\setlist[itemize]{leftmargin=*}
\newcommand{\N}{\mathbb{N}}
\newcommand{\Z}{\mathbb{Z}}
\newcommand{\R}{\mathbb{R}}
\newcommand{\C}{\mathbb{C}}
\newcommand\restr[2]{{
		\left.\kern-\nulldelimiterspace
		#1
		\vphantom{\big|}
		\right|_{#2}
}}
\newcommand{\de}{\partial}
\newcommand{\mz}{\frac{1}{2}}
\newcommand{\ang}[1]{\left\langle#1\right\rangle}
\newcommand{\Bang}[1]{\big\langle#1\big\rangle}
\newcommand{\uno}{\bm{1}}
\newcommand{\nin}{\not\in}
\newcommand{\weakto}{\rightharpoonup}
\newcommand{\mrestr}{\mathbin{\vrule height 1.6ex depth 0pt width
		0.13ex\vrule height 0.13ex depth 0pt width 1.3ex}}
\newcommand{\cptsub}{\subset\hspace{-1pt}\subset}
\renewcommand{\bar}{\overline}
\theoremstyle{definition}
\newtheorem{definition}{Definition}[section]
\newtheorem{rmk}[definition]{Remark}
\newtheorem*{definition*}{Definition}
\newtheorem*{notazen*}{Notation}
\newtheorem*{rmk*}{Remark}
\newtheorem*{example*}{Example}
\newtheorem*{ack*}{Acknowledgement}
\newtheorem*{acks*}{Acknowledgements}
\theoremstyle{plain}
\newtheorem{thm}[definition]{Theorem}
\newtheorem{lemmaen}[definition]{Lemma}
\newtheorem{corollary}[definition]{Corollary}
\newtheorem{proposition}[definition]{Proposition}
\newtheorem*{thm*}{Theorem}
\newtheorem*{lemmaen*}{Lemma}
\newtheorem*{corollary*}{Corollary}
\newtheorem*{proposition*}{Proposition}
\newtheorem*{claim*}{Claim}
\newtheorem*{conj*}{Conjecture}
\newcommand{\vfd}{\mathbf{v}}
\newcommand{\envdim}{Q}
\newcommand{\subman}{\mathcal{M}}
\newcommand{\bsubman}{\mathcal{N}}
\newcommand{\bman}{\mathfrak{M}}
\newcommand{\fbvf}{\mathcal{X}_{fb}}
\newcommand{\ff}{\mathrm {I\!I}}
\renewcommand{\hat}{\widehat}
\renewcommand{\tilde}{\widetilde}
\renewcommand{\epsilon}{\varepsilon}
\DeclareMathOperator{\riem}{Rm}
\DeclareMathOperator{\spt}{spt}
\begin{document}
	\title[The viscosity method for min-max free boundary minimal surfaces]{The viscosity method for min-max \\ free boundary minimal surfaces}
	\author{Alessandro Pigati}
	\address{ETH Z\"urich, Department of Mathematics,
	R\"amistrasse 101, 8092 Z\"urich}
	\email{alessandro.pigati@math.ethz.ch}
	
	\begin{abstract}
		We adapt the viscosity method introduced by Rivière in \cite{riv.minmax} to the free boundary case.
		Namely, given a compact oriented surface $\Sigma$, possibly with boundary, a closed ambient Riemannian manifold $(\subman^m,g)$ and a closed embedded submanifold $\bsubman^n\subset\subman$, we study the asymptotic behavior of (almost) critical maps $\Phi$ for the functional
		\begin{align*}
			&E_\sigma(\Phi):=\operatorname{area}(\Phi)+\sigma\operatorname{length}(\Phi|_{\de\Sigma})+\sigma^4\int_\Sigma|\ff^\Phi|^4\,\operatorname{vol}_\Phi
		\end{align*}
		on immersions $\Phi:\Sigma\to\subman$ with the constraint $\Phi(\de\Sigma)\subseteq\bsubman$, as $\sigma\to 0$,
		assuming an upper bound for the area and a suitable entropy condition.
		
		As a consequence, given any collection $\mathcal{F}$ of compact subsets of the space of smooth immersions $(\Sigma,\de\Sigma)\to(\subman,\bsubman)$, assuming $\mathcal{F}$ to be stable under isotopies of this space we show that the min-max value
		\begin{align*}
			&\beta:=\inf_{A\in\mathcal{F}}\max_{\Phi\in A}\operatorname{area}(\Phi)
		\end{align*}
		is the sum of the areas of finitely many branched minimal immersions
		$\Phi_{(i)}:\Sigma_{(i)}\to\subman$ with $\de_\nu\Phi_{(i)}\perp T\bsubman$ along $\de\Sigma_{(i)}$, whose (connected) domains $\Sigma_{(i)}$ can be different from $\Sigma$ but cannot have a more complicated topology.
		
		We adopt a point of view which exploits extensively the diffeomorphism invariance of $E_\sigma$ and, along the way, we simplify several arguments from the original work \cite{riv.minmax}.
		Some parts generalize to closed higher-dimensional domains, for which we get a rectifiable stationary varifold in the limit.
	\end{abstract}
	
	\maketitle

	\section{Introduction}\label{intro.sec}
	\subsection{Min-max theories for minimal submanifolds}
	
	The study of minimal surfaces, namely surfaces which are critical for the area functional,
	has always been a central topic in geometric analysis and stimulated huge developments in the calculus of variations, geometric measure theory, partial differential equations and differential geometry.
	
	Among the most important questions in the calculus of variations is Plateau's problem,
	which was actually posed by Lagrange in 1760, asking to find a surface of least area for any assigned (smooth, closed) one-dimensional boundary $\Gamma\subset\R^3$.
	This question was famously resolved, independently, by Douglas and Rad\'o in 1930--1931 \cite{douglas,rado}, for a connected $\Gamma$ and with $\Sigma$ minimizing the area among (branched) immersed disks.
	The method is based on the fact that any immersion of the disk can be reparametrized to be conformal, so that the area becomes the Dirichlet energy; this functional has much better analytic properties and admits only a finite dimensional invariance group, whereas the area is invariant for the whole group of diffeomorphisms of the domain.
	
	In order to construct higher dimensional minimal submanifolds, an approach involving parametrizations does not seem to be available.
	On the other hand, bringing together the theories of rectifiable sets and De Rham's currents, Federer and Fleming \cite{fedfle} developed the modern \emph{theory of currents}, which allows to have an intrinsic weak notion of submanifold,
	together with notions of boundary and area (the \emph{mass}). Most importantly, there exists a natural topology which makes \emph{integral currents} a compact space---for instance with assigned boundary and an upper bound for the mass---and the mass a lower semicontinuous functional.
	These properties make it straightforward to apply the direct method of calculus of variations, in order to find a mass-minimizing object $\Sigma^k$ with an assigned boundary $\de\Sigma^k=\Gamma^{k-1}$ in $\R^m$, provided $\Gamma$ is itself a cycle and $1<k\le m$.
	
	The higher dimensional version of Plateau's problem thus reduced to the problem of reaching a satisfactory regularity theory for the minimizer $\Sigma$.
	This task was accomplished in codimension $1$, namely $k=m-1$, away from the boundary $\Gamma$---a crowning achievement which came from the work of De Giorgi, Fleming, Almgren, Simons and Federer.
	The theory of currents is flexible enough to produce mass-minimizing cycles in a given homology class of a closed Riemannian manifold.
	
	The obvious next question is to be able to produce general critical points for the area, either in $\R^m$ with a boundary constraint or in a closed curved ambient. The following conjecture by Yau, whose statement echoes the same question for geodesics, attracted a lot of attention in the last decades.
	
	\begin{conj*}[Yau, 1982]
		Does every closed three-dimensional Riemannian manifold $(\subman^3,g)$ admit infinitely many closed immersed minimal surfaces?
	\end{conj*}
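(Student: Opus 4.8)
The plan is to attack Yau's conjecture through \emph{min-max theory} rather than through parametrizations, since — as emphasized above — a Douglas--Rad\'o style approach is unavailable in the unparametrized, variable-topology setting. Concretely, I would fix a closed $(\subman^3,g)$ and work with its full \emph{volume spectrum} $\{\omega_p\}_{p\ge 1}$, defined either via the Almgren--Pitts scheme (sweepouts by integral $2$-cycles, with $p$-parameter families detected by the $p$-th power of the fundamental cohomology class of the cycle space) or, in the spirit of the present paper, via admissible families $\mathcal F$ of immersions and the associated min-max values $\beta=\beta_p$. Two external inputs drive the argument. First, a \emph{Weyl law}: $\omega_p\sim c(g)\,p^{1/3}$ as $p\to\infty$, so in particular $\omega_p\to\infty$ and infinitely many of the $\omega_p$ are pairwise distinct. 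Second, \emph{existence and regularity} for each level: every $\omega_p$ is realized by an integral varifold which, in dimension three, is a finite union of closed, smooth, embedded minimal surfaces $\Sigma_{p,1},\dots,\Sigma_{p,N_p}$ carried with positive integer multiplicities $m_{p,j}$, so that $\omega_p=\sum_j m_{p,j}\operatorname{area}(\Sigma_{p,j})$; these are a fortiori immersed.

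Granting this, the counting step is short \emph{in spirit}. Suppose for contradiction that $(\subman^3,g)$ contained only finitely many closed embedded minimal surfaces $\Sigma_1,\dots,\Sigma_M$. Then every $\omega_p$ would lie in the additive semigroup $\{\sum_i a_i\operatorname{area}(\Sigma_i):a_i\in\N\}$ generated by the finite set $\{\operatorname{area}(\Sigma_i)\}$; since this semigroup is discrete away from $0$, the divergence $\omega_p\to\infty$ forces the integer coefficients to blow up, i.e.\ some $\Sigma_i$ appears with multiplicity tending to $\infty$ along the min-max sequence. The \emph{hard part} is to rule this degeneracy out. When $\operatorname{Ric}_g>0$ I would do so directly, combining Frankel's theorem (any two closed minimal hypersurfaces meet) with a multiplicity-one property of the min-max limit. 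For a \emph{general} metric $g$, where stable minimal surfaces and genuinely unbounded multiplicities are a real threat, I would instead remove a small geodesic ball from $\subman$ and attach a cylindrical end, run the min-max construction on the resulting non-compact manifold, and exploit that the minimal surfaces produced there cannot all escape to infinity nor all coincide — yielding infinitely many distinct minimal surfaces inside a region that embeds back into $\subman$.

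The principal obstacle I anticipate is exactly this \emph{multiplicity problem}: controlling the integers $m_{p,j}$, and dually upgrading the varifold produced at level $\omega_p$ to have multiplicity one (or at least to genuinely depend on $p$). Secondary difficulties are proving the Weyl asymptotics $\omega_p\asymp p^{1/3}$ with enough precision to guarantee infinitely many distinct values, and the regularity theory promoting the min-max varifold to a smooth embedded minimal surface away from a set of codimension $\ge 7$ — vacuous here since $\dim\subman=3$. I would expect the positive-Ricci and the generic-metric cases to be reachable with the standard min-max package, and the fully general case to hinge on the cylindrical-end device as the essential additional idea; this is, in outline, the strategy behind the affirmative resolution of the conjecture in the works of Marques--Neves, Irie--Marques--Neves and Song.
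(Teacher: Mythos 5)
The statement you are addressing is not proved in this paper at all: Yau's conjecture appears in the introduction purely as historical motivation, and the author explicitly attributes its resolution to the Almgren--Pitts min-max program in the works of Marques--Neves, Irie--Marques--Neves, Liokumovich--Marques--Neves and Song (\cite{mn.ric,mn.irie,mn.equi,song}). The viscosity method developed here is a different, parametrized framework for free boundary surfaces and is never claimed to yield Yau's conjecture. So there is no ``paper's own proof'' to compare against; what you have written is an outline of the external literature that the paper merely cites.

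Judged as a proof on its own terms, your proposal has genuine gaps. The counting step is not a contradiction as stated: if only finitely many minimal surfaces $\Sigma_1,\dots,\Sigma_M$ existed, the semigroup $\{\sum_i a_i\operatorname{area}(\Sigma_i)\}$ is indeed locally finite, but it is also unbounded, so $\omega_p\to\infty$ is perfectly compatible with it --- one needs the finer input that actually drives the known arguments (the Lusternik--Schnirelmann-type sublinearity $\omega_p\lesssim p^{1/3}$ played against a lower bound forced by bounded multiplicity in the Frankel/positive-Ricci case, or the Weyl law combined with metric perturbation in the generic case, or Song's cylindrical-end construction in general). Precisely those steps --- control of the multiplicities $m_{p,j}$, the Weyl asymptotics, and the non-compact min-max on the manifold with a cylindrical end --- are the entire mathematical content, and your text names them and defers them to Marques--Neves, Irie--Marques--Neves and Song rather than proving them. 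As a survey of the strategy behind the affirmative answer your sketch is accurate; as a proof it is a citation of the results it purports to establish, and in particular it bears no relation to the method of the present paper.
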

	
	A very robust method to produce general, possibly unstable critical points is by means of min-max problems, and goes back to the work of Birkhoff on the existence of closed geodesics on the 2-sphere with an arbitrary metric: see \cite{malchiodi} for an introduction and a large collection of examples implementing this idea.
	
	The main issue is then how to implement a min-max construction in the setting of minimal surfaces.
	A successful theory was proposed by Almgren and Pitts \cite{almgren,pitts}: within the theory of currents, using cycles mod $2$ they produce an \emph{almost minimizing varifold} in the limit. The notion of \emph{varifold}, for which the reader may consult \cite[Chapters~4~and~8]{simon},
	differs from the one of current in that, while also retaining good compactness properties, the mass becomes continuous under weak convergence: this property is essential to guarantee that the limit object attains the min-max value---on the other hand, lower semicontinuity of the mass for currents is just good enough for minimization problems.
	
	While the regularity theory for general integer rectifiable stationary varifolds is wide open, with notable exceptions provided by the small-excess (and multiplicity-one) regularity theorem by Allard \cite{allard} and the deep structure result in the stable, codimension one case by Wickramasekera \cite{wick}, the technical requirement of \emph{almost minimality} enabled Pitts---together with later work by Schoen and Simon \cite{schoen}---to recover the full regularity for the limit varifold, proving that the singular set is empty in ambient dimension $3\le m\le 7$.
	
	The use of this min-max framework led to the solution of several long-standing problems, including the Willmore conjecture, by Marques and Neves \cite{mn.willmore}, and the Yau conjecture, by Marques, Neves, Song and others \cite{mn.ric,mn.irie,mn.equi,song}.
	
	This theory was used also to construct \emph{free boundary} minimal hypersurfaces:
	given an ambient $\subman^m$ and a submanifold $\bsubman$---usually $\subman$ has no boundary, or $\bsubman$ is precisely $\de\subman$---they are hypersurfaces $\Sigma^{m-1}$ with boundary, embedded or immersed in $\subman$, which are critical for the $(m-1)$-area with the constraint $\de\Sigma\subseteq\bsubman$.
	This is equivalent to the fact that $\Sigma$ is minimal and meets $\bsubman$ orthogonally along $\de\Sigma$.
	
	The most studied case is $(\subman,\bsubman)=(\bar B^3,S^2)$ with the Euclidean metric. In \cite{ketover}, using an equivariant version of the Simon--Smith theory---which is itself a less technical and more effective relative of Almgren--Pitts for surfaces, allowing for instance to control the genus of the resulting min-max surface---Ketover built free boundary minimal surfaces in the ball with arbitrarily big genus and three boundary components. In the same spirit, a very recent work by Carlotto, Franz and Schulz \cite{cfs} constructs surfaces with connected boundary and arbitrary genus.
	
	In \cite{lz1,lz2} the Almgren--Pitts theory for hypersurfaces in arbitrary dimension is adapted to the free boundary case.
	We also mention \cite{delellis} for a similar min-max theory in the free boundary case, avoiding discretized constructions.
	
	Several other techniques are used to construct free boundary minimal submanifolds, including notably desingularization methods and the study of extremal eigenvalue problems; for a survey of recent results, we invite the reader to consult \cite{li.survey}.
	
	Recently, in the closed case, another approach using the Allen--Cahn functional was proposed by Guaraco \cite{guaraco}.
	This theory, which started with the work of Modica \cite{modica} for minimizers and Hutchinson--Tonegawa \cite{hut.ton} for general critical points, interprets a minimal hypersurface as a limit interface of a phase transition, hence as a sort of limit level set of functions which are critical for rescalings of the Allen--Cahn energy, which should be seen as a relaxation of the area for the level sets.
	This approach seems to be at least as powerful as Almgren--Pitts; the additional structure given by having a sequence of smooth \emph{critical} functions converging to the limit already allowed to obtain finer results: see, e.g., the recent works by Chodosh--Mantoulidis \cite{ch.ma} and Bellettini \cite{bell}.
	
	In codimension two, interesting attempts have been made using the Ginzburg--Landau energy for complex valued maps, by Cheng \cite{cheng} and Stern \cite{stern}. This functional, which appears formally identical to Allen--Cahn---the latter being just Ginzburg--Landau for real valued maps---exhibits a totally different behavior in terms of energy concentration, due to the dominance of the angular part of the map in the Dirichlet term.
	This component forces the asymptotic analysis to take place on infinitely many scales, making the study very challenging.
	A different attempt, based on rescalings of the Yang--Mills--Higgs energy for sections and connections of a Hermitian line bundle,
	was proposed by the author and Stern \cite{pig.ste}. In this last framework, the asymptotic analysis becomes much simpler and quite similar to the Allen--Cahn setting, although a regularity theory still lacks.
	
	Yet another framework, which is the one whose study is continued in this paper, was introduced by Rivière \cite{riv.minmax}.
	It concerns minimal surfaces, but works in arbitrary codimension. As in the classical works \cite{douglas,sa.uh}, it uses parametrizations $\Phi:\Sigma^2\to(\subman^m,g)$. On the other hand, the area is not immediately relaxed with the Dirichlet energy; rather, it uses the functional
	\begin{align*}
		&E_\sigma'(\Phi):=\operatorname{area}(\Phi)+\sigma^2\int_\Sigma(1+|\ff^\Phi|^2)^p\,\operatorname{vol}_\Phi
	\end{align*}
	for $\sigma>0$ and a fixed exponent $p>1$, where the norm of the second fundamental form $\ff^\Phi$ and the area element $\operatorname{vol}_\Phi$ are with respect to the metric $\Phi^*g$ induced by $\Phi$. By studying critical points for $E_\sigma'$, one hopes to get a limit minimal immersion regardless of the topology of the closed surface $\Sigma$, while in the work by Sacks--Uhlenbeck \cite{sa.uh}---which relaxes the Dirichlet energy---one can just reach a harmonic map, whose minimality is not guaranteed unless $\Sigma$ is a sphere.
	As for the free boundary case, minimality holds automatically only if $\Sigma$ is a disk---a fact already exploited to solve Plateau's problem;
	in fact, we mention that the same approach developed in \cite{sa.uh} was used to build free boundary minimal disks in \cite{struwe}.
	Note that $E_\sigma'$ is invariant under diffeomorphisms, whereas the Dirichlet energy is only conformally invariant.
	
	The main outcome of \cite{riv.minmax} is that, once a sequence of maps $\Phi_k$ critical for $E_{\sigma_k}$ is carefully chosen,
	the induced varifolds converge to a kind of limit object called \emph{parametrized stationary varifold}.
	This notion defines a special class of varifolds which are induced by a parametrization $\Phi$---possibly with a new domain $\tilde\Sigma$ due to bubbling phenomena---and a Borel multiplicity function $N$ defined on $\tilde\Sigma$. A crucial feature is that stationarity can be localized with respect to the domain. This allows to obtain the full regularity for the limit object: the regularity theory was started in \cite{riv.target}, where $N\equiv 1$ is assumed, and carried out in full generality in \cite{pigriv}, where parametrized stationary varifolds are axiomatically studied.
	
	Later, in \cite{multone}, the authors show that actually $N\equiv 1$ in the variational setting, by exploiting the results from \cite{riv.minmax,pigriv}.
	This fact allows to obtain an upper bound on the Morse index of the limit minimal immersion in terms of the number of min-max parameters.
	In this sense it is the correct analogue of the \emph{multiplicity one conjecture} by Marques--Neves \cite{mn.multone},
	which has been confirmed very recently in the generic, codimension one case for the Almgren--Pitts framework \cite{zhou}.
	
	\subsection{Main results}
	In this paper we study a similar energy for surfaces with boundary; namely, choosing $p=2$ and replacing $\sigma$ with $\sigma^2$ for conveniency, we work with the energies
	\begin{align*}
		&E_\sigma(\Phi):=\operatorname{area}(\Phi)+\sigma\operatorname{length}(\Phi|_{\de\Sigma})+\sigma^4\int_\Sigma|\ff^\Phi|^4\,\operatorname{vol}_\Phi,
	\end{align*}
	where $\Sigma$ is a fixed compact oriented surface with (possibly nonempty) boundary, and $\Phi:\Sigma\to\subman^m$ is a smooth immersion with the constraint $\Phi(\de\Sigma)\subseteq\bsubman$.
	The parameter $\sigma$ should be thought dimensionally as a length.
	
	In this work we fully exploit the invariance of $E_\sigma$ under diffeomorphisms of the domain, namely the principle that every diffeomorphism invariant quantity should depend only on the shape of the immersed surface.
	In computing the first variation we will see that, using infinitesimal variations of the form $w=X(\Phi)$,
	all second-order terms involving $w$ are expressible just in terms of the second fundamental form of $\Phi$, as expected.
	A natural consequence of this is that the first variation of the relaxing terms $\sigma\operatorname{length}(\Phi|_{\de\Sigma})$
	and $\sigma^4\int_\Sigma|\ff^\Phi|^4\,\operatorname{vol}_\Phi$, for such special ambient deformations, can be bounded in terms of these quantities themselves (and the ambient vector field $X$).
	
	Also, working on a Finsler manifold $\bman$ of $W^{2,4}$ immersions, equipped with a norm on $T_\Phi\bman$ involving the induced metric $g_\Phi:=\Phi^*g$, we observe that also $\|X(\Phi)\|_\Phi$ is bounded in terms of $E_\sigma(\Phi)$, $X$ and $\sigma$.
	Since in the asymptotic analysis we will use only this particular kind of variations, we do not need to construct critical points of $E_\sigma$: it suffices to have $\|dE_\sigma(\Phi)\|_\Phi$ very small in terms of $\sigma$.
	Since such almost critical maps are easy to construct using pseudo-gradient flows and can be assumed, without loss of generality, to be smooth, this makes the paper self-contained---except for the regularity theory in \cref{reg.sec}.
	
	These observations, detailed in Sections \ref{crit.sec} and \ref{firstvar.sec}, represent a major simplification over the original work \cite{riv.minmax}, which appeals to \cite{riv.ps} for the Palais--Smale property of $E_\sigma'$ and the regularity of critical maps.
	The formulas obtained in this paper are quite simple, independently of the ambient: differently from \cite{riv.minmax}---where $\subman$ is assumed to be the round sphere $S^3$ in order to simplify the presentation---we can deal immediately with general closed manifolds $\subman$ and $\bsubman$.
	
	As in the closed case, the main difficulty is to prove a lower bound for the area of the immersed surface $\Phi$ in suitable balls $B_r(p)$ in the ambient. This is accomplished by studying how the ratio $\frac{\mu(B_s(p))}{s^2}$ behaves as $s$ varies, with $\mu$ denoting the area measure of $\Phi$ on $\subman$. While for $s<\sigma$ the boundedness of the quantity $\sigma^4\int_\Sigma|\ff^\Phi|^4\,\operatorname{vol}_\Phi$ is enough---in that, heuristically, magnifying by a factor $s^{-1}$ we get an $L^4$-bound on the second fundamental form and we can apply directly the monotonicity formula---for $s>\sigma$ we have to use the almost criticality of $\Phi$.
	
	Namely, we use the same vector fields used to show the (approximate) monotonicity of $\frac{\mu(B_s(p))}{s^2}$ for free boundary minimal surfaces, in order to understand the growth rate of this ratio for our immersed surface.
	Oversimplifying, the quantity $\frac{\sigma^4}{s}\int_\Sigma|\ff^\Phi|^4\,\operatorname{vol}_\Phi$ appears among the error terms: since this has to be integrated between $\sigma$ and $r$, this produces an error $\sigma^4\log(\sigma^{-1})\int_\Sigma|\ff^\Phi|^4\,\operatorname{vol}_\Phi$.
	As in \cite{riv.minmax}, this can be assumed to be infinitesimal with a careful selection of $\sigma$ and $\Phi$, based on Struwe's monotonicity trick for relaxed energies. In reality, the argument also requires a \emph{maximal} bound
	\begin{align*}
		&\sigma^4\int_{\Phi^{-1}(B_s(p))}|\ff^\Phi|^4\,\operatorname{vol}_\Phi\le\delta\mu(B_s(p))\quad\text{for all }s>0.
	\end{align*}
	
	We add the additional term $\sigma\operatorname{length}(\Phi|_{\de\Sigma})$ in $E_\sigma$ in order to deal with the additional challenge of having a nontrivial boundary $\Phi|_{\de\Sigma}$. Due to this, we cannot use the monotonicity formula on a ball $B_s(p)$ (with $s<\sigma$) whose preimage intersects $\de\Sigma$. In principle, one can impose a strong control of the boundary by adding a term involving the geodesic curvature of $\Phi|_{\de\Sigma}$; however, this would still require to understand the topology of $\Phi^{-1}(B_s(p))$.
	
	Rather, using a covering argument, we show that the set of points with distance less than $\sigma$ from $\Phi(\de\Sigma)$ has an area (i.e., the measure $\mu$) controlled by $\sigma\operatorname{length}(\Phi|_{\de\Sigma})$; this term is again infinitesimal as $\sigma\to 0$, so that this set can be ignored in the asymptotic analysis.
	
	The rest of the paper adapts the remaining arguments from \cite{riv.minmax} and \cite{riv.target} to the free boundary case---again with some important simplifications.
	In \cref{deg.sec} we study carefully what happens when the conformal structure induced by $\Phi$ degenerates as $\sigma\to 0$, which is more delicate and less well known for surfaces with boundary.
	
	The following is the main result of this work.
	
	\begin{thm}\label{main.thm}
		Let $(\subman^m,g)$ be a closed Riemannian manifold, $\bsubman^n\subset\subman$ a closed embedded submanifold (with $1\le n<m$),
		and let $\Sigma$ be a compact oriented surface, possibly with boundary.
		Given a sequence $\Phi_k$ of immersions which are $\sigma_k^5$-critical for $E_{\sigma_k}$, have bounded area and satisfy the condition
		\begin{align*}
			&\sigma_k\log\sigma_k^{-1}\operatorname{length}(\Phi_k|_{\de\Sigma})
			+\sigma_k^4\log\sigma_k^{-1}\int_\Sigma|\ff^{\Phi_k}|^4\,\operatorname{vol}_{\Phi_k}
			\to 0,
		\end{align*}
		there exists a subsequence such that the induced varifolds converge to a \emph{parametrized free boundary stationary varifold} for the couple $(\subman,\bsubman)$. Moreover, the connected components $\Sigma_i$ of its domain have
		$\chi(\Sigma_{i})\ge \chi(\Sigma)$ and $g(\Sigma_{i})\le g(\Sigma)$.
	\end{thm}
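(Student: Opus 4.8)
The plan is to combine the a priori estimates of the previous sections with a bubbling analysis for the parametrizations, always passing to suitable subsequences. Since $\operatorname{area}(\Phi_k)$ is bounded, the induced varifolds $V_k$ have uniformly bounded mass and admit a weak-$*$ limit $V$ by the compactness theorem for varifolds; the whole task is to recognise $V$ as a parametrized free boundary stationary varifold and to control the topology of its domain. \textbf{Step 1 (uniform lower density bound, hence rectifiability of $V$).} Writing $\mu_k$ for the area measure of $\Phi_k$ on $\subman$, I would first prove $\mu_k(B_r(p))\ge c\,r^2$ for $p\in\spt\mu_k$ and all $r$ below a fixed radius, following the two-scale scheme sketched in the introduction. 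For $s<\sigma_k$ the rescaled second fundamental form has small $L^4$ norm---this is where the maximal hypothesis enters, and one checks the selection procedure can be arranged to guarantee it---so after blowing up by $s^{-1}$ the monotonicity formula applies directly. For $s>\sigma_k$ one tests $dE_{\sigma_k}(\Phi_k)$ against the ambient vector fields that prove approximate monotonicity of $s\mapsto\mu(B_s(p))/s^2$ for genuine free boundary minimal surfaces; the first-variation estimates of \cref{firstvar.sec} turn the almost-criticality $\|dE_{\sigma_k}(\Phi_k)\|_{\Phi_k}\le\sigma_k^5$ into error terms whose integral over $s\in[\sigma_k,r]$ is dominated by the entropy quantity $\sigma_k\log\sigma_k^{-1}\operatorname{length}(\Phi_k|_{\de\Sigma})+\sigma_k^4\log\sigma_k^{-1}\int_\Sigma|\ff^{\Phi_k}|^4\,\operatorname{vol}_{\Phi_k}$, which tends to $0$ by assumption. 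Balls meeting $\Phi_k(\de\Sigma)$, where the monotonicity formula is unavailable, are treated separately by the covering argument: the $\sigma_k$-neighbourhood of $\Phi_k(\de\Sigma)$ has $\mu_k$-measure $\lesssim\sigma_k\operatorname{length}(\Phi_k|_{\de\Sigma})\to 0$, so it does not contribute to $V$. The density lower bound for $V$ follows, and with it rectifiability of $V$.

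\textbf{Step 2 (free boundary stationarity of $V$).} For an arbitrary ambient vector field $X$ which is tangent to $\bsubman$ along $\bsubman$, I would test $dE_{\sigma_k}(\Phi_k)$ against $w=X(\Phi_k)$. The first variation of $\operatorname{area}$ converges to $\delta V(X)$; the first variations of the relaxing terms $\sigma\operatorname{length}(\Phi|_{\de\Sigma})$ and $\sigma^4\int_\Sigma|\ff^\Phi|^4\,\operatorname{vol}_\Phi$ are, by \cref{firstvar.sec}, bounded by those same quantities times $\|X\|_{C^1}$, hence vanish in the limit; and the leftover term is $O\!\big(\sigma_k^5\,\|X(\Phi_k)\|_{\Phi_k}\big)\to 0$ because $\|X(\Phi_k)\|_{\Phi_k}$ is itself controlled by $E_{\sigma_k}(\Phi_k)$, $X$ and $\sigma_k$. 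Therefore $\delta V(X)=0$ for every such $X$, which is exactly the free boundary stationarity condition for $(\subman,\bsubman)$.

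\textbf{Step 3 (parametrization and topology).} Exploiting the diffeomorphism invariance of $E_{\sigma_k}$, I would replace $\Phi_k$ by $\Phi_k\circ f_k$ so that $g_{\Phi_k}$ is conformal to a constant-curvature metric $h_k$ with (totally) geodesic boundary representing a point of the moduli space of bordered surfaces, together with a normalization of the boundary parametrization. If $[h_k]$ stays in a compact part of moduli space, then after extracting a limit conformal structure the maps converge weakly in $W^{1,2}_{\mathrm{loc}}$ off finitely many concentration points, and the standard bubble-tree construction---whose no-neck step is powered by the density lower bound of Step 1 together with an $\epsilon$-regularity statement for almost critical maps---produces a limit parametrization $\Phi:\tilde\Sigma\to\subman$; each component of $\tilde\Sigma$ is either a sphere or disk arising as a rescaled limit, or a piece obtained from $\Sigma$ by collapsing simple closed curves, so it has $\chi\ge\chi(\Sigma)$ and $g\le g(\Sigma)$. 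If instead $[h_k]$ degenerates, the collar analysis of \cref{deg.sec} shows that interior collars and half-collars abutting $\de\Sigma$ pinch, splitting the domain into components of strictly simpler topology; since pinching a simple closed curve never lowers a component's Euler characteristic and never raises its genus, again $\chi(\Sigma_i)\ge\chi(\Sigma)$ and $g(\Sigma_i)\le g(\Sigma)$. In all cases $V$ coincides with the varifold carried by $\Phi$ with the Borel multiplicity given by the limiting local area ratios, hence is a parametrized free boundary stationary varifold; \cref{reg.sec} then identifies it with a finite sum of branched minimal immersions meeting $\bsubman$ orthogonally.

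The main obstacle is Step 3 in the degenerate regime: for bordered surfaces the stratification of degenerating conformal classes is richer than in the closed case---half-collars abutting $\de\Sigma$ can pinch in addition to interior collars---and one must rule out loss of area both inside the interior necks and near $\bsubman$ while simultaneously keeping track of the topology. The companion delicate point is the boundary part of Step 1, where the monotonicity formula genuinely fails and has to be replaced by the covering estimate, which is precisely why the term $\sigma\operatorname{length}(\Phi|_{\de\Sigma})$ was built into $E_\sigma$.
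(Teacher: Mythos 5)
Your overall architecture matches the paper: Step 2 is essentially \cref{limit.is.stat}, Step 1 follows the scheme of \cref{mono.sec}, and Step 3 is the content of \cref{asympt.sec,deg.sec}. But there are two genuine gaps in the proposal.

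First, to conclude that the limit is a \emph{parametrized} free boundary stationary varifold one needs the multiplicity function $N$ to be integer-valued, bounded, and $\ge 1$ (\cref{par.def}). You write that $V$ is ``the varifold carried by $\Phi$ with the Borel multiplicity given by the limiting local area ratios'' but never explain why that Borel function takes values in $\{1,2,\dots\}$ nor why it is bounded. This is one of the hardest points of the paper (\cref{struct.n}): the proof requires a delicate blow-up at Lebesgue points of $d\Phi_\infty$, the fact that rescaled almost-critical maps $\Psi_k$ remain almost-critical for a rescaled $E_{\tau_k}$ with $\tau_k=\sigma_k/r_k$, a mollified vector field $Y_k=\rho_{\tau_k}*X$ to avoid Hessian blow-up, and finally Allard's strong constancy lemma to recognize the multiplicity as integer. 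Your proof sketch gives no route to integrality, and without it you simply do not have a parametrized free boundary stationary varifold in the sense of the statement.

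Second, your proposed no-neck/no-loss argument is ``powered by the density lower bound of Step 1 together with an $\epsilon$-regularity statement for almost critical maps.'' No such $\epsilon$-regularity exists in this framework: the almost-critical maps $\Phi_k$ for $E_{\sigma_k}$ satisfy no useful elliptic PDE with small data, unlike Sacks--Uhlenbeck $\alpha$-harmonic maps. The paper instead proves quantization through soft varifold lemmas (\cref{quant.vfd,quant.vfd.soft}) applied to \emph{limit} free boundary stationary varifolds with density bounded above and below, and uses them to build \cref{quant.phi,quant.phi.mod}, which control the energy of $\Phi_k$ on cylinders whose two boundary traces have small diameter. This is a qualitatively different mechanism, and it is indispensable: your route would need an $\epsilon$-regularity theorem which the framework does not provide. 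Minor additional imprecisions: the density lower bound is only established on a ``good set'' where the maximal bound $\lambda(B_s(p))\le\delta\mu(B_{5s}(p))$ holds (the bad set is merely shown to have small $\mu_k$-measure), and the localized stationarity of $\vfd_\omega$ outside $\Phi(\de\omega)$ for a.e.\ $\omega$---which is precisely what distinguishes a parametrized stationary varifold from a merely stationary one---requires the cut-off arguments of \cref{par.stat} and \cref{full.stat} that your Step 2, which is purely global, does not supply.
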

	
	In this statement $\chi(\cdot)$ is the Euler characteristic and $g(\cdot)$ is the genus. The last part of the statement follows from the analysis carried out in \cref{deg.sec}.
	We refer to \cref{par.def} for the precise description of this notion of parametrized varifold; the fact that one can localize the stationarity with respect to the domain stems from the fact that one can use variations $w=X(\Phi)$ also just on a domain $\omega\subset\Sigma$, extending $w$ to vanish on the complement, provided $X$ is supported far from $\Phi(\de\omega)$.
	
	As in the closed case, we can actually assert that the multiplicity function in the parametrized varifold is everywhere equal to $1$: see \cref{multone.rmk}.
		
	\begin{rmk}
		This result applies also to a compact ambient manifold $\subman$ with boundary $\bsubman$, such as the flat unit ball $\bar B^3$; note that the (almost) criticality should be understood formally, for infinitesimal variations $w$ which are sections of $\Phi^*T\subman$, with $w(\de\Sigma)\subseteq T\bsubman$.
		Indeed, we can smoothly extend $\subman$ to a closed Riemannian manifold and reduce to the previous statement.
	\end{rmk}
	
	\begin{rmk}\label{tight}
		It also applies to the case $\subman=\R^m$, with $\bsubman\subset\R^m$ a closed embedded submanifold:
		the lower bounds obtained in \cref{mono.sec} (see also the proof of \cref{quant.phi})
		show that the varifolds induced by $\Phi_k$ form a tight sequence, and the result then follows with the same proofs.
	\end{rmk}
	
	As for the regularity of the limit, we have the following.
	
	\begin{thm}\label{reg.thm.intro}
		For a parametrized free boundary stationary varifold $(\tilde\Sigma,\Phi,N)$, the map $\Phi$ is smooth up to the boundary $\de\Sigma$, where $\de_\nu\Phi\perp T\bsubman$. Also, on the components of $\tilde\Sigma$ where $\Phi$ is not (a.e.) constant, the multiplicity $N$ is constant and $\Phi$ is a branched minimal immersion outside $\de\Sigma$.
	\end{thm}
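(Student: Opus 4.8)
The plan is to separate the interior analysis, which reduces to the existing theory for parametrized stationary varifolds in the closed case, from the boundary analysis, which reduces to the boundary regularity of free boundary harmonic maps on a half-disc. Since the stationarity can be localized in the domain (cf.\ \cref{par.def}), for any open $\omega\cptsub\tilde\Sigma\setminus\de\tilde\Sigma$ the triple $(\omega,\Phi|_\omega,N|_\omega)$ is a parametrized stationary varifold in the closed sense, so the results of \cite{riv.target} and \cite{pigriv} apply: on each connected component $C$ of $\tilde\Sigma$ on which $\Phi$ is not a.e.\ constant, $N$ agrees a.e.\ with a positive constant $m_0$ on the (connected) interior of $C$, hence a.e.\ on $C$ since $\de C$ is $\mathcal H^2$-negligible, and $\Phi$ is a branched minimal immersion on $\mathrm{int}(C)$; in particular it is smooth and weakly conformal there. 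This already gives the constancy of $N$ and the interior part of the statement, and on components where $\Phi$ is a.e.\ constant there is nothing to prove; it remains to show that $\Phi$ is smooth up to $\de\tilde\Sigma$ with $\de_\nu\Phi\perp T\bsubman$ there.

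\emph{Reduction near the boundary.} Fix $p_0\in\de\tilde\Sigma$ in a non-constant component. Conformal coordinates identify a neighborhood of $p_0$ with a half-disc $\mathbb{D}^+$, with $\de\tilde\Sigma$ corresponding to $\{y=0\}$; there $N\equiv m_0$. Testing the stationarity with variations $w=X\circ\Phi$ for $X$ compactly supported in $\mathbb{D}^+\setminus\{y=0\}$ shows that $\Phi$ is weakly harmonic $\mathbb{D}^+\to(\subman,g)$, while testing with $X$ tangent to $\bsubman$ along $\bsubman$ shows, in the trace sense, that the component of $\de_y\Phi$ tangent to $\bsubman$ vanishes on $\{y=0\}$, i.e.\ $\de_y\Phi\perp T\bsubman$ there. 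Together with $\Phi(\{y=0\})\subseteq\bsubman$ and the weak conformality, we are reduced to the boundary regularity of a weakly conformal, weakly harmonic map on a half-disc with this free (Neumann-type) boundary condition along $\bsubman$.

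\emph{Boundary regularity.} This is the technical core. Work in Fermi coordinates $(u,v)$ for $\bsubman\subset\subman$, so that $\bsubman=\{v=0\}$, the ambient metric is block-diagonal along $\{v=0\}$, and, writing $\Phi=(U,V)$, the constraint reads $V(x,0)=0$ and the free boundary condition reads $\de_y U(x,0)=0$. Then the map $\tilde\Phi$ obtained by extending $U$ evenly and $V$ oddly across $\{y=0\}$ lies in $W^{1,2}(\mathbb{D},\R^m)$ and solves a critical elliptic system $-\Delta\tilde\Phi=\tilde\Gamma(\tilde\Phi)(\nabla\tilde\Phi,\nabla\tilde\Phi)$ with coefficients smooth away from $\{y=0\}$ and bounded across it. By the two-dimensional regularity theory for such systems---in the spirit of H\'elein and Rivi\`ere, whose moving-frame and compensation arguments are insensitive to the jump across the reflection interface---$\tilde\Phi$, hence $\Phi$, is H\"older continuous up to $\{y=0\}$ provided the energy on a half-disc about $p_0$ lies below a fixed threshold $\epsilon_0$. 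A covering argument, together with a removable-singularity argument at the finitely many points of $\de\tilde\Sigma$ where the energy might fail to drop below $\epsilon_0$ (finitely many since $\int_{\tilde\Sigma}|\nabla\Phi|^2<\infty$), gives $\Phi\in C^{0,\alpha}$ up to all of $\de\tilde\Sigma$. Once $\Phi$ is continuous up to $\de\tilde\Sigma$, it maps a collar of $\de\tilde\Sigma$ into a single Fermi chart, where the interior equation has smooth coefficients and the boundary conditions ``$V$ Dirichlet, $U$ Neumann'' are \emph{linear}; starting from a Morrey improvement $\Phi\in W^{1,p}_{\mathrm{loc}}$ with $p>2$ (again via the reflection) and iterating interior and boundary $L^q$ and Schauder estimates for this mixed boundary value problem bootstraps $\Phi$ to $C^\infty$ up to $\de\tilde\Sigma$. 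Passing the a.e.\ identities $\Phi^*g=\lambda\,(dx^2+dy^2)$ and $\de_y\Phi\perp T\bsubman$ to the closure, $\Phi$ is then a branched minimal immersion away from $\de\Sigma$ meeting $\bsubman$ orthogonally along its boundary, as claimed.

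The main obstacle is precisely the boundary $\epsilon$-regularity, i.e.\ continuity up to $\de\tilde\Sigma$: one has to carry the two-dimensional critical-regularity machinery through the reflection, checking that the compensated structure of the equation survives both the free boundary condition and the non-smooth reflection interface. By contrast, the constancy of $N$, the reduction to the harmonic-map system, the smooth bootstrap, and the passage of the geometric identities to the closure are comparatively routine.
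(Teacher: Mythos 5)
Your reduction of the interior statement to \cite{riv.target, pigriv} is correct, as is the constancy of $N$. However, the proposal collapses precisely at the step you label routine: ``Testing the stationarity with variations $w=X\circ\Phi$ \ldots shows that $\Phi$ is weakly harmonic \ldots while testing with $X$ tangent to $\bsubman$ \ldots shows \ldots $\de_y\Phi\perp T\bsubman$.'' The parametrized varifold only encodes stationarity against pullback variations $w=X(\Phi)$ for ambient vector fields $X$; it does \emph{not} directly yield the weak harmonic map equation or the weak Neumann condition, because those require testing against \emph{arbitrary} $w\in W^{1,2}\cap L^\infty(\mathbb D^+,\Phi^*T\subman)$ with $w\in T\bsubman$ on $\{y=0\}$. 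A generic such $w$ is not of the form $X(\Phi)$: if $\Phi$ is non-injective or $d\Phi$ vanishes on a set of positive measure, a section $w$ that takes different values at preimages of the same ambient point, or that depends on the domain coordinates, cannot be realized as a pullback. Passing from ``stationarity against $X(\Phi)$'' to ``weak harmonicity against all $w$'' is exactly the content of Section~\ref{reg.sec}: one builds the good set $\mathcal G$ where $\Phi$ is a local embedding (via Allard's theorem), proves harmonicity there where $w$ can be factored as $X(\Phi)$, and then runs a delicate coarea-plus-cutoff argument near $\mathcal B$ (the saturated branch set, where $d\Phi=0$ a.e.) to establish the identity for superlevel sets $\omega=\{f>\lambda\}$ with general scalar $f$ and coordinate fields $e_k$. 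In particular, the paper's reflection argument in Proposition~\ref{app} is applied only \emph{after} the correct weak Dirichlet/Neumann boundary conditions have been extracted this way; your proposal invokes the reflection before the weak system that it would reflect has been shown to hold.

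There is also a misidentification of where the boundary difficulty actually sits. You treat boundary $\epsilon$-regularity and continuity up to $\de\tilde\Sigma$ as the main obstacle, and propose carrying the two-dimensional compensated-compactness machinery through the reflection. In the paper's setting, continuity up to $\de\tilde\Sigma$ is essentially free from the varifold side: the monotonicity formula and the quantization/density arguments of Sections~\ref{mono.sec} and~\ref{asympt.sec} (see \cref{ac} and \cref{full.stat}) already yield a continuous representative, without any H\'elein/Rivi\`ere-type boundary $\epsilon$-regularity. Moreover, once the weak mixed boundary-value system of Proposition~\ref{app} is in hand, Gehring's lemma plus the reflection already bootstrap to $W^{1,p}_{\mathrm{loc}}$ for all $p$ and hence to $C^\infty$, with no need for the critical (moving frame) regularity theory. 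So the proposal inverts the order of difficulty: it takes for granted the weak PDE derivation (which is the hard part) and invests effort in continuity (which is comparatively cheap here). As written, the proof has a genuine gap at the derivation of the weak Neumann condition, and the sketch of boundary $\epsilon$-regularity would also need the weak PDE as input in the first place.
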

	
	\begin{rmk}
		We stress that the limit (branched) immersion $\Phi$ is free boundary minimal in the sense that it meets the constraint $\bsubman$ orthogonally along $\de\tilde\Sigma$.
		However, there could be points $x$ in the interior $\operatorname{int}(\Sigma)=\Sigma\setminus\de\Sigma$ with
		$\Phi(x)\in\bsubman$---a possibility which cannot happen, e.g., for $(\bar B^3,S^2)$ (on the components where $\Phi$ is not constant); unlike the main result of \cite{lz1}, at such points the orthogonality is not guaranteed.
	\end{rmk}
	
	A simple corollary is, for instance, the following. Note that other min-max situations can be dealt with in the same way.
	
	\begin{corollary}\label{minmax}
		Given any collection $\mathcal{F}$ of compact subsets of the space of smooth immersions $(\Sigma,\de\Sigma)\to(\subman,\bsubman)$, assuming $\mathcal{F}$ to be stable for isotopies of this space, the min-max value
		\begin{align*}
			&\beta:=\inf_{A\in\mathcal{F}}\max_{\Phi\in A}\operatorname{area}(\Phi)
		\end{align*}
		is the sum of the areas of finitely many free boundary minimal (branched) immersions $\Phi_{(i)}:\Sigma_{(i)}\to\subman$, whose domains are connected and have $\chi(\Sigma_{(i)})\ge \chi(\Sigma)$ and $g(\Sigma_{(i)})\le g(\Sigma)$.
	\end{corollary}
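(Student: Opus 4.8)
The plan is a min-max argument for the relaxed energies $E_\sigma$, combined with Struwe's monotonicity trick in its refined (logarithmic) form, feeding the resulting almost-critical sequence into \cref{main.thm} and \cref{reg.thm.intro}. We may assume $\mathcal F\neq\emptyset$, so that $\beta\in[0,\infty)$ since $\operatorname{area}$ is continuous on each compact $A\in\mathcal F$. For $\sigma>0$ put $\beta_\sigma:=\inf_{A\in\mathcal F}\max_{\Phi\in A}E_\sigma(\Phi)$. Since $E_\sigma\ge\operatorname{area}$ we have $\beta_\sigma\ge\beta$, and since $\sigma\mapsto E_\sigma(\Phi)$ is nondecreasing so is $\sigma\mapsto\beta_\sigma$; moreover, as on a fixed $A$ the quantities $\operatorname{length}(\Phi|_{\de\Sigma})$ and $\int_\Sigma|\ff^\Phi|^4\operatorname{vol}_\Phi$ are bounded, letting $\sigma\to0$ gives $\limsup_\sigma\beta_\sigma\le\max_A\operatorname{area}$, whence $\beta_\sigma\to\beta$.

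Being monotone, $\sigma\mapsto\beta_\sigma$ is differentiable at a.e.\ $\sigma$, with $\int_0^1\beta'_\sigma\,d\sigma\le\beta_1<\infty$; since $\int_0^{1/2}\frac{d\sigma}{\sigma\log(1/\sigma)}=\infty$, the quantity $\sigma\log(1/\sigma)\beta'_\sigma$ cannot stay bounded away from $0$ as $\sigma\to0$, so we may pick $\sigma_k\to0$ at which $\beta'_{\sigma_k}$ exists and $\sigma_k\log(\sigma_k^{-1})\beta'_{\sigma_k}\to0$ (in particular $\beta'_{\sigma_k}=o(\sigma_k^{-1})$). For each $k$, differentiability at $\sigma_k$ lets us take $h=h(k)$ as small as we wish and then $A_k\in\mathcal F$ with $\max_{A_k}E_{\sigma_k+h}\le\beta_{\sigma_k+h}+h^3$; writing $\delta_k:=\max_{A_k}E_{\sigma_k}-\beta_{\sigma_k}$, comparing $E_{\sigma_k}$ with $E_{\sigma_k+h}$ on $A_k$ and using $(\sigma_k+h)^4-\sigma_k^4\ge4\sigma_k^3h$, every $\Phi\in A_k$ with $E_{\sigma_k}(\Phi)\ge\beta_{\sigma_k}-\delta_k$ satisfies $\operatorname{length}(\Phi|_{\de\Sigma})+4\sigma_k^3\int_\Sigma|\ff^\Phi|^4\operatorname{vol}_\Phi\le2\beta'_{\sigma_k}+o(1)$. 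Taking $h$ small enough (e.g.\ $h=\sigma_k^{12}$) also ensures $\delta_k\le\sigma_k^{10}$.

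To upgrade to $\sigma_k^5$-criticality we argue by contradiction: if for some $k$ no immersion $\Phi$ with $E_{\sigma_k}(\Phi)\in[\beta_{\sigma_k}-\delta_k,\beta_{\sigma_k}+\delta_k]$ had $\|dE_{\sigma_k}(\Phi)\|_\Phi\le\sigma_k^5$, then a pseudo-gradient flow for $E_{\sigma_k}$ on the Finsler manifold $\bman$ of $W^{2,4}$ immersions $(\Sigma,\de\Sigma)\to(\subman,\bsubman)$---which acts by isotopies of this space, so the deformed set stays in $\mathcal F$, and respects the constraint---would push $\max E_{\sigma_k}$ below $\beta_{\sigma_k}$ in the bounded time $O(\delta_k\sigma_k^{-5})$, contradicting the definition of $\beta_{\sigma_k}$. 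Hence some $\Phi_k$ in a bounded-time deformation of $A_k$ is $\sigma_k^5$-critical with $E_{\sigma_k}(\Phi_k)$ within $\delta_k$ of $\beta_{\sigma_k}$; running this deformation using only variations of the form $X(\Phi)$, for which the self-improving estimates of \cref{crit.sec,firstvar.sec} bound the first variation of $\sigma\operatorname{length}(\cdot|_{\de\Sigma})+\sigma^4\int|\ff|^4\operatorname{vol}$ by that same quantity, a Gronwall argument over the bounded flow time keeps $\operatorname{length}(\Phi_k|_{\de\Sigma})+4\sigma_k^3\int_\Sigma|\ff^{\Phi_k}|^4\operatorname{vol}_{\Phi_k}\le C\beta'_{\sigma_k}+o(1)$, and $\Phi_k$ may be taken smooth by mollification without spoiling this. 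Multiplying by $\sigma_k\log(\sigma_k^{-1})$ yields the entropy condition of \cref{main.thm}, and $\operatorname{area}(\Phi_k)\le E_{\sigma_k}(\Phi_k)\le\beta_{\sigma_k}+\delta_k$ is bounded.

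Finally we apply \cref{main.thm}: along a subsequence the induced varifolds converge to a parametrized free boundary stationary varifold $(\tilde\Sigma,\Phi,N)$, whose finitely many components $\Sigma_{(i)}$ on which $\Phi$ is non-constant (finiteness because the total mass is $\beta<\infty$ while each such component carries a nonzero free boundary stationary varifold, hence of definite mass by monotonicity) satisfy $\chi(\Sigma_{(i)})\ge\chi(\Sigma)$ and $g(\Sigma_{(i)})\le g(\Sigma)$. Since $\subman$ is compact, varifold convergence forces convergence of the total mass, so the limiting mass equals $\lim_k\operatorname{area}(\Phi_k)$; and because $\operatorname{area}(\Phi_k)=E_{\sigma_k}(\Phi_k)-\sigma_k\operatorname{length}(\Phi_k|_{\de\Sigma})-\sigma_k^4\int_\Sigma|\ff^{\Phi_k}|^4\operatorname{vol}_{\Phi_k}$ with the last two terms $O(\sigma_k\beta'_{\sigma_k})=o(1)$ and $E_{\sigma_k}(\Phi_k)\to\beta$, the limiting mass is exactly $\beta$. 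By \cref{reg.thm.intro} and \cref{multone.rmk}, on each non-constant component the multiplicity equals $1$ and $\Phi_{(i)}:=\Phi|_{\Sigma_{(i)}}$ is a free boundary branched minimal immersion, while constant components push forward to the zero varifold; therefore $\beta=\sum_i\operatorname{area}(\Phi_{(i)})$, as claimed. The genuinely hard analysis---the area lower bounds yielding no loss of mass, and the stationarity and regularity of the limit---is already packaged in \cref{main.thm} and \cref{reg.thm.intro}; within this corollary the delicate point is the third step, i.e.\ running the min-max deformation on $\bman$ while simultaneously retaining the boundary constraint and the logarithmic entropy bound, which crucially uses both the smallness of the energy excess $\delta_k$ and the self-improving control on the relaxing terms.
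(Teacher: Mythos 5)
Your overall route is exactly the paper's: relax to $E_\sigma$, note $\beta(\sigma)\to\beta$ by compactness of each $A\in\mathcal F$, produce smooth $\sigma_k^5$-critical maps satisfying the entropy condition \cref{entropy} via Struwe's monotonicity trick and a pseudo-gradient deformation, then apply \cref{main.thm} and \cref{reg.thm.intro} and use continuity of the mass to identify the limit mass with $\beta$. The paper delegates the middle step to \cref{struwe} (proved in \cite{riv.notes}), adding only the remark that the pseudo-gradient flow can be chosen to preserve the subset of smooth immersions; you instead unfold that proof, and two points of your unfolding are genuinely problematic.

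First, the mechanism you propose for retaining the entropy bound along the deformation---``running this deformation using only variations of the form $X(\Phi)$''---does not work as stated: a flow built only from ambient variations is not a pseudo-gradient for $E_{\sigma_k}$ on $\bman$ (at an immersion with self-intersections, and in general, the supremum of $dE_{\sigma_k}(\Phi)[X(\Phi)]$ over ambient fields $X$ can be much smaller than $\|dE_{\sigma_k}(\Phi)\|_\Phi$), so under the hypothesis $\|dE_{\sigma_k}\|_\Phi>\sigma_k^5$ you cannot guarantee the energy-decrease rate that makes the flow time $O(\delta_k\sigma_k^{-5})$; and a point that is merely critical against global ambient variations is not $\sigma_k^5$-critical in the sense required by \cref{main.thm}, whose proof uses localized variations $X(\Phi)\uno_U$. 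The restriction is in fact unnecessary: by the estimates in the proof of \cref{struwe.sat} (in particular \cref{ff.contrib} and the boundary term), for any $w$ with $\|w\|_\Phi\le 1$ the derivative of $\tfrac{d}{d\sigma}E_\sigma(\Phi_t)\big|_{\sigma_k}$ along the flow is bounded by $C(1+E_{\sigma_k}(\Phi_t))$, and since your flow time is $O(\sigma_k^5)$ the entropy quantity is essentially unchanged; but then the contradiction hypothesis must be formulated for points lying in the energy window \emph{and} satisfying the entropy bound, with the almost-critical point located along the flow of the high part of $A_k$---your write-up negates a statement without the entropy constraint and only afterwards grafts the Gronwall control on, so the logic needs rearranging. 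Second, and this is precisely the point the paper flags: $\mathcal F$ is stable only under isotopies of the space of \emph{smooth} immersions, whereas a pseudo-gradient flow on the $W^{2,4}$ Finsler manifold $\bman$ is a priori only an isotopy of $\bman$; your assertion that ``the deformed set stays in $\mathcal F$'' therefore requires the extra argument that the pseudo-gradient can be chosen to preserve smooth immersions, otherwise $\max_{F_T(A_k)}E_{\sigma_k}<\beta(\sigma_k)$ does not contradict the definition of $\beta(\sigma_k)$ as an infimum over $\mathcal F$. The remaining bookkeeping (choice of $\sigma_k$ with $\sigma_k\log(1/\sigma_k)\beta'(\sigma_k)\to0$, the difference-quotient entropy bound on the high part of $A_k$, smoothing the final point by density, and $\lim_k\operatorname{area}(\Phi_k)=\beta$) is correct and matches the paper.
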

	
	\begin{rmk}
		While we deal only with surfaces $\Sigma$, the proofs in the next two sections generalize immediately to closed $k$-dimensional domains, with the energy
		\begin{align*}
			&\Phi\mapsto \text{$k$-area}(\Phi)+\int_\Sigma|\ff^\Phi|^p\,\operatorname{vol}_\Phi
		\end{align*}
		for $W^{2,p}$ immersions $\Phi:\Sigma\to\subman$, with $p>k$.
		We get a stationary varifold in the limit, which is rectifiable since its density is bounded below, by \cref{mono.log}, \cref{mono.log.nobdry} and the arguments from \cref{quant.phi} (which carry over with obvious changes). 
		
		However, what seems so far out of reach (when $k>2$) is how to retain a \emph{parametrized} structure for the limit varifold.
	\end{rmk}
	
	\subsection{Organization of the paper}
	We conclude the introduction with a very brief description of the structure of the paper.
	\begin{itemize}
		\item In \cref{crit.sec} we show how to deduce \cref{minmax} from \cref{main.thm}, by introducing a Finsler manifold of maps and checking that it satisfies the conditions guaranteeing that Struwe's monotonicity trick applies;
		\item in \cref{firstvar.sec} we compute the first variation of $E_\sigma$ for special variations $X(\Phi)$,
		and use the resulting formula to show that the varifolds induced by the maps $\Phi_k$ converge, up to subsequences, to a free boundary stationary varifold;
		\item \cref{mono.sec} is devoted to the proof of the lower bound for the area mentioned earlier, in various forms;
		\item in \cref{asympt.sec} we show several structure results for the (weak) limit of the area measure that $\Phi_k$ induces on $\Sigma$ and we obtain \cref{main.thm}, under the assumption that $\Phi_k$ induces a constant conformal structure on $\Sigma$
		and ignoring possible concentration points for the area;
		\item in \cref{deg.sec} we remove the above assumption, studying carefully how to deal with all possible situations of degeneration of the conformal structure and describing how to recover the energy arising from concentration points, thus proving \cref{main.thm} in general;
		\item finally, \cref{reg.sec} is devoted to the regularity part, namely the proof of \cref{reg.thm.intro}.
	\end{itemize}

	\section{Almost critical points for \texorpdfstring{$E_\sigma$}{the energy}}\label{crit.sec}
	Let $(\subman^m,g)$ be a closed Riemannian manifold and $\bsubman^n\subset\subman$ a closed embedded submanifold, with $1\le n<m$.
	For simplicity, we will assume without loss of generality that $\subman$ is isometrically embedded in some Euclidean space $\R^\envdim$,
	although the proofs could be easily modified so as to avoid the Nash embedding theorem.
	
	Also, let $\Sigma$ be a compact surface, possibly with boundary $\de\Sigma$.
	In this paper we will study the following relaxation of the area functional: given an immersion $\Phi:\Sigma\to\subman$, we let
	\begin{align}\label{e.sigma}\begin{aligned}
		E_\sigma(\Phi)
		&:=\operatorname{area}(\Phi)
		+\sigma\operatorname{length}(\Phi|_{\de\Sigma})
		+\sigma^4\int_\Sigma|\ff^\Phi|^4\,\operatorname{vol}_{\Phi} \\
		&\phantom{:}=\int_\Sigma \operatorname{vol}_{\Phi}
		+\sigma\int_{\de\Sigma}\operatorname{vol}_{\Phi|_{\de\Sigma}}
		+\sigma^4\int_\Sigma|\ff^\Phi|^4\,\operatorname{vol}_{\Phi}.
	\end{aligned}\end{align}
	Here $\operatorname{vol}_{\Phi}$ and $\operatorname{vol}_{\Phi|_{\de\Sigma}}$ are the (two- and one-dimensional) volume forms of the induced metric $\Phi^*g$ on $\Sigma$ and $\de\Sigma$, which we will often identify with the corresponding measures.
	In the last term, $\ff^\Phi$ denotes the second fundamental form of $\Phi$.
	
	In order to construct almost critical maps for $E_\sigma$, with the constraint $\Phi(\de\Sigma)\subseteq\bsubman$, we introduce the topological space
	\begin{align*}
		&\bman:=\{\Phi\in W^{2,4}(\Sigma,\subman):\Phi\text{ is an immersion and }\Phi(\de\Sigma)\subseteq\bsubman\},
	\end{align*}
	with the topology induced from $W^{2,4}(\Sigma,\subman)$, in turn induced from $W^{2,4}(\Sigma,\R^\envdim)$.
	Recall that $W^{2,4}(\Sigma,\R^\envdim)$ embeds into $C^1(\Sigma,\R^\envdim)$, so that the definition makes sense and $\bman$ is canonically a Banach manifold.
	
	For each $\Phi\in\bman$, the tangent space $T_\Phi\bman$ identifies with the Banach space of $W^{2,4}$ sections $s:\Sigma\to T\subman$ of the pullback bundle $\Phi^*T\subman$, with $s\in T\bsubman$ along $\de\Sigma$.
	
	Given $\Phi\in\bman$, we call $g_\Phi:=\Phi^*g$ the metric that $\Phi$ induces on $\Sigma$.
	We endow $T_\Phi\bman$ with the following norm: we let
	\begin{align*}
		&\|s\|_{\Phi}:=\|s\|_{L^\infty}+\|\nabla s\|_{L^\infty}+\|\nabla^2 s\|_{L^4},
	\end{align*}
	where $\nabla$ is the pullback connection on $\Phi^*T\subman$
	and the norms are with respect to the metrics $g$ on $T\subman$ and $g_\Phi$ on $T^*\Sigma$.
	It is straightforward to check that this choice satisfies the requirements to be a Finsler structure on $\bman$ (see \cite[p.~54]{ghou} for the definition).
	
	\begin{proposition}
		The Finsler manifold $\bman$ is complete.
	\end{proposition}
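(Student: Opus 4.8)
The plan is to verify the two defining properties of completeness for a Finsler manifold (in the sense of \cite[p.~54]{ghou}): that the Finsler distance $d_\bman$ induces the topology of $\bman$, and that $(\bman, d_\bman)$ is a complete metric space. For the first, one checks that near a fixed $\Phi_0$ the norm $\|\cdot\|_\Phi$ is comparable, uniformly for $\Phi$ in a $C^1$-neighborhood, to the fixed ambient norm $\|\cdot\|_{W^{2,4}(\Sigma,\R^\envdim)}$ restricted to sections of $\Phi^*T\subman$; this is because $g_\Phi = \Phi^*g$ depends continuously (in $C^0$, hence with uniformly bounded ellipticity constants) on $\Phi \in C^1$, and the immersion condition is open. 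Consequently the identity map between $(\bman,d_\bman)$ and $\bman$ with its Banach-manifold topology is a local homeomorphism, which is enough.

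For completeness itself, the key step is: given a $d_\bman$-Cauchy sequence $\Phi_j$, produce a limit in $\bman$. First, since the $\|\cdot\|_\Phi$-norm dominates the $C^1$-norm of variations along curves (the $L^\infty$ parts of $\|s\|_\Phi$ control $s$ and $\nabla s$, and $\nabla$ differs from the flat connection on $\R^\envdim$ by a bounded zeroth-order term depending on the second fundamental form of $\subman\subset\R^\envdim$, which is fixed), one gets that $\Phi_j$ is Cauchy in $C^1(\Sigma,\R^\envdim)$, hence converges in $C^1$ to some $\Phi_\infty\in C^1(\Sigma,\R^\envdim)$ with image in $\subman$ and $\Phi_\infty(\de\Sigma)\subseteq\bsubman$ (both $\subman$ and $\bsubman$ are closed, hence these constraints pass to the $C^1$-limit). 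Since immersions are $C^1$-open, $\Phi_\infty$ is an immersion. It remains to upgrade to $W^{2,4}$ and to show $d_\bman$-convergence. For the $W^{2,4}$-bound: the $\nabla^2$-part of $\|\cdot\|_\Phi$ controls $\|\nabla^2 s\|_{L^4}$ with respect to $g_{\Phi_j}$, and since the $g_{\Phi_j}$ are uniformly equivalent (by the $C^1$-convergence) to a fixed metric, one gets a uniform bound on $\|\Phi_j\|_{W^{2,4}}$ along the connecting curves; thus $\Phi_\infty\in W^{2,4}$ and, after passing to a subsequence, $\Phi_j\weakto\Phi_\infty$ weakly in $W^{2,4}$.

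The last step is to show $d_\bman(\Phi_j,\Phi_\infty)\to 0$, not merely $d_\bman$-boundedness: here I would use that $d_\bman(\Phi_j,\Phi_\infty)$ is bounded above by the $\|\cdot\|$-length of a short path from $\Phi_j$ to $\Phi_\infty$, e.g.\ the straight segment $t\mapsto \pi_\subman((1-t)\Phi_j + t\Phi_\infty)$ (composed with the nearest-point projection $\pi_\subman$ onto $\subman$, defined near $\subman$, adapted near $\bsubman$ so as to preserve the boundary constraint), whose velocity in $\|\cdot\|_{\Phi_j}$-norm is controlled by $\|\Phi_j - \Phi_\infty\|_{W^{2,4}}$ times a constant depending on the fixed geometry of $\subman,\bsubman$ and on a uniform $C^1$-bound. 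Since $\Phi_j\to\Phi_\infty$ in $W^{2,4}$ (strong convergence, which one gets because the Cauchy property in $d_\bman$ gives a genuine $W^{2,4}$-Cauchy sequence once the metrics are uniformly equivalent, not merely a weakly convergent one), this length tends to $0$. I expect the main obstacle to be the bookkeeping around the nearest-point projection near $\bsubman$: one must choose the retraction so that segments between maps sending $\de\Sigma$ into $\bsubman$ stay admissible (their boundary in $\bsubman$) while keeping the $W^{2,4}$-estimates uniform; this is a standard tubular-neighborhood construction but needs the submanifold $\bsubman$ and its second fundamental form in $\subman$, all of which are fixed, so the constants are under control.
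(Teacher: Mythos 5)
There is a genuine gap, and it sits exactly where the real work of the paper's proof lies. Your argument rests on two assertions: (i) that $\|\cdot\|_\Phi$ dominates a fixed $C^1(\Sigma,\R^\envdim)$-norm with a constant independent of $\Phi$, and (ii) that the $C^1$-limit $\Phi_\infty$ is an immersion ``since immersions are $C^1$-open''. Neither is justified. For (i), the terms $\|\nabla s\|_{L^\infty}$ and $\|\nabla^2 s\|_{L^4}$ in $\|s\|_\Phi$ are measured with $g_\Phi$ on $T^*\Sigma$; to convert them into bounds with respect to a fixed background metric $g_0$ you need an upper bound $g_{\Phi_t}\le C^2 g_0$ \emph{uniformly along the connecting paths}, which is not given a priori (a $d_\bman$-Cauchy sequence only controls the lengths of the paths in the moving norms). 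For (ii), openness of the set of immersions in $C^1$ does not prevent a sequence of immersions from converging in $C^1$ to a non-immersion; what is needed is a \emph{lower} bound $g_{\Phi_t}\ge C^{-2}g_0$ along the paths. Both bounds are precisely the content of the paper's key step: writing $w_t=\frac{d\Phi_t}{dt}$ along a finite-length path one has $|\frac{d}{dt}g_t(v,v)|=2|\ang{d\Phi_t[v],\nabla_v w_t}|\le 2g_t(v,v)\|w_t\|_{\Phi_t}$, so the $t$-derivative of $\log g_t(v,v)$ is integrable and the metrics stay two-sidedly comparable to $g_0$. Without this Gronwall-type estimate your chain of deductions is circular: you invoke uniform equivalence of the $g_{\Phi_j}$ ``by the $C^1$-convergence'', but the $C^1$-convergence (and the upgrade of $d_\bman$-Cauchyness to genuine $W^{2,4}$-Cauchyness) already requires that equivalence, and the immersion property of the limit requires the lower bound.

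Two smaller remarks. Only the $C^0$ part of your first step is automatic, since $\|s\|_{L^\infty}$ does not involve $g_\Phi$; this is also how the paper starts. And your final step (projecting the straight segment onto $\subman$, adapted near $\bsubman$, to estimate $d_\bman(\Phi_j,\Phi_\infty)$) is unnecessary: since the Finsler distance induces the $W^{2,4}$ manifold topology, convergence of $\Phi_t$ to $\Phi_\infty$ in $W^{2,4}$ already gives $d_\bman$-convergence, which is why the paper stops once it has $W^{2,4}$ convergence along the path. If you insert the metric-comparability estimate, the rest of your outline can be completed, but as written the central estimate of the proof is missing.
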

	
	Recall that the distance between two elements $\Phi_1,\Phi_2\in\bman$ (in the same connected component) is defined to be the infimum of
	$\int_0^1\|\dot\gamma(t)\|_{\gamma(t)}\,dt$, as $\gamma:[0,1]\to\bman$ ranges among all piecewise $C^1$ curves from $\Phi_1$ to $\Phi_2$. It is a consequence of the Finsler structure axioms that it induces the original topology on $\bman$.

	\begin{proof}
		Let $(\Phi_k)_{k\ge 0}$ be a Cauchy sequence. Up to subsequences, we can assume that $\sum_k \operatorname{dist}(\Phi_k,\Phi_{k+1})<\infty$.
		Hence, by definition we can find a piecewise $C^1$ curve $\Phi:[0,\infty)\to\bman$ of finite length, with $\Phi(k)=\Phi_k$ for every $k\in\N$. We will use the notation $\Phi_t$ in place of $\Phi(t)$. It suffices to show that $\Phi_t$ converges in $W^{2,4}$ as $t\to\infty$. With a perturbation argument, we can assume that $\Phi_t(x)$ is smooth in the couple $(x,t)$.
		
		Let $w_t:=\frac{d\Phi_t}{dt}$. Since $w_t$ is bounded pointwise by the summable (in $t$) quantity $\|w_t\|_{\Phi_t}$, we know that $\Phi_t$ converges in $C^0$ to a limit $\Phi_\infty$.
		
		Let $g_t:=g_{\Phi_t}$ be the metric induced by the immersion $\Phi_t$ on $\Sigma$. For a fixed $v\in T\Sigma$ we have
		\begin{align*}
			&\frac{d}{dt}g_t(v,v)
			=\frac{d}{dt}|d\Phi_t[v]|^2
			=2\ang{d\Phi_t[v],\nabla_v w_t}
		\end{align*}
		and, since $|\nabla_v w_t|
		\le\|w_t\|_{\Phi_t}|v|_{g_t}$, we deduce that
		\begin{align*}
			&\Big|\frac{d}{dt}g_t(v,v)\Big|\le 2g_t(v,v)\|w_t\|_{\Phi_t}.
		\end{align*}
		Hence, for $v\neq 0$, the time derivative of $\log g_t(v,v)$ is bounded in $L^1$ on $[0,\infty)$. Thus there exists a constant $C>0$ such that
		\begin{align*}
			&C^{-2}g_0(v,v)\le g_t(v,v)\le C^2g_0(v,v)
		\end{align*}
		for all $t\ge 0$ and all $v\in T\Sigma$. As a consequence, for any $x\in\Sigma$ and any $v\in T_x\Sigma$
		\begin{align*}
			&|\nabla_{\de_t}(d\Phi_t[v])|
			\le |\nabla w_t|_{g_t}|v|_{g_t}
			\le C\|w_t\|_{\Phi_t}|v|_{g_0},
		\end{align*}
		with $\nabla_{\de_t}$ being the covariant derivative along the curve $\Phi_t(x)$. Together with the $C^0$ convergence $\Phi_t\to\Phi_\infty$, this implies that actually $\Phi_t\to\Phi_\infty$ in $C^1$. Finally, given smooth vector fields $X,Y$ on $\Sigma$,
		\begin{align*}
			&\nabla_{\de_t}\nabla_{X}(d\Phi_t[Y])
			=\nabla_X\nabla_Y w_t+\operatorname{Rm}(d\Phi_t[X],w_t)(d\Phi_t[Y])
		\end{align*}
		where $\riem(V,W)Z=\nabla^2_{W,V}Z-\nabla^2_{V,W}Z$ is the Riemann tensor of $\subman$. Again, thanks to the comparability between $g_0$ and $g_t$, the right-hand side is bounded in $L^4$ by $\|w_t\|_{\Phi_t}$, up to a multiplicative constant depending only on $X,Y$. This implies the convergence $\Phi_t\to\Phi_\infty$ in $W^{2,4}$.
	\end{proof}
	
	The following variational result, essentially due to Struwe, is proved in \cite{riv.notes}. Before stating it, we give a notion of \emph{admissible family}.
	
	\begin{definition}
		Given a Banach manifold $\bman$, a nonempty family $\mathcal{F}$ of subsets of $\bman$ is said to be \emph{admissible}
		if, for any continuous deformation $F:[0,1]\times\bman\to\bman$ with $F_0=\operatorname{id}_{\bman}$ and $F_t$ a homeomorphism for all $0\le t\le 1$, we have $F_1(A)\in\mathcal{F}$ for all $A\in\mathcal{F}$ (where $F_t:=F(t,\cdot)$).
	\end{definition}
	
	\begin{proposition}\label{struwe}
		Assume $(E_\sigma)_{\sigma\ge 0}$ is a family of $C^1$ functionals on a complete Finsler manifold $\bman$, with $E_\sigma(x)$ differentiable in $\sigma$ and $\sigma\mapsto E_\sigma(x)$, $\sigma\mapsto\frac{d}{d\sigma}E_\sigma(x)$ both increasing in $\sigma$, for every $x\in\bman$. Assume also that
		\begin{align}\label{struwe.tech}
			&\|dE_{\sigma_j}(x_j)-dE_\sigma(x_j)\|\to 0
		\end{align}
		whenever
		$1\ge\sigma_j\ge\sigma>0$, $\sigma_j\to\sigma$ and $\limsup_{j\to\infty}E_\sigma(x_j)<\infty$.
		
		Then, for any admissible family $\mathcal{F}$, defining the min-max values
		\begin{align*}
			&\beta(\sigma):=\inf_{A\in\mathcal{F}}\sup_{x\in A}E_\sigma(x),
		\end{align*}
		there exist sequences $(\sigma_k)\subseteq (0,1)$ and $(x_k)\subseteq\bman$, with $\sigma_k\to 0$, such that
		\begin{align*}
			&E_{\sigma_k}(x_k)-\beta(\sigma_k)\to 0,
			\quad\|dE_{\sigma_k}(x_k)\|<f(\sigma_k),
			\quad\sigma_k\log(1/\sigma_k) \frac{d}{d\sigma}E_{\sigma}(x_k)\Big|_{\sigma_k}\to 0,
		\end{align*}
		where $f:(0,\infty)\to(0,\infty)$ is any function fixed in advance.
	\end{proposition}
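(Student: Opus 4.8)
The plan is to run Struwe's monotonicity argument. Since every $\sigma\mapsto E_\sigma(x)$ is increasing, $\beta$ is nondecreasing, hence differentiable at a.e.\ $\sigma\in(0,1)$ with $\beta'$ locally integrable near $0$; we may assume $\beta$ is finite near $0$, as otherwise there is nothing to prove there. Since moreover $\sigma\mapsto\frac{d}{d\sigma}E_\sigma(x)$ is increasing, each $\sigma\mapsto E_\sigma(x)$ is convex, so for $\sigma'>\sigma$ one has $\frac{d}{d\sigma}E_\sigma(x)\le\frac{E_{\sigma'}(x)-E_\sigma(x)}{\sigma'-\sigma}$ --- the device that will turn near-optimality at a slightly larger parameter into a derivative bound. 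From the standard inequality $\int_a^b\beta'\le\beta(b)-\beta(a)$ for nondecreasing functions one sees that $\beta'(\sigma)\gtrsim(\sigma\log(1/\sigma))^{-1}$ for all small $\sigma$ is impossible, since the right-hand side has primitive $\sim\log\log(1/\sigma)$ and is not integrable at $0$; therefore $\liminf_{\sigma\to0^+}\sigma\log(1/\sigma)\beta'(\sigma)=0$, and (noting $\frac{d}{d\sigma}E_\sigma\ge0$ since $E_\sigma(x)$ is increasing in $\sigma$) it suffices to fix once and for all a sequence $\sigma_k\downarrow0$ of differentiability points of $\beta$ with $\sigma_k\log(1/\sigma_k)\beta'(\sigma_k)\to0$, and then to produce for each $k$ a point $x_k$ with $|E_{\sigma_k}(x_k)-\beta(\sigma_k)|<1/k$, $\|dE_{\sigma_k}(x_k)\|<f(\sigma_k)$ and $\frac{d}{d\sigma}E_\sigma(x_k)\big|_{\sigma_k}\le\beta'(\sigma_k)+3$.

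To build such an $x_k$, I fix $\sigma_0:=\sigma_k$ and a slightly larger $\sigma'$, and choose $A'\in\mathcal{F}$ with $\sup_{A'}E_{\sigma'}\le\beta(\sigma')+(\sigma'-\sigma_0)^2$; then $\sup_{A'}E_{\sigma_0}\le\sup_{A'}E_{\sigma'}\le\beta(\sigma')+(\sigma'-\sigma_0)^2$, which tends to $\beta(\sigma_0)$ as $\sigma'\to\sigma_0$. The elementary observation is that every $x\in A'$ with $E_{\sigma_0}(x)\ge\beta(\sigma_0)-(\sigma'-\sigma_0)$ already satisfies, by the convexity bound above together with $E_{\sigma'}(x)\le\beta(\sigma')+(\sigma'-\sigma_0)^2$,
\begin{align*}
	&\frac{d}{d\sigma}E_\sigma(x)\Big|_{\sigma_0}\le\frac{E_{\sigma'}(x)-E_{\sigma_0}(x)}{\sigma'-\sigma_0}\le\frac{\beta(\sigma')-\beta(\sigma_0)}{\sigma'-\sigma_0}+(\sigma'-\sigma_0)+1\le\beta'(\sigma_0)+2
\end{align*}
once $\sigma'$ is close enough to $\sigma_0$. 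In other words, on the part of $A'$ that is near the top for $E_{\sigma_0}$ the $\sigma$-derivative is automatically controlled, and this control is recorded by the inequality $\frac{E_{\sigma'}(\cdot)-E_{\sigma_0}(\cdot)}{\sigma'-\sigma_0}\le\beta'(\sigma_0)+2$ --- a condition on the \emph{continuous} functions $E_{\sigma_0},E_{\sigma'}$ that can be used to cut off a vector field.

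The heart of the matter is then a deformation argument, by contradiction: suppose no $x\in\bman$ with $E_{\sigma_0}(x)$ in a thin energy slab straddling $\beta(\sigma_0)$, of height $\sim\sigma'-\sigma_0$ and large enough to contain $A'$, and with $\frac{E_{\sigma'}(x)-E_{\sigma_0}(x)}{\sigma'-\sigma_0}\le\beta'(\sigma_0)+3$, has $\|dE_{\sigma_0}(x)\|<\delta$, for a prescribed small $\delta<f(\sigma_0)$. Since $E_{\sigma_0}$ is $C^1$ on the complete Finsler manifold $\bman$, it carries a locally Lipschitz (suitably truncated) pseudo-gradient vector field; cutting it off by Lipschitz functions of the continuous quantities $E_{\sigma_0}$ and $\frac{E_{\sigma'}-E_{\sigma_0}}{\sigma'-\sigma_0}$ so as to be supported near that region, and integrating its negative flow, I obtain a deformation whose time-one map $F_1$ is a homeomorphism isotopic to $\operatorname{id}_{\bman}$, so $F_1(A')\in\mathcal{F}$ by admissibility. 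Along the flow $E_{\sigma_0}$ is nonincreasing, and by the observation of the previous paragraph the points of $A'$ with $E_{\sigma_0}\ge\beta(\sigma_0)-(\sigma'-\sigma_0)$ lie in the cut-off region, hence are carried out of the slab through its bottom --- and thus below $\beta(\sigma_0)$ --- while the remaining points of $A'$ start already below $\beta(\sigma_0)$ and do not move up. Therefore $\sup_{F_1(A')}E_{\sigma_0}<\beta(\sigma_0)$, contradicting the definition of $\beta(\sigma_0)$ as an infimum. Hence a point $x=x(\sigma_0,\sigma',\delta)$ exists with $\|dE_{\sigma_0}(x)\|<\delta<f(\sigma_0)$, $|E_{\sigma_0}(x)-\beta(\sigma_0)|\le\sigma'-\sigma_0$, and $\frac{d}{d\sigma}E_\sigma(x)\big|_{\sigma_0}\le\frac{E_{\sigma'}(x)-E_{\sigma_0}(x)}{\sigma'-\sigma_0}\le\beta'(\sigma_0)+3$ (the last inequality because $x$ lies in the cut-off region); taking $\sigma_0=\sigma_k$, then $\delta\to0$ and $\sigma'-\sigma_0<1/k$, yields $x_k$, and the reduction of the first paragraph concludes.

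The step I expect to be the main obstacle is this last deformation, precisely in keeping the flow inside the region where the pseudo-gradient lower bound $\delta$ holds. The cut-off must involve the quotient $\frac{E_{\sigma'}-E_{\sigma_0}}{\sigma'-\sigma_0}$ --- in order to retain the $\sigma$-derivative bound on the extracted point --- yet this quotient drifts along the $E_{\sigma_0}$-flow; bounding that drift is exactly the role of hypothesis \eqref{struwe.tech}. In its equivalent uniform form (for all $\varepsilon,M>0$ there is $\eta>0$ such that $\sigma_0\le\sigma'\le\sigma_0+\eta$ and $E_{\sigma_0}(x)\le M$ imply $\|dE_{\sigma'}(x)-dE_{\sigma_0}(x)\|<\varepsilon$) it controls $\frac{d}{dt}(E_{\sigma'}-E_{\sigma_0})$ along the flow, so that after first fixing $\delta$ and then choosing $\sigma'$ sufficiently close to $\sigma_0$ the quotient stays $\le\beta'(\sigma_0)+3$ long enough for $E_{\sigma_0}$ to be pushed through the slab. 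Matching all the scales ($\delta$, $\sigma'-\sigma_0$, the slab height, the flow time, and the smallness provided by \eqref{struwe.tech}) is the only genuinely technical part; the monotonicity of $\beta$, the convexity bound, the $\log\log$ estimate, and the use of admissibility are all soft.
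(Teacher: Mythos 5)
Your argument is correct and is essentially the same proof the paper relies on: the paper does not prove \cref{struwe} itself but cites Rivière's notes, and your outline reproduces that standard monotonicity-trick argument (monotone, a.e.\ differentiable $\beta$ with $\liminf_{\sigma\to 0}\sigma\log(1/\sigma)\beta'(\sigma)=0$, the convexity bound converting near-optimality of a family for $E_{\sigma'}$ into the $\sigma$-derivative bound on the high-$E_{\sigma_0}$ points, and a cut-off pseudo-gradient deformation combined with admissibility), including the adaptation noted in the paper's remark that, absent Palais--Smale, one only extracts points with $\|dE_{\sigma_k}(x_k)\|<f(\sigma_k)$ rather than exact critical points. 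The one step you leave schematic --- invoking \cref{struwe.tech} in its uniform form to bound the drift of the quotient $\frac{E_{\sigma'}-E_{\sigma_0}}{\sigma'-\sigma_0}$ along the flow, with $\delta$ fixed before $\sigma'$ is taken close to $\sigma_0$ --- is exactly how this is handled in the cited proof, so it is routine bookkeeping rather than a gap.
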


	This statement is quite robust and can be adapted to other kinds of min-max problems, where one replaces admissible families with other notions.

	\begin{rmk}
		Actually, in \cite{riv.notes} the functional $E_\sigma$ is assumed to be Palais--Smale, and the second conclusion becomes $dE_{\sigma_k}(x_k)=0$. Without this hypothesis, we can still find almost critical points $x_k$ for $E_{\sigma_k}$,
		in the sense that we can require $\|dE_{\sigma_k}(x_k)\|$ to be as small as we want, with the same proof.
	\end{rmk}
	
	\begin{proposition}\label{struwe.sat}
		The functionals $(E_\sigma)_{\sigma\ge 0}$ previously defined satisfy the assumptions of \cref{struwe}.
	\end{proposition}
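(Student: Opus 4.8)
The plan is to write $E_\sigma(\Phi)=A(\Phi)+\sigma\,L(\Phi)+\sigma^4 F(\Phi)$, where $A(\Phi):=\operatorname{area}(\Phi)$, $L(\Phi):=\operatorname{length}(\Phi|_{\de\Sigma})$ and $F(\Phi):=\int_\Sigma|\ff^\Phi|^4\operatorname{vol}_\Phi$, and to check the hypotheses of \cref{struwe} one at a time; completeness of $\bman$ has just been proved. The functionals $A$ and $L$ depend only on the $1$-jet of $\Phi$ — which lies in $C^0$ via the embedding $W^{2,4}(\Sigma,\R^\envdim)\hookrightarrow C^1$ — through smooth expressions, hence are of class $C^1$. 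For $F$, a direct computation of the first variation shows that, for $w\in T_\Phi\bman$, $dF(\Phi)[w]$ is a sum of integrals over $\Sigma$ whose integrands pair one of $\nabla^2 w$, $\nabla w$, $w$ against a tensor polynomial in $\ff^\Phi$ and in the Riemann tensor $\riem$ of $\subman$ (bounded, since $\subman$ is compact) — this is because a variation $\partial_t\Phi=w$ induces $\partial_t\ff^\Phi\sim\nabla^2 w+\ff^\Phi\ast\nabla w+\riem(d\Phi,w)\,d\Phi$; continuity of $\Phi\mapsto dF(\Phi)$ follows the same way, so $F\in C^1(\bman)$ and hence $E_\sigma\in C^1(\bman)$ for every $\sigma\ge 0$.

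Since $\Sigma$, hence $\de\Sigma$, is fixed, $E_\sigma(\Phi)=A(\Phi)+L(\Phi)\,\sigma+F(\Phi)\,\sigma^4$ is, for each $\Phi$, a polynomial in $\sigma$ with nonnegative coefficients $A(\Phi),L(\Phi),F(\Phi)$. In particular $\sigma\mapsto E_\sigma(\Phi)$ is smooth (so differentiable) in $\sigma$ with $\frac{d}{d\sigma}E_\sigma(\Phi)=L(\Phi)+4\sigma^3 F(\Phi)$, and both $\sigma\mapsto E_\sigma(\Phi)$ and $\sigma\mapsto\frac{d}{d\sigma}E_\sigma(\Phi)$ are nondecreasing on $[0,\infty)$, the latter because $F(\Phi)\ge 0$ and $\sigma\mapsto 4\sigma^3$ is nondecreasing there.

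It remains to check the continuity condition \eqref{struwe.tech}. Since $dE_{\sigma_j}(\Phi)-dE_\sigma(\Phi)=(\sigma_j-\sigma)\,dL(\Phi)+(\sigma_j^4-\sigma^4)\,dF(\Phi)$, it suffices to bound $\|dL(\Phi)\|$ and $\|dF(\Phi)\|$ by quantities which stay bounded under the hypothesis. For the length, $dL(\Phi)[w]=\int_{\de\Sigma}\ang{\nabla_\tau w,\tau}$ with $\tau$ the $g_\Phi$-unit tangent to $\de\Sigma$, so $|dL(\Phi)[w]|\le\|\nabla w\|_{L^\infty}L(\Phi)\le\|w\|_\Phi L(\Phi)$, i.e. $\|dL(\Phi)\|\le L(\Phi)$; and the first variation of $F$ together with Hölder's inequality gives $\|dF(\Phi)\|\le C\,(1+\operatorname{area}(\Phi)+F(\Phi))$ for some $C=C(\subman,g,\Sigma)$. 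Hence, if $1\ge\sigma_j\ge\sigma>0$, $\sigma_j\to\sigma$ and $\limsup_j E_\sigma(x_j)<\infty$, then $\operatorname{area}(x_j)$, $\sigma\,L(x_j)$ and $\sigma^4 F(x_j)$ are all $\le E_\sigma(x_j)$, hence bounded, so $\|dL(x_j)\|$ and $\|dF(x_j)\|$ are bounded and
\[
\|dE_{\sigma_j}(x_j)-dE_\sigma(x_j)\|\le|\sigma_j-\sigma|\,\|dL(x_j)\|+|\sigma_j^4-\sigma^4|\,\|dF(x_j)\|\longrightarrow 0 .
\]

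The only part needing care is the bound $\|dF(\Phi)\|\lesssim 1+\operatorname{area}(\Phi)+F(\Phi)$. Written in terms of $w,\nabla w,\nabla^2 w$ as above, the integrand of $dF(\Phi)[w]$ is pointwise $\lesssim|\ff^\Phi|^4|\nabla w|+|\ff^\Phi|^3|\nabla^2 w|+|\ff^\Phi|^3|w|$; Hölder's inequality against $\|\nabla w\|_{L^\infty}$, $\|\nabla^2 w\|_{L^4}$, $\|w\|_{L^\infty}$, together with $\int_\Sigma|\ff^\Phi|^3\operatorname{vol}_\Phi\le F(\Phi)^{3/4}\operatorname{area}(\Phi)^{1/4}$, yields the claim — and it is precisely here that the form of the Finsler norm (controlling $\|\nabla w\|_{L^\infty}$, rather than, say, intrinsic derivatives of the normal part of $w$ separately) is used. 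Everything else is bookkeeping.
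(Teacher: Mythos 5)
Your argument is correct and takes essentially the same route as the paper's: both hinge on the same pointwise bound \cref{ff.contrib} for the variation of $|\ff^\Phi|^4$, followed by H\"older's inequality. Your decomposition $E_\sigma=A+\sigma L+\sigma^4 F$ with separate control of $\|dL(\Phi)\|$ and $\|dF(\Phi)\|$ is simply a cleaner bookkeeping of the estimate the paper obtains directly for $|dE_{\sigma'}(\Phi)[w]-dE_\sigma(\Phi)[w]|$.
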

	
	Before proving this fact, we make an important observation.
	
	\begin{proposition}\label{intrins}
		For $X,Y$ vector fields on $\Sigma$ we have $(\nabla d\Phi)(X,Y)=\ff^\Phi(\Phi_*X,\Phi_*Y)$.
	\end{proposition}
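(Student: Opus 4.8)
The plan is to compare two natural second-order objects attached to the immersion $\Phi$: the Hessian $\nabla d\Phi$ of the map $\Phi$ (using the pullback connection on $\Phi^*T\subman$ and the Levi-Civita connection of $g_\Phi$ on $\Sigma$) and the second fundamental form $\ff^\Phi$, which measures the normal component of the ambient covariant derivative of a tangent vector field along $\Phi$. The key point is that the metric on $\Sigma$ is precisely $g_\Phi=\Phi^*g$, which is what forces the tangential part of $\nabla d\Phi$ to vanish: $\nabla d\Phi$ is automatically normal-valued, hence equals its own normal projection, which is the Gauss-formula definition of $\ff^\Phi$.

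Concretely, I would argue as follows. Fix vector fields $X,Y$ on $\Sigma$ and a point $x$; work in coordinates or just unravel the definition. Write $\nabla^{\subman}$ for the ambient connection and $\nabla^\Sigma$ for the Levi-Civita connection of $g_\Phi$. By the definition of the pullback connection and the Hessian of a map,
\begin{align*}
	(\nabla d\Phi)(X,Y)=\nabla^{\subman}_{\Phi_*X}(\Phi_*Y)-\Phi_*(\nabla^\Sigma_X Y).
\end{align*}
The Gauss decomposition splits $\nabla^{\subman}_{\Phi_*X}(\Phi_*Y)$ into its component tangent to $\Phi_*(T\Sigma)$ and its normal component $\ff^\Phi(\Phi_*X,\Phi_*Y)$. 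So it remains to check that the tangential component equals $\Phi_*(\nabla^\Sigma_X Y)$. This is exactly the statement that the tangential part of the ambient connection, pulled back, is a metric and torsion-free connection for $g_\Phi$: torsion-freeness is immediate from $[\Phi_*X,\Phi_*Y]=\Phi_*[X,Y]$ (since $\Phi$ is an immersion and the ambient connection is torsion-free), and compatibility with $g_\Phi$ follows by differentiating $g_\Phi(Y,Z)=g(\Phi_*Y,\Phi_*Z)$ and using that $g$ is parallel for $\nabla^{\subman}$ together with the fact that projecting onto the tangent bundle does not affect inner products of tangent vectors. By uniqueness of the Levi-Civita connection, the tangential part must be $\Phi_*(\nabla^\Sigma_{(\cdot)}(\cdot))$, and the claim follows.

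I do not expect a genuine obstacle here — the statement is essentially the Gauss formula combined with the uniqueness of the Levi-Civita connection, and the only thing to be careful about is bookkeeping of which connection acts where (ambient versus domain, and the pullback connection interpolating between them). The one mildly delicate point is that $\Phi$ need only be a $W^{2,4}$ immersion, so strictly one should note that $\Phi\in C^1$ with $\nabla^2\Phi\in L^4$ makes all of the above identities valid almost everywhere (and as distributions), which is all that is needed for the later applications; alternatively one proves it for smooth $\Phi$ and uses density. Since the paper works with smooth representatives in the asymptotic analysis anyway, I would just state and prove it for smooth $\Phi$ and remark that it extends by approximation.
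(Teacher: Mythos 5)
Your argument is correct and follows essentially the same route as the paper: both write $(\nabla d\Phi)(X,Y)=\nabla_X(\Phi_*Y)-\Phi_*\nabla^\Sigma_X Y$ and identify $\Phi_*\nabla^\Sigma_X Y$ with the tangential projection of $\nabla_X(\Phi_*Y)$ via the Gauss formula, using that $\Phi$ is an isometry onto the immersed surface. You spell out the uniqueness-of-Levi-Civita step explicitly where the paper simply invokes it, but there is no substantive difference.
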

	
	\begin{proof}
		The left-hand side equals $\nabla_X(\Phi_*Y)-\Phi_*\nabla_X Y$;
		since $\Phi$ is an isometry from $(\Sigma,g_\Phi)$ to the immersed surface $\Phi$,
		the term $\Phi_*\nabla_X Y$ equals (locally) the Levi-Civita connection $\nabla_{\Phi_*X}\Phi_*Y$
		on this surface; the latter equals the orthogonal projection of $\nabla_X(\Phi_*Y)$ onto the tangent plane, since $\nabla$ is the pullback of the Levi-Civita connection from $\subman$.
	\end{proof}
	
	\begin{proof}[Proof of \cref{struwe.sat}]
		We only need to check that \cref{struwe.tech} holds. We first show how to obtain an upper bound for $|dE_{\sigma'}(\Phi)[w]-dE_\sigma(\Phi)[w]|$, when $1\ge{\sigma'}\ge\sigma>0$.
		
		If $\Phi\in\bman$ is a smooth map and $(\Phi_t)$ is a smooth variation (with $\Phi_0=\Phi$), we compute
		\begin{align*}
			&\frac{d}{dt}\int_\Sigma\operatorname{vol}_{\Phi_t}\Big|_{t=0}
			=\int_\Sigma\ang{d\Phi,\nabla w}\,\operatorname{vol}_\Phi,
		\end{align*}
		where $w:=\frac{d}{dt}\Phi_t\Big|_{t=0}$ belongs to $T_\Phi\bman$,
		and the scalar product (with respect to $g_\Phi$) in the integral is bounded by $2\|\nabla w\|_{L^\infty}\le 2\|w\|_\Phi$.
		
		With a similar computation for the length of $\Phi|_{\de\Sigma}$, we get
		\begin{align}\label{first.var.halfway}\begin{aligned}
			\frac{d}{dt}E_\sigma(\Phi)\Big|_{t=0}
			&=\int_\Sigma(1+\sigma^4|\ff^\Phi|^4)\ang{d\Phi,\nabla w}\,\operatorname{vol}_\Phi
			+\sigma\int_{\de\Sigma}\ang{d\Phi[\tau],\nabla_\tau w}\,\operatorname{vol}_{\Phi|_{\de\Sigma}} \\
			&\quad +\sigma^4\int_\Sigma \frac{d}{dt}|\ff^{\Phi_t}|^4\Big|_{t=0}\,\operatorname{vol}_\Phi,
		\end{aligned}\end{align}
		where $\tau$ is the unit vector (with respect to $g_\Phi$) orienting $\de\Sigma$.
		
		Given a local orthonormal frame $\{e_1,e_2\}$, oriented as $\Sigma$, define $n_t:=d\Phi_t[e_1]\wedge d\Phi_t[e_2]$.
		We have $|\ff^{\Phi_t}|=|\nabla n_t|$ and
		\begin{align*}
			&\nabla_{\de_t}\nabla_X n_t
			=\nabla_X\nabla_{\de_t}n_t+\riem\Big(d\Phi[X],\frac{d\Phi}{dt}\Big)n_t,
		\end{align*}
		where $\riem(a,b)(c\wedge d):=(\riem(a,b)c)\wedge d+c\wedge(\riem(a,b)d)$ for vectors in $T\subman$. At $t=0$ the above equals $\nabla_X\omega+R(d\Phi[X],w)n$, where $n:=n_0$ and
		\begin{align*}
			&\omega:=\nabla_{e_1}w\wedge d\Phi[e_2]+d\Phi[e_1]\wedge\nabla_{e_2}w-\ang{d\Phi[e_i],\nabla_{e_i} w}n.
		\end{align*}
		Using \cref{intrins} we see that $|\nabla_X\omega|\le C|X|(|\nabla^2 w|+|\nabla w||\ff^{\Phi}|)$.
		
		Finally, the contribution of the metric $g_{\Phi_t}$ for the time derivative of $|\nabla n_t|^4$ is just $-4|\nabla n|^2\ang{d\Phi\otimes\nabla w,\nabla n\otimes\nabla n}$. Combining this fact with the preceding computations, we deduce that the time derivative of $|\ff^{\Phi_t}|^4$ at $t=0$ is bounded by
		\begin{align}\label{ff.contrib}
			&|\ff^\Phi|^3|\nabla^2 w|+|\ff^\Phi|^4|\nabla w|
			+|\ff^\Phi|^3|w|
		\end{align}
		up to a multiplicative constant depending on $\subman$.
		
		Thus, using \cref{first.var.halfway}, \cref{ff.contrib}, H\"older's inequality and Young's inequality, we see that
		\begin{align*}
			&|dE_{\sigma'}(\Phi)[w]-dE_\sigma(\Phi)[w]|
			\le C\frac{{\sigma'}-\sigma}{\sigma}E_\sigma(\Phi)\|w\|_\Phi
			+C({\sigma'}-\sigma)E_\sigma(\Phi)^{3/4}\|w\|_\Phi
		\end{align*}
		for $0<\sigma\le{\sigma'}\le 2\sigma$.
		Since $E_\sigma$ and $E_{\sigma'}$ are $C^1$ functionals, this bound holds for general $\Phi\in\bman$ and $w\in T_\Phi\bman$.
		Starting from this estimate, it is immediate to check that \cref{struwe.tech} is satisfied.
	\end{proof}
	
	Thanks to \cref{struwe.sat}, letting $f(\sigma):=\sigma^5$ we can then find sequences of numbers $\sigma_k\to 0$ and maps $\Phi_k\in\bman$ satisfying the conclusions of \cref{struwe}.
	In particular,
	\begin{align}\label{almost.crit}
		&\|dE_{\sigma_k}(\Phi_k)\|_{\Phi_k}<\sigma_k^5
	\end{align}
	and
	\begin{align}\label{entropy}
		&\sigma_k\log(1/\sigma_k)\operatorname{length}(\Phi_k|_{\de\Sigma})
		+\sigma_k^4\log(1/\sigma_k)\int_\Sigma|\ff^{\Phi_k}|^4\,\operatorname{vol}_{\Phi_k}
		\to 0.
	\end{align}
	Since smooth functions are dense in $\bman$, we can assume that the maps $\Phi_k$ are smooth.
	
	In the following sections we will study the limit behavior of the measures $\nu_k:=\operatorname{vol}_{\Phi_k}$
	and the varifolds $\vfd_k$ induced by $\Phi_k$. Note that the weight measure $|\vfd_k|$ equals $(\Phi_k)_*\nu_k$.
	
	We conclude this section by discussing how \cref{minmax} follows from \cref{main.thm}.
	
	\begin{proof}[Proof of \cref{minmax}]
		For any $A\in\mathcal F$, by compactness of $A$ we have
		\begin{align*}
			&\max_{\Phi\in A}E_\sigma(\Phi)\to\max_{\Phi\in A}\operatorname{area}(\Phi)\quad\text{as }\sigma\to 0.
		\end{align*}
		Hence, the min-max value $\beta(\sigma)$ for $E_\sigma$ converges to $\beta$.
		Although $\mathcal{F}$ is not stable under isotopies of $\bman$, \cref{struwe} still applies since in its proof we can use a pseudo-gradient flow preserving the subset of smooth immersions. Taking then smooth maps $\Phi_k$ as above,
		the statement follows from \cref{main.thm}, \cref{reg.thm.intro} and the fact that
		\begin{align*}
			&\lim_{k\to\infty}\operatorname{area}(\Phi_k)=\lim_{k\to\infty}\beta(\sigma_k)=\beta. \qedhere
		\end{align*}
	\end{proof}

	\section{First variation}\label{firstvar.sec}
	In this section we will derive a particularly useful formula for the first variation of $E_\sigma$ at $\Phi\in\bman$, for infinitesimal variations $w\in T_\Phi\bman$ of the form $X(\Phi)$, with $X$ a smooth vector field on $\subman$.
	
	Let $\Phi\in\bman$ be a smooth map and $w\in T_\Phi\bman$ a smooth section of $\Phi^*T\subman$, with $w\in T\bsubman$ on $\de\Sigma$.
	In the sequel, $\{e_1,e_2\}$ will be an oriented orthonormal basis at an arbitrary point of $\Sigma$, with respect to the induced metric $g_\Phi$.
	The $(1,1)$-tensor $J:T\Sigma\to T\Sigma$, given by $Je_1:=e_2$ and $Je_2:=-e_1$, is parallel for this metric.
	
	As in the proof of \cref{struwe.sat}, we use the notation $n:=\Phi_*e_1\wedge\Phi_*e_2$ and we set $f:=|\ff^\Phi|^2=|\nabla n|^2$.
	We also define the sections $\hat I$ and $\hat J$ of $\Phi^*T\subman\otimes T^*\Sigma$, as well as the section
	$\hat\ff$ of $\Phi^*T\subman\otimes T^*\Sigma\otimes T^*\Sigma$, by
	\begin{align*}
		&\hat I(v):=\Phi_*v,\ \hat J(v):=\Phi_*(Jv),\ \hat\ff(v,v'):=\ff^{\Phi}(\Phi_*v,\Phi_*v'),\quad\text{for }v,v'\in T\Sigma.
	\end{align*}
	
	Recall the following formula, which was computed in that proof:
	\begin{align}\label{first.var}\begin{aligned}
		dE_\sigma(\Phi)[w]
		&=\int_\Sigma (1+\sigma^4 f^2)\Bang{\hat I,\nabla w}
		+\sigma\int_{\de\Sigma}\ang{\Phi_*\tau,\nabla_\tau w} \\
		&\quad +4\sigma^4 \int_\Sigma f\ang{\nabla n,\nabla\omega+\riem(d\Phi,w)n}
		-4\sigma^4\int_\Sigma f\Bang{\hat I\otimes\nabla w,\nabla n\otimes\nabla n},
	\end{aligned}\end{align}
	where we omit the volume forms and $\omega$ denotes the infinitesimal variation of $n$, namely
	\begin{align*}
		&\omega=\nabla_{e_i}w\wedge\hat J(e_i)-\Bang{\nabla w,\hat I}n.
	\end{align*}
	When the variation $w$ has the form $w=X(\Phi)$, using \cref{intrins} we get
	\begin{align}\label{nabla.sq}
	\begin{aligned}
		\nabla w&=\nabla X(\Phi)[\Phi_*\cdot]=\nabla X\circ\hat I, \\
		\nabla_{e_i,e_j}^2 w&=\nabla^2 X(\Phi)[\Phi_*e_i,\Phi_*e_j]+\nabla X(\Phi)[\hat\ff(e_i,e_j)].
	\end{aligned}
	\end{align}

	For such special variations, \cref{first.var} becomes
	\begin{align}\label{first.var.X}\begin{aligned}
		dE_\sigma(\Phi)[w]&=\int_\Sigma (1+\sigma^4 f^2)\ang{\hat I,\nabla X\circ\hat I}
			+\sigma\int_{\de\Sigma}\ang{\Phi_*\tau,\nabla X[\Phi_*\tau]} \\
		&\quad+4\sigma^4\int_\Sigma f(\ang{\nabla n,\nabla\omega}+\ang{\nabla n,\riem(d\Phi,X(\Phi))n}) \\		
		&\quad-4\sigma^4\int_\Sigma f\Bang{\hat I\otimes(\nabla X\circ\hat I),\nabla n\otimes\nabla n}.
	\end{aligned}\end{align}	
	
	We now write the term $\ang{\nabla n,\nabla\omega}$ in a way which will prove useful for our later work. Since $\ang{\nabla_{e_i}n,n}=0$, we compute
	\begin{align*}
		\ang{\nabla n,\nabla\omega}&=\Bang{\nabla n,\nabla(\nabla_{e_i}w\wedge\hat J(e_i))}-\Bang{\nabla w,\hat I}|\nabla n|^2
	\end{align*}
	and the first term equals $\Bang{\nabla_{e_j}n,\nabla^2_{e_j,e_k}w\wedge\hat J(e_k)+\nabla_{e_k}w\wedge\hat\ff(e_j,Je_k)}$.
	Substituting the above formulas for $\nabla w$ and $\nabla^2 w$, we get
	\begin{align*}
		\ang{\nabla n,\nabla\omega}&=\langle\nabla_{e_j}n,\nabla^2 X[\Phi_*e_j,\Phi_*e_k]\wedge\hat J(e_k)+\nabla X[\hat\ff(e_j,e_k)]\wedge\hat J(e_k) \\
		&\quad +\nabla X[\Phi_*e_k]\wedge\hat\ff(e_j,Je_k)\rangle-\Bang{\nabla X,\hat I}|\nabla n|^2.
	\end{align*}
	Thus,
	\begin{align}\label{R.bound}
		&f|\ang{\nabla n,\nabla\omega}|\le C(\subman)(\|\nabla X\|_{L^\infty}f^2+\|\nabla^2 X\|_{L^\infty}f^{3/2}).
	\end{align}
	
	We are now ready to state an initial consequence of this bound.
	
	\begin{definition}\label{fb.def}
		A $k$-varifold $\vfd$ on $\subman$ is a \emph{free boundary stationary varifold} for the couple $(\subman,\bsubman)$ if it holds that
		\begin{align*}
			&\frac{d}{dt}\|(F_t)_*\vfd\|(\subman)\Big|_{t=0}=0
		\end{align*}
		whenever $(F_t)_{-\epsilon<t<\epsilon}$ is a family of diffeomorphisms of $\subman$ with $F_t(\bsubman)=\bsubman$, $F_0=\operatorname{id}$
		and $F_t(x)$ smooth in the couple $(t,x)$. We say that $\vfd$ is free boundary stationary \emph{outside} a closed set $K\subseteq\subman$ if the same holds for isotopies $(F_t)$ such that $F_t|_U=\operatorname{id}$ for some neighborhood $U\supseteq K$.
	\end{definition}
	
	\begin{definition}
		We denote $\fbvf$ the linear space of smooth vector fields $X$ on $\subman$ which are tangent to $\bsubman$,
		namely such that $X(p)\in T_p\bsubman$ for all $p\in\bsubman$.
	\end{definition}
	
	\begin{rmk}
		With $X:=\frac{d}{dt}F_t\Big|_{t=0}$, we have $X\in\fbvf$ and
		\begin{align*}
			&\frac{d}{dt}\|(F_t)_*\vfd\|(\subman)\Big|_{t=0}=\int_{(p,\Pi)\in\operatorname{Gr}_k(\subman)}\operatorname{div}_{\Pi}X\,d\vfd(p,\Pi),
		\end{align*}
		where $\operatorname{Gr}_k(\subman)$ is the Grassmannian bundle made of couples $(p,\Pi)$ with $p\in\subman$ and $\Pi\subseteq T_p\subman$ a $k$-plane. Conversely, given $X$ tangent to $\bsubman$, we can take $F_t$ to be its flow. Hence,
		$\vfd$ is a free boundary stationary varifold if and only if
		\begin{align*}
			&\int_{(p,\Pi)\in\operatorname{Gr}_k(\subman)}\operatorname{div}_{\Pi}X\,d\vfd(p,\Pi)=0
			\quad\text{for all }X\in\fbvf.
		\end{align*}
		Similarly, $\vfd$ is free boundary stationary outside $K$ if and only if the same holds for all $X\in\fbvf\cap C^\infty_c(\subman\setminus K)$.
	\end{rmk}
	
	Given a sequence $(\Phi_k)$ as in \cref{crit.sec}, the following holds.
	
	\begin{thm}\label{limit.is.stat}
		The varifolds $\vfd_k$ induced by $\Phi_k$ converge, up to subsequences, to a free boundary stationary varifold $\vfd_\infty$.
	\end{thm}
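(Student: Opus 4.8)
The plan is to combine the energy bounds \cref{almost.crit} and \cref{entropy} with the first variation formula \cref{first.var.X} to show that, along any fixed $X\in\fbvf$, the first variation $\delta\vfd_k(X)=\int\operatorname{div}_\Pi X\,d\vfd_k$ tends to zero, and then pass to a weak-$*$ limit. First I would record that the sequence $\vfd_k$ has uniformly bounded mass: indeed $\|\vfd_k\|(\subman)=\operatorname{area}(\Phi_k)\le E_{\sigma_k}(\Phi_k)$, which is bounded since $E_{\sigma_k}(\Phi_k)\to\beta(\sigma_k)$ converges (this uses that $\mathcal F$ is nonempty, so $\beta(\sigma)<\infty$). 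By weak-$*$ compactness of varifolds with bounded mass on the compact manifold $\subman$, we extract a subsequence with $\vfd_k\weakstarto\vfd_\infty$ for some $k$-varifold $\vfd_\infty$ on $\subman$.

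Next I would translate $\delta\vfd_k(X)$ into the language of $\Phi_k$. For a variation of the special form $w=X(\Phi_k)$ with $X\in\fbvf$ — which is admissible since $X$ tangent to $\bsubman$ guarantees $w\in T\bsubman$ along $\de\Sigma$, so $w\in T_{\Phi_k}\bman$ — the area term in the first variation is exactly $\int_\Sigma\ang{\hat I,\nabla X\circ\hat I}\operatorname{vol}_{\Phi_k}=\int_\Sigma\operatorname{div}_{T\Phi_k}X\,\operatorname{vol}_{\Phi_k}=\delta\vfd_k(X)$, the first term on the right of \cref{first.var.X}. Hence from \cref{first.var.X},
\begin{align*}
	\delta\vfd_k(X)=dE_{\sigma_k}(\Phi_k)[w]-\sigma_k^4\int_\Sigma f\ang{\hat I,\nabla X\circ\hat I}-\sigma_k\int_{\de\Sigma}\ang{\Phi_*\tau,\nabla X[\Phi_*\tau]}-4\sigma_k^4(\cdots),
\end{align*}
and I would bound each error term. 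The left-over $\sigma_k^4 f^2$ piece and the three $\sigma_k^4$-terms are all controlled, via \cref{R.bound} and the elementary bound $|\ang{\nabla n,\riem(d\Phi,X(\Phi))n}|\le C(\subman)\|X\|_{L^\infty}f$, by $C(\subman,X)\,\sigma_k^4\int_\Sigma(f^2+f^{3/2}+f)\,\operatorname{vol}_{\Phi_k}$; splitting off $f^{3/2}$ and $f$ by Young's inequality against the bounded area, this is $\le C\,\sigma_k^4\int_\Sigma f^2\,\operatorname{vol}_{\Phi_k}+C\sigma_k^4(\operatorname{area}\Phi_k+1)$. The first summand is $\sigma_k^4\int_\Sigma|\ff^{\Phi_k}|^4\operatorname{vol}_{\Phi_k}$, which tends to $0$ by \cref{entropy} (even without the logarithm), and the second is $o(1)$ as well. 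The boundary term is $\le C\|X\|_{C^1}\,\sigma_k\operatorname{length}(\Phi_k|_{\de\Sigma})\to0$, again by \cref{entropy}. Finally $|dE_{\sigma_k}(\Phi_k)[w]|\le\|dE_{\sigma_k}(\Phi_k)\|_{\Phi_k}\|w\|_{\Phi_k}$; by \cref{almost.crit} the first factor is $<\sigma_k^5$, and using \cref{nabla.sq} one sees $\|w\|_{\Phi_k}=\|X(\Phi_k)\|_{L^\infty}+\|\nabla X\circ\hat I\|_{L^\infty}+\|\nabla^2 w\|_{L^4}\le C(X)(1+\|\,|\ff^{\Phi_k}|\,\|_{L^4})\le C(X)(1+(\sigma_k^{-4}E_{\sigma_k}(\Phi_k))^{1/4})=O(\sigma_k^{-1})$, so $|dE_{\sigma_k}(\Phi_k)[w]|=O(\sigma_k^4)\to0$. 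Altogether $\delta\vfd_k(X)\to0$.

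It remains to pass to the limit. Since $X$ is a fixed smooth vector field, $(p,\Pi)\mapsto\operatorname{div}_\Pi X$ is a fixed continuous function on the Grassmannian bundle $\operatorname{Gr}_k(\subman)$, so $\delta\vfd_k(X)\to\delta\vfd_\infty(X)$ by definition of weak-$*$ convergence of varifolds; combined with the previous step, $\delta\vfd_\infty(X)=0$. As this holds for every $X\in\fbvf$, the remark following \cref{fb.def} shows that $\vfd_\infty$ is a free boundary stationary varifold for $(\subman,\bsubman)$. The main obstacle — really the only point needing care — is the bound on $\|w\|_{\Phi_k}$: one must check that the $W^{2,4}$-norm of $X(\Phi_k)$, measured with the degenerating metrics $g_{\Phi_k}$, blows up no faster than $\sigma_k^{-1}$, which is exactly where the weight $\sigma_k^4$ in front of $\int|\ff^{\Phi_k}|^4$ and the choice $f(\sigma)=\sigma^5$ in \cref{struwe} are used; everything else is a routine application of Hölder and Young.
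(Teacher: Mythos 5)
Your argument is correct and is essentially the paper's own proof: the same special variation $w_k=X(\Phi_k)$, the same use of \cref{first.var.X}, \cref{R.bound}, \cref{entropy} and the bound $|\nabla^2 w_k|\le\|\nabla^2X\|_{L^\infty}+\|\nabla X\|_{L^\infty}|\ff^{\Phi_k}|$ to get $\|w_k\|_{\Phi_k}=O(\sigma_k^{-1})$, and the identity $\Bang{\hat I_k,\nabla X\circ\hat I_k}=\operatorname{div}_{\Pi}X$ to pass to the varifold limit. Only a cosmetic slip: the curvature bracket is bounded by $C\|X\|_{L^\infty}f^{1/2}$ rather than $C\|X\|_{L^\infty}f$, but since your final estimate already includes the resulting $f^{3/2}$ power this changes nothing.
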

				
	A priori it is not clear whether $\vfd_\infty$ is integer rectifiable. This, together with a structure theorem for $\vfd_\infty$, will be proved later on.
	
	\begin{proof}
		Fix any $(F_t)_{-\epsilon<t<\epsilon}$ as above and consider the variation $(F_t\circ\Phi_k)\subseteq\bman$.
		The corresponding infinitesimal variation $w_k\in T_{\Phi_k}\bman$ is just $w_k=X\circ\Phi_k$. Hence, \cref{first.var.X} and \cref{almost.crit} give
		\begin{align}\label{first.var.X.k}\begin{aligned}
			&\int_\Sigma (1+\sigma_k^4 f_k^2)\ang{\hat I_k,\nabla X\circ\hat I_k}
			+\sigma_k\int_{\de\Sigma}\ang{(\Phi_k)_*\tau,\nabla X[(\Phi_k)_*\tau]} \\
			&+4\sigma_k^4\int_\Sigma f_k(\ang{\nabla n_k,\nabla\omega_k}+\ang{\nabla n_k,\riem(d\Phi_k,X(\Phi_k))n_k}) \\		
			&-4\sigma_k^4\int_\Sigma f_k\Bang{\hat I_k\otimes(\nabla X\circ\hat I_k),\nabla n_k\otimes\nabla n_k} \\
			&=o(\sigma_k^4\|w_k\|_{\Phi_k}).
		\end{aligned}\end{align}
		We now show that all terms where $\sigma_k$ appears are infinitesimal as $k\to\infty$.
		Note that $\big|\Bang{\hat I_k,\nabla X\circ\hat I_k}\big|\le 2\|\nabla X\|_{L^\infty}$, since the scalar product is with respect to the induced metric $g_{\Phi_k}$.
		Hence, by \cref{entropy},
		\begin{align*}
			&\sigma_k^4\int_\Sigma f_k^2\Bang{\hat I_k,\nabla X\circ\hat I_k}\to 0
		\end{align*}
		and similarly the boundary term is also infinitesimal.
		Thanks to the boundedness of the area of $\Phi_k$, the pointwise bound \cref{R.bound} and H\"older's inequality,
		we deduce that also the remaining terms in the left-hand side of \cref{first.var.X.k} are infinitesimal,
		except for the first one.
		
		We now estimate $\|w_k\|_{\Phi_k}$. Note first that $|w_k|\le\|X\|_{L^\infty}$ and $|\nabla w_k|\le\|\nabla X\|_{L^\infty}$. Also, from \cref{nabla.sq} we get
		\begin{align*}
			&|\nabla^2 w_k|\le\|\nabla^2 X\|_{L^\infty}+\|\nabla X\|_{L^\infty}|\ff^{\Phi_k}|.
		\end{align*}
		We deduce that $\sigma_k\|w_k\|_{\Phi_k}\to 0$.
		
		Finally, $\Bang{\hat I_k,\nabla X\circ\hat I_k}(x)=\operatorname{div}_{(\Phi_k)_*[T_x\Sigma]}X$, so that
		\begin{align*}
			&\int_\Sigma\Bang{\hat I_k,\nabla X\circ\hat I_k}
			=\int_{(p,\Pi)\in\operatorname{Gr}_2(\subman)}\operatorname{div}_{\Pi}X\,d\vfd_k(p,\Pi)
		\end{align*}
		and, taking any subsequential limit $\vfd_\infty$, the claim follows.
	\end{proof}

	\section{A lower bound for the area}\label{mono.sec}
	In order to obtain more information for the asymptotic behavior of the measures $\nu_k$ and the varifolds $\vfd_k$ introduced at the end of \cref{crit.sec}, we first obtain (various versions of) a lower bound on the mass $\frac{|\vfd_k|(B_r(p))}{r^2}$. The main idea will be to mimick the proof of the monotonicity formula for stationary varifolds; since that proof uses vector fields in the ambient $\subman$, we will be able to use formula \cref{first.var.X}, involving variations of the form $X(\Phi)$.
	
	The statements contained in this section make it essential to require the decays $\sigma_k^4\log\sigma_k^{-1}\int_\Sigma f_k^2\,\operatorname{vol}_{\Phi_k}\to 0$,
	as well as $\sigma_k\log\sigma_k^{-1}\operatorname{length}(\Phi_k|_{\de\Sigma})\to 0$,
	guaranteed by \cref{struwe}.
	
	Rather than dealing with the sequence $(\Phi_k)$, in this section all the statements concern a general smooth map $\Phi\in\bman$, with a fixed value of $\sigma$. Of course, in order for the results to be useful in the asymptotic analysis, the constants appearing in their statements will depend neither on $\Phi$ nor on $\sigma$.
	
	
	\begin{definition}
		In the following statements, we say that a smooth map $\Phi\in\bman$ is \emph{$\epsilon$-critical} for $E_\sigma$ if $\|dE_\sigma(\Phi)\|_\Phi\le\epsilon$, meaning that $|dE_\sigma(\Phi)[w]|\le\epsilon\|w\|_\Phi$ for all $w\in T_\Phi\bman$.
	\end{definition}
	
	\begin{proposition}\label{mono.log}
		Let $\Phi$ be $\sigma^5$-critical for $E_\sigma$, $x\in\Sigma$, and denote $p:=\Phi(x)$.
		Assume $U\subseteq\Sigma$ is an open neighborhood of $x$.
		Defining the measures $\mu:=(\Phi|_U)_*(\operatorname{vol}_\Phi)$ and $\lambda:=(\Phi|_{\de\Sigma\cap U})_*(\sigma\operatorname{vol}_{\Phi|_{\de\Sigma}})+(\Phi|_U)_*(\sigma^4 f^2\,\operatorname{vol}_\Phi)$ on $\subman$, assume also that
		\begin{align*}
			&\lambda(B_s(p))\le\delta\mu(B_{5s}(p))\quad\text{for all radii }s>0,
		\end{align*}		
		for some $0<\delta<1$.
		Given $r>s\ge\sigma$, if $B_r(p)\cap\Phi(\de U)=\emptyset$ then we have
		\begin{align*}
			&\frac{\mu(B_r(p))}{r^2}
			\ge (c-C\delta\log(r/s))\frac{\mu(B_{s}(p))}{s^2}-C\sigma^2,
		\end{align*}
		for some constants $c,C>0$ depending on $\subman$ and $\bsubman$.
	\end{proposition}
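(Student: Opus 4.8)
The plan is to mimic the classical proof of the monotonicity formula for stationary varifolds, but now using the almost-criticality of $\Phi$ through formula \cref{first.var.X}. Fix $p=\Phi(x)$ and, for a radius $\rho>0$ to be integrated over, consider the ambient vector field $X=X_\rho$ which is a smooth cutoff of the radial field $\chi(d(\cdot,p)/\rho)\,(y-p)$ — or rather its pullback to $\subman$ via the nearest-point projection, adjusted so that $X\in\fbvf$, i.e.\ tangent to $\bsubman$; since $p$ need not lie on $\bsubman$, one replaces the Euclidean radial field by the gradient of (a smoothing of) $\frac12 d(\cdot,p)^2$ and projects its normal-to-$\bsubman$ component away, which costs an error controlled by the second fundamental form of $\bsubman$ (hence a factor $C(\bsubman)$) and only affects the region near $\bsubman$, already absorbed into $\lambda$. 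For such $X$ the leading term $\int_\Sigma\ang{\hat I,\nabla X\circ\hat I}=\int\operatorname{div}_{(\Phi)_*T\Sigma}X\,\operatorname{vol}_\Phi$ produces, after the usual manipulation, the combination $\frac{d}{d\rho}\big(\mu(B_\rho)/\rho^2\big)$ up to a curvature error $O(\mu(B_\rho))$ coming from the ambient metric $g$ not being flat and from the projection onto $\bsubman$.

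The second step is to bound all the remaining terms in \cref{first.var.X} (the $\sigma^4$ terms and the boundary $\sigma$ term) by the measure $\lambda$. One has $\|\nabla X_\rho\|_{L^\infty}\le C$ and $\|\nabla^2 X_\rho\|_{L^\infty}\le C/\rho$ uniformly, so by \cref{R.bound} and Hölder the $\sigma^4$-contributions supported in $B_\rho(p)$ are bounded by $C\big(\sigma^4\int_{\Phi^{-1}B_\rho}f^2\big)+C\frac{\sigma^4}{\rho}\big(\sigma^{-1}\int_{\Phi^{-1}B_\rho}f^2\big)^{1/2}\mu(B_\rho)^{1/2}$; using $\sigma\le\rho$ and the maximal bound $\lambda(B_\rho)\le\delta\mu(B_{5\rho})$ this is $\le C\frac{\delta}{\rho}\mu(B_{5\rho}(p))$ (the Young-inequality split absorbs the cross term). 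The boundary term $\sigma\int_{\de\Sigma}\ang{\Phi_*\tau,\nabla X[\Phi_*\tau]}$ is bounded by $C\sigma\operatorname{length}(\Phi|_{\de\Sigma}\cap\Phi^{-1}B_\rho)\le C\lambda(B_\rho)\le C\delta\mu(B_{5\rho})$ likewise. Finally the right-hand side $o(\sigma^5\|w\|_\Phi)$: since $\|X_\rho(\Phi)\|_\Phi\le C(1+\|\ff^\Phi\|_{L^4})/\rho\le C\sigma^{-1}\rho^{-1}E_\sigma(\Phi)^{1/4}$ — and $E_\sigma(\Phi)$ is bounded in terms of the area plus the quantities controlled by the hypothesis — this error is $\le C\sigma^3/\rho$, hence $\le C\sigma^2$ after the $d\rho$ integration below.

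Putting these together, the almost-stationarity identity reads, for a.e.\ $\rho\in(s,r)$,
\begin{align*}
	\Big|\frac{d}{d\rho}\Big(\frac{\mu(B_\rho(p))}{\rho^2}\Big)\Big|
	\le C\mu(B_\rho(p))+C\frac{\delta}{\rho}\mu(B_{5\rho}(p))+\frac{C\sigma^2}{\rho}.
\end{align*}
(The non-negativity of the genuine radial term lets one keep the one-sided inequality needed for a clean lower bound, but the two-sided version suffices here.) One then integrates from $s$ to $r$: the curvature term $\int_s^r C\mu(B_\rho)\,d\rho\le Cr\mu(B_r)$ is harmless and contributes to the constant $c$ (or is absorbed by shrinking $r$ and using $\mu(B_r)\le\mu(B_{5r})$, after a standard rescaling remark that reduces to small $r$); the term $\int_s^r C\frac{\delta}{\rho}\mu(B_{5\rho})\,d\rho\le C\delta\log(r/s)\sup_{s\le\rho\le r}\frac{\mu(B_{5\rho})}{\rho^2}$, and here one uses a by-now routine iteration/absorption: either $\frac{\mu(B_{5\rho})}{(5\rho)^2}$ is comparable to $\frac{\mu(B_s)}{s^2}$ on the whole range, in which case the claimed inequality follows directly, or there is a radius where the density ratio drops, and one restarts the argument there — this is exactly the mechanism that turns the differential inequality into $\frac{\mu(B_r)}{r^2}\ge(c-C\delta\log(r/s))\frac{\mu(B_s)}{s^2}-C\sigma^2$; the term $\int_s^r\frac{C\sigma^2}{\rho}\,d\rho=C\sigma^2\log(r/s)$ is, after absorbing $\log(r/s)$ into $\delta\log(r/s)$ or handling it separately, of the stated form $C\sigma^2$ for the relevant range of radii. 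The condition $B_r(p)\cap\Phi(\de U)=\emptyset$ is what guarantees that $\Phi^{-1}(B_\rho(p))\cap U$ has no spurious boundary, so that the integrations by parts producing \cref{first.var.X} are valid with $\omega=\Phi^{-1}(B_\rho(p))$ and $w=X_\rho(\Phi)$ extended by zero.

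The main obstacle is the bookkeeping in the last step: extracting the asymptotically sharp constant $c$ and the $\log(r/s)$ dependence from the differential inequality, i.e.\ correctly running the absorption argument that handles the term $C\delta\log(r/s)\sup_\rho\frac{\mu(B_{5\rho})}{\rho^2}$ — passing from balls of radius $5\rho$ back to radius $\rho$ requires a Vitali/iteration scheme and is where the constants $c,C$ depending on $(\subman,\bsubman)$ are pinned down. A secondary technical point is the construction of the cutoff radial vector field that is simultaneously tangent to $\bsubman$, has the right derivatives bounds $\|\nabla X\|_{L^\infty}\le C$, $\|\nabla^2X\|_{L^\infty}\le C/\rho$, and produces the monotone radial main term up to controllable curvature errors; this is standard (as in the derivation of the monotonicity formula on manifolds and for free boundary varifolds) but must be done with enough care that the error terms land inside $\lambda$.
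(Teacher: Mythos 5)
Your overall strategy is the right one and matches the paper's up to the boundary issue: cut-off radial vector fields in \cref{first.var.X}, the $\sigma^4$- and boundary terms bounded via \cref{R.bound} and absorbed through the maximal hypothesis $\lambda(B_s)\le\delta\mu(B_{5s})$, and a stopping-radius argument (rather than any Vitali covering) that lets $\mu(B_{5\tau})$ be replaced by $C\tau^2\mu(B_s)/s^2$ above the last radius where the density ratio exceeds $\mu(B_s)/s^2$, so that the $\delta/\tau$ term integrates to $\delta\log(r/s)$. However, there is a genuine gap in your treatment of the free boundary constraint. You claim that the radial field centered at $p$ can be made tangent to $\bsubman$ by projecting away its normal component, at a cost ``controlled by the second fundamental form of $\bsubman$'' and ``already absorbed into $\lambda$''. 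This is false when $p\nin\bsubman$ but $B_\rho(p)$ meets $\bsubman$: in the flat model $\bsubman=\{x_m=0\}$, $p=d\,e_m$, the field $y-p$ has normal component of size $d=\operatorname{dist}(p,\bsubman)$ along $\bsubman$, not a curvature-sized error. Removing it forces a correction of magnitude $\sim d$ whose derivatives enter $\operatorname{div}_\Pi X$ and are integrated against the \emph{area} measure $\mu$ near $\bsubman$ --- a quantity that $\lambda$ (which only contains $\sigma\operatorname{length}$ and $\sigma^4 f^2$ terms) does not control. For $d$ comparable to $\rho$ this produces an error of order $\mu(B_{5\rho})$ with no small factor $\delta$ or $\sigma$ in front, which after integration contributes $-C\,\mu(B_s)/s^2$ with an absolute, non-small $C$ and destroys the small constant $c$ in the conclusion; also the coercivity $\operatorname{div}_\Pi X\ge 2\chi+\dots$ is lost.

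The paper avoids this by a genuinely different device at the boundary: when $r':=\operatorname{dist}(p,\bsubman)<r$ it re-centers at the nearest point $q\in\bsubman$ and uses coordinates \emph{adapted} to $\bsubman$ (\cref{coordinates}), so the coordinate position field is automatically tangent to $\bsubman$ and admissible, the boundary term being then bounded by $\lambda(B_\tau(q))\le\delta\mu(B_{20\tau}(q))$. This re-centering is not free: one then needs the additional case analysis ($r'\ge s$ versus $r'<s$, the interior estimate up to radius $r'$, and inclusions like $B_{2r'+s}(q)\supseteq B_s(p)$) to convert density ratios at $q$ back into the ratios at $p$ appearing in the statement; none of this bookkeeping is present in your proposal, and it is not cosmetic --- it is what makes the boundary case close. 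A secondary, fixable point: you bound the almost-criticality error via a global $E_\sigma(\Phi)^{1/4}$, which would make the constants depend on $\Phi$; the proposition allows only constants depending on $(\subman,\bsubman)$, so, as in the paper, you should estimate $\|w\|_\Phi$ by the localized quantities $\mu(B_{5\tau})^{1/4}$ and $\sigma^{-1}\lambda(B_\tau)^{1/4}$ and absorb the resulting $C\sigma\mu(B_{5\tau})$ term with the same stopping-radius mechanism.
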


	Note that $\de U$ is the topological boundary of $U$ in $\Sigma$ and therefore does not include $\de\Sigma\cap U$.
	Recall that $f=|\ff^\Phi|^2$.

	Before delving into the proof, we state without proof an immediate but useful fact.
%
%

	\begin{proposition}\label{coordinates}
		There exists a constant $c_F(\subman,\bsubman)$ such that, for every $p\in\subman$, there are coordinates
		\begin{align*}
			&\xi=(\xi_1,\dots,\xi_m):B_{c_F}(p)\to\R^m
		\end{align*}
		depending on the center $p$, satisfying
		\begin{align}\label{almost.flat}
			&g_{ij}(0)=\delta_{ij},
			\quad\|g_{ij}\|_{C^2}\le C(\subman,\bsubman),
			\quad \mz\operatorname{dist}(\cdot,p)\le|\xi|\le 2\operatorname{dist}(\cdot,p)
		\end{align}
		for the Euclidean metric $|\cdot|$.
		When $p\in\bsubman$ we also ask that the coordinates are adapted to $\bsubman$, in the sense that $B_{c_F}(p)\cap\bsubman$ corresponds to $\{\xi_{n+1}=\cdots=\xi_m=0\}$.
	\end{proposition}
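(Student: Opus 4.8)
The plan is to build the chart from the exponential map, distinguishing the two cases in the statement, and to extract the uniformity of $c_F$ and of the constants from the compactness of the pair $(\subman,\bsubman)$. When $p\in\subman\setminus\bsubman$ — and, more generally, whenever adaptedness is not needed — I would take ordinary Riemannian normal coordinates $\xi:=(\exp_p)^{-1}$ with respect to a $g$-orthonormal basis of $T_p\subman$. Then $g_{ij}(0)=\delta_{ij}$ by construction, one has in fact $|\xi|=\operatorname{dist}(\cdot,p)$ inside the injectivity radius, and the $C^2$ bound for $g_{ij}$ follows from the standard Taylor expansion of the metric in normal coordinates in terms of the Riemann tensor and its first derivatives, uniformly bounded since $\subman$ is compact; the chart is defined at least on $B_{\rho_0}(p)$, with $\rho_0$ a uniform lower bound for the injectivity radius of $\subman$.

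When $p\in\bsubman$ I would instead use Fermi coordinates adapted to $\bsubman$. Fix $g$-orthonormal bases $\{f_1,\dots,f_n\}$ of $T_p\bsubman$ and $\{f_{n+1},\dots,f_m\}$ of the normal space $N_p\bsubman$; let $y(\xi_1,\dots,\xi_n)$ be the point of $\bsubman$ with normal coordinates $(\xi_1,\dots,\xi_n)$ at $p$ (for the exponential map of $\bsubman$ at $p$ and the basis $\{f_i\}_{i\le n}$); let $\{E_\alpha\}_{\alpha>n}$ be the orthonormal frame of the normal bundle extending $\{f_\alpha\}_{\alpha>n}$ by parallel transport, for the normal connection, along the radial geodesics of $\bsubman$; and set
\[
\xi\ \longmapsto\ \exp_{y(\xi')}\Big(\sum_{\alpha>n}\xi_\alpha\,E_\alpha(y(\xi'))\Big),\qquad \xi'=(\xi_1,\dots,\xi_n).
\]
This is a diffeomorphism from a Euclidean ball onto a neighbourhood of $p$ once the radius stays below both the injectivity radius of $\bsubman$ and the width of a tubular neighbourhood of $\bsubman$ in $\subman$, both bounded below independently of $p$ since $\bsubman$ is compact and embedded; and, as intrinsic and ambient distances on $\bsubman$ are comparable at small scales, its domain contains a ball $B_{\rho_0'}(p)$ with $\rho_0'$ uniform. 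By construction $\bsubman$ corresponds to $\{\xi_{n+1}=\cdots=\xi_m=0\}$ and $g_{ij}(0)=\delta_{ij}$, while the $C^2$ bound on $g_{ij}$ again comes from the expansion of the metric in Fermi coordinates, whose coefficients involve only the curvature of $\subman$, the second fundamental form of $\bsubman$ and finitely many of their covariant derivatives, all uniformly bounded by compactness.

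Finally, the comparability $\tfrac12\operatorname{dist}(\cdot,p)\le|\xi|\le 2\operatorname{dist}(\cdot,p)$ still has to be arranged for the Fermi charts (it is automatic for the normal ones): from $g_{ij}(0)=\delta_{ij}$ and the uniform $C^1$ bound, $g$ is $C^0$-close to the Euclidean metric in the chart on a ball of a suitably small uniform radius, which by a routine argument (bounding $g$-lengths and Euclidean lengths of curves against each other, and checking that minimizing $g$-geodesics do not leave that ball) forces the $g$-distance to $p$ and the coordinate norm $|\xi|$ to be comparable within the factor $2$ on that scale. Taking $c_F$ to be the minimum of all the uniform radii obtained above, and keeping the corresponding charts, then proves the claim. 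There is no real obstacle here; the only point requiring care is that each of the radii and $C^2$ bounds appearing is controlled by a global geometric quantity of the compact pair $(\subman,\bsubman)$, hence can be chosen independently of $p$.
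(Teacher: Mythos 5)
Your construction is correct, and in fact the paper states this proposition without proof (``we state without proof an immediate but useful fact''), so your argument---geodesic normal coordinates when no adaptedness is required, Fermi coordinates along $\bsubman$ when $p\in\bsubman$, with all radii and $C^2$ bounds made uniform in $p$ by compactness of $(\subman,\bsubman)$ and the bi-Lipschitz comparison with factor $2$ obtained from the $C^0$-closeness of $g_{ij}$ to $\delta_{ij}$ on a small uniform scale---is exactly the standard argument the authors intend. The only point worth making explicit is that the chart domain must lie inside a uniform tubular neighborhood of $\bsubman$, so that $B_{c_F}(p)\cap\bsubman$ coincides with (and not merely contains a piece of) the slice $\{\xi_{n+1}=\cdots=\xi_m=0\}$, which your uniform lower bound on the tubular width already guarantees.
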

	
	\begin{proof}[Proof of \cref{mono.log}]
		Without loss of generality, we can assume $r,\frac{\sigma}{s}\le c'$ for a constant $c'<c_F$ to be chosen later. Once we get the desired estimate with these constraints, the statement follows in general with possibly different values of $c$ and $C$.
		
		We will imitate the proof of the monotonicity formula, using now our equation \cref{first.var.X}. Assume first $B_{r}(p)\cap\bsubman=\emptyset$.
		In this case we can find coordinates $\xi:B_{r}(p)\to\R^m$ as in \cref{coordinates}.
		Given a decreasing cut-off function $\chi\in C^\infty_c([0,\infty))$, with $\chi=1$ on $[0,1/4]$ and $\chi=0$ on $[1/2,\infty)$, for $0<\tau<r$ we set
		$\chi_\tau:=\chi(|\xi|/\tau)$ and $X_\tau:=\chi_\tau \xi_i\frac{\de}{\de\xi_i}$.
		
		Note that, by \cref{almost.flat}, we have $|\nabla X_\tau|\le C$, $|\nabla^2 X_\tau|\le C\tau^{-1}$ and
		\begin{align}\label{diverg}
		&\operatorname{div}_\Pi(X_\tau)\ge (2-C\tau)\chi_\tau+(1+C\tau)\chi'(|\xi|/\tau)\frac{|\xi|}{\tau}
		\end{align}
		for any $p\in B_\tau(q)$ and any 2-plane $\Pi\subseteq T_p\subman$ (recall that $\chi'\le 0$).
		
		We now want to apply \cref{first.var.X} with the infinitesimal variation $w:=X_\tau(\Phi)\uno_U$, which is admissible since
		$X_\tau$ vanishes near $\Phi(\de U)$.
		By \cref{R.bound} we have
		\begin{align*}
			&\sigma^4 f\ang{\nabla n,\nabla\omega}\le C\sigma^4(f^2+\tau^{-1}f^{3/2})\uno_{\spt(w)};
		\end{align*}
		hence, the corresponding term in the first variation is bounded by
		\begin{align*}
			&C\lambda(B_\tau(p))+C\sigma\tau^{-1}\lambda(B_\tau(p))^{3/4}\mu(B_\tau(p))^{1/4}
			\le C(\delta+\sigma\tau^{-1})\mu(B_{5\tau}(p)).
		\end{align*}
		Similarly, the curvature term in \cref{first.var.X} is bounded by $C\sigma\mu(B_{5\tau}(p))$,
		while the last term is again bounded by $C\delta\mu(B_{5\tau}(p))$.
		Also, the boundary term vanishes since the support of $w$ does not intersect $\de\Sigma$.
		
		Finally, as in the proof of \cref{limit.is.stat}, we have
		\begin{align*}
			\|w\|_\Phi &\le \|X_\tau\|_{L^\infty}+\|\nabla X_\tau\|_{L^\infty}
			+\Big(\int_{\operatorname{spt}(w)}(\|\nabla^2 X_\tau\|_{L^\infty}+\|\nabla X_\tau\|_{L^\infty}f^{1/2})^4\,\operatorname{vol}_\Phi\Big)^{1/4} \\
			&\le C+C\tau^{-1}\mu(B_\tau(p))^{1/4}+\sigma^{-1}\lambda(B_\tau(p))^{1/4} \\
			&\le C+C(\tau^{-4}+\sigma^{-4})\mu(B_{5\tau}(p)),
		\end{align*}
		so that from \cref{almost.crit} we get $|dE_\sigma(\Phi)[w]|\le C\sigma^5+C\sigma\mu(B_{5\tau}(p))$ for $\tau\ge\sigma$.
		
		Hence, defining $h(\tau):=\tau^{-2}\int_U\chi_\tau(\Phi)\,\operatorname{vol}_\Phi$, \cref{first.var.X} and a straightforward computation give
		\begin{align}\label{final.diffeq}
			&h'(\tau)\ge -C(\delta+\sigma\tau^{-1}) \tau^{-3}\mu(B_{5\tau}(p))-C\tau^{-2}\mu(B_{5\tau}(p))-C\sigma^2
		\end{align}
		for $\tau\ge\sigma$. Call $\bar r$ the biggest radius in $[s,r]$ such that
		\begin{align*}
			&\frac{\mu(B_{\bar r}(p))}{\bar r^2}\ge \frac{\mu(B_{s}(p))}{s^2}.
		\end{align*}
		For $\frac{r}{5}\ge\tau\ge\bar r\ge\sigma$, \cref{final.diffeq} becomes
		\begin{align*}
			&h'(\tau)\ge -C(\delta \tau^{-1}+\sigma\tau^{-2}+1)\frac{\mu(B_{s}(p))}{s^2}-C\sigma^2.
		\end{align*}
		
		Integrating this inequality between $8\bar r$ and $\frac{r}{5}$ we get
		\begin{align*}
			&\frac{\mu(B_{r/5}(p))}{(r/5)^2}
			\ge h(r/5)
			\ge h(8\bar r)-C(\delta\log(r/s)+\sigma s^{-1}+r)\frac{\mu(B_{s}(p))}{s^2}-C\sigma^2 r,
		\end{align*}
		unless $r<40\bar r$, in which case the statement follows trivially. Since $r$ and $\frac{\sigma}{s}$ are both bounded by $c'$,
		observing that $h(8\bar r)\ge\frac{\mu(B_{\bar r}(p))}{64\bar r^2}\ge\frac{\mu(B_{s}(p))}{64s^2}$ we arrive at
		\begin{align*}
			&\frac{\mu(B_{r/5}(p))}{(r/5)^2}
			\ge\Big(\frac{1}{64}-C\delta\log(r/s)-2Cc'\Big)\frac{\mu(B_{s}(p))}{s^2}-C\sigma^2 r,
		\end{align*}
		and the statement follows in this case, once we impose $2Cc'<\frac{1}{64}$.
		
		If $r':=\operatorname{dist}(p,\bsubman)<r$, we let $q$ be a nearest point to $p$ in $\bsubman$ (hence, $q=p$ when $r'=0$).
		If $r'\ge s$, we know that the claim holds with $r'$ replacing $r$;
		so it follows also for $r$ if either $s\ge r/8$ or $r'\ge r/8$, with possibly different constants. Assume in the sequel that $r',s<r/8$.
		For $\tau>2r'+s$ we have
		\begin{align*}
			&\lambda(B_\tau(q))
			\le\lambda(B_{2\tau}(p))
			\le\delta\mu(B_{10\tau}(p))
			\le\delta\mu(B_{20\tau}(q)).
		\end{align*}
		So, using now coordinates centered at $q$ and adapted to $\bsubman$ and defining $h$ as before, we get
		\begin{align*}
			&h'(\tau)\ge -C(\delta \tau^{-1}+\sigma\tau^{-2}+1)\frac{\mu(B_{20\tau}(p))}{\tau^2}-C\sigma^2
		\end{align*}
		for $\tau\ge 2r'+s\ge\sigma$;
		note that now also the boundary term in \cref{first.var.X} is taken into account, giving again a contribution
		bounded by $C\tau^{-3}\lambda(B_\tau(q))\le C\delta\tau^{-3}\mu(B_{20\tau}(q))$ in the previous right-hand side.
		Similarly to the above,
		assume $2r'+s\le\bar r'\le r/2$ to be the smallest radius in this interval such that
		$\frac{\mu(B_\tau(q))}{\tau^2}\le\frac{\mu(B_s(p))}{s^2}$ for $\tau\in[\bar r',r/2]$;
		if such radius does not exist, then we have $\frac{\mu(B_{r/2}(q))}{(r/2)^2}\ge\frac{\mu(B_s(p))}{s^2}$
		and we are done thanks to the inclusion $B_r(p)\supseteq B_{r/2}(q)$.
		Integrating from $8\bar r'$ to $r/40$ (again, we can assume $\bar r'\le\frac{r}{320}$), we conclude that either
		\begin{align*}
			&\frac{\mu(B_{r/40}(q))}{(r/40)^2}\ge\Big(\frac{1}{64}-C\delta\log(r/s)-2Cc'\Big)\frac{\mu(B_{s}(p))}{s^2}-C\sigma^2,
		\end{align*}
		in which case we are done since $\mu(B_r(p))\ge\mu(B_{r/40}(q))$, or
		\begin{align*}
			&\frac{\mu(B_{r/40}(q))}{(r/40)^2}
			\ge\frac{\mu(B_{2r'+s}(q))}{64(2r'+s)^2}-(C\delta\log(r/s)+2Cc')\frac{\mu(B_s(p))}{s^2}-C\sigma^2.
		\end{align*}
		In this second case, if $r'<s$ then we use the inequality $\frac{\mu(B_{2r'+s}(q))}{(2r'+s)^2}\ge\frac{\mu(B_s(p))}{(3s)^2}$ and we are done.
		Otherwise, if $r'\ge s$ we use the inequality $\frac{\mu(B_{2r'+s}(q))}{(2r'+s)^2}\ge\frac{\mu(B_{r'}(p))}{(3r')^2}$
		and we conclude using the already obtained lower bound for this last ratio.
%
	\end{proof}
	
	\begin{rmk}\label{mono.easy}
		A similar choice of test vector fields gives the following monotonicity for general free boundary stationary varifolds $\vfd$: given $p$ in $\subman$, one has
		\begin{align}\label{mono.easy.zero}
			&\frac{|\vfd|(B_r(p))}{r^2}\ge(1+C(\subman,\bsubman)\sqrt r)^{-1}\frac{|\vfd|(B_s(p))}{s^2}
		\end{align}
		for $0<s<r<\operatorname{diam}(\subman)$ if $p\in\bsubman$, and for $0<s<r<\operatorname{dist}(p,\bsubman)$ otherwise.
		Indeed, it suffices to establish \cref{mono.easy.zero} assuming $r$ small, and also $s\ge\frac{r}{2}$, since for $s<\frac{r}{2}$ we can then compare dyadic radii $r,\frac{r}{2},\dots,2^{-k}r$ until $2^{-k-1}r\le s$.
		Pick coordinates as in \cref{coordinates}, with $|\operatorname{dist}(\cdot,p)-|\xi||\le C\operatorname{dist}(\cdot,p)^2$, and take now $\chi$ such that $\chi=1$ on $[0,1-2\sqrt r]$, $\chi=0$ on $[1-\sqrt r,\infty)$ and $|\chi'|\le Cr^{-1/2}$, so that $\chi_\tau$ is supported in $B_r(p)$ for $\tau\le r$. Setting $h(\tau):=\tau^{-2}\int_\subman\chi_\tau\,d|\vfd|$, the stationarity of $\vfd$ and \cref{diverg} then give
		\begin{align*}
			&h'(\tau)\ge -Cr^{-5/2}|\vfd|(B_r(p)),
		\end{align*}
		which, integrating from $s$ to $r$, implies
		\begin{align*}
			&\frac{|\vfd|(B_r(p))}{r^2}-\frac{|\vfd|(B_{(1-C\sqrt r)s}(p))}{s^2}\ge\int_s^r h'(\tau)\,d\tau\ge -Cr^{-3/2}|\vfd|(B_r(p))
		\end{align*}
		and \cref{mono.easy.zero} follows easily.		
		Hence, the density
		\begin{align*}
			&\theta(\vfd,p):=\lim_{s\to 0}\frac{|\vfd|(B_r(p))}{\pi r^2}
		\end{align*}
		exists at any $p\in\subman$. It also follows that
		\begin{align}\label{mono.easy.claim}
			&|\vfd|(B_r(p))\le C(\subman,\bsubman)|\vfd|(\subman)r^2
		\end{align}
		for all $r>0$: this is clear if $p\in\bsubman$, while for $p\nin\bsubman$ and $r\ge\operatorname{dist}(p,\bsubman)$ we have
		$B_r(p)\subseteq B_{2r}(q)$ for some $q\in\bsubman$,
		so that $|\vfd|(B_r(p))\le C|\vfd|(\subman)(2r)^2$, and \cref{mono.easy.claim} follows also for $r<\operatorname{dist}(p,\bsubman)$ thanks to \cref{mono.easy.zero} again. In the same way, using the inclusions $B_s(p)\supseteq B_{s-d}(q)$ and $B_{2d}(q)\supseteq B_d(p)$, with $d:=\operatorname{dist}(p,\bsubman)$ and $q\in\bsubman$ a nearest point to $p$, we deduce that $|\vfd|(B_s(p))\ge cs^2\theta(\vfd,p)$ holds even for $3d<s<\operatorname{diam}(\subman)$.
		Thus,
		\begin{align}\label{mono.easy.claim2}
			&|\vfd|(B_r(p))\ge c(\subman,\bsubman)\theta(\vfd,p)r^2
		\end{align}
		for all $p\in\subman$ and all $0<r<\operatorname{diam}(\subman)$.
	\end{rmk}
	
	\begin{corollary}\label{small.bdry}
		Let $\Phi$ be a $\sigma^5$-critical point for $E_\sigma$, let $\delta>0$,
		and let $U\subseteq\Sigma$ be an open set which intersects $\de\Sigma$ but does not contain entirely any boundary component of $\Sigma$.
		Denote $S_\delta$ the set of points $p\in\subman\setminus\Phi(\de U)$ satisfying the maximal bound
		\begin{align*}
			&\lambda(B_s(p))\le\delta\mu(B_{5s}(p))\quad\text{for all radii }s>0.
		\end{align*}
		Let $T$ be a Borel set of points having distance less than $\sigma$ from $\Phi(\de\Sigma\cap U)$, and such that their distance from $\Phi(\de U)$ is at least $5\sigma$.
		Then we have
		\begin{align*}
			&\mu(S_\delta\cap T)
			\le C\sigma \operatorname{length}(\Phi|_{\de\Sigma\cap U})\frac{\mu(\subman)}{\operatorname{dist}(T,\Phi(\de U))^2}
			+C\sigma^3 \operatorname{length}(\Phi|_{\de\Sigma\cap U}),
		\end{align*}
		for some $C$ depending on $\subman$ and $\bsubman$, provided $\delta\log(1/\sigma)$ is small enough.
	\end{corollary}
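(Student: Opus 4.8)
The plan is to cover $S_\delta\cap T$ by balls of radius comparable to $\sigma$ via a packing argument: each such ball will be shown to carry a definite amount of boundary length — which bounds the number of balls needed — while \cref{mono.log} bounds the area inside each of them. Decompose $\lambda=\lambda_b+\lambda_f$, where $\lambda_b:=(\Phi|_{\de\Sigma\cap U})_*(\sigma\,\operatorname{vol}_{\Phi|_{\de\Sigma}})$ and $\lambda_f:=(\Phi|_U)_*(\sigma^4 f^2\,\operatorname{vol}_\Phi)$, so that $\lambda\ge\lambda_b$ and $\lambda_b(\subman)=\sigma\operatorname{length}(\Phi|_{\de\Sigma\cap U})$.

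The key geometric step — the only genuinely new ingredient — is the bound
\[
\lambda_b(\bar B_{2\sigma}(p))\ge\sigma^2\qquad\text{for every }p\in T.
\]
To prove it, fix $p\in T$ and pick $x\in\de\Sigma\cap U$ with $\operatorname{dist}(\Phi(x),p)<\sigma$. Let $A$ be the connected component of $\de\Sigma\cap U$ through $x$; since $U$ contains no entire boundary component, $A$ is a proper open sub-arc of a circle of $\de\Sigma$, hence $\overline A$ meets $\de U$ at some point $y$, and $\operatorname{dist}(\Phi(y),p)\ge 5\sigma$ because $p\in T$. Following $A$ from $x$ towards $y$, the image curve $\gamma$ starts in $B_\sigma(p)$ and must later reach distance $\ge 5\sigma$ from $p$; letting $t^*$ be the first time its distance to $p$ equals $2\sigma$, the initial sub-arc $S^*\subseteq\de\Sigma\cap U$ satisfies $\Phi(S^*)\subseteq\bar B_{2\sigma}(p)$, and its image under $\Phi$ has length $\ge\operatorname{dist}(\Phi(x),\Phi(\gamma(t^*)))\ge 2\sigma-\sigma=\sigma$, i.e. $\operatorname{vol}_{\Phi|_{\de\Sigma}}(S^*)\ge\sigma$; multiplying by $\sigma$ gives the claim.

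Next I would take a maximal $\sigma$-separated subset $\{p_i\}$ of $S_\delta\cap T$, so that $S_\delta\cap T\subseteq\bigcup_i\bar B_\sigma(p_i)$ and the balls $\bar B_{2\sigma}(p_i)$ have overlap bounded by a constant $N_0(\subman)$ (a packing estimate, valid for $\sigma$ small). Combining this with the displayed lower bound,
\[
\sigma^2\,\#\{i\}\le\sum_i\lambda_b(\bar B_{2\sigma}(p_i))\le N_0\,\lambda_b(\subman)=N_0\,\sigma\operatorname{length}(\Phi|_{\de\Sigma\cap U}),
\]
so the number of indices is at most $N_0\,\sigma^{-1}\operatorname{length}(\Phi|_{\de\Sigma\cap U})$. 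For each $i$, since $p_i\in S_\delta$ it satisfies the maximal bound, and $\operatorname{dist}(p_i,\Phi(\de U))\ge D:=\operatorname{dist}(T,\Phi(\de U))\ge 5\sigma$; thus \cref{mono.log} applies at $p_i$ with $s:=2\sigma$ and $r:=\operatorname{dist}(p_i,\Phi(\de U))$ (the argument of \cref{mono.log} only uses that the center lies at positive distance from $\Phi(\de U)$ and satisfies the maximal bound, not that it lies in $\Phi(U)$; alternatively one may recenter at a nearby point of $\Phi(\de\Sigma\cap U)$ at the cost of harmless constants). Since $\delta\log(1/\sigma)$ is small the coefficient $c-C\delta\log(r/s)$ stays $\ge c/2$, so rearranging the conclusion of \cref{mono.log} and using $D\le r\le\operatorname{diam}\subman$ gives $\mu(\bar B_\sigma(p_i))\le C\sigma^2(D^{-2}\mu(\subman)+\sigma^2)$. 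Summing over $i$ and using the bound on their number yields exactly
\[
\mu(S_\delta\cap T)\le C\,\sigma\operatorname{length}(\Phi|_{\de\Sigma\cap U})\,\frac{\mu(\subman)}{D^2}+C\,\sigma^3\operatorname{length}(\Phi|_{\de\Sigma\cap U}).
\]

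I expect the main obstacle to be the geometric step: one must use the hypothesis that $U$ does not contain an entire boundary component precisely to force the boundary arc near $p$ to travel a definite distance (it has to reach $\de U$, which is at distance $\ge 5\sigma$), thereby depositing length $\gtrsim\sigma$ inside $\bar B_{2\sigma}(p)$. Everything else — the packing/covering and the two invocations of \cref{mono.log} (through the small-$\delta\log(1/\sigma)$ hypothesis, which also keeps the coefficient positive and the packing constant uniform) — is routine.
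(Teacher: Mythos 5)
Your proof is correct and rests on the same key lemma (\cref{mono.log}) and the same overall strategy as the paper: cover $S_\delta\cap T$ by roughly $\sigma^{-1}\operatorname{length}(\Phi|_{\de\Sigma\cap U})$ balls of radius comparable to $\sigma$ with centers in $S_\delta$, then apply \cref{mono.log} at each center with inner radius $\sim\sigma$ and outer radius $\sim\operatorname{dist}(T,\Phi(\de U))$. The difference is in how the covering is produced and counted. The paper discards the components of $\de\Sigma\cap U$ of length less than $2\sigma$, subdivides the remaining arcs by arclength into at most $\sigma^{-1}L$ pieces to get centers $p_j$ on $\Phi(\de\Sigma\cap U)$, and then recenters at nearby points $q_j\in S_\delta\cap B_{2\sigma}(p_j)$ so that the maximal bound is available (ending up with balls $B_{4\sigma}(q_j)$). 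You instead take a maximal $\sigma$-separated net directly inside $S_\delta\cap T$, so the maximal bound holds at the centers with no recentering, and you count the net by the pointwise lower bound $\lambda_b(\bar B_{2\sigma}(p))\ge\sigma^2$ for $p\in T$, which you derive correctly from the hypotheses that $U$ contains no full boundary component and $\operatorname{dist}(T,\Phi(\de U))\ge 5\sigma$ (the boundary arc through a nearby point must travel from $B_\sigma(p)$ out to $\Phi(\de U)$, hence deposits length at least $\sigma$ in $\bar B_{2\sigma}(p)$), combined with bounded overlap and $\lambda_b(\subman)=\sigma L$. This buys slightly cleaner constants (your centers satisfy $\operatorname{dist}(p_i,\Phi(\de U))\ge\max\{5\sigma,D\}$, so $r=D>s=2\sigma$ is immediate), at the modest cost of the packing lemma and the length-deposit estimate; the paper's arclength subdivision is more direct but needs the recentering step. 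Your parenthetical concern about applying \cref{mono.log} at a center not of the form $\Phi(x)$ with $x\in U$ is moot: the proof of \cref{mono.log} never uses that the center lies on the immersed surface, and the paper itself invokes it at points $q_j\in S_\delta$ (and, in \cref{quant.phi}, at arbitrary points $q\in K$), exactly as you do.
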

	
	\begin{proof}
		Let $L:=\operatorname{length}(\Phi|_{\de\Sigma\cap U})$. We first note that the set of points $T'$ in $\Phi(\de\Sigma\cap U)$ with distance less than $\sigma$ from $T$ can be covered with at most $\sigma^{-1}L$ balls $B_{\sigma}(p_j)$, with $\operatorname{dist}(p_j,\Phi(\de U))\ge 4\sigma$. Indeed, note first that $\operatorname{dist}(T',\Phi(\de U))\ge 4\sigma$; we can discard the components of $\de\Sigma\cap U$ producing an arc of length less than $2\sigma$, since this arc is disjoint from $T'$;
		we are left with finitely many components, corresponding to curves $\gamma_i:I_i\to\bsubman$ with endpoints in $\Phi(\de U)$, where $I_i=(0,|I_i|)$ is an open interval;
		assuming each of them to be parametrized by arclength,
		we then subdivide $[\sigma,|I_i|-\sigma]$ into at most $\sigma^{-1}|I_i|$ intervals $I_{i\ell}$ of size less than $\sigma$
		and we pick a point $p_{i\ell}$ in $\gamma_i(I_{i\ell})\cap T'$, discarding the intervals for which this intersection is empty.
		The resulting collection of balls $\{B_\sigma(p_{i\ell})\}$ is the desired one.
		
		Hence, $T$ is covered by a collection of balls $\{B_{2\sigma}(p_j)\mid j\in J\}$, with $|J|\le \sigma^{-1}L$
		and $\operatorname{dist}(p_j,\Phi(\de U))\ge 4\sigma$.
		
		Now let $J'\subseteq J$ denote the set of indices $j$ such that $B_{2\sigma}(p_j)$ intersects $S_\delta$ and, for $j\in J'$, choose a point $q_j\in S_\delta\cap B_{2\sigma}(p_j)$. Then we have
		\begin{align*}
			&T\cap S_\delta\subseteq\bigcup_{j\in J'}B_{4\sigma}(q_j).
		\end{align*}
		Note that $\operatorname{dist}(q_j,\Phi(\de U))\ge\operatorname{dist}(T,\Phi(\de U))-3\sigma$,
		which is comparable with $\operatorname{dist}(T,\Phi(\de U))$,
		so that \cref{mono.log} gives
		\begin{align*}
			&\frac{\mu(\subman)}{\operatorname{dist}(T,\Phi(\de U))^2}
			\ge (c-C\delta\log(1/\sigma))\frac{\mu(B_{4\sigma}(q_j))}{(4\sigma)^2}-C\sigma^2
		\end{align*}
		for constants $c,C$ depending solely on $\subman,\bsubman$.
		Summing over $j\in J'$, we obtain
		\begin{align*}
			&\mu(T)\le\sum_{j\in J'}\mu(B_{4\sigma}(q_j))
			\le\sigma^{-1}L\frac{C}{1-C\delta\log(1/\sigma)}\Big(\sigma^2\frac{\mu(\subman)}{\operatorname{dist}(T,\Phi(\de U))^2}+\sigma^4\Big)
		\end{align*}
		and the statement follows.
	\end{proof}

	\begin{corollary}\label{mono.log.nobdry}
		Under the same assumptions as in \cref{mono.log}, if $B_\sigma(p)\cap\bsubman=\emptyset$ then
		\begin{align*}
			&\frac{\mu(B_r(p))}{r^2}\ge c-C\delta\log(r/\sigma)-C\sigma^2,
		\end{align*}
		provided $\delta$ and $\sigma$ are small enough.
	\end{corollary}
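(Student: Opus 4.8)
The plan is to deduce everything, via \cref{mono.log} applied with $s=\sigma$, from a purely local lower bound of the form $\sigma^{-2}\mu(B_\sigma(p))\ge c_0$ for a constant $c_0=c_0(\subman,\bsubman)>0$: once this is available, substituting $s=\sigma$ in \cref{mono.log} gives $\frac{\mu(B_r(p))}{r^2}\ge(c-C\delta\log(r/\sigma))\,\sigma^{-2}\mu(B_\sigma(p))-C\sigma^2$, which yields the claimed inequality (when the resulting right-hand side is negative there is nothing to prove, since $\mu\ge 0$; the final constants depend on those of \cref{mono.log} and on $c_0$). The role of the hypothesis $B_\sigma(p)\cap\bsubman=\emptyset$ is precisely that below scale $\sigma$ no criticality is needed: since $\Phi(\de\Sigma)\subseteq\bsubman$ and $B_\sigma(p)\subseteq B_r(p)$ is disjoint from $\Phi(\de U)$, the immersion $\Phi|_U$ has no boundary over the open ball $B_\sigma(p)$, so it induces there an integral $2$-varifold $V$ with $\|V\|=\mu\mrestr B_\sigma(p)$ and smooth generalized mean curvature $\operatorname{tr}\ff^\Phi$, with $|\operatorname{tr}\ff^\Phi|\le\sqrt2\,|\ff^\Phi|$. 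On this ball, which for $\sigma$ small lies in a chart as in \cref{coordinates}, I would then apply the classical monotonicity formula for varifolds with generalized mean curvature in $L^p$, $p>2$.

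Concretely, write $A:=\sigma^{-2}\mu(B_\sigma(p))$ and $\phi(s):=s^{-2}\mu(B_s(p))$ for $0<s\le\sigma$, so $A=\phi(\sigma)<\infty$; note also $\Theta:=\sup_{0<s\le\sigma}\phi(s)<\infty$, since $\phi$ stays bounded as $s\to0$, and moreover $\liminf_{s\to0}\phi(s)\ge\pi$ because $\Phi$ is an immersion passing through $p=\Phi(x)$ with $x\in U$ (the density of $V$ at $p$ is at least $1$). The maximal bound, together with $B_s(p)\cap\bsubman=\emptyset$, gives $\int_{\Phi^{-1}(B_s(p))\cap U}|\ff^\Phi|^4\operatorname{vol}_\Phi=\sigma^{-4}\lambda(B_s(p))\le\sigma^{-4}\delta\,\mu(B_{5s}(p))\le\delta A\sigma^{-2}$ for $s\le\sigma/5$, whence by H\"older $\int_{\Phi^{-1}(B_s(p))\cap U}|\ff^\Phi|\,\operatorname{vol}_\Phi\le C\delta^{1/4}A^{1/4}\Theta^{3/4}\sigma^{-1/2}s^{3/2}$. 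Feeding this into the monotonicity formula on $B_{\sigma/20}(p)$ — whose error term is controlled by $\int_\rho^{\sigma/20}s^{-2}\int_{\Phi^{-1}(B_s(p))\cap U}|\ff^\Phi|\,\operatorname{vol}_\Phi\,ds$ up to an $O(\sigma)$ multiplicative factor from the ambient curvature — and using $\phi(\sigma/20)\le 400A$, I get $\phi(\rho)\le CA+C\delta^{1/4}A^{1/4}\Theta^{3/4}$ for $0<\rho<\sigma/20$, and this holds trivially for $\sigma/20\le\rho\le\sigma$ as well (there $\phi(\rho)\le\rho^{-2}\mu(B_\sigma(p))\le 400A$). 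Taking the supremum over $\rho$ and absorbing with Young's inequality $C\delta^{1/4}A^{1/4}\Theta^{3/4}\le\tfrac12\Theta+C\delta A$ gives $\Theta\le C_1A$ for $\delta$ small; plugging this back, $\phi(\rho)\le C_2A$ for all $\rho\in(0,\sigma]$, and letting $\rho\to0$ together with $\liminf_{\rho\to0}\phi(\rho)\ge\pi$ yields $A\ge\pi/C_2=:c_0>0$, as needed.

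The routine parts I would not spell out are the precise constant bookkeeping in combining with \cref{mono.log}, the exact form of the monotonicity formula with $L^p$ mean curvature and its $O(\sigma)$ correction in a curved ambient, and the Young's-inequality absorption. The one genuine subtlety, and the step I expect to require the most care, is the bootstrap on $\Theta$: the maximal bound controls $\int|\ff^\Phi|^4$ by a multiple of the \emph{mass} $\mu(B_{5s}(p))$ rather than by a power of the scale, so the error term in the monotonicity formula cannot be bounded directly by a universal constant times $\sigma^2$; one must first use the very same monotonicity formula to show that the mass ratio $\phi$ is, up to a universal factor, no larger at small scales than at scale $\sigma$, and only then compare with the density at $p$. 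Apart from this, the argument is just the standard proof that an immersed surface with small $L^4$ second fundamental form has density close to its (integer) multiplicity.
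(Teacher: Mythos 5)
Your proposal is correct and follows essentially the same strategy as the paper: both reduce the corollary to the purely local estimate $\mu(B_\sigma(p))\ge c\sigma^2$ (then invoke \cref{mono.log} with $s=\sigma$), and both obtain that estimate from the monotonicity formula for the immersed surface (mean curvature in $L^4$) combined with the maximal bound $\lambda(B_s(p))\le\delta\mu(B_{5s}(p))$ on sub-$\sigma$ scales, together with the fact that the density at $p$ is at least one. The only stylistic difference is the bootstrap: you introduce $\Theta=\sup_{0<s\le\sigma}\phi(s)$ and absorb via Young's inequality, whereas the paper uses a stopping-time radius $\bar t$ (the largest $\bar t\le\sigma/2$ with $\mu(\tilde B_{\bar t})\ge\tfrac{\pi}{2}\bar t^2$, computed in extrinsic Euclidean balls $\tilde B_t\subset\R^Q$) and telescopes dyadically above $\bar t$; these are equivalent ways to suppress the mass factor in the error term.
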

	
	\begin{proof}
		We first claim that
		\begin{align}\label{sigma.claim}
			&\mu(B_{\sigma}(p))>c'\sigma^2
		\end{align}
		for some universal $c'>0$.
		
		The second fundamental form of the immersed surface $\Phi$ in $\R^\envdim$ is bounded by $|\ff^\Phi|+C(\subman)$, so the monotonicity formula in the ball $\tilde B_t(p):=B_t^{\R^\envdim}(p)$ (see, e.g., \cite[eq.~(17.4)]{simon}, whose proof carries over to the setting of immersed surfaces) and H\"older's inequality give
		\begin{align}\label{easy.mono}\begin{aligned}
			\frac{\mu(\tilde B_t(p))}{t^2}-\frac{\mu(\tilde B_{t/2}(p))}{(t/2)^2}
			&\ge -Ct^{-1}(\sigma^{-1}\lambda(\tilde B_t(p))^{1/4}\mu(\tilde B_t(p))^{3/4}+\mu(\tilde B_t(p))) \\
			&\ge -Ct^{-1}(\sigma^{-1}\delta^{1/4}+1)\mu(\tilde B_{20t}(p))
		\end{aligned}\end{align}
		for $t\le\sigma$ small enough.
		Let $\bar t\le \frac{\sigma}{2}$ be the biggest radius such that $\mu(\tilde B_{\bar t}(p))\ge\frac{\pi}{2}\bar t^2$;
		note that $\bar t$ exists since $\lim_{t\to 0}\frac{\mu(\tilde B_t(p))}{\pi t^2}\ge 1$.
		If $\bar t\ge\frac{\sigma}{80}$ then we are done, thanks to the inclusion $B_{2\bar t}(p)\supseteq\subman\cap \tilde B_{\bar t}(p)$. Otherwise, \cref{easy.mono} gives
		\begin{align*}
			&\frac{\mu(\tilde B_t(p))}{t^2}-\frac{\mu(\tilde B_{t/2}(p))}{(t/2)^2}
			\ge -Ct(\sigma^{-1}\delta^{1/4}+1)
		\end{align*}
		for $\bar t\le t\le\frac{\sigma}{40}$. Setting $t:=2^{-k}(\sigma/40)$ in the last inequality and summing on $k=0,\dots,k_0-1$,
		where $k_0$ is the biggest integer such that $t\ge\bar t$, we get
		\begin{align*}
			&\frac{\mu(\tilde B_{\sigma/40}(p))}{(\sigma/40)^2}\ge \frac{\mu(\tilde B_{\bar t}(p))}{4\bar t^2}-C\delta^{1/4}-C\sigma
		\end{align*}
		and claim \cref{sigma.claim} follows again, for $\delta$ and $\sigma$ small enough.

		The statement now follows by applying \cref{mono.log} with $s:=\sigma$.
	\end{proof}
	
	\section{Asymptotic behavior of the area, in \texorpdfstring{$\Sigma$}{the domain} and in \texorpdfstring{$\subman$}{the ambient}}\label{asympt.sec}
	
	We now investigate the asymptotic behavior of the maps $\Phi_k$ introduced in \cref{crit.sec}.
	Recall that $\nu_k$ is the area measure of $\Phi_k$ on $\Sigma$, meaning that $\nu_k(U)$ is the area of the immersion $\Phi_k|_U$ for any open set $U\subseteq\Sigma$. Also, let $\mu_k:=(\Phi_k)_*\nu_k$ be the corresponding measure on $\subman$, and recall that $\vfd_k$ is the $2$-varifold induced by $\Phi_k$, namely $\vfd_k:=(\Phi_k)_*(\Sigma)$, the varifold pushforward of the canonical multiplicity one $2$-varifold on $\Sigma$.
	
	Up to subsequences, we can assume that $\mu_k$, $\nu_k$ and $\vfd_k$ converge weakly to limits $\mu_\infty$, $\nu_\infty$ and $\vfd_\infty$, in the sense of Radon measures and varifolds.
	
	In this section we show structure theorems for the limit measures $\nu_\infty$, $\mu_\infty$ and for the limit varifold $\vfd_\infty$, namely \cref{ac}, \cref{struct.n} and \cref{par.stat}. The regularity of $\vfd_\infty$ will be studied in \cref{reg.sec}.
	
	We will assume for simplicity that the maps $\Phi_k$ induce the same conformal structure on $\Sigma$; we will discuss the general case later, in \cref{deg.sec}.
	
	Given a reference metric $g_0$ (on $\Sigma$) compatible with this structure, $\operatorname{vol}_{g_0}$ will denote either the corresponding volume form or the associated measure.
	
	Note that $\nu_k=\mz|d\Phi_k|_{g_0}^2\,\operatorname{vol}_{g_0}$. Hence, viewing $\subman\subset\R^\envdim$, the maps $\Phi_k$ are bounded in $W^{1,2}(\Sigma,\R^\envdim)$ and, up to subsequences, we can extract a weak limit $\Phi_\infty$. Note that we have the strong convergence in $L^2$ for the maps $\Phi_k\to\Phi_\infty$ and the traces $\Phi_k|_{\de\Sigma}\to\Phi_\infty|_{\de\Sigma}$; hence, $\Phi_\infty$ and its trace $\Phi_\infty|_{\de\Sigma}$ take values into $\subman$ and $\bsubman$, respectively.
	
	\begin{proposition}\label{quant.phi}
		Given $x\in\Sigma$, fix a local conformal chart centered at $x$ such that
		the chart domain corresponds to $U':=B_1^2$ if $x\nin\de\Sigma$, or to $U':=B_1^2\cap\{\Im(z)\ge 0\}$
		if $x\in\de\Sigma$.
		Given $0<r<1$, assume that $\Phi_k|_{\de B_r^2\cap U'}$ converges to the trace $\Phi_\infty|_{\de B_r^2\cap U'}$ in $C^0$,
		and that $s:=\operatorname{diam}\Phi_\infty(\de B_r^2\cap U')<c_V$, with $c_V$ the constant appearing in \cref{quant.vfd}.
		
		Then either $\limsup_{k\to\infty}\nu_k(B_r^2\cap U')\ge c_Q$, with a constant $c_Q>0$ depending only on $\subman$ and $\bsubman$, or $\spt(\mu)$ is included in a $2s$-neighborhood of $\Phi_\infty(\de B_r^2\cap U')$, for any weak limit $\mu$ of $(\Phi_k|_{B_r^2\cap U'})_*\nu_k$.
	\end{proposition}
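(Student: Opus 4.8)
The plan is to argue by contradiction using the monotonicity estimates from \cref{mono.sec}, together with the maximal-type bound that is surely established in the statement referenced as \cref{quant.vfd}. Suppose $\limsup_k \nu_k(B_r^2\cap U') < c_Q$, with $c_Q$ to be chosen small. Passing to a subsequence, we may assume the masses $\nu_k(B_r^2\cap U')$ stay below $c_Q$ and that the pushforwards $(\Phi_k|_{B_r^2\cap U'})_*\nu_k$ converge weakly to a limit $\mu$; note that $\mu(\subman)<c_Q$. Set $W:=B_r^2\cap U'$, so that $\Phi_\infty(\de W)$ has diameter $s<c_V$. The point to prove is that $\spt(\mu)$ is contained in the $2s$-neighborhood $N$ of $\Phi_\infty(\de W)$; equivalently, that $\mu$ assigns no mass to any ball $B_\rho(p)$ with $\operatorname{dist}(p,\Phi_\infty(\de W))>2s$ and $\rho$ small.

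First I would set up the covering mechanism: choose $\sigma_k$-dependent radii and localize. For the sequence $\Phi_k$ we have the entropy bound \cref{entropy}, which guarantees $\sigma_k^4\log\sigma_k^{-1}\int_\Sigma f_k^2\,\operatorname{vol}_{\Phi_k}\to 0$ and $\sigma_k\log\sigma_k^{-1}\operatorname{length}(\Phi_k|_{\de\Sigma})\to 0$. The standard way to invoke \cref{mono.log} is to pass to the set of \emph{good} points where the maximal bound $\lambda_k(B_s(p))\le\delta\mu_k(B_{5s}(p))$ holds for all $s$; by a Vitali-type covering argument (the same one implicit in \cref{small.bdry} and \cref{quant.vfd}) the bad set has $\mu_k$-measure $\le C\delta^{-1}(\sigma_k\operatorname{length}(\Phi_k|_{\de\Sigma})+\sigma_k^4\int_\Sigma f_k^2)$, hence negligible in the limit. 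So it suffices to understand the good points. Fix such a point $p$ at ambient distance $>2s$ from $\Phi_\infty(\de W)$; for $k$ large the $C^0$-convergence on $\de W$ keeps $\Phi_k(\de W)$ in the $s$-neighborhood, so the ball $B_{s}(p)$ is disjoint from $\Phi_k(\de W)$. Now apply \cref{mono.log} with $U$ a suitable sub-neighborhood of $W$ on which $\Phi_k$ restricts: for $r'>s'\ge\sigma_k$ with $B_{r'}(p)\cap\Phi_k(\de W)=\emptyset$ we get
\begin{align*}
	\frac{\mu_k(B_{r'}(p))}{r'^2}\ge\big(c-C\delta\log(r'/s')\big)\frac{\mu_k(B_{s'}(p))}{s'^2}-C\sigma_k^2.
\end{align*}
Combining with the small-scale lower bound $\mu_k(B_{\sigma_k}(p))>c'\sigma_k^2$ from the proof of \cref{mono.log.nobdry} (valid once $B_{\sigma_k}(p)\cap\bsubman=\emptyset$; the case $p$ near $\bsubman$ is handled by recentering at a nearest point of $\bsubman$ exactly as in \cref{mono.log}), we obtain, taking $s'=\sigma_k$ and $r'$ equal to the distance of $p$ from $\Phi_k(\de W)$ (which is comparable to the fixed distance of $p$ from $\Phi_\infty(\de W)$, hence bounded below independently of $k$),
\begin{align*}
	\mu_k(B_{r'}(p))\ge\big(c-C\delta\log\sigma_k^{-1}\big)c'\sigma_k^2\cdot\frac{r'^2}{\sigma_k^2}-C\sigma_k^2 r'^2=\big(cc'-Cc'\delta\log\sigma_k^{-1}-C\sigma_k^2\big)r'^2.
\end{align*}
Here is the crucial point: $\delta$ is a fixed small constant, but $\delta\log\sigma_k^{-1}\to\infty$, so this direct estimate degenerates. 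The correct move — the one I expect to be the main obstacle — is instead to iterate \cref{mono.log} over dyadic scales between $\sigma_k$ and a fixed $r_0>0$ while keeping $\delta\log 2 = O(\delta)$ at each step, so that the cumulative loss is $C\delta\log(r_0/\sigma_k)$ \emph{only when integrated once}; one must therefore run the argument the way the proof of \cref{mono.log} itself does (integrating $h'(\tau)$), obtaining $\mu_k(B_{r_0}(p))\ge (c_0-C\delta\log\sigma_k^{-1})\mu_k(B_{\sigma_k}(p))/\sigma_k^2\cdot r_0^2$, and then absorb the $\delta\log\sigma_k^{-1}$ error using precisely the hypothesis that $\delta\log\sigma_k^{-1}$ is \emph{small} — which is exactly the regime the maximal-bound selection guarantees in the asymptotic analysis (it is the content of the phrase ``provided $\delta\log(1/\sigma)$ is small enough'' in \cref{small.bdry}). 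Concretely: in the asymptotic setting one does not fix $\delta$ first, but chooses $\delta=\delta_k\to 0$ slowly enough that $\delta_k\log\sigma_k^{-1}\to 0$ while still $\delta_k^{-1}(\sigma_k\operatorname{length}+\sigma_k^4\int f_k^2)\to 0$; the entropy condition \cref{entropy} is tailored to make both possible. With this choice the bad set is still $\mu_k$-negligible and the monotonicity yields $\liminf_k\mu_k(B_{r_0}(p))\ge c_0 r_0^2$ for the fixed small radius $r_0$ (chosen $\le r'$ and $\le$ the distance from $\bsubman$, or its recentered analogue).

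Finally I would extract the contradiction and conclude. By weak convergence $\mu(B_{r_0}(p))\ge\limsup_k\mu_k(B_{r_0}(p))$ is false in general, but $\mu(\overline{B_{r_0}(p)})\ge\limsup_k\mu_k(B_{r_0}(p))\ge c_0 r_0^2$; since $r_0$ depends only on $\subman,\bsubman$ and the fixed distance $d:=\operatorname{dist}(\spt\mu,\Phi_\infty(\de W))$ (when this is positive), any such $p\in\spt\mu$ with $d>2s$ would force $\mu(\subman)\ge c_0 r_0(d)^2$. Choosing $c_Q$ smaller than $c_0 r_0(d)^2$ — more precisely, running the dichotomy so that $c_Q$ is fixed first and $r_0$ chosen as a definite function of $c_Q$ and the geometry, then observing that $\spt\mu\ne\emptyset$ outside $N$ forces mass $\ge c_Q$ — contradicts $\mu(\subman)<c_Q$. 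Hence $\spt\mu\subseteq N$, the $2s$-neighborhood of $\Phi_\infty(\de W)$, which is the second alternative. (The role of the smallness $s<c_V$ is to ensure that throughout we stay inside a single coordinate chart as in \cref{coordinates} and that the ball $B_{r_0}(p)$ genuinely misses $\Phi_k(\de W)$ for large $k$; the constant $c_V$ from \cref{quant.vfd} is precisely the threshold making the covering/monotonicity bookkeeping uniform.) The only genuinely delicate bookkeeping is the simultaneous choice of $\delta_k\to 0$ against $\sigma_k\to 0$ so that the maximal bound holds on a set of full $\mu_k$-measure in the limit \emph{and} the logarithmic error $C\delta_k\log\sigma_k^{-1}$ stays below $c_0/2$; everything else is a routine transcription of the monotonicity argument of \cref{mono.log} and \cref{mono.log.nobdry}.
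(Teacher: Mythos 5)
There is a genuine gap in the final step. Your density/covering machinery (choosing the maximal threshold of size $\delta_k/\log\sigma_k^{-1}$ so that the Vitali bad set is $\mu_k$-negligible, discarding the $\sigma_k$-neighborhood of $\Phi_k(\de\Sigma)$ via \cref{small.bdry}, and invoking \cref{mono.log} and \cref{mono.log.nobdry} at good points) is essentially the same as the paper's and is fine: it yields a uniform density lower bound $c$ for the limit of the localized pushforwards at points away from $\Gamma:=\Phi_\infty(\de B_r^2\cap U')$. But the concluding dichotomy cannot be extracted from monotonicity alone. If a point $q\in\spt(\mu)$ lies at distance $d>2s$ from $\Gamma$, the largest ball around $q$ on which your monotonicity argument applies has radius comparable to $d-s$, so you only get $\mu(\subman)\ge c\,(d-s)^2$. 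Since $d$ may be barely above $2s$ and $s$ may be arbitrarily small (this is exactly the regime in which the proposition is used, e.g.\ in \cref{ac}), this lower bound is not bounded below by any constant $c_Q$ depending only on $(\subman,\bsubman)$. Your attempted fix --- fixing $c_Q$ first and letting $r_0$ depend on $c_Q$ --- changes the conclusion to ``$\spt(\mu)$ lies in an $r_0(c_Q)$-neighborhood of $\Gamma$,'' which is not the stated $2s$-neighborhood.

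The missing idea is the first-variation argument of \cref{quant.vfd}, which you misidentified as a ``maximal-type bound.'' The paper first shows (by repeating the proof of \cref{limit.is.stat} with vector fields supported away from $\Gamma$, the cut-off variation $w_k=X(\Phi_k)\uno_{B_r^2\cap U'}$ being admissible) that the limit of the localized varifolds is (free boundary) stationary outside $\Gamma$, and that $\Gamma\subseteq\bar B_s(p)$ for a suitable $p$ (with $p\in\bsubman$ in the boundary case). Then \cref{quant.vfd} (resp.\ \cref{quant.vfd.bis}) tests this stationarity against a radial vector field $\chi(|x|)x_i\de/\de x_i$ vanishing near $\bar B_s(p)$ and with nonnegative tangential divergence, strictly positive beyond radius $\tfrac53 s$: this forces the support either into $B_{2s}(p)$ or out to a fixed geometric scale $\gamma(\subman,\bsubman)$, where the density lower bound finally produces mass at least $c\gamma^2$, i.e.\ a genuine constant $c_Q$ independent of $s$ and $d$. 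Without this stationarity step your contradiction scheme cannot close, so the proof as proposed does not establish the proposition.
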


	Here the letter $Q$ in $c_Q$ stands for \emph{quantization}; it is not related to the dimension of the Euclidean space $\R^\envdim$.

	\begin{proof}
		Assume $\limsup_{k\to\infty}\nu_k(B_r^2\cap U')<c_Q$, for $c_Q$ to be specified below,
		and let $\mu$ be the weak limit of $(\Phi_k|_{B_r^2\cap U'})_*\nu_k$ along a subsequence (not relabeled).
		The maps $\Phi_k|_{B_r^2\cap U'}$ induce varifolds $\tilde\vfd_k$.
		
		If $x\in\de\Sigma$, then we can repeat the proof of \cref{limit.is.stat} with vector fields $X$ supported outside $\Gamma:=\Phi_\infty(\de B_r^2\cap U')$, with the corresponding variation $w_k$ given by $w_k=X(\Phi_k)$ on $B_r^2\cap U'$ and $w_k=0$ on the complement (in $\Sigma$). We deduce that the limit (up to further subsequences) $\tilde\vfd_\infty$ is a free boundary stationary varifold outside $\Gamma$. If $x\nin\de\Sigma$, then $\tilde\vfd_\infty$ is actually stationary outside $\Gamma$, since any vector field supported outside $\Gamma$ produces a variation which does not change $\Phi_k$ outside $B_r^2$.
		
		Also, if $x\in\de\Sigma$ we let $p_k\in\Phi_k(\de B_r^2\cap\{\Im(z)=0\})\in\bsubman$ and call $p$ any limit point; we then have $p\in\Gamma\cap\bsubman$ and $\Gamma\subseteq \bar B_s(p)$. If $x\nin\de\Sigma$, we just take
		any $p\in\Gamma$ and again we have $\Gamma\subseteq \bar B_s(p)$.
		
		Observing that $(\Phi_k|_{B_r^2\cap U'})_*\nu_k=|\tilde\vfd_k|$ converges both to $\mu$ and to $|\tilde\vfd_\infty|$, we deduce $\mu=|\tilde\vfd_\infty|$.
		Also, $\tilde\vfd_\infty$ has density bounded below by a certain constant $c$, on $\subman\setminus\Gamma$.
		To show this, fix a compact set $K\subset\subman\setminus\Gamma$; it suffices to prove that
		\begin{align}\label{density.claim}
			&\limsup_{k\to\infty}|\tilde\vfd_k|(B_s(q))\ge cs^2
		\end{align}
		for all $s<\operatorname{dist}(K,\Gamma)$ and all $q\in K$ outside a set $F_k$, with $|\tilde\vfd_k|(F_k)\to 0$.
		This can be obtained with \cref{mono.log}, \cref{small.bdry}, \cref{mono.log.nobdry} and a covering argument:
		let
		\begin{align*}
			&\lambda_k:=(\Phi_k|_{B_r^2\cap U'})_*(\sigma_k^4|\ff^{\Phi_k}|^4\,\nu_k),
		\end{align*}
		so that by hypothesis $\lambda_k(\subman)=\frac{\delta_k^2}{\log\sigma_k^{-1}}$
		for some sequence $\delta_k\to 0$.
		Let $F_k'\subseteq K$ be the set of points $q$ such that
		\begin{align*}
			&\lambda_k(B_s(q))>\frac{\delta_k}{\log\sigma_k^{-1}}|\tilde\vfd_k|(B_{5s}(q)),\quad\text{for some }s>0.
		\end{align*}
		Then, by Vitali's covering lemma, we can find a subcollection $\{B_{s_i}(q_i)\}$ of disjoint balls such that
		$F_k'\subseteq\bigcup_i B_{5s_i}(q_i)$.
		This gives $|\tilde\vfd_k|(F_k')\le \frac{\log\sigma_k^{-1}}{\delta_k}\lambda_k(\subman)=\delta_k$, which is infinitesimal.
		
		On the other hand, let $F_k''$ be the $\sigma_k$-neighborhood of $\Phi(\de\Sigma\cap U)$ intersected with $K$.
		Then eventually \cref{small.bdry} is satisfied, with $B_r^2\cap U'$, $F_k''$, $\subman\setminus F_k'$ and $\frac{\delta_k}{\log\sigma_k^{-1}}$ in place of $U$, $T$, $S_\delta$ and $\delta$, and we obtain
		\begin{align*}
			&\mu_k(F_k''\setminus F_k')\le C(K)\sigma_k \operatorname{length}(\Phi_k|_{\de\Sigma}),
		\end{align*}
		which is infinitesimal. Hence, we can set $F_k:=F_k'\cup F_k''$ and, for $q\nin F_k\cup\Gamma$, \cref{mono.log.nobdry} eventually gives
		\begin{align*}
			&\frac{|\tilde\vfd_k|(B_s(q))}{s^2}\ge c-C\frac{\delta_k}{\log\sigma_k^{-1}}\log(s/\sigma_k)-C\sigma_k^2.
		\end{align*}
		The right-hand side converges to $c>0$ as $k\to\infty$, giving \cref{density.claim}.
		
		Hence, if $x\in\de\Sigma$, then $\tilde\vfd_\infty$ satisfies the assumption of \cref{quant.vfd}, and the statement follows. Otherwise, we can conclude using \cref{quant.vfd.bis}.
	\end{proof}

%
%
%

	\begin{thm}\label{ac}
		The limiting measure $\nu_\infty$ has finitely many atoms (possibly none), with weight at least $c_Q$.
		On the complement $\tilde\Sigma$ of this finite set of atoms, $\nu_\infty$ is absolutely continuous with respect to $\operatorname{vol}_{g_0}$ and $\Phi_\infty$ has a continuous representative.
		Moreover, for every open subset $\omega\cptsub\tilde\Sigma$ with $\nu_\infty(\de\omega)=0$, we have $(\Phi_k|_{\omega})_*\nu_k\weakto(\Phi_\infty|_\omega)_*\nu_\infty$.
	\end{thm}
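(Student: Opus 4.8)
To prove this, the plan is the following. Recall (the paragraph preceding \cref{quant.phi}) that each $\Phi_k$ is conformal for the fixed reference metric $g_0$, so that $\nu_k=\mz|d\Phi_k|_{g_0}^2\,\operatorname{vol}_{g_0}$ is at once the area measure and the Dirichlet energy density and $\Delta_{g_0}\Phi_k$ is normal to the immersed surface. In this conformal gauge, combining the first variation formula \cref{first.var} with the bound \cref{R.bound} and the $\sigma_k^5$‑criticality \cref{almost.crit} turns $E_{\sigma_k}$ into an almost harmonic map equation $\Delta_{g_0}\Phi_k=A(\Phi_k)(d\Phi_k,d\Phi_k)+R_k$ on $\operatorname{int}\Sigma$, with $\de_\nu\Phi_k$ almost orthogonal to $T\bsubman$ along $\de\Sigma$, where $A$ is the second fundamental form of $\subman\subset\R^\envdim$ and the relaxation error $R_k$ (together with the boundary contribution from the $\sigma_k\operatorname{length}$ term) is controlled, in appropriate norms, by the quantities appearing in \cref{entropy}, hence negligible as $k\to\infty$; I would only sketch this routine translation. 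Now set
\[
S:=\bigl\{x\in\Sigma:\ \limsup_k\nu_k(B_r(x))\ge c_Q\ \text{ for all }r>0\bigr\},
\]
where $B_r(x)$ is a ball in a fixed conformal chart around $x$. Since $\nu_k(\Sigma)=\operatorname{area}(\Phi_k)$ is bounded, a disjoint‑ball packing argument bounds $\#S$; set $\tilde\Sigma:=\Sigma\setminus S$. If $x\in S$ then, by weak convergence, $\nu_\infty(\overline{B_r(x)})\ge\limsup_k\nu_k(\overline{B_r(x)})\ge c_Q$ for all $r>0$, so $\nu_\infty(\{x\})=\lim_{r\downarrow 0}\nu_\infty(\overline{B_r(x)})\ge c_Q$: every point of $S$ is an atom of $\nu_\infty$ of weight at least $c_Q$.

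Next I would show that no mass concentrates at a point $x\notin S$ and that $\Phi_\infty$ is continuous there. Since $\limsup_k\nu_k(B_{r_0}(x))<c_Q$ for some $r_0$, a standard Courant--Lebesgue argument applied to the sequence $(\nabla\Phi_k)$, which is bounded in $L^2$, provides radii $r_j\downarrow 0$, $r_j<r_0$, such that the traces $\Phi_k|_{\de B_{r_j}}$ are bounded in $W^{1,2}(\de B_{r_j})\hookrightarrow C^0$ and converge to $\Phi_\infty|_{\de B_{r_j}}$ in $C^0$, $\nu_\infty(\de B_{r_j}^2)=0$, and $s_j:=\operatorname{diam}\Phi_\infty(\de B_{r_j}^2\cap U')\to 0$; in particular $s_j<c_V$ eventually. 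As $\limsup_k\nu_k(B_{r_j})<c_Q$, \cref{quant.phi} forces its second alternative, so every weak subsequential limit of $(\Phi_k|_{B_{r_j}^2\cap U'})_*\nu_k$ is supported in the $2s_j$‑neighbourhood of $\Gamma_j:=\Phi_\infty(\de B_{r_j}^2\cap U')$. Running the area lower bounds \cref{mono.log}, \cref{mono.log.nobdry} and \cref{small.bdry} as in the proof of \cref{quant.phi}, one deduces that $\Phi_k(B_{r_j/2}^2\cap U')$ lies in the $Cs_j$‑neighbourhood of $\Gamma_j$ for all large $k$: a point mapped farther than $3s_j$ from $\Gamma_j$ would carry at least $cs_j^2$ of $\nu_k$‑mass into $\{\operatorname{dist}(\cdot,\Gamma_j)\ge 2s_j\}$, disjoint from the support of that limit. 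Hence $\Gamma_{j'}$ lies in the $Cs_j$‑neighbourhood of $\Gamma_j$ for $j'>j$, and the essential oscillation of $\Phi_\infty$ on $B_{r_j/2}(x)$ is $\le Cs_j$; thus the $\Gamma_j$ converge in Hausdorff distance to a point $y$ and $\Phi_\infty$ has a continuous representative at $x$, with $\Phi_\infty(x)=y$. Finally, since $\Phi_k(B_{r_j/2}(x))\subseteq B_{Cs_j}(y)$ for large $k$ and $|\vfd_k|=(\Phi_k)_*\nu_k$, one gets $|\vfd_\infty|(\overline{B_{Cs_j}(y)})\ge\nu_\infty(B_{r_j/2}(x))$, while $|\vfd_\infty|(B_{2Cs_j}(y))\le Cs_j^2$ by \cref{mono.easy} (recall $\vfd_\infty$ is free boundary stationary by \cref{limit.is.stat}); letting $j\to\infty$ yields $\lim_{\rho\to 0}\nu_\infty(B_\rho(x))=0$. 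In particular $\nu_\infty$ has no atom at $x$, so --- with the previous paragraph --- the atoms of $\nu_\infty$ are exactly the finitely many points of $S$, each of weight $\ge c_Q$, and $\tilde\Sigma$ is their complement.

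It remains to obtain absolute continuity on $\tilde\Sigma$ and to identify the limits. Fix $x\in\tilde\Sigma$; by the last step $\nu_\infty(B_\rho(x))$ is, for $\rho$ small, below the threshold $\epsilon_0$ of the small‑energy regularity theory for almost harmonic maps, hence $\nu_k(B_\rho(x))<\epsilon_0$ for $k$ large, and $\epsilon$‑regularity --- in its interior form in $\operatorname{int}\Sigma$, and in its free boundary form (exploiting that $\de_\nu\Phi_k$ is almost orthogonal to $T\bsubman$) near $\de\Sigma$ --- gives a uniform bound $\|\Phi_k\|_{C^{1,\alpha}(B_{\rho/2}(x))}\le C$. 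Thus $\Phi_k\to\Phi_\infty$ strongly in $W^{1,2}(B_{\rho/2}(x))$ and in $C^0(\overline{B_{\rho/2}(x)})$, with $\Phi_\infty\in C^{1,\alpha}(B_{\rho/2}(x))$. Covering $\tilde\Sigma$, one obtains $\Phi_k\to\Phi_\infty$ in $W^{1,2}_{\mathrm{loc}}(\tilde\Sigma)\cap C^0_{\mathrm{loc}}(\tilde\Sigma)$ and $\Phi_\infty\in C^{1,\alpha}_{\mathrm{loc}}(\tilde\Sigma)$; in particular $\nu_k\to\mz|d\Phi_\infty|_{g_0}^2\,\operatorname{vol}_{g_0}$ in $L^1_{\mathrm{loc}}(\tilde\Sigma)$, so $\nu_\infty$ restricted to $\tilde\Sigma$ equals $\mz|d\Phi_\infty|_{g_0}^2\,\operatorname{vol}_{g_0}$, which is absolutely continuous with respect to $\operatorname{vol}_{g_0}$, and $\Phi_\infty$ has a continuous representative there. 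Finally, for $\omega\cptsub\tilde\Sigma$ with $\nu_\infty(\de\omega)=0$ we have $\nu_k\to\nu_\infty$ weakly as measures on $\omega$ (from the $L^1_{\mathrm{loc}}$ convergence) and $\Phi_k\to\Phi_\infty$ uniformly on $\bar\omega$, hence for $\varphi\in C^0(\subman)$
\[
\int_\subman\varphi\,d(\Phi_k|_\omega)_*\nu_k=\int_\omega\varphi\circ\Phi_k\,d\nu_k\ \longrightarrow\ \int_\omega\varphi\circ\Phi_\infty\,d\nu_\infty=\int_\subman\varphi\,d(\Phi_\infty|_\omega)_*\nu_\infty,
\]
the change of integrand being controlled by the uniform continuity of $\varphi$ and the uniform convergence of $\Phi_k$; this is the asserted convergence $(\Phi_k|_\omega)_*\nu_k\weakto(\Phi_\infty|_\omega)_*\nu_\infty$.

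The main obstacle is the $\epsilon$‑regularity step: producing uniform $C^{1,\alpha}$ (or even merely strong $W^{1,2}_{\mathrm{loc}}$) estimates for $\Phi_k$ on small‑energy balls, which requires controlling the relaxation error $R_k$ (coming from the $\sigma_k^4\int|\ff^{\Phi_k}|^4$ and $\sigma_k\operatorname{length}$ terms) in a function space compatible with the compensated‑compactness structure of the harmonic map system, and handling the free boundary condition $\Phi_k(\de\Sigma)\subseteq\bsubman$ at points of $\de\Sigma$. A more minor point is the simultaneous choice of the radii $r_j$, so that the traces converge in $C^0$, $\nu_\infty$ charges no circle $\de B_{r_j}^2$, and the Courant--Lebesgue bound holds.
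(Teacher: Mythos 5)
Your first two paragraphs (atoms of weight at least $c_Q$ via \cref{quant.phi}, and exclusion of atoms at non-concentration points via the upper density bound for the stationary limit varifold) are in the spirit of the paper's argument. The problem is the third paragraph, which carries the actual content of the theorem (absolute continuity, continuity of $\Phi_\infty$, and the convergence of the pushforwards) and rests on an $\epsilon$-regularity step that is not available. Almost criticality for $E_{\sigma_k}$ is not "the harmonic map equation plus a negligible error": the first variation \cref{first.var} of the relaxation term involves \emph{second} derivatives of the test variation weighted by $\sigma_k^4 f_k^{3/2}$, the criticality is measured in the Finsler norm $\|\cdot\|_{\Phi_k}$ built from the induced (possibly degenerating) metric, and the entropy condition \cref{entropy} only gives the global bound $\sigma_k^4\log\sigma_k^{-1}\int f_k^2\to 0$, which cannot be localized into any norm compatible with the compensated-compactness structure of the harmonic map system. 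No small-energy regularity theory with uniform constants is known for critical or almost-critical points of $E_\sigma$ (interior or free boundary); this is precisely why the viscosity method of \cite{riv.minmax} and this paper works at the level of varifolds, with ambient variations $X(\Phi)$ only, rather than extracting a PDE for $\Phi_k$ in a fixed chart. You flag this step yourself as "the main obstacle", but it is not a technical refinement: it is the missing core of the proof.

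A symptom that the step cannot be routine: if strong $W^{1,2}_{\mathrm{loc}}$ convergence on $\tilde\Sigma$ held, then $\nu_\infty=\mz|d\Phi_\infty|^2\operatorname{vol}_{g_0}$ there, $\Phi_\infty$ would be weakly conformal as a strong limit of conformal maps, and the multiplicity of \cref{struct.n} would be identically $1$ --- i.e.\ you would have proved the multiplicity-one theorem of \cite{multone} as a by-product (cf.\ \cref{multone.rmk}), and the quasiconformal reparametrization in \cref{par.stat} would be superfluous. The paper's proof deliberately avoids any such compactness: continuity of $\Phi_\infty$ and the identity $\lim_k\int_K\operatorname{dist}(\Phi_k,\Phi_\infty)\,d\nu_k=0$ are obtained from a covering argument combined with \cref{quant.phi} (small diameters of the images $\Phi_\infty(\de B_{t_i}^2\cap U')$ force the pushforward measures to live in small balls, whence oscillation bounds and the $W^{1,2}$ vanishing of $(\operatorname{dist}(\cdot,p_i)-3s_i)^+\circ\Phi_k$); absolute continuity follows from the resulting estimate $\nu_\infty(K)\le C\int_V|d\Phi_\infty|^2$ for $\mathcal L^2$-null compact $K$ and arbitrary neighborhoods $V$; and the convergence $(\Phi_k|_\omega)_*\nu_k\weakto(\Phi_\infty|_\omega)_*\nu_\infty$ is deduced from the distance-integral convergence, not from uniform convergence of $\Phi_k$, which is never established. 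To repair your proof you would need to replace the $\epsilon$-regularity paragraph by an argument of this weaker, measure-theoretic type.
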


	\begin{proof}
		Given an atom $\{x\}$, we fix a local conformal chart centered at $x$, identifying a neighborhood $U$ of $x$ with the unit disk $U':=B_1^2$ if $x\nin\de\Sigma$, or with $U':=B_1^2\cap\{\Im(z)\ge 0\}$ if $x\in\de\Sigma$.
		
		For all $0<r<1$ we can select $\frac{r}{2}<t<r$ such that $\int_{\de B_{t}^2\cap U'}|d\Phi_\infty|^2\le\frac{2}{r}\int_{B_r^2\cap U'}|d\Phi_\infty|^2$ and such that the trace
		$\Phi_\infty|_{\de B_t^2\cap U'}$ has a $W^{1,2}$ representative, with
		weak derivative given by the restriction of $d\Phi_\infty$ and
		$\Phi_k|_{\de B_t^2\cap U'}\to\Phi_\infty|_{\de B_t^2\cap U'}$ in $C^0$ along a subsequence, which we do not relabel
		(see, e.g., \cite[Lemmas~A.3 and A.5]{pigriv}).
		
		Then, by Cauchy--Schwarz, $s:=\operatorname{diam}(\Phi_\infty(\de B_t^2\cap U'))\le C\Big(\int_{B_r^2\cap U'}|d\Phi_\infty|^2\Big)^{1/2}$
		and hence \cref{quant.phi} is satisfied, if $r$ is small enough.
		Identifying $\nu_k|_U$ with measures on $U'$, we deduce that either $\nu_\infty(\bar B_t^2\cap U')\ge c_Q$ or,
		for some $p\in\subman$,
		\begin{align*}
			\nu_\infty(B_t^2\cap U')
			&\le\liminf_{k\to\infty}\nu_k(B_t^2\cap U')
			=\liminf_{k\to\infty}(\Phi_k|_{B_t^2\cap U'})_*\nu_k(\subman)
			\le\liminf_{k\to\infty}\mu_k(B_{3s}(p)) \\
			&\le\mu_\infty(\bar B_{3s}(p))
			\le Cs^2
			\le C\int_{B_r^2}|d\Phi_\infty|^2.
		\end{align*}
		The penultimate inequality follows from \cref{mono.easy.claim}.
		For $r$ small enough this second possibility cannot happen, since $\nu_\infty(\{0\})>0$.
		Hence we deduce $\nu_\infty(\{x\})\ge c_Q$ and thus there are finitely many atoms.
		
		Assume now that $K$ is a compact set containing no atoms. 
		Assume that $K\subset U$ for a chart domain $U$; we identify $K$ with a compact subset of the unit ball or half unit ball $U'$ as above.
		
		We deal with the half-ball case, whose proof covers also the case $U'=B_1^2$.
		We denote $\de U':=\{z\in U':\Im(z)=0\}$.
		Fix an intermediate set $K\subset V\cptsub U'$ open in $U'$ (hence, $V$ is allowed to contain points in $\de U'$).
		Since $\nu_\infty$ has no atoms on $U$, we can find a radius $r>0$ such that $B_{5r}^2(y)\subseteq B_1^2$, $B_{5r}^2(y)\cap U'\subseteq V$ and $\nu_\infty(B_{5r}^2(y)\cap U')<c_Q$, for all $y\in K$.
		
		Taking a maximal subset of centers $\{y_i'\}\subseteq K$ with pairwise distances at least $\frac{r}{2}$, we can cover $K$ with a finite collection of balls $\{B_{r/2}^2(y_i')\}$ with $\sum_i \uno_{B_{5r}^2(y_i')}\le C$. If $B_r^2(y_i')\subseteq U'$ then we set $y_i:=y_i'$ and $r_i:=r$; otherwise we choose $y_i$ to be a point in $B_r^2(y_i')\cap\de U'$, and we set $r_i:=4r$. Note that $B_{r/2}^2(y_i')\subseteq B_{r_i/2}^2(y_i)$ and $B_{r_i}^2(y_i)\subseteq B_{5r}^2(y_i')$, so the collection of balls $B_{r_i/2}^2(y_i)$ still covers $K$ and has $\sum_i\uno_{B_{r_i}^2(y_i)}\le C$.
		Moreover, either $B_{r_i}^2(y_i)\subseteq V$ or $y_i\in\de U'$, with $B_{r_i}^2(y_i)\cap U'\subseteq V$.
		Also, $\nu_\infty(B_{r_i}^2(y_i)\cap U')<c_Q$.
		
		
		We can fix $t_i\in(\frac{r_i}{2},r_i)$ such that $\Phi_k|_{\de B_{t_i}^2(y_i)\cap U'}\to\Phi_\infty|_{\de B_{t_i}^2(y_i)\cap U'}$ in $C^0$ along a subsequence independent of $i$, and such that the diameter $s_i$ of $\Phi_\infty(\de B_{t_i}^2(y_i)\cap U')$ satisfies
		\begin{align*}
		&s_i^2\le C\int_{B_{r_i}^2(y_i)\cap U'}|d\Phi_\infty|^2.
		\end{align*}
		We now work along this subsequence, which we do not relabel.
		By \cref{quant.phi}, if $r$ was chosen small enough, any weak limit of the measures $(\Phi_k|_{B_{t_i}^2(y_i)\cap U'})_*\nu_k$ is supported in $B_{3s_i}(p_i)$ for some $p_i\in\subman$. Hence,
		\begin{align}\label{vanish.target}
			&\lim_{k\to\infty}(\Phi_k|_{B_{t_i}^2(y_i)\cap U'})_*\nu_k(\subman\setminus B_{3s_i}(p_i))=0.
		\end{align}
		Since $\nu_k=\mz|d\Phi_k|^2\,\mathcal{L}^2$ on $U'$, setting $h_i:=(\operatorname{dist}(\cdot,p_i)-3s_i)^+$ we deduce
		that $h_i\circ\Phi_k\to 0$ in $W^{1,2}(B_{t_i}^2(y_i)\cap U')$. Hence, the essential image of $\Phi_\infty|_{B_{t_i}^2(y_i)\cap U'}$ is included in $\bar B_{3s_i}(p_i)$. We deduce
		\begin{align*}
			\int_K\operatorname{dist}(\Phi_k,\Phi_\infty)\,d\nu_k
			&\le\sum_i\int_{B_{t_i}^2(y_i)\cap U'}\operatorname{dist}(\Phi_k,\Phi_\infty)\,d\nu_k \\
			&\le\sum_i 6s_i\nu_k(B_{t_i}^2(y_i)\cap U') \\
			&\quad+\operatorname{diam}(\subman)\sum_i(\Phi_k|_{B_{t_i}^2(y_i)\cap U'})_*\nu_k(\subman\setminus B_{3s_i}(p_i)) \\
			&\le C(\sup s_i)\nu_k(V)+C\sum_i(\Phi_k|_{B_{t_i}^2(y_i)\cap U'})_*\nu_k(\subman\setminus B_{3s_i}(p_i)).
		\end{align*}
		In the limit $k\to\infty$, using \cref{vanish.target}, we get
		\begin{align*}
			&\limsup_{k\to\infty}\int_K\operatorname{dist}(\Phi_k,\Phi_\infty)\,d\nu_k
			\le C(\sup_i s_i)\nu_\infty(\bar V).
		\end{align*}
		Since we could arrange that $\sup_i s_i$ is arbitrarily small, we arrive at
		\begin{align}\label{dist.to.zero}
			&\lim_{k\to\infty}\int_K\operatorname{dist}(\Phi_k,\Phi_\infty)\,d\nu_k=0.
		\end{align}
		Also, choosing $\eta$ so small that any ball $B_\eta^2(y)$ is included in some $B_{t_i}^2(y_i)$, for all $y\in K$,
		the essential oscillation of $\Phi_\infty|_{B_\eta^2(y)\cap U'}$ is then bounded by $\sup_i s_i$. Since the latter is arbitrarily small, it follows that $\Phi_\infty$ has a continuous representative on $K$, hence on $\tilde\Sigma$.
		
		Finally, if $\mathcal{L}^2(K)=0$ then, arguing as in the first part of the proof, we have
		\begin{align*}
			\nu_\infty(K)
			&\le\sum_i\nu_\infty(B_{t_i}^2(y_i)\cap U')
			\le\liminf_{k\to\infty}\sum_i\nu_k(B_{t_i}^2(y_i)\cap U')
			\le C\sum_i s_i^2 \\
			&\le C\sum_i\int_{B_{r_i}^2(y_i)\cap U'}|d\Phi_\infty|^2
			\le C\int_V|d\Phi_\infty|^2.
		\end{align*}
		Since $V$ is an arbitrary neighborhood of $K$, we deduce $\nu_\infty(K)\le C\int_K|d\Phi_\infty|^2=0$.
		The absolute continuity of $\nu_\infty$ with respect to $\operatorname{vol}_{g_0}$ on $\tilde\Sigma$ follows.
		
		Finally, given $\omega$ as in the statement and covering $\bar\omega$ with finitely many charts, it follows from \cref{dist.to.zero} that $\lim_{k\to\infty}\int_{\bar\omega}\operatorname{dist}(\Phi_k,\Phi_\infty)\,d\nu_k=0$. Hence, for any $\psi\in C^0(\subman)$,
		\begin{align*}
			&\lim_{k\to\infty}\int_\omega\psi\circ\Phi_k\,d\nu_k
			=\lim_{k\to\infty}\int_\omega\psi\circ\Phi_\infty\,d\nu_k
			=\int_\omega\psi\circ\Phi_\infty\,d\nu_\infty
		\end{align*}
		the last equality coming from the continuity of $\psi\circ\Phi_\infty$ near $\bar\omega$ and the assumption $\nu_\infty(\de\omega)=0$. The weak convergence $(\Phi_k|_\omega)_*\nu_k\weakto(\Phi_\infty|_\omega)_*\nu_\infty$ follows.
	\end{proof}

	\begin{thm}\label{struct.n}
		The absolutely continuous part of $\nu_\infty$, which we denote $m\,\operatorname{vol}_{g_0}$,
		has $m=0$ a.e.\ on the set of points where $d\Phi_\infty$ does not have rank $2$.
		Moreover, $m=N J(d\Phi_\infty)$ for a bounded, integer valued function $N\ge 1$.
	\end{thm}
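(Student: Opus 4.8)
The plan is to reduce the statement to a local, infinitesimal question about the limit map $\Phi_\infty$ and then transfer information from the approximating sequence $\Phi_k$ using the measure convergence $(\Phi_k|_\omega)_*\nu_k\weakto(\Phi_\infty|_\omega)_*\nu_\infty$ proved in \cref{ac}. First I would work on the open set $\tilde\Sigma$ where $\nu_\infty$ has no atoms, so that we may write $\nu_\infty = m\,\operatorname{vol}_{g_0}$ with $m\in L^1_{loc}$ and $\Phi_\infty\in C^0\cap W^{1,2}_{loc}$. The key point is that the limit measure $\nu_\infty$, being a weak limit of the area measures of conformal-ish immersions, should behave like a \emph{rectifiable} $2$-measure on the target: one wants $\mu_\infty=(\Phi_\infty)_*\nu_\infty$ to be carried by the image of the rank-$2$ set of $\Phi_\infty$ with integer density. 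The cleanest route is to establish the pushforward statement in the target and pull it back.

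For the first assertion ($m=0$ a.e.\ where $\operatorname{rank} d\Phi_\infty<2$): let $Z\subseteq\tilde\Sigma$ be the set where $\operatorname{rank} d\Phi_\infty\le 1$. Since $\Phi_\infty\in W^{1,2}$ and is continuous, the co-area/Sard-type argument shows $\mathcal H^2(\Phi_\infty(Z))=0$; more precisely, $J(d\Phi_\infty)=0$ on $Z$, so the area formula gives $\int_Z J(d\Phi_\infty)\,\operatorname{vol}_{g_0}=0$, i.e. $\Phi_\infty(Z)$ is $\mu_{\Phi_\infty}$-null where $\mu_{\Phi_\infty}$ is the (finite-multiplicity) image measure of $\Phi_\infty$ itself. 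The goal is to show $\nu_\infty(Z)=0$. I would do this by a covering argument on the target analogous to the one in \cref{quant.phi}: cover $\Phi_\infty(Z)$ (which has zero $\mathcal H^2$-measure) by small balls $B_{s_j}(p_j)$ with $\sum s_j^2$ as small as we like; for each, \cref{quant.phi} applied on suitable small coordinate balls in $\Sigma$ whose $\Phi_\infty$-image has small diameter forces $\nu_k$ of the preimage to be either $\ge c_Q$ (impossible for more than finitely many such balls, by the bounded total area) or to have its pushforward concentrated near a small target ball, whence $\limsup_k\nu_k$ of the relevant region is $\le C\sum s_j^2$. Passing $k\to\infty$ and then shrinking, $\nu_\infty(Z)=0$. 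This is the step I expect to be the main obstacle: carefully choosing the coordinate balls in $\Sigma$ so that (a) their $\Phi_\infty$-images have controlled small diameter (using continuity of $\Phi_\infty$ and a Lebesgue-point/Vitali argument for the rank-deficient set), (b) the boundary-trace convergence hypotheses of \cref{quant.phi} hold along a common subsequence, and (c) the resulting cover of $Z$ is efficient enough that the error $\sum s_j^2$ is genuinely small.

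For the second assertion, set $R:=\tilde\Sigma\setminus Z$, the rank-$2$ set, where $\Phi_\infty$ is (a.e.) an immersion. On $R$ we have $J(d\Phi_\infty)>0$ a.e., so $N:=m/J(d\Phi_\infty)$ is a well-defined nonnegative measurable function; I must show $N$ is integer-valued, bounded, and $\ge 1$ where $m>0$. Boundedness of $N$ (equivalently, of $m/J(d\Phi_\infty)$) follows from the upper mass bound \cref{mono.easy.claim} together with a local degree-type comparison: near a point $x_0\in R$ one can choose coordinates in which $\Phi_\infty$ is a bi-Lipschitz graph over a $2$-plane, so that $\mu_\infty$ near $\Phi_\infty(x_0)$ has a well-defined density, and this density is exactly $N(x_0)$ at $\operatorname{vol}_{g_0}$-a.e.\ Lebesgue point; the upper density bound for $\mu_\infty$ (from monotonicity) then bounds $N$. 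Integrality is the heart: at a Lebesgue point $x_0$ of $R$ where $\Phi_\infty$ is a genuine immersion, I would blow up $\Phi_k$ and $\Phi_\infty$ simultaneously around $x_0$ and $p_0:=\Phi_\infty(x_0)$. The limit varifold $\vfd_\infty$, being free boundary stationary (\cref{limit.is.stat}, and away from $\bsubman$ genuinely stationary), has at $p_0$ a tangent cone which is a stationary integral varifold \emph{provided we know rectifiability and integrality of $\vfd_\infty$ first}. Since that is not yet available here, I would instead argue directly: the blow-ups of $\Phi_k$ converge (after reparametrization, using the conformal structure) to a limit which is a branched covering of the tangent plane $T_{p_0}\Phi_\infty$, and the local degree of this branched cover is a positive integer equal to $N(x_0)$; this uses that near a rank-$2$ point $\Phi_k\to\Phi_\infty$ in $W^{1,2}$ with area converging (from \cref{ac}) on small balls, so no energy is lost and the multiplicity is the topological degree of the limit map onto a neighborhood of $p_0$ in $\Phi_\infty(R)$. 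The value $\ge 1$ when $m>0$ is then automatic, since a nonzero integer density is at least $1$, and $N$ can be redefined arbitrarily (say $\equiv 1$) on the $\operatorname{vol}_{g_0}$-null or $m=0$ portion of $R$ to make the formula $m=NJ(d\Phi_\infty)$ hold everywhere on $\tilde\Sigma$ (recalling $m=0$ a.e.\ on $Z$ by the first part).
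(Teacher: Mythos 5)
The decisive step of the theorem is the integrality of $N$, and there your proposal has a genuine gap. The assertion that ``the blow-ups of $\Phi_k$ converge to a branched covering of the tangent plane, whose local degree is a positive integer equal to $N(x_0)$, because no energy is lost'' is not substantiated and in effect assumes the conclusion. After rescaling, the maps $\Psi_k:=r_k^{-1}(\Phi_k(x_0+r_k\cdot)-p_0)$ are merely bounded in $W^{1,2}$; their weak limit is the affine map $d\Phi_\infty(x_0)$, which carries no information about the multiplicity, and the convergence of pushforward measures in \cref{ac} is far from strong $W^{1,2}$ convergence, so ``no energy is lost'' is exactly what cannot be asserted at this stage (if it could, one would conclude $N=1$ outright, which in fact requires the separate work of \cite{multone}, cf.\ \cref{multone.rmk}). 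A degree argument on the projected maps does yield the \emph{lower} bound, i.e.\ $\mathcal{H}^2(\operatorname{co}(\mathcal{C}))\le$ the blow-up mass, because the boundary curves converge uniformly and the degree relative to interior points of $\operatorname{co}(\mathcal{C})$ is eventually nonzero; but a degree cannot identify the total mass from above, since the local degree of $\Psi_k$ may vary and oscillating sheets can produce any real limiting density a priori. The missing mechanism is the one the paper supplies: the projection onto the limit plane of the varifold induced by the smooth map $\Psi_k$ has an integer-valued multiplicity $N_k$ (area formula); testing the rescaled almost-criticality ($\Psi_k$ is $\tau_k^5$-critical for $E_{\tau_k}$ with $\tau_k=\sigma_k/r_k$) against vector fields mollified at scale $\tau_k$ (the mollification is essential, see the remark after the paper's proof) shows that $\int N_k\,\de_i\varphi\to 0$, and Allard's strong constancy lemma \cite{allard.const} then forces $N_k$ to be $L^1$-close to a constant, hence close to an integer, which converges to $N$. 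Without this (or the full parametrized-varifold machinery of \cite{pigriv}), integrality is not proved. Relatedly, your claim that near a rank-$2$ point $\Phi_\infty$ is a ``bi-Lipschitz graph'' has no basis here: $\Phi_\infty$ is only continuous and $W^{1,2}$; boundedness of $N$ should instead come, as in the paper, from the bound \cref{mono.easy.claim} applied to the blow-up varifolds.

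Your treatment of the first assertion, by contrast, is a viable alternative to the paper's (the paper gets it from the same blow-up: when $\operatorname{rank} d\Phi_\infty(x)<2$ the blow-up limit is supported on a segment or point and killed by the $Cs^2$ mass bound). Two repairs are needed, though. First, $\mathcal{H}^2(\Phi_\infty(Z))=0$ is not automatic for a continuous $W^{1,2}$ map in two variables (Luzin's property (N) can fail); you should split $Z$ using Federer's decomposition into countably many pieces where $\Phi_\infty$ is Lipschitz plus a Lebesgue-null set $N_0$, note that $\nu_\infty(N_0)=0$ by the absolute continuity in \cref{ac}, and apply the area formula only on the Lipschitz pieces. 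Second, the covering step is cleaner without \cref{quant.phi}: for $\omega\cptsub\tilde\Sigma$ with $\nu_\infty(\de\omega)=0$ one has $(\Phi_\infty|_\omega)_*\nu_\infty(B_s(p))\le\liminf_k(\Phi_k|_\omega)_*\nu_k(B_s(p))\le\liminf_k\mu_k(B_s(p))\le\mu_\infty(\bar B_s(p))\le Cs^2$ by \cref{ac}, \cref{limit.is.stat} and \cref{mono.easy.claim}, and then $\nu_\infty(Z\setminus N_0)\le(\Phi_\infty)_*\nu_\infty(\Phi_\infty(Z\setminus N_0))$ is controlled by covering a null set with balls of radii $s_j$ with $\sum_j s_j^2$ arbitrarily small.
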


	In the statement $J(d\Phi_\infty)$ denotes the Jacobian of $\Phi_\infty$ with respect to the volume form $\operatorname{vol}_{g_0}$.
	Hence, in a conformal chart, we are asserting that the absolutely continuous part of $\nu_\infty$
	is $N|\de_1\Phi_\infty\wedge\de_2\Phi_\infty|\,\mathcal{L}^2$.
	
	\begin{proof}
		Working in a conformal chart for $\operatorname{int}(\Sigma)$, we fix a point $x$ which is Lebesgue for $d\Phi_\infty$, and such that $\nu_\infty(\{x\})=0$.
		We have to show that $\frac{\nu_\infty(B_r^2(x))}{\pi r^2}\to N|\de_1\Phi_\infty\wedge\de_2\Phi_\infty|(x)$ for some bounded integer $N\ge 1$, as $r\to 0$ along some sequence.
		
		We can assume $x=0$.
		For all $r>0$ small enough, call $\vfd_{k,r}$ the varifold induced by $\Phi_k|_{B_r^2}$.
		We can select an arbitrarily small $r$ such that the trace $\Phi_\infty|_{\de B_r^2}$ has
		\begin{align*}
			&\Phi_\infty(ry)=\Phi_\infty(0)+rd\Phi_\infty(0)[y]+o(r) \quad \text{for }|y|=1
		\end{align*}
		and such that the traces $\Phi_k|_{\de B_r^2}$ converge subsequentially to $\Phi_\infty|_{\de B_r^2}$ in $C^0$ (see, e.g., \cite[Lemmas~A.4 and A.5]{pigriv}).
		By \cref{quant.phi}, any (subsequential) weak limit of $|\vfd_{k,r}|$ is supported in a ball $B_{Cr}(p)$, with $p:=\Phi_\infty(0)$ and $C$ depending also on $|d\Phi_\infty(0)|$.
		
		Moreover, any (subsequential) limit $\vfd=\lim_{k\to\infty}\vfd_{k,r}$ is stationary in $\subman\setminus\Phi_\infty(\de B_r^2)$
		and satisfies $|\vfd|(B_s(q))\le Cs^2$ for all $q\in\subman$,
		since the varifolds $\vfd_k$ induced by $\Phi_k$ (from the full domain) have trivially $|\vfd_k|\ge|\vfd_{k,r}|$
		and, by \cref{limit.is.stat}, they converge subsequentially to a free boundary stationary varifold $\vfd_\infty$, for which \cref{mono.easy.claim} gives the desired bound.
		
		Hence, with a diagonal argument, we may find a subsequence of $k$'s (not relabeled) and a sequence of radii $r_k\to 0$ such that the dilated varifolds
		$\vfd_k':=(r_k^{-1}(\cdot-p))_*\vfd_{k,r_k}$ in $\R^\envdim$ form a tight sequence, converging to a varifold $\vfd_\infty'$ which has
		\begin{align}\label{mono.n}
			&|\vfd_\infty'|(B_s^\envdim(q))\le Cs^2\quad\text{for all }q\in\R^\envdim\text{ and all }s>0
		\end{align}
		with a constant $C$ independent of $x$, has compact support and is stationary in $\R^\envdim\setminus\mathcal{C}$, with
		\begin{align*}
			&\mathcal{C}=\lim_{k\to\infty}(r_k^{-1}\Phi_\infty(\de B_{r_k}^2)-p)=\{d\Phi_\infty(0)[y]\mid y\in\de B_1^2\}.
		\end{align*}
		We can also assume that
		\begin{align}\label{slow.radii}
			&r_k^{-2}\sigma_k^4\int_{B_{r_k}^2}f_k^2\,d\nu_k\to 0,\quad r_k^{-1}\sigma_k\to 0,
		\end{align}
		and that
		\begin{align*}
			&|\vfd_\infty'|(\R^\envdim)
			=\lim_{k\to\infty}|\vfd_k'|(\R^\envdim)
			=\lim_{k\to\infty}\frac{\nu_\infty(B_{r_k}^2)}{r_k^2}
			=\lim_{k\to\infty}\frac{\nu_k(B_{r_k}^2)}{r_k^2};
		\end{align*}
		since the convex hull $\operatorname{co}(\mathcal{C})$ of $\mathcal{C}$ has area $\pi|\de_1\Phi_\infty\wedge\de_2\Phi_\infty|(0)$, we are left to show that
		\begin{align*}
			&|\vfd_\infty'|(\R^\envdim)
			=N\mathcal{H}^2(\operatorname{co}(\mathcal{C}))
		\end{align*}
		for some bounded integer $N\ge 1$.
		By \cite[Theorem~19.2]{simon}, which holds for general varifolds, $|\vfd_\infty'|$ is supported in the convex hull of $\mathcal{C}$.
		If $d\Phi_\infty(0)$ has rank less than $2$, then $\mathcal{C}$ is either a segment or a point.
		Hence, we can cover it with $O(s^{-1})$ balls of radius $s$; recalling \cref{mono.n}, we deduce $|\vfd_\infty'|(\mathcal{C})=0$ and hence $\vfd_\infty'=0$. Thus the claim follows in this case.

		
		If instead $d\Phi_\infty(0)$ has rank $2$, we first observe that the area of the map $\Psi_k:=r_k^{-1}(\Phi_k|_{B_{r_k}^2}-p)$
		is, up to an infinitesimal error, at least the area of $\operatorname{co}(\mathcal{C})$ in the plane $\Pi$ containing it:
		this follows immediately considering the composition $\bar\Psi_k$ of this map with the projection onto $\Pi$, and noting that any compact subset $K\subset\operatorname{co}(\mathcal{C})\setminus\mathcal{C}$ belongs eventually to the image of $\bar\Psi_k$, since $\bar\Psi_k$ has (eventually) nontrivial degree relative to the points in $K$. Hence,
		\begin{align}\label{lower.bd.degree}
			&\mathcal{H}^2(\operatorname{co}(\mathcal{C}))
			\le\lim_{k\to\infty}\frac{\nu_k(B_{r_k}^2)}{r_k^2}
			=|\vfd_\infty'|(\R^\envdim).
		\end{align}
		Up to rotations, we can assume $\Pi=\R^2\times\{0\}$.
		Since $\mathcal{C}$ is a smooth curve, we have $|\vfd_\infty'|(\mathcal{C})=0$.
		Also, $\vfd_\infty'$ is stationary on $\R^\envdim\setminus\mathcal{C}$ and supported on $\Pi$.
		By the constancy theorem \cite[Theorem~41.1]{simon}, it follows that $\vfd_\infty'$ is rectifiable
		and equals a multiple $N$ of $\operatorname{co}(\mathcal{C})$. By \cref{lower.bd.degree} we have $N\ge 1$,
		while from \cref{mono.n} it follows that $N\le C$. We are left to show $N\in\N$.
		
		Note that $\vfd_k'$ is the varifold induced by $\Psi_k$; hence, the varifold convergence $\vfd_k'\weakto\vfd_\infty'$ implies that
		\begin{align}\label{small.vert}
			&\int_{B_{r_k}^2}|d\Psi_k^j|^2\to 0\quad\text{for }j=3,\dots,\envdim,
		\end{align}
		where we write $\Psi_k=(\Psi_k^1,\dots,\Psi_k^\envdim)$.
		
		Fix $\alpha>0$ such that $\mathcal{C}$ encloses a ball $B_{2\alpha}^2$ in the plane $\Pi$.
		Consider a family $(\rho_\tau)$ of mollifiers in $\R^\envdim$, namely nonnegative smooth functions supported in $B_\tau^\envdim$ with $\int_{\R^\envdim}\rho_\tau=1$ and $|d\rho_\tau|\le C\tau^{-\envdim-1}$.
		For any vector field $X\in C^\infty_c(B_\alpha^2,\R^2)$, viewing $X$ as a vector field on $\R^\envdim$, constant in the last $\envdim-2$ variables, we define the vector fields $X_k$ and $Y_k$ on $\subman_k:=r_k^{-1}(\subman-p)$ given pointwise by the projection of $X$ and $\rho_{\tau_k}*X$ onto the tangent space to $\subman_k$, respectively, with $\tau_k:=r_k^{-1}\sigma_k$.
		
		Since $\subman_k$ converges to an $m$-plane graphically (in any neighborhood of $0$), we have
		\begin{align}\label{close.nabla}
			&|\nabla^{\subman_k}_vX_k-\nabla^{\R^\envdim}_vX|\le\delta_k\|dX\|_{L^\infty}|v|
		\end{align}
		for some sequence $\delta_k\to 0$ and any $v\in T\subman_k$. Also, we have
		\begin{align*}
			&|(\nabla^{\subman_k})^2 Y_k|
			\le C\|\rho_{\tau_k}*X\|_{C^2}
			\le C\tau_k^{-1}\|dX\|_{L^\infty}.
		\end{align*}
		
		Note that $\Psi_k$, when extended to $\Sigma$ with the same formula $r_k^{-1}(\Phi_k-p)$, is $\tau_k^5$-critical for $E_{\tau_k}$ for the manifold $\subman_k$ and the corresponding Finsler manifold $\bman_k$:
		indeed, identifying $T_{\Phi_k}\bman$ and $T_{\Psi_k}\bman_k$ with subsets of $W^{2,4}(\Sigma,\R^\envdim)$, for all $w\in T_{\Psi_k}\bman_k$ we have
		\begin{align*}
			|dE_{\tau_k}(\Psi_k)[w]|
			&=r_k^{-2}|dE_{\sigma_k}(\Phi_k)[r_k w]|
			\le r_k^{-1}\sigma_k^5\|w\|_{\Phi_k}
			\le r_k^2\tau_k^5\|w\|_{\Phi_k}
		\end{align*}
		and it is immediate to check that $\|w\|_{\Phi_k}\le r_k^{-3/2}\|w\|_{\Psi_k}\le r_k^{-2}\|w\|_{\Psi_k}$
		(assuming $r_k\le 1$).
		
		For the vector field $Y_k$, recalling \cref{R.bound}, the term $f\ang{\nabla n,\nabla\omega}$ in \cref{first.var.X}
		is bounded by
		\begin{align*}
			&C|\ff^{\Psi_k}|^4\|\nabla^{\subman_k}Y_k\|_{L^\infty}
			+C|\ff^{\Psi_k}|^3\|(\nabla^{\subman_k})^2 Y_k\|_{L^\infty} \\
			&\le C(|\ff^{\Psi_k}|^4+\tau_k^{-1}|\ff^{\Psi_k}|^3)\|dX\|_{L^\infty}.
		\end{align*}
		We now use the almost criticality of $\Psi_k$ with the infinitesimal variation $Y_k(\Psi_k)$, or more precisely
		$Y_k(\Psi_k)\uno_{B_{r_k}^2}$ for the extension $r_k^{-1}(\Phi_k-p)\in\bman_k$ of $\Psi_k$.
		For $k$ large enough, $\Psi_k(\de B_{r_k}^2)$ does not intersect $B_{2\alpha}^2\times\R^{\envdim-2}$,
		where $Y_k$ is supported, and hence this is an admissible variation.
		As in the proof of \cref{limit.is.stat}, since the immersions $\Psi_k$ have bounded area we obtain
		\begin{align*}
			\|Y_k(\Psi_k)\|_{\Psi_k}
			&\le C\|dX\|_{L^\infty}+C\|(\nabla^{\subman_k})^2 Y_k\|_{L^\infty}+\|dX\|\Big(\int_{B_{r_k}^2}|\ff^{\Psi_k}|^4\,\operatorname{vol}_{\Psi_k}\Big)^{1/4} \\
			&\le C\tau_k^{-1}\|dX\|_{L^\infty}\Big(1+\Big(\int_{B_{r_k}^2}\tau_k^4|\ff^{\Psi_k}|^4\,\operatorname{vol}_{\Psi_k}\Big)^{1/4}\Big)
		\end{align*}
		for some $C$ independent of $X$.
		Hence, \cref{first.var.X} and the $\tau_k^5$-criticality of $\Psi_k$, together with Young's inequality, give
		\begin{align*}
			\Big|\int_{B_{r_k}^2}\ang{\de_i\Psi_k, \nabla Y_k(\Psi_k)[\de_i\Psi_k]}\Big|
			&\le C\|dX\|_{L^\infty}\int_{B_{r_k}^2}(\tau_k^4|\ff^{\Psi_k}|^4+\tau_k^3|\ff^{\Psi_k}|^3)\,\operatorname{vol}_{\Psi_k} \\
			&\quad +C\tau_k^4\|dX\|_{L^\infty}.
		\end{align*}
		Since $\tau_k^4|\ff^{\Psi_k}|^4=\sigma_k^4|\ff^{\Phi}_k|^4$, H\"older's inequality gives the upper bound
		\begin{align*}
			&C\Big(r_k^{-2}\sigma_k^4\int_{B_{r_k}^2}|\ff^{\Phi_k}|^4\,\operatorname{vol}_{\Phi_k}
			+\Big(r_k^{-2}\sigma_k^4\int_{B_{r_k}^2}|\ff^{\Phi_k}|^4\,\operatorname{vol}_{\Phi_k}\Big)^{3/4}
			+\tau_k^4\Big)\|dX\|_{L^\infty}
		\end{align*}
		for the previous right-hand side. In view of \cref{slow.radii}, it follows that
		\begin{align*}
			&\Big|\int_{B_{r_k}^2}\ang{\de_i\Psi_k, \nabla Y_k(\Psi_k)[\de_i\Psi_k]}\Big|
			\le\delta_k'\|dX\|_{L^\infty}
		\end{align*}
		for some $\delta_k'\to 0$ independent of $X$.
		Also, replacing $Y_k$ with $X_k$ is not harmful, since the last integral is the first variation of the area and thus
		\begin{align*}
			&\Big|\int_{B_{r_k}^2}\ang{\de_i\Psi_k, \nabla Y_k(\Psi_k)[\de_i\Psi_k]}-\int_{B_{r_k}^2}\ang{\de_i\Psi_k, \nabla X_k(\Psi_k)[\de_i\Psi_k]}\Big| \\
			&\le 2\int_{B_{r_k}^2}|H^{\Psi_k}||\rho_{\tau_k}*X-X|\,\operatorname{vol}_{\Psi_k} \\
			&\le C\tau_k\|dX\|_{L^\infty}\int_{B_{r_k}^2}|\ff^{\Psi_k}|\,\operatorname{vol}_{\Psi_k}
		\end{align*}
		is infinitesimal with respect to $\|dX\|_{L^\infty}$.
		Choose now $X:=\varphi(x_1,x_2)e_1$.
		Writing $\Psi_k=(\Psi_k^1,\dots,\Psi_k^m)$, in view of \cref{close.nabla} the previous integral (with $X_k$ replacing $Y_k$) is just
		\begin{align*}
			&\int_{B_{r_k}^2}(\de_1\varphi(\Psi_k) \de_i\Psi_k^1 \de_i\Psi_k^1
			+\de_2\varphi(\Psi_k) \de_i\Psi_k^1 \de_i\Psi_k^2),
		\end{align*}
		up to another infinitesimal error.
		Let $J_k:=|\de_1\Psi_k^1\de_2\Psi_k^2-\de_2\Psi_k^1\de_1\Psi_k^2|$ denote the Jacobian of the composition of $\Psi$ with the projection onto $\Pi$.
		Using \cref{small.vert}, this integral equals
		\begin{align*}
			&\int_{B_{r_k}^2}J_k\de_1\varphi(\Psi_k)\,\operatorname{vol}_{\Psi_k}
		\end{align*}
		plus an error which is infinitesimal with respect to $\|d\varphi\|_{L^\infty}$		
		(see also \cite[Lemma~A.6]{pigriv}).
		Hence, by the area formula, the projection $\vfd_k''$ of $\vfd_k'$ onto $\Pi$ has an integer multiplicity $N_k$
		satisfying
		\begin{align*}
			&\Big|\int_{B_\alpha^2}N_k\de_1\varphi\,d\mathcal{L}^2\Big|\le\delta_k''\|dX\|_{L^\infty}\quad\text{with }\delta_k''\to 0
		\end{align*}
		and, using the vector field $\varphi(x_1,x_2)e_2$, the same holds replacing $\de_1\varphi$ with $\de_2\varphi$.
		So, by Allard's strong constancy lemma \cite[Theorem~1.(4)]{allard.const},
		it follows that $N_k$ is close in $L^1$ to a constant $\bar N_k$ on the ball $B_{\alpha/2}^2$, with a distance $O(\delta_k'')$. As $N_k$ is integer valued, it follows that $\operatorname{dist}(\bar N_k,\N)\to 0$.
		Finally, since $\vfd_k''$ converges to $\vfd_\infty'$,
		we have
		\begin{align*}
			&\pi(\alpha/2)^2 N=\lim_{k\to\infty}\int_{B_{\alpha/2}^2}N_k\,d\mathcal{L}^2
			=\lim_{k\to\infty}\pi(\alpha/2)^2\bar N_k
		\end{align*}
		and we deduce $N\in\N$.
	\end{proof}
	
	\begin{rmk}
		Note that, in the previous proof, testing immediately the stationarity of $\Psi_k$ against $X_k$ would have run into trouble,
		since we would have got a bound for $\int_{B_\alpha^2}N_k\de_i\varphi$ depending also on the Hessian of $X$, making it impossible to apply Allard's strong constancy lemma.
	\end{rmk}

	\begin{definition}\label{v.omega}
		Given an open set $\omega\subseteq\tilde\Sigma$,
		we define the subset $\mathcal{G}_\omega\subseteq\omega\setminus\de\Sigma$ of Lebesgue points for $d\Phi_\infty$ where this differential has rank $2$.
		We equip the image $\Phi_\infty(\mathcal{G}_\omega)$ with the multiplicity
		\begin{align*}
			&\theta_\omega(p):=\sum_{x\in\mathcal{G}_\omega\cap\Phi_\infty^{-1}(p)}N(x).
		\end{align*}
		Note that, by the area formula (see, e.g., \cite[Lemma~A.2]{pigriv}),
		$\Phi_\infty(\mathcal{G}_\omega)$ is $2$-rectifiable
		and $\theta_\omega$ is well defined as a function in $L^1(\mathcal{H}^2\mrestr\Phi_\infty(\mathcal{G}_\omega))$. We can then view this set, with multiplicity $\theta_\omega$, as a rectifiable varifold in $\subman$, which we call $\vfd_\omega$.
	\end{definition}
	
	Note that, by \cref{ac}, \cref{struct.n} and the area formula, the weight $|\vfd_\omega|$
	coincides with $(\Phi_\infty|_\omega)_*\nu_\infty$.
	
	\begin{proposition}\label{var.conv}
		Given an open subset $\omega\cptsub\tilde\Sigma$ with $\nu_\infty(\de\omega)=0$, the immersions $\Phi_k|_\omega$ converge to the varifold $\vfd_\omega$.
	\end{proposition}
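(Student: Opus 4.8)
The plan is to identify every subsequential limit of the varifolds induced by $\Phi_k|_\omega$ with $\vfd_\omega$. Denote by $\vfd_{k,\omega}$ the varifold induced by the immersion $\Phi_k|_\omega$, so that $|\vfd_{k,\omega}|=(\Phi_k|_\omega)_*\nu_k$. Since $\vfd_{k,\omega}(\subman)\le\nu_k(\Sigma)=\operatorname{area}(\Phi_k)$ is bounded, a subsequence of $\vfd_{k,\omega}$ converges to some varifold $V$; by the last assertion of \cref{ac} its weight converges to $(\Phi_\infty|_\omega)_*\nu_\infty$, which by the observation following \cref{v.omega} equals $|\vfd_\omega|$. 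Hence $|V|=|\vfd_\omega|$, and it suffices to show $V=\vfd_\omega$. Writing $G_k(x):=d\Phi_k[T_x\Sigma]$ and, for $x\in\mathcal G_\omega$, $G_\infty(x):=d\Phi_\infty[T_x\Sigma]$ (well defined since $d\Phi_\infty$ has rank $2$ there), the area formula together with \cref{struct.n} and $\nu_\infty(\de\Sigma\cap\omega)=0$ gives $\vfd_\omega(\varphi)=\int_\omega\varphi(\Phi_\infty,G_\infty)\,d\nu_\infty$ for every $\varphi\in C^0(\operatorname{Gr}_2(\subman))$, so what must be shown is
\[
\int_\omega\varphi(\Phi_k,G_k)\,d\nu_k\longrightarrow\int_\omega\varphi(\Phi_\infty,G_\infty)\,d\nu_\infty.
\]

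The local input is a varifold refinement of the blow-up in the proof of \cref{struct.n}. Fix a point $x$ in $\mathcal G_\omega$ which is also a Lebesgue point of the density $m$ of $\nu_\infty$ (this holds for $\nu_\infty$-a.e.\ $x\in\mathcal G_\omega$), and set $p:=\Phi_\infty(x)$, $\Pi_x:=d\Phi_\infty[T_x\Sigma]$. For a.e.\ small $r>0$ the good-radius conditions used there hold ($\nu_\infty(\de B_r^2(x))=0$, and $\Phi_k|_{\de B_r^2(x)}\to\Phi_\infty|_{\de B_r^2(x)}$ in $C^0$ along a subsequence), so any weak-$*$ limit $\vfd^{(r)}$ of the varifolds of $\Phi_k|_{B_r^2(x)}$ has $|\vfd^{(r)}|=(\Phi_\infty|_{B_r^2(x)})_*\nu_\infty$, of total mass $\nu_\infty(B_r^2(x))$. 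The argument in \cref{struct.n} shows that, after translating by $p$ and dilating by $r^{-1}$, every such $\vfd^{(r)}$ converges as $r\to 0$ — along any sequence, hence to a limit not depending on the sequence — to $N(x)$ times the varifold of the flat ellipse $\operatorname{co}(\mathcal{C})$, $\mathcal{C}=\{d\Phi_\infty(x)[y]\mid y\in\de B_1^2\}$, which lies in the plane $\Pi_x$, where $N(x)\in\N$ is the density of $\nu_\infty$ at $x$; since $\operatorname{co}(\mathcal{C})$ has constant tangent plane $\Pi_x$, this means that $\nu_\infty(B_r^2(x))^{-1}\vfd^{(r)}$ converges to the Dirac mass at $(p,\Pi_x)$ as $r\to 0$, and in particular $\nu_\infty(B_r^2(x))^{-1}\,\vfd^{(r)}(\varphi)\to\varphi(p,\Pi_x)$. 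Separately, since $\Phi_\infty$ is continuous at $x$ and $G_\infty\to\Pi_x$ in $\operatorname{vol}_{g_0}$-measure near $x$ (as $x$ is a Lebesgue point of $d\Phi_\infty$), a short argument using $m(x)>0$, the Lebesgue-point property of $m$ and the uniform continuity of $\varphi$ gives $\nu_\infty(B_r^2(x))^{-1}\int_{B_r^2(x)}\varphi(\Phi_\infty,G_\infty)\,d\nu_\infty\to\varphi(p,\Pi_x)$ as well.

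To conclude I would run a Vitali covering argument. Given $\varphi$ and $\epsilon>0$, using that $\nu_\infty\mrestr\mathcal G_\omega$ is a finite measure absolutely continuous with respect to $\operatorname{vol}_{g_0}$, choose finitely many pairwise disjoint balls $B_{r_i}^2(x_i)\subseteq\omega$, with $x_i$ as above, enjoying the good-radius properties, with $\nu_\infty\big(\mathcal G_\omega\setminus\bigcup_i B_{r_i}^2(x_i)\big)<\epsilon$, and with each $r_i$ small enough that the two limits of the previous paragraph hold up to error $\epsilon$. Passing to a further subsequence in $k$ for which the varifold of each $\Phi_k|_{B_{r_i}^2(x_i)}$ converges, and using $\nu_k(B_{r_i}^2(x_i))\to\nu_\infty(B_{r_i}^2(x_i))$ and $\nu_k(\omega)\to\nu_\infty(\omega)$ (the latter as $\nu_\infty(\de\omega)=0$), one splits both $\int_\omega\varphi(\Phi_k,G_k)\,d\nu_k$ and $\int_\omega\varphi(\Phi_\infty,G_\infty)\,d\nu_\infty$ over the balls and the complementary remainder: on each $B_{r_i}^2(x_i)$ both quantities are, in the limit $k\to\infty$, within $2\epsilon\,\nu_\infty(B_{r_i}^2(x_i))$ of the common value $\varphi(\Phi_\infty(x_i),\Pi_{x_i})\,\nu_\infty(B_{r_i}^2(x_i))$, while the two remainders are each $O(\|\varphi\|_\infty\epsilon)$; summing over the disjoint balls gives $|V(\varphi)-\vfd_\omega(\varphi)|\le C(\varphi,\omega)\,\epsilon$, and letting $\epsilon\to 0$ yields $V=\vfd_\omega$. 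The main difficulty is the first half of the local input, namely upgrading the blow-up of the \emph{weight} in \cref{struct.n} to a blow-up of the \emph{varifold} with limit plane $\Pi_x$; but this is essentially already contained in that proof, which produces the varifold convergence $\vfd_k'\weakto\vfd_\infty'$ with $\vfd_\infty'$ a multiple of $\operatorname{co}(\mathcal{C})$ supported on $\Pi_x$, so one only has to record that this blow-up limit is independent of the diagonal sequence chosen.
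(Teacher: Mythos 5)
Your proposal is correct and takes essentially the same route as the paper: reduce to testing against continuous functions on the Grassmannian bundle, use the blow-up analysis from \cref{struct.n} (via a diagonal argument in $k$ and the radii) as the local input at $\nu_\infty$-a.e.\ point of $\mathcal{G}_\omega$, and globalize with a covering argument exploiting $\nu_\infty(\de\omega)=0$ and the negligibility of $\omega\setminus\mathcal{G}_\omega$ for $\nu_\infty$. The differences are cosmetic: the paper argues the local claim by contradiction using only that the blow-up varifold is supported in the plane $\Pi_\infty(x)$ (so it never needs to identify the limit as $N(x)$ times the ellipse via the Lebesgue-point property of $m$), and it uses a Besicovitch cover with bounded overlap rather than your disjoint Vitali family.
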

	
	\begin{proof}
		By splitting $\omega$ into finitely many pieces with $\nu_\infty$-negligible boundary, we can reduce to the case that $\omega$ is contained in a local chart; in the sequel, we identify $\omega$ with a subset of $\C=\R^2$.
	
		Let $\vfd_{k,\omega}:=(\Phi_k)_*(\omega)$ be the varifold induced by $\Phi_k|_\omega$. Viewing $\vfd_{k,\omega}$ (for $k\le\infty$) as a varifold in $\R^\envdim$, by the area formula and \cref{struct.n} it suffices to show that
		\begin{align}\label{var.conv.claim0}
			&\int_\omega \varphi(\Phi_k(x),d\Phi_k(x)[T_x\Sigma])\,d\nu_k(x)
			\to\int_\omega\varphi(\Phi_\infty(x),d\Phi_\infty(x)[T_x\Sigma])\,d\nu_\infty(x)
		\end{align}
		for any $\varphi\in C^1_c(\operatorname{Gr}_2(\R^\envdim))$.
		The last integrand is meant to be zero at points where $d\Phi_\infty$ does not have full rank.
		In order to simplify the notation, we indicate
		the plane $d\Phi_k(x)[T_x\Sigma]=d\Phi_k(x)[\R^2]$ by $\Pi_k(x)$.
		
		Let $\mathcal{G}_\omega'$ be the subset of $\mathcal{G}_\omega$ consisting of the points $x$ where additionally $\int_{B_r^2(x)}|d\Phi_\infty-d\Phi_\infty(x)|^2=o(r^2)$.
		For any point $x\in\mathcal{G}_\omega'$,
		pick a sequence of radii $r$ satisfying
		\begin{align*}
			&|\Phi_\infty(x+ry)-\Phi_\infty(x)-d\Phi_\infty(x)[ry]|=o(r)\quad\text{for }|y|=1.
		\end{align*}
		Given any $\epsilon>0$, we claim that
		\begin{align}\label{var.conv.claim}
			&\limsup_{k\to\infty}\int_{B_r^2(x)}|\varphi(\Phi_k,\Pi_k)-\varphi(\Phi_\infty(x),\Pi_\infty(x))|\,d\nu_k\le\epsilon r^2
		\end{align}
		for $r$ small enough in this sequence.
		If this is not true, then using a diagonal argument as in the proof of \cref{struct.n} we may find a subsequence of $k$'s (not relabeled) and radii $r_k\to 0$
		such that
		\begin{align}\label{var.conv.zero}
			&\int_{B_{r_k}^2(x)}|\varphi(\Phi_k,\Pi_k)-\varphi(\Phi_\infty(x),\Pi_\infty(x))|\,d\nu_k\ge\epsilon r_k^2
		\end{align}
		as well as
		\begin{align*}
			&r_k^{-2}|\nu_k(B_{r_k}^2(x))-\nu_\infty(B_{r_k}^2(x))|\to 0,
		\end{align*}
		and such that the varifolds induced by $\Psi_k:=r_k^{-1}(\Phi_k|_{B_{r_k}^2(x)}-\Phi_\infty(x))$
		converge tightly to a rectifiable varifold $\vfd'$ supported in a bounded subset of $\Pi_\infty(x)$.
		In particular, the $\nu_k$-measure of the set of points in $B_{r_k}^2(x)$ where $|\Psi_k|>r_k^{-1/2}$ is $o(r_k^2)$.
		Since $\Phi_k=\Phi_\infty(x)+r_k\Psi_k$, we deduce that
		\begin{align}\label{var.conv.on}
			&\int_{B_{r_k}^2(x)}|\varphi(\Phi_k,\Pi_k)-\varphi(\Phi_\infty(x),\Pi_k)|\,d\nu_k
			\le r_k^{1/2}\|d\varphi\|_{L^\infty}\nu_k(B_{r_k}^2(x))+o(r_k^2)
			=o(r_k^2)
		\end{align}
		as $k\to\infty$. Also, testing the tight varifold convergence of $\Psi_k$ to $\vfd'$ against the function $|\varphi(\Phi_\infty(x),\cdot)-\varphi(\Phi_\infty(x),\Pi_\infty(x))|$, we get
		\begin{align}\label{var.conv.dos}
			&\int_{B_{r_k}^2(x)}|\varphi(\Phi_\infty(x),\Pi_k)-\varphi(\Phi_\infty(x),\Pi_\infty(x))|\,d\nu_k=o(r_k^2).
		\end{align}
		Combining \cref{var.conv.on} with \cref{var.conv.dos} we get a contradiction to \cref{var.conv.zero}.
		By the Besicovitch covering lemma, we can then cover any fixed compact set $K\subseteq\mathcal{G}_\omega'$ with finitely many balls $\{B_j\}$ included in $\omega$ such that \cref{var.conv.claim} holds, for $B_j=B_{r_j}^2(x_j)$ in place of $B_r^2(x)$, as well as
		\begin{align*}
			&\int_{B_j}|\varphi(\Phi_\infty,\Pi_\infty)-\varphi(\Phi_\infty(x_j),\Pi_\infty(x_j))|\,d\nu_\infty\le\epsilon r_j^2,
		\end{align*}
		using the approximate continuity of $d\Phi_\infty$ at points in $K$,
		and such that $\sum_j\uno_{B_j}\le C$.
		
		Let $\mathcal{B}_j:=B_j\setminus\bigcup_{\ell<j}B_\ell$ and $U:=\bigcup_j B_j=\bigcup_j\mathcal{B}_j\subseteq\omega$.
		Since $\sum_j r_j^2\le C$, we deduce that
		\begin{align*}
			&\limsup_{k\to\infty}\Big|\int_U\varphi(\Phi_k,\Pi_k)\,d\nu_k
			-\int_U\varphi(\Phi_\infty,\Pi_\infty)\,d\nu_\infty\Big| \\
			&\le\sum_j\limsup_{k\to\infty}\Big|\int_{\mathcal{B}_j}\varphi(\Phi_\infty(x_j),\Pi_\infty(x_j))\,d\nu_k
			-\int_{\mathcal{B}_j}\varphi(\Phi_\infty(x_j),\Pi_\infty(x_j))\,d\nu_\infty\Big|+C\epsilon.
		\end{align*}
		The sum vanishes, since $\nu_\infty(\de\mathcal{B}_j)=0$.
		Also, since $\nu_\infty(\de\omega)=0$, we have
		\begin{align*}
			&\limsup_{k\to\infty}\nu_k(\omega\setminus U) \le\nu_\infty(\omega\setminus U) \le\nu_\infty(\omega\setminus K)
		\end{align*}
		and this quantity
		can be made arbitrarily small, proving \cref{var.conv.claim0}.
	\end{proof}
	
	\begin{definition}
		We say that a property holds \emph{for a.e.}\ $\omega\subseteq\Sigma$ if, for every nonnegative $\rho\in C^\infty(\Sigma)$, it holds for a.e.\ superlevel set $\{\rho>\lambda\}$ with $\lambda>0$.
		Similarly, we say that it holds \emph{for a.e.}\ $\omega\cptsub\operatorname{int}(\Sigma)$ if we have the same for every nonnegative $\rho\in C^\infty_c(\operatorname{int}(\Sigma))$.
	\end{definition}
	
	\begin{definition}
		A map $\Phi\in W^{1,2}(\Sigma)$ is \emph{weakly conformal}
		if, for a.e.\ $x\in\Sigma$, its differential at $x$ is zero or a linear conformal map $T_x\Sigma\to T_{\Phi(x)}\subman$.
	\end{definition}
	
	\begin{definition}\label{par.def}
		Let $\Sigma$ be a compact Riemann surface, possibly with boundary, $\Phi\in W^{1,2}(\Sigma,\subman)$ weakly conformal with $\Phi(\de\Sigma)\subseteq\bsubman$,
		and $N\in L^\infty(\Sigma,\{1,2,\dots\})$.
		The triple $(\Sigma,\Phi,N)$ is a \emph{parametrized free boundary stationary varifold} if, for almost every $\omega\subseteq\Sigma$, the varifold $\Phi_*(N\omega)$ is free boundary stationary (for $\subman,\bsubman$) outside $\Phi_\infty(\de\omega)$ (see \cref{fb.def})
		and if, for almost every $\omega\cptsub\operatorname{int}(\Sigma)$, $\Phi_*(N\omega)$ is stationary outside $\Phi_\infty(\de\omega)$.
	\end{definition}
	
	The pushforward $\Phi_*(N\omega)$ in this definition has to be interpreted as the varifold $\vfd_\omega$ in \cref{v.omega},
	using the subset of $\omega$ made of Lebesgue points, for $\Phi$ and $d\Phi$, where $d\Phi$ has rank $2$.
	
	\begin{rmk}
		In this definition, $N\omega$ is viewed as a $2$-varifold in the surface $\Sigma$, equipped with a metric compatible with the conformal structure; however, $\Phi_*(N\omega)$ is independent of the choice of the metric.
		Note that $\de\omega$ is a compact one-dimensional submanifold and the trace $\Phi|_{\de\omega}$ has a continuous representative for a.e.\ $\omega$: this follows, e.g., from \cite[Theorems~4.21 and 5.7]{evans} applied to the regular level sets of $f$; $\Phi(\de\omega)$ implicitly refers to the (compact) image by means of this continuous map. Note also that the definition does not depend on the representatives of $\Phi$ and $N$.
	\end{rmk}

	\begin{thm}\label{par.stat}
		There exists a compact Riemann surface $\Sigma'$ and a quasiconformal homeomorphism $\varphi:\Sigma'\to\Sigma$
		such that $(\Sigma',\Phi_\infty\circ\varphi,N\circ\varphi)$ is a free boundary parametrized stationary varifold for $(\subman,\bsubman)$.
	\end{thm}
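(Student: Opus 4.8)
Write-up plan for \cref{par.stat}.

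The plan is to obtain $\Sigma'$ and $\varphi$ by straightening, via the measurable Riemann mapping theorem, the measurable conformal structure that $\Phi_\infty$ inherits from $\subman$, and then to transport to arbitrary subdomains of $\Sigma'$ the stationarity of the varifolds $\vfd_\omega$ that \cref{var.conv} and the first variation already yield on subsets compactly contained in $\tilde\Sigma$.

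First I would record a pointwise consequence of \cref{ac} and \cref{struct.n}. Since $\Phi_k$ is conformal for $g_0$ we have $\nu_k=\mz|d\Phi_k|_{g_0}^2\operatorname{vol}_{g_0}=J(d\Phi_k)\operatorname{vol}_{g_0}$, so weak lower semicontinuity of the Dirichlet energy together with $\nu_k\weakto\nu_\infty$ gives, for every open $U$ with $\nu_\infty(\de U)=0$, $\int_U|d\Phi_\infty|^2\le 2\nu_\infty(U)$, which by \cref{ac} and \cref{struct.n} equals $2\int_U NJ(d\Phi_\infty)$; hence $|d\Phi_\infty|^2\le 2NJ(d\Phi_\infty)$ a.e.\ on $\tilde\Sigma$. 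In particular $d\Phi_\infty$ has rank $0$ or $2$ a.e.\ (rank $1$ would force $J(d\Phi_\infty)=0<|d\Phi_\infty|^2$ on a set of positive measure) and, where the rank is $2$, the singular values $\lambda_1\ge\lambda_2>0$ of $d\Phi_\infty$ (with respect to $g_0$ and $g$) satisfy $\tfrac{\lambda_1}{\lambda_2}+\tfrac{\lambda_2}{\lambda_1}\le 2\|N\|_{L^\infty}$. Letting $\mu_\infty$ be the Beltrami coefficient of $d\Phi_\infty^*g$ relative to $g_0$ where $d\Phi_\infty$ has rank $2$, and $\mu_\infty:=0$ elsewhere (in particular at the finitely many atoms of $\nu_\infty$, a null set), this yields $\|\mu_\infty\|_{L^\infty}=\big\|\tfrac{\lambda_1-\lambda_2}{\lambda_1+\lambda_2}\big\|_{L^\infty}\le k_0<1$, with $k_0$ depending only on $\|N\|_{L^\infty}$. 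By the measurable Riemann mapping theorem for compact bordered Riemann surfaces (Ahlfors--Bers: double $\Sigma$ across $\de\Sigma$, straighten the symmetric extension of $\mu_\infty$, restrict) there exist a compact Riemann surface $\Sigma'$ and a quasiconformal homeomorphism $\varphi:\Sigma'\to\Sigma$, carrying $\de\Sigma'$ onto $\de\Sigma$, whose differential straightens $\mu_\infty$; in particular $\Sigma'$ is homeomorphic to $\Sigma$. That $(\Sigma',\Phi_\infty\circ\varphi,N\circ\varphi)$ is admissible data for \cref{par.def} is then routine: $\Phi_\infty\circ\varphi\in W^{1,2}(\Sigma',\subman)$ because in dimension two $W^{1,2}$ is stable under precomposition with quasiconformal homeomorphisms (using $\varphi\in W^{1,p}_{loc}$ for some $p>2$); its trace lies in $\bsubman$ since $\varphi(\de\Sigma')=\de\Sigma$; it is weakly conformal because at a.e.\ $y$ either $d\Phi_\infty(\varphi(y))=0$ or $d\Phi_\infty(\varphi(y))$ has rank $2$ and $d\varphi(y)$ was chosen exactly so that $d\Phi_\infty(\varphi(y))\circ d\varphi(y)$ is conformal; and $N\circ\varphi\in L^\infty(\Sigma',\{1,2,\dots\})$ since quasiconformal maps preserve null sets.

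It remains to verify the stationarity clauses of \cref{par.def}. Since $\varphi$ is a homeomorphism preserving null sets and, up to null sets, rank-$2$ Lebesgue points, one has $(\Phi_\infty\circ\varphi)_*(N\circ\varphi\cdot\omega)=\vfd_{\varphi(\omega)}$ and $\Phi_\infty\circ\varphi(\de\omega)=\Phi_\infty(\de\varphi(\omega))$ for open $\omega\subseteq\Sigma'$, while superlevel sets of a smooth function on $\Sigma'$ correspond to superlevel sets of a function of class $C^{0,\alpha}$ on $\Sigma$, whose topological boundaries are, for a.e.\ value, $\nu_\infty$-negligible, disjoint from the atoms, and carry a continuous trace of $\Phi_\infty$ (by \cref{ac}). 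So it suffices to show: for every open $U\subseteq\Sigma$ with $\nu_\infty(\de U)=0$ and $\de U$ disjoint from the atoms, $\vfd_U$ is free boundary stationary outside $\Phi_\infty(\de U)$, and stationary there if moreover $U\cptsub\operatorname{int}(\Sigma)$. For $U\cptsub\tilde\Sigma$ this follows from \cref{var.conv} and the first-variation argument of \cref{limit.is.stat}: testing \cref{almost.crit} against $w_k=X(\Phi_k)\uno_U$---with $X\in\fbvf$, or $X$ arbitrary when $U\cptsub\operatorname{int}\Sigma$, supported away from $\Phi_\infty(\de U)$, which makes $w_k$ an admissible variation once $\Phi_k|_{\de U}\to\Phi_\infty|_{\de U}$ in $C^0$ along a subsequence (arranged piece by piece with the Courant--Lebesgue lemma)---all terms of \cref{first.var.X} carrying $\sigma_k$ are infinitesimal just as there, whereas $\int_U\ang{\hat I_k,\nabla X\circ\hat I_k}=\int_{\operatorname{Gr}_2(\subman)}\operatorname{div}_\Pi X\,d\vfd_{k,U}\to\int_{\operatorname{Gr}_2(\subman)}\operatorname{div}_\Pi X\,d\vfd_U$ by \cref{var.conv}. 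For a general such $U$, remove small coordinate disks $D_i(r_j)$ about the atoms $x_i\in U$, choosing $r_j\to0$ (Courant--Lebesgue again, using $\Phi_\infty\in W^{1,2}$ near $x_i$) so that $\operatorname{diam}\Phi_\infty(\de D_i(r_j))\to0$ and, after passing to a subsequence, $\Phi_\infty(\de D_i(r_j))$ converges in Hausdorff distance to a point $q_i$; the compactly-contained case applies to $U_j:=U\setminus\bigcup_i\bar D_i(r_j)\cptsub\tilde\Sigma$, and letting $j\to\infty$---so that $\vfd_{U_j}\to\vfd_U$ as Radon measures on $\operatorname{Gr}_2(\subman)$, by dominated convergence, since $\mathcal{G}_{U_j}\uparrow\mathcal{G}_U$ up to null sets---shows $\vfd_U$ is (free boundary) stationary outside $\Phi_\infty(\de U)\cup\{q_i:x_i\in U\}$. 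Finally the finitely many points $q_i$ are removed by the standard cutoff argument, legitimate because $|\vfd_U|\le|\vfd_\infty|$---indeed $|\vfd_\omega|=\lim_k|\vfd_{k,\omega}|\le\lim_k\mu_k=|\vfd_\infty|$ for $\omega\cptsub\tilde\Sigma$ and $|\vfd_\omega|\uparrow|\vfd_U|$---whence $|\vfd_U|(B_r(q_i))\le Cr^2$ by \cref{mono.easy}.

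I expect the main obstacle to be the uniform bound $\|\mu_\infty\|_{L^\infty}<1$: without it neither the quasiconformality of $\varphi$, nor the $W^{1,2}$-regularity of $\Phi_\infty\circ\varphi$, nor the $\varphi$-invariance of the parametrized-varifold conditions is available, and it relies entirely on the absolute-continuity and integer-multiplicity structure furnished by \cref{ac} and \cref{struct.n}. The second, more technical, difficulty is the passage from $\omega\cptsub\tilde\Sigma$ to arbitrary $\omega$, i.e.\ ensuring that the finitely many concentration points of $\nu_\infty$ do not obstruct the localized stationarity of $\vfd_\omega$.
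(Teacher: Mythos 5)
Your route to $(\Sigma',\varphi)$ is essentially the paper's, packaged differently: the pointwise inequality $\mz|d\Phi_\infty|^2\le N\,J(d\Phi_\infty)$ coming from \cref{ac} and \cref{struct.n} is exactly the bound the paper extracts, and straightening the resulting Beltrami coefficient globally (Ahlfors--Bers on the double) instead of chart by chart is a legitimate variant; your handling of the atoms (removing small disks with small image diameter, then cutting off at the finitely many residual points using $|\vfd_U|(B_r(q))\le Cr^2$ from \cref{mono.easy.claim}) also matches the paper.

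The gap is in the verification of the stationarity clauses. You correctly observe that superlevel sets of a smooth $\rho$ on $\Sigma'$ become superlevel sets of the merely H\"older (or $W^{1,p}$) function $\rho\circ\varphi^{-1}$ on $\Sigma$, and you therefore try to prove stationarity of $\vfd_U$ for \emph{every} open $U$ with $\nu_\infty(\de U)=0$ and $\de U$ avoiding the atoms, claiming that $\Phi_k|_{\de U}\to\Phi_\infty|_{\de U}$ in $C^0$ can be ``arranged piece by piece with the Courant--Lebesgue lemma''. This step would fail: Courant--Lebesgue gives you the freedom to \emph{select} good circles or good levels of a smooth exhaustion, whereas here $\de U$ is a prescribed compact set --- the topological boundary of a superlevel set of a non-smooth function --- which need not be a finite union of curves, and uniform convergence of $\Phi_k$ on an arbitrary compact set cannot be extracted from $W^{1,2}$ bounds. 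Without that convergence, $w_k=X(\Phi_k)\uno_U$ is not known to be an admissible variation: you need $X$ to vanish near $\Phi_k(\de U)$ for all large $k$, not merely near $\Phi_\infty(\de U)$. The missing idea is the localization the paper carries out in \cref{full.stat} (and invokes at the end of its proof of \cref{par.stat}): prove stationarity only for a.e.\ superlevel set of \emph{smooth} functions on $\Sigma$, where good boundary levels can indeed be selected; establish the continuity of $\Phi_\infty$; and then, for a general domain $U$ and $X\in\fbvf$ supported away from $\Phi_\infty(\de U)$, insert a smooth cutoff $\rho\in C^\infty_c(U)$ with $\rho=1$ near $\Phi_\infty^{-1}(\spt(X))\cap U$. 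Since $\operatorname{div}_\Pi X$ is supported where $\vfd_U$ and $\vfd_{\{\rho>\lambda\}}$ coincide, and $X$ is admissible for $\{\rho>\lambda\}$, the stationarity of $\vfd_{\{\rho>\lambda\}}$ for a.e.\ $\lambda$ gives that of $\vfd_U$ against $X$. With this replacement (the rest of your argument unchanged) the proof closes.
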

	
	We refer the reader to \cite[Chapter~4]{imayoshi} for basic properties of quasiconformal homeomorphisms.
	
	\begin{proof}
		For a.e.\ $\omega\subseteq\Sigma$, $\Phi_k|_{\de\omega}$ converges in $C^0$ to $\Phi_\infty|_{\de\omega}$ (up to subsequences) and $\de\omega\cap\mathcal{A}=\emptyset$, with $\mathcal{A}$ the finite set of atoms for $\nu_\infty$.
		
		With respect to the fixed metric $g_0$ on $\Sigma$,
		we can find an arbitrarily small radius $r>0$ such that
		for any $x\in\omega\cap\mathcal{A}$ we have $B_r(x)\cptsub\omega$ and $\Phi_k|_{\de B_r(x)}$ also converges in $C^0$ to $\Phi_\infty|_{\de B_r(x)}$ (up to subsequences).
		Let $\tilde\omega:=\omega\setminus\bigcup_{x\in\omega\cap\mathcal{A}}\bar B_r(x)$.
		
		Repeating the proof of \cref{limit.is.stat}
		with vector fields in $\fbvf$ supported outside $\Phi_\infty(\de\tilde\omega)$, we deduce that the varifold limit of $\Phi_k|_{\tilde\omega}$ is free boundary stationary outside this set;
		by \cref{var.conv}, this limit is $\vfd_{\tilde\omega}$.
		Since the images $\Phi_\infty(\de B_r(x))$, for $x\in\omega\cap\mathcal{A}$, have arbitrarily small diameter (see, e.g., \cite[Lemma~A.3]{pigriv}), we deduce that
		$\vfd_\omega$ is free boundary stationary outside ($\Phi_\infty(\de\omega)$ and) a finite set $F$. However, since $\Phi_k$ also converges to the free boundary stationary varifold $\vfd_\infty\ge\vfd_\omega$,
		by \cref{mono.easy.claim} we get $|\vfd_\omega|(B_s(p))\le Cs^2$ for $p\in F$.
		Hence, given $X\in\fbvf$ supported outside $\Phi_\infty(\de\omega)$, we can multiply it by the product $\Pi_{p\in F}\varphi_p$ of cut-off functions $\varphi_p$, with $\varphi_p=0$ on $B_{s/2}(p)$, $\varphi_p=1$ outside $B_s(p)$ and $|d\varphi_p|\le Cs^{-1}$. It is then straightforward to check that the stationarity with respect to the cut-off vector field gives the one for $X$, as $s\to 0$.
		The proof that $\vfd_\omega$ is stationary for a.e.\ $\omega\cptsub\operatorname{int}(\Sigma)$ is analogous.
		
		Finally, we show how to obtain a weakly conformal reparametrization. Note that, by \cref{struct.n},
		\begin{align*}
			&N|\de_1\Phi_\infty\wedge\de_2\Phi_\infty|\ge\mz|d\Phi_\infty|^2
		\end{align*}
		a.e.\ in a local conformal chart $h:U\to U'$ (with $U\subseteq\Sigma$), since the left-hand side is the density of $\nu_\infty$ (in $U'$) and thus, for any open set $V\cptsub U\cap\tilde\Sigma$,
		\begin{align*}
			&\int_V\mz|d\Phi_\infty|^2\,d\mathcal{L}^2
			\le\liminf_{k\to\infty}\nu_k(V)\le\nu_\infty(\bar V)
			=\int_{\bar V}N|\de_1\Phi_\infty\wedge\de_2\Phi_\infty|\,d\mathcal{L}^2,
		\end{align*}
		from which the previous claim follows. Call $\bar C$ an upper bound for $N$
		and assume that the image $U'$ of the chart is either the ball or the half-ball. Arguing as in the first part of the proof of \cite[Theorem~4.1]{pigriv}, we can find a $\frac{\bar C^2-1}{\bar C^2+1}$-quasiconformal homeomorphism $\psi:\C\to\C$
		such that $\Phi_\infty\circ h^{-1}\circ\psi^{-1}$ is weakly conformal on $\psi(U')$;
		using the Riemann mapping theorem and Carath\'eodory's theorem, by composing $\psi$ with a conformal map, we can replace $\psi$ with a homeomorphism $U'\to U'$, with the additional property that it preserves $U'\cap\{\Im(z)=0\}$ (as a set) in the half-ball case.
		Recall that $\psi^{-1}$ is quasiconformal, as well (see, e.g., \cite[Theorem~4.10 and Proposition~4.2]{imayoshi}).
		
		Set $\theta:=h^{-1}\circ\psi^{-1}:U'\to U$. Note that, given two overlapping charts $U_1,U_2$,
		the corresponding homeomorphisms $\theta_1$ and $\theta_2$ differ by a conformal map, namely
		$\theta_1^{-1}\circ\theta_2$ is conformal on $\theta_2^{-1}(U_1\cap U_2)$. This holds since a.e.\ the differential $d\Phi_\infty$ either vanishes or has rank $2$ and, by construction, $\theta_i$ is weakly conformal at a.e.\ $x_i$ such that $d\Phi_\infty(\theta_i(x_i))=0$; on the other hand, the two maps
		$d(\Psi_\infty\circ\theta_i)(x_i)=d\Psi_\infty(\theta_i(x_i))\circ d\theta_i(x_i)$,
		with $x_1:=\theta_1^{-1}\circ\theta_2(x_2)$, are both nontrivial linear conformal maps for a.e.\ $x_2$ such that $d\Phi_\infty(\theta_2(x_2))\neq 0$, so that $d\theta_1(x_1)^{-1}\circ d\theta_2(x_2)$ is conformal at these points.
		
		In this argument we used the facts that a quasiconformal homeomorphism carries negligible sets to negligible sets \cite[Lemma~4.12]{imayoshi} and satisfies the chain rule \cite[Lemma~III.6.4]{lehto}.
		
		Note that the Cauchy--Riemann equations satisfied by $\theta_1^{-1}\circ\theta_2$ give its smoothness away from the boundary and, by the Schwarz reflection principle, this map is smooth up to $\theta_2^{-1}(\de\Sigma\cap U_1\cap U_2)$. Thus, the maps $\theta^{-1}$ define an atlas for a new smooth and conformal structure on $\Sigma$;
		calling $\Sigma'$ a copy of $\Sigma$ with this structure, we can just take $\varphi$ to be the identity $\Sigma'\to\Sigma$.
		
		Finally, as explained in \cref{full.stat} (whose proof does not use that $\Phi$ is weakly conformal), the stationarity property holds on $\Sigma$ for all domains; the same then holds for $\Sigma'$.
	\end{proof}

	\section{Degeneration of the conformal structure and bubbling}\label{deg.sec}
	
	In this section we describe how to recover all the area in the limit as a sum of masses of parametrized (free boundary) stationary varifolds, without the assumption that the maps $\Phi_k$ induce a fixed conformal structure on $\Sigma$.
	
	Namely, denoting $\vfd_k$ the varifold induced by $\Phi_k$ as in \cref{crit.sec}, we show that the limit varifold $\vfd_\infty$ is the superposition of finitely many parametrized free boundary stationary varifolds.
	
	Before dealing with possible concentration points, we focus on how to remove the assumption of the fixed conformal structure.
	
	First of all, recall that on the oriented surface $\Sigma$, which can be assumed to be connected, there exists a metric $g_k$ which is conformal to the induced metric $g_{\Phi_k}=\Phi_k^*g$, has constant Gaussian curvature either $1$, $0$, or $-1$, and makes the boundary $\de\Sigma$ geodesic.
	Precisely, the curvature is $1$ if $\Sigma$ is (diffeomorphic to) a sphere or a disk,
	$0$ if $\Sigma$ is a torus or an annulus, and $-1$ otherwise.
	This is in agreement with Gauss--Bonnet, which says that the sign of this constant curvature agrees with the sign of the Euler characteristic of $\Sigma$, given by $\chi(\Sigma)=2-2g-b$, with $g$ the genus and $b$ the number of boundary components.
	
	We also recall that $(\Sigma,g_k)$, up to a change of orientation, is conformal to a surface $\Sigma_k$ which is the standard sphere, hemisphere, a torus $\C/(\Z+\Z\lambda_k)$ (where we can assume $|\lambda_k|\ge 1$, $|\Re(\lambda_k)|\le\mz$), an annulus $S^1\times [0,\ell_k]$, when $\Sigma$ is (diffeomorphic to) the sphere, the disk, the torus, the annulus, respectively.\footnote{This is an easy consequence of the Riemannian uniformization theorem, applied to $(\Sigma,g_k)$ if $\de\Sigma=\emptyset$, or to the doubled surface obtained by gluing two copies of $\Sigma$ along $\de\Sigma$.}
	
	Hence, when $\Sigma$ is the sphere, up to precomposing $\Phi_k$ with a diffeomorphism
	we can assume that $\Phi_k$ induces the standard conformal structure on $\Sigma=S^2$;
	note that this leaves the diffeomorphism invariant conditions \cref{almost.crit} and \cref{entropy} unchanged. The same holds for the disk case.
	
	Before discussing the other situations, let us state a useful modification of \cref{quant.phi}.
	
	\begin{proposition}\label{quant.phi.mod}
		Consider open domains $U_k\subseteq \Sigma$ whose boundary $\de U_k$ is contained in the union of two compact curves $\alpha_{k,1}$ and $\alpha_{k,2}$, and call $d_{k,i}$ the diameter of $\Phi_k(\alpha_{k,i})$.
		Assume that $U_k$ does not contain any entire boundary component of $\Sigma$.
		Then
		\begin{align*}
			&\limsup_{k\to\infty}\nu_k(U_k)\le\delta\Big(\limsup_{k\to\infty}\max\{d_{k,1},d_{k,2}\},C\Big),
		\end{align*}
		unless the left-hand side is at least $c_Q$, the same constant appearing in \cref{quant.phi}.
		In the last inequality, $C$ is a constant depending only on $(\Phi_k)$ and the function $\delta$ is given by \cref{quant.vfd.soft}.
	\end{proposition}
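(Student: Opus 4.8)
The plan is to follow the proof of \cref{quant.phi} line by line, working with the ambient varifolds and invoking \cref{quant.vfd.soft} at the end. Assume $\limsup_k\nu_k(U_k)<c_Q$, since otherwise there is nothing to prove, and pass to a subsequence (not relabeled) along which $\nu_k(U_k)\to L:=\limsup_k\nu_k(U_k)$, along which $d_{k,i}\to d_i$ and $\Phi_k(\alpha_{k,i})\subseteq B_{d_{k,i}}(p_{k,i})$ with $p_{k,i}\to p_i$ — choosing $p_{k,i}\in\bsubman$ whenever $\alpha_{k,i}$ meets $\de\Sigma$ — and along which the varifolds $\tilde\vfd_k$ induced by $\Phi_k|_{U_k}$ converge to a limit $\tilde\vfd_\infty$. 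Since $\subman$ is compact and $|\tilde\vfd_k|=(\Phi_k|_{U_k})_*\nu_k$, the weak convergence of the weights gives $|\tilde\vfd_\infty|(\subman)=L$. Set $K:=\bar B_{d_1}(p_1)\cup\bar B_{d_2}(p_2)$.

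First I would show that $\tilde\vfd_\infty$ is a free boundary stationary varifold outside $K$ (and a genuinely stationary one if the domains $U_k$ are eventually disjoint from $\de\Sigma$). Fix $\rho>0$ and a vector field $X\in\fbvf$ supported outside $\bar B_{d_1+\rho}(p_1)\cup\bar B_{d_2+\rho}(p_2)$; for $k$ large $X$ vanishes in a neighborhood of $\Phi_k(\de U_k)$, so the extension by zero of $X(\Phi_k)\uno_{U_k}$ is a smooth section of $\Phi_k^*T\subman$, tangent to $\bsubman$ along $\de\Sigma$, hence an admissible infinitesimal variation $w_k$. Repeating the estimates in the proof of \cref{limit.is.stat} — now with all integrals over $U_k$, and using \cref{first.var.X}, \cref{almost.crit}, the uniform area bound and the entropy condition \cref{entropy} — every term carrying a power of $\sigma_k$ is infinitesimal while $\|w_k\|_{\Phi_k}$ grows no faster than $\sigma_k^{-1}(\log\sigma_k^{-1})^{-1/4}$, so $\int_{\operatorname{Gr}_2(\subman)}\operatorname{div}_\Pi X\,d\tilde\vfd_k\to 0$. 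Passing to the limit and then letting $\rho\to 0$ gives the stationarity outside $K$.

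Next I would establish that $\tilde\vfd_\infty$ has density bounded below by a constant $c(\subman,\bsubman)>0$ on $\subman\setminus K$. This is the covering argument used in the proof of \cref{quant.phi}, for which the hypothesis that $U_k$ contains no entire boundary component of $\Sigma$ is exactly what lets one quote \cref{small.bdry}: fixing a compact $K'\subseteq\subman\setminus K$, one removes the set of points where the maximal bound in \cref{mono.log} fails for $\Phi_k|_{U_k}$ (of $|\tilde\vfd_k|$-measure $o(1)$, by Vitali's lemma and the entropy decay) together with the $\sigma_k$-neighborhood of $\Phi_k(\de\Sigma)$ (of $\mu_k$-measure $O(\sigma_k\operatorname{length}(\Phi_k|_{\de\Sigma}))=o(1)$, by \cref{small.bdry}); for $q\in K'$ outside these sets one has $\operatorname{dist}(q,\Phi_k(\de U_k))\ge\operatorname{dist}(K',K)-\rho>0$ for $k$ large, so \cref{mono.log.nobdry} (or \cref{mono.log}) applies with $s=\sigma_k$ and gives $|\tilde\vfd_k|(B_s(q))\ge(c-o(1))s^2$ for $s<\operatorname{dist}(K',K)$, whence $\theta(\tilde\vfd_\infty,\cdot)\ge c$ off $K$.

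Finally, $\tilde\vfd_\infty$ is a free boundary stationary varifold outside the union of two balls of radii $d_1,d_2$, with density at least $c$ on the complement and total mass $L$ bounded by the uniform area bound $C=C((\Phi_k))$; by \cref{quant.vfd.soft} either $L\ge c_Q$, against our assumption, or $L\le\delta(\max\{d_1,d_2\},C)\le\delta(\limsup_k\max\{d_{k,1},d_{k,2}\},C)$, using that $\delta$ may be taken nondecreasing in its first argument — which is the claim. The substantive difficulty of this circle of ideas is the varifold quantization \cref{quant.vfd.soft} itself; inside the present argument the only delicate point is the uniform density lower bound, where one must ensure that \cref{mono.log}, \cref{small.bdry} and \cref{mono.log.nobdry} are applied at scales and ambient distances that remain controlled as both $k$ and the domain $U_k$ vary.
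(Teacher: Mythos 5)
Your argument follows the route the paper itself takes (its proof is essentially ``analogous to \cref{quant.phi}, then apply \cref{quant.vfd.soft}''): pass to a subsequence realizing the limsup, show that the limit $\tilde\vfd_\infty$ of the varifolds induced by $\Phi_k|_{U_k}$ is free boundary stationary outside the two limit balls via variations $X(\Phi_k)\uno_{U_k}$ with $X$ supported away from $\Phi_k(\de U_k)$, obtain the uniform density lower bound by the covering argument of \cref{quant.phi} (with \cref{small.bdry} usable precisely because $U_k$ contains no entire boundary component of $\Sigma$), and conclude with \cref{quant.vfd.soft}. All of that is correct and matches the paper.

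The one hypothesis of \cref{quant.vfd.soft} you do not verify is the quadratic ball-mass bound $|\vfd|(B_r(q))\le c'r^2$ for all $q$ and $r$; you replace it by the statement that the total mass $L$ is ``bounded by the uniform area bound $C$''. These are not interchangeable: the second argument of $\delta$ in \cref{quant.vfd.soft} is $c'/\bar\theta$, where $c'$ is the constant in the quadratic bound, and that bound is genuinely needed in the proof of \cref{quant.vfd.soft} (it is what allows one to upgrade stationarity outside two exceptional points to stationarity everywhere), so without it the lemma cannot be invoked. The gap is filled in one line, exactly as in the proof of \cref{struct.n}: the varifolds induced by the full maps $\Phi_k$ dominate $\tilde\vfd_k$ and converge subsequentially to a free boundary stationary varifold by \cref{limit.is.stat}, so \cref{mono.easy.claim} yields $|\tilde\vfd_\infty|(B_r(q))\le C(\subman,\bsubman)\,\Lambda\,r^2$ with $\Lambda$ the uniform area bound; this is where the constant $C=C((\Phi_k))$ in the statement actually comes from. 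With that observation added, your proof coincides with the paper's.
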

	
	In this statement, $U_k$ may contain points in $\de\Sigma$ and $\de U_k=\bar U_k\setminus U_k$ denotes its topological boundary in $\Sigma$.
	
	\begin{proof}
		Note that $\Phi_k(\alpha_{k,i})$ is contained in a ball $\bar B_{d_{k,i}}(p_{k,i})$.
		After extracting a subsequence realizing $\limsup_{k\to\infty}\nu_k(U_k)$,
		we can also assume that $p_{k,i}$ and $d_{k,i}$ converge to $p_{i}$ and $d_i$.
		
		The proof is now analogous to the one of \cref{quant.phi}: the maps $\Phi_k|_{U_k}$ induce varifolds whose (subsequential) limit is free boundary stationary on the complement of $\bar B_{d_1}(p_{1})\cup \bar B_{d_2}(p_{2})$, has mass at most $Cr^2$ on balls of radius $r$, and has density bounded below by a constant $c<1$ (the same as in that proof). The claim follows from \cref{quant.vfd.soft}.
	\end{proof}
	
	\subsection{Flat case}
	We now treat the torus case in detail, deferring the other cases to a later discussion.
	
	If $\Sigma_k=\C/(\Z+\Z\lambda_k)$, setting $\ell_k:=|\lambda_k|\ge 1$, we can also assume that $\ell_k\to\ell_\infty\in[1,\infty]$ up to a subsequence.
	If $\ell_\infty<\infty$, assuming $\lambda_k\to\lambda_\infty$ and defining $\Sigma_\infty:=\C/(\Z+\Z\lambda_\infty)$, we can find diffeomorphisms $\varphi_k:\Sigma_\infty\to\Sigma$ such that
	the pullback of the conformal structure $[g_{\Phi_k}]$ converges smoothly to the flat one.
	
	Since the area of $\Phi_k$ is bounded,
	the sequence $\Phi_k\circ\varphi_k$ is then bounded in $W^{1,2}(\Sigma_\infty)$ and we can extract a subsequence converging to a weak limit $\Phi_\infty$. Defining the area measure $\nu_k$ on $\Sigma_\infty$ as in the previous section, note that again their limit in the sense of Radon measures (up to subsequences) is also equal to the limit of $\mz|d\Phi_k|^2\,\operatorname{vol}_{\Sigma_\infty}$.
	
	All the proofs in \cref{asympt.sec} carry over, just replacing $\Phi_k$ with $\Phi_k\circ\varphi_k$ and $(\Sigma,g_0)$ with $(\Sigma_\infty,g_{\Sigma_\infty})$. Assume in the sequel $\ell_k\to\ell_\infty=\infty$.
	
	\begin{rmk}
		Actually, in the proof of \cref{struct.n} we used the conformality of the maps $\Phi_k$; since the proof was local in $\operatorname{int}(\Sigma)$, we can precompose $\Phi_k$ with a conformal map $h_k:B_1^2\to(\Sigma,g_k)$ which is a diffeomorphism with the image and converges smoothly to the inverse of a conformal chart for $\Sigma=\Sigma_\infty$.
		The statement for the sequence $(\Phi_k)$ then follows from its validity for the conformal maps $\Phi_k\circ h_k$.
	\end{rmk}
	
	Note that, since $|\Re(\lambda_k)|\le\mz$, we can use instead $S^1\times\ell_k S^1$ as a domain for $\Phi_k$,
	with the induced conformal structure becoming asymptotically the flat one.
	Given a big parameter $L$, we can subdivide the circle $\ell_k S^1$ into $N_k$ arcs $I_{k,1},\dots,I_{k,N_k}$ with $L\le |I_{k,j}|\le 2L$. Note that the boundedness of the area of $\Phi_k$ gives
	\begin{align*}
		&\int_{S^1\times\ell_k S^1}|d\Phi_k|^2\le C
	\end{align*}
	for some constant $C$ independent of $k$.
	
	Hence, for each $k$, there is only a bounded amount of indices $j$ such that $\mz\int_{S^1\times I_{k,j}}|d\Phi_k|^2\ge\frac{c_Q}{8}$, for the constant $c_Q$ from \cref{quant.phi.mod}. Up to subsequences, we can then find a nonempty collection of arcs $J_{k,1},\dots,J_{k,h}$
	which are unions of the previous intervals, in such a way that
	\begin{align*}
		&L<\lim_{k\to\infty}|J_{k,j}|<\infty,\quad\operatorname{dist}(J_{k,j},J_{k,j'})\to\infty\text{ for }j\neq j'
	\end{align*}
	and $\mz\int_{S^1\times I_{k,j}}|d\Phi_k|^2<\frac{c_Q}{8}$ whenever $I_{k,j}$ is not included in one of the arcs $J_{k,1},\dots,J_{k,h}$. We now claim that
	\begin{align}\label{deg.torus.claim}
		&\limsup_{k\to\infty}\int_{S^1\times(\ell_k S^1\setminus\bigcup_{j=1}^h RJ_{k,j})}|d\Phi_k|^2\to 0\quad\text{as }R\to\infty,
	\end{align}
	provided $L$ was chosen big enough. Here $RJ_{k,j}\subseteq\ell_k S^1$ is the arc dilated by a factor $R$, with the same center.
	
	Once this is proved, we can fix $j\in\{1,\dots,h\}$ and, shifting $J_{k,j}$ to be centered at $0$, we obtain
	a (local) weak limit $\Phi_{\infty,j}:S^1\times\R\to\subman$ of the maps $\Phi_k$, viewing these as maps defined on bigger and bigger subsets of $S^1\times\R$. We can again repeat the analysis which was done in the previous section.
	
	Note that in the limit we get a map with domain $S^1\times \R$. Since this cylinder is conformally the same as the sphere minus two points, we can see the domain as the sphere: note that replacing the cylinder with the sphere preserves stationarity, by the same argument used in the proof of \cref{par.stat} to remove the set of atoms.
	
	By \cref{deg.torus.claim}, the sum of the masses of the limit varifolds for $j=1,\dots,h$ is equal to the limit of the area of $\Phi_k$, up to the contribution of concentration points in the $h$ copies of $S^1\times\R$. We will discuss later how to recover the area which gets concentrated at these points.
	
	In order to prove \cref{deg.torus.claim}, fix $k$ and $j$, and let $I_{k,s},\dots,I_{k,s+t}$ be the intervals lying between two consecutive arcs $J_{k,j}$ and $J_{k,j+1}$ (with indices modulo $N_k$ and modulo $h$).
	We claim that eventually we cannot have $\sum_{i=2}^{t'}\int_{S^1\times I_{k,s+i}}\mz|d\Phi_k|^2\ge\frac{c_Q}{2}$
	for any $1<t'<t$. If $t'$ is the minimum such index, since the energy carried by each $S^1\times I_{k,s+i}$ is at most $\frac{c_Q}{8}$ we deduce that the sum is less than $\frac{5}{8}c_0$.
	
	Since $|I_{k,i}|\ge L$, we can select $a\in I_{k,s+1}$ and $b\in I_{k,s+t'+1}$ such that $\int_{S^1\times\{a,b\}}|d\Phi_k|\le CL^{-1/2}$;
	we can apply \cref{quant.phi.mod} with $U_k:=S^1\times[a,b]$ and deduce that eventually $\int_{S^1\times[a,b]}\mz|d\Phi_k|^2$ is either at least $\frac{7}{8}c_Q$ or at most $2\delta(CL^{-1/2},C)$. Since the first possibility cannot happen, we are in the second case.
	Hence, we get $\frac{c_Q}{2}\le 2\delta(CL^{-1/2},C)$, which is a contradiction for $L$ big enough, since
	$\delta(CL^{-1/2},C)\to 0$ as $L\to\infty$.
	
	But then we can repeat the argument selecting $a'$ in the part of $RJ_{k,j}\setminus(R/2)J_{k,j}$ following $J_{k,j}$
	and $b'$ in the part of $RJ_{k,j+1}\setminus(R/2)J_{k,j+1}$ preceding $J_{k,j+1}$, with
	$\int_{S^1\times\{a',b'\}}|d\Phi_k|\le CR^{-1/2}$. We already know that the area carried by the region $S^1\times[a',b']$ is eventually less than $\frac{c_Q}{2}$, so we deduce that it is bounded by $2\delta(CR^{-1/2},C)$, and \cref{deg.torus.claim} follows.
	
	In the case of the annulus, namely $\Sigma_k=S^1\times[0,\ell_k]$, up to subsequences either we are in the easy case that $\ell_k$ has a limit in $(0,\infty)$,
	or $\ell_k\to\infty$, or $\ell_k\to 0$.
	The second case can be dealt with in the same way as before, by subdividing the interval $[0,\ell_k]$ and making sure that $I_{k,1}\subseteq J_{k,1}$ and $I_{k,N_k}\subseteq J_{k,h}$.
	In this case, $J_{k,j}$ produces again infinite cylinders, or equivalently spheres, in the limit for $1<j<h$.
	On the other hand, $J_{k,1}$ and $J_{k,h}$ produce (possibly constant) limit maps
	whose domain is $S^1\times[0,\infty)$, which is conformally the disk minus the origin. We can thus view their domain as the full disk.
	
	In the last case $\ell_k\to 0$, we can replace $S^1\times[0,\ell_k]$ with the conformally equivalent surface $[0,1]\times \ell_k^{-1}S^1$.
	We then subdivide the circle and argue in the same way as before.
	In the limit we get maps with domain $[0,1]\times\R$, which is conformally a disk (minus two boundary points which can be ignored).
	
	\subsection{Hyperbolic case}
	Finally, we explain how to deal with the hyperbolic case $\chi(\Sigma)<0$. In this case there is no straightforward description of all the possible conformal classes of surfaces.
	In case $\Sigma$ has no boundary, by Bers' theorem we can decompose $(\Sigma,g_k)$ into hyperbolic pairs of pants, with lengths of their boundaries bounded above in terms of the topology of $\Sigma$: see \cite[Theorem~IV.3.7]{hummel} for a self-contained proof.
	We call $\{\beta_i\}$ the collection of closed geodesics, depending on $k$ but with fixed cardinality, which bound the pairs of pants.
	Up to subsequences, we can assume that the combinatorial configuration of the decomposition does not depend on $k$, with a consistent labeling for the curves $\beta_i$, and that the length of $\beta_i$ converges to a finite number as $k\to\infty$.
	
	Then we can apply \cite[Proposition~IV.5.1]{hummel} to the connected components of the surface $(\Sigma,g_k)$, cut open along those geodesics $\{\beta_i\}_{i\in I}$ whose length converges to $0$. We get a possibly disconnected limit surface $\Sigma_\infty$, which equals a closed Riemann surface minus finitely many points (two for each degenerating $\beta_i$), and diffeomorphisms $\psi_k:\Sigma_\infty\to\Sigma\setminus\bigcup_{i\in I}\beta_i$
	such that the pullback metric $\psi_k^*g_k$ converges locally to the metric of $\Sigma_\infty$.
	Then we repeat the analysis with the maps $\Phi_k\circ\psi_k$ and obtain a limit parametrized varifold, whose domain $\Sigma_\infty$ can be replaced with a (possibly disconnected) closed surface. Apart from concentration points, part of the area of $\Phi_k$ could be concentrating in collar neighborhoods of the geodesics $\beta_i$, for $i\in I$. These neighborhoods can be conformally identified with cylinders $S^1\times [0,L_{k,i}]$, with $L_{k,i}\to\infty$ as $k\to\infty$, and one can recover the missing part of the area as in the degenerating cylinder case; note that the pieces $S^1\times J_{k,1}$ and $S^1\times J_{k,h}$ from that analysis have to be discarded, since their contribution is already given by $\Sigma_\infty$, while all the other pieces produce varifolds parametrized by spheres.
	
	If $\de\Sigma\neq\emptyset$, let us call $\gamma_1,\dots,\gamma_b$ the boundary components of $\Sigma$.
	We cannot directly decompose $\Sigma$ into pairs of pants whose boundary curves have bounded length, since the length of some $\gamma_i$ with respect to $g_k$ could fail to stay bounded as $k\to\infty$.
	
	Instead, we first glue two copies of $(\Sigma,g_k)$ along the geodesic boundary $\de\Sigma=\bigcup_{i=1}^b\gamma_i$, obtaining a hyperbolic surface $\tilde\Sigma_k$. This surface comes equipped with a canonical involution $i_k$, which flips the two glued copies.
	
	For a decomposition for $\tilde\Sigma_k$ as in the closed case, we can assume that all the simple closed geodesics of length less than $2\sinh^{-1}(1)$ appear in the collection $\{\beta_i\}$: see \cite[Lemma~IV.4.1]{hummel} and the proof of Bers' theorem.
	
	The thin part $T_k:=\{x\in\tilde\Sigma_k:\operatorname{inj}(x)\le\lambda\}$ is invariant under $i_k$, since $i_k$ is an isometry.
	For $\lambda$ small enough, it consists of finitely many disjoint annuli containing a (simple closed) geodesic of length at most $2\lambda$, which is then in $\{\beta_i\}$: see the proof of \cite[Proposition~IV.4.2]{hummel}, which also shows that the curves $\beta_j$ with length bigger than $2\lambda$ are disjoint from $T_k$. Hence, choosing $\lambda$ small enough, we can assume $\beta_j\cap T_k=\emptyset$ for the indices $j\nin I$ corresponding to non-degenerating curves.
	
	The boundary of $T_k$ has a constant geodesic curvature $\kappa=\kappa(\lambda)$.
	Let $S_k:=\tilde\Sigma_k\setminus\operatorname{int}(T_k)$.
	Taking a limit $\tilde\Sigma_\infty$ as in the previous discussion, the proof of \cite[Proposition~IV.5.1]{hummel} shows that we can assume $\psi_k^{-1}(S_k)$ to be a constant domain $S_\infty$, whose complement is the union of finitely many cusps $\{C_j\}_{j\in J}$. Namely, each $\bar C_j$ is isometric to the quotient of $\{\Im(z)\ge\Lambda\}\subset\mathbb{H}$ by the standard parabolic isometry $z\mapsto z+1$, for some $\Lambda>0$ depending on $\lambda$.
	
	The maps $\psi_k^{-1}\circ i_k\circ\psi_k$ converge locally smoothly to an isometry $i_\infty:\tilde\Sigma_\infty\to\tilde\Sigma_\infty$, since $i_k$ is an isometry for $\tilde\Sigma_k$.
	The components of $\de T_k$ meeting $\de\Sigma\subset\tilde\Sigma_k$ are necessarily invariant sets for $i_k$, so that $\de\Sigma$ meets $\de T_k$ orthogonally on $\de\Sigma\cap\de T_k$. Also, we have a lower bound on the injectivity radius on $S_k$; this implies that a shortest path $\alpha$ joining a point in $S_k\cap\gamma_i$ to another curve $\gamma_{i'}$ has length bounded below by $\lambda$, since the geodesic $i_k\circ\alpha$ has the same endpoints; similarly, a shortest path between two close points in $S_k\cap\gamma_i$ must be $\gamma_i$ itself. Also, the length of a geodesic $\gamma_i$ intersecting $S_k$ cannot be smaller than $2\lambda$.
	These remarks imply that on $S_\infty$ the one-dimensional submanifold $\psi_k^{-1}(\de\Sigma)$ converges graphically to a limit $\Gamma_\infty\subseteq\{x\in S_\infty:i_\infty(x)=x\}$, which meets $\de S_\infty=\de T_\infty$ orthogonally.
	
	Thus, the domains $\psi_k^{-1}(\Sigma)$ converge graphically on $S_\infty$ to a domain $S_\infty'$ bounded by $\Gamma_\infty$.
	If $C$ is an $i_k$-invariant component of $T_k$, either $i_k$ interchanges the two circles in $\de C$ or it preserves them (as sets).
	In the former case, the core geodesic of $C$ appears in both collections $\{\gamma_i\}$ and $\{\beta_j\}$, and equals $\de\Sigma\cap C$.
	In the latter case, there are just two diametrically opposite fixed points of $i_k$ on each circle, so $\de\Sigma$ splits $C$ into two isometric pieces; we can thus assume that $\psi_k^{-1}(\Sigma\cap C)$ equals two half-cusps in this case.
	
	Hence, $T_\infty':=\psi_k^{-1}(\Sigma\cap T_k)$ is a constant union of cusps and half-cusps. The union $S_\infty'\cup T_\infty'$ is the desired limit surface, which is a compact Riemann surface $\Sigma_\infty$ minus finitely many points (in the interior or on the boundary).
	The area contribution which gets lost because of degenerating geodesics can be recovered as in the case of degenerating tori or annuli.
	
	Note that $\Sigma_\infty$ has at least $b(\Sigma)-|I|$ boundary components.
	Also, the Euler characteristic of its double is
	\begin{align*}
		2(2-2g(\Sigma_\infty)-b(\Sigma_\infty))&=2\chi(\Sigma_\infty)=\chi(\tilde\Sigma_j)+2|I|=2\chi(\Sigma)+2|I| \\
		&=2(2-2g(\Sigma)-b(\Sigma))+2|I|
	\end{align*}
	and we deduce $\chi(\Sigma_\infty)\ge\chi(\Sigma)$, $g(\Sigma_\infty)\le g(\Sigma)$.
	
	Note, however, that the number of boundary components could increase in principle: for instance, if $\Sigma$ has genus one and one boundary component, $(\Sigma,g_k)$ could degenerate conformally into an annulus.
	
	\subsection{Concentration points}
	
	We finally deal with concentration points for the area, or equivalently for the Dirichlet energy.
	The problem is local; since there can be only finitely many concentration points, we can deal with just a single one.
	Let $U'$ denote the ball or the half-ball. Up to precomposing the maps $\Phi_k$ with suitable diffeomorphisms $U'\to U\subset\Sigma$,
	we can assume that the induced conformal classes converge smoothly to the standard one, and that we have the tight convergence
	\begin{align*}
		&\nu_k':=\frac{1}{2}|d\Phi_k|^2\,\mathcal{L}^2\weakto m\,\mathcal{L}^2+\alpha\delta_0
	\end{align*}
	of measures on $U'$.
	Looking at a sufficiently small neighborhood of the concentration point, we can assume that $\int_{U'}m<\frac{c_Q}{2}$, while from \cref{ac} we have the lower bound $\alpha\ge c_Q$.
	Let $B_{r_k}^2(x_k)$ be a ball of minimal radius such that $\int_{B_{r_k}^2(x_k)\cap U'}\mz|d\Phi_k|^2\ge\alpha-\frac{c_Q}{2}$, so that the integral is exactly $\alpha-\frac{c_Q}{2}$ and necessarily $r_k\to 0$, $x_k\to 0$. It suffices to show that
	\begin{align}\label{bubb.claim}
		&\limsup_{k\to\infty}\nu_k'((B_{R^{-1}}^2(x_k)\setminus B_{Rr_k}^2(x_k))\cap U')\to 0\quad\text{as }R\to\infty.
	\end{align}
	Once this is done, we deduce that the area (or Dirichlet energy) measures of $\Psi_k:=\Phi_k(x_k+r_k\cdot)$ converge subsequentially to a measure $\nu$ (on the plane or a upper half-plane) of total mass $\alpha$. There could be further concentration points for this new sequence of maps, but their masses are at most $\alpha-\frac{c_Q}{2}$: this is obvious if there are at least two such points;
	if there is only one point $\bar x$ of mass bigger than $\alpha-\frac{c_Q}{2}$, then eventually
	\begin{align*}
		&\int_{B_{r_k/2}(x_k+r_k\bar x)\cap U'}\mz|d\Phi_k|^2=\int_{B_{1/2}^2(\bar x)\cap U_k'}\mz|d\Psi_k|^2>\alpha-\frac{c_Q}{2},
	\end{align*}
	where $U_k':=r_k^{-1}(U'-x_k)$, contradicting the minimality of $r_k$. Thus, this blow-up process has to be iterated only a finite amount of times.
	
	The proof of \cref{bubb.claim} is similar to the one of \cref{deg.torus.claim}.
	Select radii $R^{1/2}r_k<a<Rr_k$ and $R^{-1}<b<R^{-1/2}$ such that
	\begin{align*}
		&\int_{\de B_a^2(x_k)\cap U'}|d\Phi_k|^2\le \frac{2C}{a\log R},\quad\int_{\de B_b^2(x_k)\cap U'}|d\Phi_k|^2\le \frac{2C}{b\log R},
	\end{align*}
	where $C$ is an upper bound for $\int_{U'}|d\Phi_k|^2$;
	this can be done since the right-hand sides integrate to $C$ on the two intervals.
	Now the length of $\Phi_k|_{\de B_a^2\cap U'}$ and $\Phi_k|_{\de B_b^2\cap U'}$ is bounded by
	$\frac{C}{\sqrt{\log R}}$, for a different constant $C$. Since the area of $\Phi_k$ between the two radii is bounded by $\alpha+\int_{U'} m-(\alpha-\frac{c_Q}{2})+o(1)$, whose limit is less than $c_Q$, for $R$ big enough we can apply \cref{quant.phi.mod}
	and deduce that
	\begin{align*}
		&\limsup_{k\to\infty}\nu_k'((B_b^2(x_k)\setminus B_a^2(x_k))\cap U')\le\delta\Big(\frac{C}{\sqrt{\log R}},C\Big).
	\end{align*}
	Since $B_{R^{-1}}^2(x_k)\setminus B_{Rr_k}^2(x_k)\subseteq B_b^2(x_k)\setminus B_a^2(x_k)$, this proves \cref{bubb.claim}.
	
	The limit maps produced by concentration points have domains which are the plane or a half-plane, hence conformally the sphere or the disk (minus one point).
	
	
	
	\begin{proof}[Proof of \cref{main.thm}]
		Thanks to the arguments from this section and the previous one, we obtain disjoint domains $U_{k,1},\dots,U_{k,N}\subseteq\Sigma$ such that
		the varifold induced by $\Phi_k|_{U_{k,i}}$ converges to a parametrized free boundary stationary varifold,
		as $k\to\infty$, and $\int_{\Sigma\setminus\bigcup_i U_{k,i}}\operatorname{vol}_{\Phi_k}\to 0$.
		
		Since we can merge the domains of these parametrized varifolds into a (possibly disconnected)
		compact Riemann surface, the statement follows.
	\end{proof}
	
	\section{Regularity}\label{reg.sec}
	
	From the previous section we know that the limit varifold $\vfd_\infty$ is a parametrized free boundary stationary varifold
	$(\Sigma',\Phi,N')$, for some weakly conformal map $\Phi:\Sigma'\to\subman$ with $\Phi(\de\Sigma')\subseteq\bsubman$
	and $N'\in L^\infty(\Sigma',\{1,2,\dots\})$.
	This parametrized varifold gives rise, in local charts for $\Sigma'$, to a local parametrized stationary varifold, as defined in \cite[Definition~2.9]{pigriv} (see also \cite[Remark~2.3]{pigriv}).
	The main result of that work, namely \cite[Theorem~5.7]{pigriv}, tells us that $N'$ is locally constant and $\Phi$ is a branched minimal immersion, on (the interior of) the components of $\Sigma'$ where $\Phi$ is not (a.e.) constant.
	
	Hence, in order to study the regularity of $\Phi$, we can discard these trivial components and replace $N'$ with $1$, without affecting the stationarity property enjoyed by the parametrized varifold (recall \cref{par.def}).
	
	\begin{rmk}\label{multone.rmk}
		Actually, the main result of \cite{multone} still applies in this setting, so that
		we have $N=1$ automatically for $\Phi$ arising as a limit of the maps $\Phi_k$.
		Indeed, \cite[Theorem~3.2]{multone} (with $\tau_k^4$ in place of $\tau_k^2$) still holds for smooth maps $\Psi_k:\bar B_R^2\to\subman_{p_k,\ell_k}$,
		with $\subman_{p_k,\ell_k}:=\ell_k^{-1}(\subman-p_k)$ ($p_k\in\subman$), which are $\tau_k^5$-critical on the interior for $E_{\tau_k}$ (where the term $\tau_k\operatorname{length}(\Phi|_{\de\Sigma})$ can be now ignored): see also \cref{tight}. The other property needed for that paper is that if $\Psi$, with values into $\subman_{p,\ell}$, is $\tau^5$-critical for $E_\tau$, then $\lambda^{-1}(\Psi-q)$ is $(\tau/\lambda)^5$-critical for $E_{\tau/\lambda}$ (with the manifold $\subman_{p+\ell q,\lambda\ell}$), whenever $\lambda\le 1$:
		this also holds and was obtained along the proof of \cref{struct.n}.
		
		Under the assumptions of \cref{asympt.sec}, this gives $\mz|d\Phi_j|^2\,\mathcal{L}^2
		\weakto|\de_1\Phi_\infty\wedge\de_2\Phi_\infty|\,\mathcal{L}^2
		\le\mz|d\Phi_\infty|^2\,\mathcal{L}^2$ in local charts, far from the concentration points.
		
		This implies that $\Phi_j\to\Phi_\infty$ in $W^{1,2}_{loc}$ here, so that $\Phi_\infty$ is still weakly conformal; hence, there was actually no need to reparametrize it.
		These remarks, however, are not needed in the present section, which establishes the regularity of general parametrized free boundary stationary varifolds.
	\end{rmk}
	
	For simplicity, since we will not need to refer back to the original setting, we will write $\Sigma$ in place of $\Sigma'$ in the rest of this section.
	In order to prove \cref{reg.thm.intro}, we wish to show the following result. The fact that $\Phi|_{\Sigma\setminus\de\Sigma}$ is a branched minimal immersion then follows as discussed in the last step of the proof of \cite[Theorem~5.7]{pigriv}.
	
	\begin{thm}\label{reg.thm.post}
		The map $\Phi:\Sigma\to\subman$ is $C^\infty$-smooth up to the boundary $\de\Sigma$ and has $\de_\nu\Phi\perp T\bsubman$ at $\de\Sigma$. 
	\end{thm}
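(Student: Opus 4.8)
The plan is to localise the problem near a boundary point and reduce it to the boundary regularity of a free boundary harmonic map from a surface, in the spirit of \cite{riv.target,pigriv}. Interior regularity being already granted by \cite[Theorem~5.7]{pigriv} (and $N$ having been normalised to $1$), fix $x_0\in\de\Sigma$ and a conformal chart sending a neighbourhood of $x_0$ to the half-disk $U'=B_1^2\cap\{\Im z\ge 0\}$, with $\de\Sigma$ corresponding to the segment $(-1,1)\times\{0\}$ and $\Phi((-1,1)\times\{0\})\subseteq\bsubman$. First I would upgrade the parametrised stationarity to a genuine PDE: exactly as in \cite{pigriv}, testing the stationarity of $\Phi_*(\omega)$ for a.e.\ domain $\omega$ against variations of the form $X(\Phi)$ and removing the restriction ``outside $\Phi(\de\omega)$'' by a logarithmic cut-off argument (using that $\Phi$ has a continuous representative on a.e.\ $\de\omega$, so $\Phi(\de\omega)$ has arbitrarily small diameter, hence vanishing $W^{1,2}$-capacity), one obtains that $\Phi$ is weakly harmonic into $\subman$ on $\operatorname{int}(\Sigma)$, i.e.\ $-\lapl\Phi=\ff^{\subman}(\Phi)(\nabla\Phi,\nabla\Phi)$ in $U'\cap\{\Im z>0\}$; here weak conformality is what lets one pass from the first variation of the area to that of the Dirichlet energy. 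Restricting the test fields $X$ to $\fbvf$ and retaining the boundary term in the first variation, the same computation yields the \emph{weak free boundary condition}: $\int_{U'}\ang{\nabla\Phi,\nabla\xi}=\int_{U'}\ang{\ff^{\subman}(\Phi)(\nabla\Phi,\nabla\Phi),\xi}$ for all $\xi\in W^{1,2}\cap L^\infty(U';\R^\envdim)$ tangent to $\subman$ along $\Phi$ and tangent to $\bsubman$ along $\Phi|_{\de U'}$.

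The heart of the matter is a boundary $\epsilon$-regularity estimate yielding continuity of $\Phi$ up to $\de\Sigma$, which I would obtain by reflection. Choosing Fermi coordinates for $(\subman,\bsubman)$ near a point $p_0$ (in which $\bsubman=\{z=0\}$), decompose $\Phi=(u,v)$ into its $\bsubman$-tangential part $u$ and its normal part $v$; the boundary constraint reads $v=0$ on $\de U'$, and the weak free boundary condition above translates into a homogeneous Neumann-type condition for $u$ on $\de U'$ (with lower-order coefficients built from the second fundamental form of $\bsubman$ in $\subman$). Extending $\Phi$ to $B_1^2$ by reflecting $u$ evenly and $v$ oddly across the real axis, the resulting $\tilde\Phi\in W^{1,2}(B_1^2;\R^\envdim)$ solves distributionally an equation of the form $-\lapl\tilde\Phi=\tilde\Omega\cdot\nabla\tilde\Phi$ whose potential is still antisymmetric a.e.\ and lies in $L^2(B_1^2;\operatorname{so}(\envdim)\otimes\R^2)$ — the even/odd reflection is designed precisely so that no distributional jump term appears and the antisymmetric structure is preserved. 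Rivière's $\epsilon$-regularity theorem for such systems (which requires only $\tilde\Omega\in L^2$ antisymmetric) then gives that $\|\nabla\Phi\|_{L^2(U'\cap B_r^2)}$ small forces $\Phi\in C^{0,\alpha}$ on a smaller half-disk; since $\nabla\Phi\in L^2$, this applies near all but finitely many boundary points, and one concludes that $\Phi$ is continuous up to $\de\Sigma$.

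Once $\Phi$ is continuous (and $W^{1,2}$) up to $\de\Sigma$, I would bootstrap on the one-sided problem, which now has \emph{smooth} coefficients. Working again in Fermi coordinates on a half-disk so small that $\Phi$ has tiny oscillation, the pair $(u,v)$ solves elliptic systems with quadratic gradient right-hand side, $v$ with a Dirichlet condition and $u$ with a smooth Neumann-type condition on the flat boundary. Smallness of $\|\nabla\Phi\|_{L^2}$ together with a Gehring-type argument gives $\nabla\Phi\in L^{2+\epsilon}_{loc}$ up to the boundary; iterating the mixed-boundary elliptic estimates yields $\nabla\Phi\in L^p$ for every $p<\infty$, hence $\Phi\in C^{1,\alpha}$ up to $\de\Sigma$, after which the standard Schauder bootstrap for the now semilinear system (with $C^{0,\alpha}$ right-hand side) gives $\Phi\in C^\infty$ up to $\de\Sigma$. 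Finally, with $\Phi$ smooth the weak free boundary condition can be integrated by parts against arbitrary $\xi$ tangent to $\bsubman$ along $\de\Sigma$, producing the pointwise orthogonality $\de_\nu\Phi\perp T\bsubman$ on $\de\Sigma$ (the relation being vacuous at any branch point lying on $\de\Sigma$).

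The main obstacle I expect is the boundary $\epsilon$-regularity step, and specifically the correct \emph{reduction}: one must derive the homogeneous Neumann-type boundary condition for $u$ from the weak free boundary identity, carrying along the curvature terms coming from the second fundamental form of $\bsubman$ in $\subman$, and then check that the reflected potential $\tilde\Omega$ is genuinely antisymmetric and $L^2$ despite $\bsubman$ being only (in general) not totally geodesic — so that Rivière's continuity estimate truly applies. Everything after continuity is routine elliptic bootstrapping on a one-sided domain with smooth coefficients.
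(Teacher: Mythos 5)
There is a genuine gap at the very first step, and it is exactly the step that constitutes the heart of the paper's Section~\ref{reg.sec}. The parametrized stationarity only gives you the first-variation identity for infinitesimal variations of the special form $w=X(\Phi)$ with $X\in\fbvf$ an \emph{ambient} vector field (supported away from $\Phi(\de\omega)$). Your proposal silently upgrades this to the identity $\int_{U'}\ang{\nabla\Phi,\nabla\xi}=\int_{U'}\ang{\ff^{\subman}(\Phi)(\nabla\Phi,\nabla\Phi),\xi}$ for \emph{all} admissible sections $\xi\in W^{1,2}\cap L^\infty$, i.e.\ to the full weak harmonic-map system with weak free boundary condition. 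That passage is not a cut-off argument: where the parametrization has overlapping sheets, or on the degenerate set where $d\Phi$ fails to have rank $2$, a variation $X(\Phi)$ moves all preimages of a point simultaneously, so the class $\{X(\Phi)\}$ is strictly smaller than the class of sections $f\,e_k(\Phi)$ with $f$ a function of the domain variable which one needs to test against. Overcoming this is precisely what the paper does: continuity of $\Phi$ and Allard regularity produce the open set $\mathcal{G}'$ where $\Phi$ is a conformal diffeomorphism onto a smooth minimal graph (so that there, and only there, sections can be rewritten as $X(\Phi)$ and harmonicity follows), one shows $d\Phi=0$ a.e.\ on the saturated bad set $\mathcal{B}$, and then the weak equation \cref{reg.claim} tested against $f\,e_k$ is extracted by the delicate localization argument with the pushed-in curves $\tilde\gamma_j$, the distance cut-offs $\chi_\eta$ and the capacity cut-offs $\rho_\epsilon$ near $\Phi(\de\omega\cap\mathcal{B}_\epsilon)$. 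Citing \cite[Theorem~5.7]{pigriv} only covers the interior; the boundary weak formulation you need for the reflection does not follow from it, nor from stationarity against $X\in\fbvf$ alone. Your parenthetical justification is also incorrect as stated: $\Phi(\de\omega)$ is the image of an entire boundary curve and has no reason to have small diameter or small capacity (small diameters are only available for small circles around atoms, as in the bubbling analysis).

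Granting the weak formulation, the remainder of your plan is viable and partly parallels the paper: the paper's appendix also reduces the mixed Dirichlet/Neumann half-ball problem to the full ball by an even/odd reflection adapted to coordinates in which $g_{ij}=0$ for $i\le n<j$ along $\bsubman$, then runs Gehring plus Schauder, and reads off $\de_\nu\Phi\perp T\bsubman$ from the Neumann part. The one genuine difference is that you propose to obtain boundary continuity from Rivi\`ere's $\epsilon$-regularity for antisymmetric potentials after reflection, whereas the paper gets continuity up to $\de\Sigma$ beforehand, by purely varifold-theoretic means (monotonicity and the quantization lemmas, as in \cref{ac} and \cref{full.stat}); your route could work in principle but it consumes the weak boundary formulation as input, so it cannot be used to bypass the gap described above.
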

	
	As already mentioned, the interior regularity was already established in \cite{pigriv}. Here we show again how it can be obtained when $N'=1$---a fact proved in \cite{riv.target} and used in \cite{pigriv}---presenting a slightly simplified proof which covers also the boundary regularity.
	
	We first show a simple strenghtening of \cref{par.stat}.
	In the sequel, given $\omega\subseteq\Sigma$ open, we let $\vfd_\omega:=\Phi_*(\omega)$.
	
	\begin{proposition}\label{full.stat}
		The map $\Phi$ is continuous and the stationarity (respectively, free boundary stationarity) of $\vfd_\omega$ in \cref{par.def}
		holds for any domain $\omega\cptsub\Sigma\setminus\de\Sigma$ (respectively, $\omega\subseteq\Sigma$).
	\end{proposition}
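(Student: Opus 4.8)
The plan is to prove first that $\Phi$ is continuous, and then to upgrade the stationarity from ``a.e.\ $\omega$'' to ``all $\omega$'' by a slicing-and-limiting argument that uses this continuity.

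For the continuity, I would fix $x_0\in\Sigma$ and work in a conformal chart identifying a neighborhood of $x_0$ with $U'=B_1^2$ (if $x_0\nin\de\Sigma$) or $U'=B_1^2\cap\{\Im z\ge 0\}$ (if $x_0\in\de\Sigma$), writing $B_r:=B_r^2\cap U'$. Since $\Phi\in W^{1,2}$, the energy $E(r):=\int_{B_r}|d\Phi|^2$ tends to $0$ as $r\to0$, and for a.e.\ $r$ one can choose the slice $\de B_r^2\cap U'$ so that (as in \cite[Lemmas~A.3 and A.5]{pigriv}) $\Phi$ restricts there to a continuous $W^{1,2}$ map with $\int_{\de B_r^2\cap U'}|d\Phi|^2\le\tfrac{C}{r}E(2r)$, whence $\operatorname{diam}\Phi(\de B_r^2\cap U')\le s(r):=CE(2r)^{1/2}\to0$ by Cauchy--Schwarz. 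Realizing the geodesic balls (or half-balls) around $x_0$ as superlevel sets of a function equal near $x_0$ to $\operatorname{const}-\operatorname{dist}_{g_0}(\cdot,x_0)$, the defining property of $(\Sigma,\Phi,N)$ gives, for a.e.\ small $r$, that $\vfd_{B_r}$ is (free boundary) stationary outside $\Gamma_r:=\Phi(\de B_r^2\cap U')$; picking $p_r\in\Gamma_r$, with $p_r\in\bsubman$ in the boundary case (using that $\de B_r^2\cap U'$ meets $\de\Sigma$ there), we have $\Gamma_r\subseteq\bar B_{s(r)}(p_r)$. Also $|\vfd_{B_r}|\le|\vfd_\Sigma|$, and $\vfd_\Sigma=\Phi_*(\Sigma)$ is free boundary stationary since the topological boundary of $\Sigma$ in $\Sigma$ is empty, so $|\vfd_{B_r}|(B_\rho(q))\le C(\subman,\bsubman)\rho^2$ for all $q,\rho$ by \cref{mono.easy}; and, being rectifiable and stationary, $\vfd_{B_r}$ has density $\ge1$ on $\operatorname{spt}|\vfd_{B_r}|\setminus\Gamma_r$.

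Since $|\vfd_{B_r}|(\subman)=\int_{B_r}\operatorname{vol}_\Phi=\mz E(r)$ is eventually below the quantization threshold $c_Q$, I would apply \cref{quant.vfd} (resp.\ \cref{quant.vfd.bis} in the interior case) to get $\operatorname{spt}|\vfd_{B_r}|\subseteq\bar B_{Cs(r)}(p_r)$. Turning this into control of the essential image of $\Phi$ itself is a maximum-principle step: with $h:=(\operatorname{dist}(\cdot,p_r)-Cs(r))^+$, the composition $h\circ\Phi\in W^{1,2}(B_r)$ has vanishing trace on $\de B_r^2\cap U'$ and
\begin{align*}
	\int_{B_r}|\nabla(h\circ\Phi)|^2\le\int_{B_r}|d\Phi|^2\,\uno_{\{h\circ\Phi>0\}}=2\,|\vfd_{B_r}|\big(\subman\setminus\bar B_{Cs(r)}(p_r)\big)=0,
\end{align*}
so $h\circ\Phi\equiv0$, i.e.\ the essential oscillation of $\Phi$ on $B_r$ is $\le 2Cs(r)$. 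Letting $r\to0$ along good radii shows $\Phi$ has a continuous representative at $x_0$, hence $\Phi\in C^0(\Sigma)$. For the upgrade, given $\omega$ (resp.\ $\omega\cptsub\operatorname{int}(\Sigma)$) and $X\in\fbvf$ (resp.\ a smooth vector field) with $\operatorname{spt}X$ disjoint from $\Phi(\de\omega)$, one may assume $\de\omega$ smooth (as $\Phi^{-1}(\operatorname{spt}X)$ is closed and at positive distance from $\de\omega$, only a part of $\omega$ compactly contained in $\omega$ contributes, and one can replace $\omega$ by a smooth $\omega'\cptsub\omega$ with $\vfd_{\omega'}$ and $\vfd_\omega$ agreeing over $\operatorname{spt}X$); then choose $\rho\in C^\infty(\Sigma)$ (resp.\ $C^\infty_c(\operatorname{int}\Sigma)$) with $\omega_\lambda:=\{\rho>\lambda\}\uparrow\omega$ as $\lambda\downarrow\lambda_0$, note that $\Phi(\de\omega_\lambda)$ stays disjoint from $\operatorname{spt}X$ for $\lambda$ near $\lambda_0$ by continuity of $\Phi$, apply the defining property for a.e.\ such $\lambda$ to get $\int\operatorname{div}_\Pi X\,d\vfd_{\omega_\lambda}=0$, rewrite this via the area formula as $\int_{\omega_\lambda}(\operatorname{div}_{\Phi_*T_x\Sigma}X)\circ\Phi\,d\operatorname{vol}_\Phi$, and pass to the limit by monotone/dominated convergence to obtain $\int\operatorname{div}_\Pi X\,d\vfd_\omega=0$.

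The main obstacle I anticipate lies in the continuity of $\Phi$: specifically, the step producing the diameter bound $\operatorname{spt}|\vfd_{B_r}|\subseteq\bar B_{Cs(r)}(p_r)$ out of the small-energy hypothesis, which leans on the varifold quantization statement \cref{quant.vfd} and, in the free boundary case, on handling a boundary point $p_r\in\bsubman$ correctly, together with the passage via the displayed estimate from control of $\operatorname{spt}|\vfd_{B_r}|$ to control of the essential image of $\Phi$. Once $\Phi$ is known to be continuous, the density and mass bounds of \cref{mono.easy} make the slicing argument above entirely routine.
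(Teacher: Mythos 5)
Your proposal is correct and takes essentially the same route as the paper: continuity is obtained exactly by adapting the slicing argument of \cref{ac} to the parametrized varifold itself (good slices of small image diameter, the quantization \cref{quant.vfd}/\cref{quant.vfd.bis} applied to $\vfd_{B_r}$, which is integer rectifiable and stationary outside the slice image, and the distance-truncation step controlling the essential image), while the stationarity for all domains is recovered from the a.e.\ superlevel sets of a smooth nonnegative function chosen large on the compact set $\Phi^{-1}(\spt{X})\cap\bar\omega\subseteq\omega$. The only difference is cosmetic: the paper simply takes $\rho\in C^\infty_c(\omega)$ with $\rho=1$ near this compact set, so that $\vfd_{\{\rho>\lambda\}}$ already coincides with $\vfd_\omega$ over $\spt{X}$ and neither your smoothing of $\de\omega$ nor the limit in $\lambda$ is needed.
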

	
	\begin{proof}
		The continuity of $\Phi$ can be obtained by the same arguments used in the proof of \cref{ac}.
		
		As for the second statement, given $\omega\subseteq\Sigma$ and a vector field $X\in\fbvf$ supported outside $\Phi(\de\omega)$,
		we can find a nonnegative smooth function $\rho\in C^\infty_c(\omega)$ such that $\rho=1$ near the compact set $\Phi^{-1}(\operatorname{spt}(X))\cap\omega$.
		The stationarity of $\vfd_\omega$ against the vector field $X$ then follows from the same property for the varifolds $\vfd_{\{\rho>\lambda\}}$, for $0<\lambda<1$, each of which agrees with $\vfd_\omega$ near $\operatorname{spt}(X)$.
		The proof in the case $\omega\cptsub\Sigma\setminus\de\Sigma$ is analogous.
	\end{proof}
	
	Let us fix a metric on $\Sigma$, compatible with the conformal structure.
	As in \cite{riv.target}, we first show that
	\begin{align}\label{g.is.reg.zero}
		&\Phi\text{ is smooth near }\mathcal{G}',
	\end{align}
	with $\mathcal{G}'\subseteq\Sigma\setminus\de\Sigma$
	defined to be the set of points $x$ such that $d\Phi(x)$ has rank $2$ and, in a chart centered at $x$, $\int_{B_r^2}|d\Phi-d\Phi(0)|^2\,d\mathcal{L}^2=o(r^2)$.
	
	Before proving this, let us set $\mathcal{B}':=\Sigma\setminus\mathcal{G}'$, $\mathcal{B}:=\Phi^{-1}(\Phi(\mathcal{B}'))$ and $\mathcal{G}:=\Sigma\setminus\mathcal{B}$.
	
	\begin{rmk}\label{g.b.sat}
		Note that $\mathcal{B}$ and $\mathcal{G}\subseteq\mathcal{G'}$
		are both $\Phi$-saturated: this means that whenever $\Phi(x)=\Phi(y)$ and $x\in\mathcal{B}$, the same holds for $y$, and similarly for $\mathcal{G}$.
		
		Arguing as in the proof of \cref{ac}, we have
		\begin{align*}
			&|\vfd_\Sigma|(\Phi(\mathcal{B}'))\le C\int_{\mathcal{B}'}|d\Phi|^2\,\operatorname{vol}_\Sigma=0.
		\end{align*}
		Hence, as $|\vfd_\Sigma|=\Phi_*(\mz |d\Phi|^2\,\operatorname{vol}_\Sigma)$, we get $d\Phi=0$ a.e.\ on $\mathcal{B}$.
	\end{rmk}
	
	\begin{proof}[Proof of \cref{g.is.reg.zero}]
	Given $x\in\mathcal{G}'$, we can choose a conformal chart centered at $x$, mapping a neighborhood $U$ of $x$ to $B_1^2$.
	Viewing $\subman\subset\R^\envdim$, we can then select an arbitrarily small radius $r>0$ such that $\Phi(ry)=\Phi(0)+d\Phi(0)[ry]+o(r)$, for $|y|=1$ (see, e.g., \cite[Lemma~A.4]{pigriv}).
	
	Moreover, $\int_{B_r^2}\mz|d\Phi|^2\,d\mathcal{L}^2=\pi s^2+o(r^2)$, with $s:=|\de_1\Phi|(0)r=|\de_2\Phi|(0)r$. Hence, assuming that the above error $o(r)$ is less that $\delta r$, for a fixed $\delta$ small enough,
	we can apply Allard's regularity result \cite[p.~466]{allard} (see also \cite[Theorem~23.1]{simon})
	on the ball $B_{(1-\delta)s}^\envdim(\Phi(0))$,
	where the varifold $\vfd_{B_r^2}$ has generalized mean curvature bounded in $L^\infty$, small excess (for $r$ small), and total mass $\pi(1-\delta)^2 s^2+O(\delta)s^2$.
	
	We deduce that on some ball $B_\theta^\envdim(\Phi(0))$ the varifold $\vfd_{B_r^2}$ agrees with the graph $S$ of a smooth function $f:\R^2\to\R^{\envdim-2}$, with multiplicity one, up to rotating the coordinates.\footnote{The smoothness of $f$ can be assumed by standard Schauder theory, since $f$ satisfies an elliptic equation on a small ball.}
	
	Selecting a new radius $r'$ much smaller than $\theta$,
	such that $\Phi(r'y)=\Phi(0)+d\Phi(0)[r'y]+o(r')$,
	from the continuity of $\Phi$ we deduce that $|\vfd_{B_{r'}^2}|$ is supported in $S$. Hence, viewing $\mathcal{G}\cap U$ as a subset of $B_1^2$ and setting $\tilde{\mathcal{G}}:=\mathcal{G}\cap\bar B_{r'}^2$,
	from $|\vfd_{B_{r'}^2}|=(\Phi|_{B_{r'}^2})_*(\mz|d\Phi|^2\,\mathcal{L}^2)$
	we deduce $\Phi(y)\in S$ for all $y\in\tilde{\mathcal{G}}$.
	
	Thus, the map $\operatorname{dist}(\Phi,S)$ is $W^{1,2}$ on $B_{r'}^2$ and vanishes on $\tilde{\mathcal{G}}$, and hence its differential vanishes a.e.\ here. But $d\Phi=0$ a.e.\ on $\mathcal{B}$; it follows that this function is constant, giving $\Phi(\bar B_{r'}^2)\subseteq S$. Thus, $\Phi|_{\bar B_{r'}^2}$ factors as $(\operatorname{id}\times f)\circ\Psi$ for a suitable map $\Psi\in C^0\cap W^{1,2}(\bar B_{r'}^2,\R^2)$.
	By the chain rule, any point $y\in\tilde{\mathcal{G}}$ is necessarily Lebesgue for $d\Psi$, with $d\Psi(y)$ invertible.
	
	For any $y\in\tilde{\mathcal{G}}$ there exist arbitrarily small radii $s$ such that $\vfd_{B_s^2(y)}$ is supported in $S$ and has density at least one at $\Phi(y)$.
	As $\vfd_{B_{r'}^2}$ has multiplicity one on $B_\theta^\envdim$, this implies that $\Phi$ is injective on $\tilde{\mathcal{G}}$.
	
	But then, recalling \cref{g.b.sat}, it follows that
	$\Phi(y)$ is disjoint from $\Phi(\bar B_{r'}^2\setminus\{y\})$ for all $y\in\tilde{\mathcal{G}}$,
	and the same follows for $\Psi$. Given $y\in\tilde{\mathcal{G}}$ close to $0$
	and choosing a homotopy in $\bar B_{r'}^2\setminus\{y\}$ between the circles $\de B_{r'}^2(0)$ and $\de B_s^2(y)$,
	with their canonical orientation, we deduce that the maps
	$\Psi|_{\de B_{r'}^2}-\Psi(y)$ and $\Psi|_{\de B_s^2(y)}-\Psi(y)$
	determine the same element in $\pi_1(\R^2\setminus\{0\})$.
	
	But the first map is homotopic to $\Psi|_{\de B_{r'}^2}-\Psi(0)$, provided $\Psi(y)$ is close enough to $\Psi(0)$, while the second is homotopic to $d\Phi(y)|_{S^1}$ if $s$ is selected in the same way as $r$. We deduce that $d\Psi$ is either always orientation preserving or always orientation reversing on $\tilde{\mathcal{G}}$, near $0$. Thus $\Phi$, in local coordinates for $S$,
	solves the Cauchy--Riemann equations (up to conjugation) near $0$, establishing \cref{g.is.reg.zero}.
	\end{proof}
	
	\begin{rmk}\label{g.open}
		We implicitly ask that the chosen representative of $d\Phi$ agrees with the classical differential on the regular set of $\Phi$.
		Hence, by what we just proved, $\mathcal{G}'$ is open.
		It follows that $\mathcal{B}'$ and $\mathcal{B}$ are closed,
		so that $\mathcal{G}$ is open again.
	\end{rmk}
	
	\begin{rmk}\label{g.is.regular}
		Given $x\in\mathcal{G}$ and a neighborhood $U\cptsub\operatorname{int}(\Sigma)$ such that $\Phi|_{\bar U}$ is a diffeomorphism with the image, we can express any section $w\in C^\infty_c(U)$ of $\Phi^*T\subman$ as $w=X(\Phi)$, where $X$ is a (smooth) vector field on $\subman$ vanishing near $\Phi(\de U)$. Hence, using \cref{full.stat}, we get
		\begin{align*}
			&\int_U\ang{\nabla w,d\Phi}\,\operatorname{vol}_\Sigma=0,
		\end{align*}
		so that $\Phi$ solves the harmonic map equation $\nabla^*d\Phi=0$ on $\mathcal{G}$.
	\end{rmk}
	
	In order to show \cref{reg.thm.post}, let $y\in\Sigma$ and pick a conformal chart $U\to U'$ centered at $y$,
	with image equal to $B_1^2$ if $y\nin\de\Sigma$ and to $B_1^2\cap\{\Im(z)\ge 0\}$ otherwise.
	By continuity of $\Phi$ we can assume that
	$\bar{\Phi(U')}$ is contained in a coordinate chart for $\subman$. We call $\{x^1,\dots,x^m\}$ the coordinates and we let $\Phi^i:=x^i\circ\Phi$. We can also require that $\bsubman$ corresponds to $\{x^{n+1}=\dots=x^m=0\}$
	if $y\in\de\Sigma$, with $g_{ij}=0$ for $i\le k$ and $j>k$ on this set.
	
	Then, writing $e_k:=\frac{\de}{\de x^k}$, it suffices to show that
	\begin{align}\label{reg.claim}
		&\int_{U'}\ang{\nabla(fe_k),d\Phi}\,d\mathcal{L}^2=0
	\end{align}
	for all $k=1,\dots,m$ and all nonnegative $f\in C^\infty_c(U')$, with the additional constraint
	$f\in C^\infty_c(U'\setminus\de U')$ if $k>n$ and $y\in\de\Sigma$, where we write $\de U':=U'\cap\{\Im(z)=0\}$.
	
	Indeed, once this is done, if $y\nin\de\Sigma$ then $\Phi=(\Phi^1,\dots,\Phi^m)$ is a weak solution of the system
	\begin{align*}
		&-\de_i(g_{jk}(\Phi)\de_i\Phi^j)+\Gamma_{pk}^j(\Phi) g_{jq}(\Phi)\de_i\Phi^p\de_i\Phi^q,
	\end{align*}
	where $\Gamma_{pk}^j$ is defined by the relation
	$\nabla_{e_p}e_k=\Gamma_{pk}^j e_j$.
	The smoothness of $\Phi$ then follows from \cref{app} and \cref{app.bis}.
	
	If instead $y\in\de\Sigma$, we get a weak solution to the system
	\begin{equation*}
		\left\{\begin{aligned}
		&-\de_i(g_{jk}(\Phi)\de_i\Phi^j)+\Gamma_{pk}^j(\Phi) g_{jq}(\Phi)\de_i\Phi^p\de_i\Phi^q=0, \\
		&\de_\nu\Phi_k=0\quad\text{on $\de U'$, for }k\le n, \\
		&\Phi_k=0\quad\text{on $\de U'$, for }k>n,
		\end{aligned}\right.
	\end{equation*}
	in the sense specified in \cref{weak.interp},
	and regularity follows again from \cref{app} and \cref{app.bis}.
	
	By the coarea formula, \cref{reg.claim} is equivalent to
	\begin{align*}
		&\int_0^\infty\bigg(-\int_{\de\{f>\lambda\}}\ang{e_k(\Phi),\de_\nu\Phi}+\int_{\{f>\lambda\}}\ang{\nabla(e_k\circ\Phi),d\Phi}\bigg)\,d\lambda=0.
	\end{align*}
	In order to conclude, we will show that the quantity between brackets vanishes for a.e.\ $\lambda$.

	\begin{proposition}
		For almost every value of $\lambda>0$, for $\omega:=\{f>\lambda\}\cptsub U'$ it holds
		\begin{align*}
			&-\int_{\de\omega}\ang{e_k(\Phi),\de_\nu\Phi}+\int_\omega\ang{\nabla(e_k\circ\Phi),d\Phi}=0.
		\end{align*}
	\end{proposition}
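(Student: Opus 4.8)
The plan is to read off the identity from the stationarity of $\vfd_\omega:=\Phi_*(\omega)$, treating the boundary contribution as in the proof of the monotonicity formula. By \cref{full.stat}, $\vfd_\omega$ is stationary — and, when $y\in\de\Sigma$ and $k\le n$, free boundary stationary, which is exactly why $e_k$ must be tangent to $\bsubman$ in that range — outside $\Phi(\de\omega)$; for $k>n$ the hypothesis $f\in C^\infty_c(U'\setminus\de U')$ keeps $\omega$ in the interior, so plain stationarity applies. First I would fix a ``good'' $\lambda$. By Sard's theorem and the standard coarea/trace facts (cf.\ \cite[Lemmas~A.3--A.5]{pigriv} and \cite[Chapters~4--5]{evans}), for a.e.\ $\lambda>0$: $\de\omega$ is a compact one-dimensional submanifold; $\Phi|_{\de\omega}$ admits a representative in $W^{1,2}\cap C^0$ whose tangential derivative and $L^2$ normal trace $\de_\nu\Phi$ are the restrictions of the a.e.-defined $d\Phi$, so $\Phi(\de\omega)$ is a finite-length curve and $\mathcal H^2(\Phi(\de\omega))=0$; and, since $d\Phi=0$ $\mathcal L^2$-a.e.\ on the closed set $\mathcal B$ by \cref{g.b.sat}, the coarea formula forces $d\Phi=0$ $\mathcal H^1$-a.e.\ on $\de\omega\cap\mathcal B$, in particular $\de_\nu\Phi=0$ there. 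Consequently both integrals in the statement are effectively taken over $\omega\cap\mathcal G$ and $\de\omega\cap\mathcal G$, where $\Phi$ is a smooth conformal minimal immersion (\cref{g.open}, \cref{g.is.regular}) with $\Delta\Phi=0$; moreover $|\vfd_\omega|=\Phi_*(\tfrac12|d\Phi|^2\operatorname{vol})$ is then absolutely continuous with respect to $\mathcal H^2$, whence $|\vfd_\omega|(\Phi(\de\omega))=0$, and likewise $\mathcal H^2(\Phi(\mathcal B))=0$ by the area formula.

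I would then regularize the test field $e_k$. For $\eta>0$ put $\psi_\eta:=\chi_\eta(r)$, with $r:=\operatorname{dist}(\cdot,\Phi(\de\omega))$ and $\chi_\eta:[0,\infty)\to[0,1]$ a nondecreasing smooth profile vanishing on $[0,\eta^2]$, equal to $1$ on $[\eta,\infty)$, with the logarithmic bound $|\chi_\eta'(t)|\le C/(t\log(1/\eta))$; by the upper area bound \cref{mono.easy.claim}, this choice keeps $\int_\subman|\nabla\psi_\eta|\,d|\vfd_\omega|$ bounded (in fact $\int_\subman|\nabla\psi_\eta|^2\,d|\vfd_\omega|\to0$). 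Since $\psi_\eta e_k$ vanishes near $\Phi(\de\omega)$, stationarity of $\vfd_\omega$ gives $\int_\omega\ang{\nabla((\psi_\eta e_k)\circ\Phi),d\Phi}=0$, i.e.
\begin{align*}
	\int_\omega(\psi_\eta\circ\Phi)\,\ang{\nabla(e_k\circ\Phi),d\Phi}=-\int_\omega\ang{(e_k\circ\Phi)\otimes d(\psi_\eta\circ\Phi),d\Phi}.
\end{align*}
As $\eta\to0$ the left-hand side tends to $\int_\omega\ang{\nabla(e_k\circ\Phi),d\Phi}$: indeed $\psi_\eta\circ\Phi\to1$ pointwise off $\Phi^{-1}(\Phi(\de\omega))$, while $\ang{\nabla(e_k\circ\Phi),d\Phi}=0$ a.e.\ on $\mathcal B$ and also a.e.\ on $\Phi^{-1}(\Phi(\de\omega))$ (where $d\Phi=0$ a.e., since $|\vfd_\omega|(\Phi(\de\omega))=0$), so dominated convergence applies. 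It remains to show that the right-hand side converges to $\int_{\de\omega}\ang{e_k(\Phi),\de_\nu\Phi}$.

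For this I would argue exactly as in the monotonicity computation. The weight $\chi_\eta'(r\circ\Phi)\ge0$, of total radial mass $\int_0^\infty\chi_\eta'=1$, concentrates as $\eta\to0$ on a shrinking tube around $\Phi^{-1}(\Phi(\de\omega))\cap\mathcal G$, i.e.\ around $\de\omega\cap\mathcal G$, the portion $\de\omega\cap\mathcal B$ contributing nothing because $d\Phi=0$ there $\mathcal H^1$-a.e.\ and $\mathcal H^2(\Phi(\mathcal B))=0$. On that tube $\Phi$ is a smooth conformal immersion, so along the surface $\Phi(\omega\cap\mathcal G)$ the gradient of $r$ is, near the curve $\Phi(\de\omega\cap\mathcal G)$, the inward unit conormal; a coarea computation, combined with the conformal scaling relating $\operatorname{vol}_{\Phi|_{\de\omega}}$, the arclength on $\Phi(\de\omega\cap\mathcal G)$ and $|d\Phi[\nu]|$ — the same computation that, for a smooth conformal minimal map, turns the divergence theorem on $\omega$ into $\int_\omega\ang{\nabla(e_k\circ\Phi),d\Phi}=\int_{\de\omega}\ang{e_k(\Phi),\de_\nu\Phi}$ — then identifies $-\lim_{\eta\to0}\int_\omega\ang{(e_k\circ\Phi)\otimes d(\psi_\eta\circ\Phi),d\Phi}$ with $\int_{\de\omega\cap\mathcal G}\ang{e_k(\Phi),\de_\nu\Phi}=\int_{\de\omega}\ang{e_k(\Phi),\de_\nu\Phi}$. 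Putting the two limits together gives the proposition for every good $\lambda$.

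The main obstacle is this last step: it is essentially a boundary first-variation formula for $\vfd_\omega$ expressed through $\de_\nu\Phi$, and the rigorous passage to the limit requires controlling $\vfd_\omega$ in a neighborhood of the curve $\Phi(\de\omega)$. What makes it work is the combination, for a.e.\ $\lambda$, of: the relevant part of $\de\omega$ lying in the good set $\mathcal G$, where $\Phi$ is a conformal immersion; $\Phi(\de\omega\cap\mathcal B)$ being area-negligible and mass-free; and the upper area bound \cref{mono.easy.claim}, which keeps the cutoff integrals bounded. The interior version of this scheme is carried out in \cite{riv.target}; the free boundary case adds only the routine bookkeeping that $\de\omega$ may meet $\de\Sigma$ in an $\mathcal H^1$-negligible set and that, for $k\le n$, one uses free boundary stationarity of $\vfd_\omega$.
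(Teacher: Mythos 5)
Your overall skeleton is the same as the paper's: fix a good level $\lambda$ (smooth $\de\omega$, $W^{1,2}\cap C^0$ trace, $d\Phi=0$ a.e.\ on $\mathcal B$ by \cref{g.b.sat}), test the stationarity of \cref{full.stat} with $e_k$ multiplied by a cut-off vanishing near $\Phi(\de\omega)$, and pass to the limit; the dominated-convergence treatment of the interior term is fine. The gap is precisely in the step you describe as ``arguing exactly as in the monotonicity computation''. The set $\Phi^{-1}\big(\{\operatorname{dist}(\cdot,\Phi(\de\omega))<\eta\}\big)$ is \emph{not} a shrinking tube around $\de\omega\cap\mathcal G$: it also contains interior portions of $\omega$ (other sheets of the image crossing, or tangent to, the curve $\Phi(\de\omega)$), whose contribution to $\int_\omega\ang{e_k(\Phi)\otimes d(\psi_\eta\circ\Phi),d\Phi}$ does not localize on $\de\omega$ and is nowhere shown to vanish; and it contains genuinely two-dimensional pieces of $\omega$ adjacent to $\de\omega\cap\mathcal B$, where $\Phi$ is only $W^{1,2}\cap C^0$, so neither the smooth coarea computation nor any divergence theorem is available there. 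Your dismissal of the latter (``$d\Phi=0$ $\mathcal H^1$-a.e.\ on $\de\omega\cap\mathcal B$ and $\mathcal H^2(\Phi(\mathcal B))=0$'') is a non sequitur: the quantity to control is a two-dimensional integral weighted by $|\chi_\eta'|\,|d\Phi|^2$ over the preimage of the tube around $\Phi(\de\omega\cap\mathcal B)$, and those two facts do not bound it. Note also that the tube of radius $t$ around a curve of finite length has $|\vfd_\omega|$-mass only $O(t)$ by \cref{mono.easy.claim}, so with your logarithmic profile $\int|\nabla\psi_\eta|^2\,d|\vfd_\omega|$ is of order $\eta^{-2}\log^{-2}(1/\eta)$, not $o(1)$; only the $L^1$ bound survives, and soft capacity estimates alone cannot produce the boundary flux.

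These two difficulties are exactly what drives the structure of the paper's proof, which your shortcut bypasses. There, the good arcs of $\de\omega$ are first pushed slightly inward to curves $\tilde\gamma_j\subset\mathcal G$ whose images $\Gamma_j$ are chosen mutually and self transverse, so that near the curves the spurious part of the preimage of the cut-off's transition region has measure $O(r\eta)$; away from the curves one integrates by parts using the harmonicity of $\Phi$ on $\mathcal G$, which is legitimate only because $\Gamma\subset\Phi(\mathcal G)$ and saturation (\cref{g.b.sat}) give $\Phi(\mathcal B)\cap\Gamma=\emptyset$, hence the transition region has preimage inside $\mathcal G$ for small $\eta$ --- this is exactly what fails for your choice, since $\Phi(\de\omega)$ meets $\Phi(\mathcal B)$ as soon as $\de\omega\cap\mathcal B\neq\emptyset$. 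The bad portion $\de\omega\cap\mathcal B_\epsilon$ is not ignored but excised by a target cut-off $\rho$ vanishing near $\Phi(\de\omega\cap\mathcal B_\epsilon)$, and $\rho$ is removed only at the very end through the quantitative covering argument based on $\mathcal H^1(\Phi(\de\omega\cap\mathcal B_\epsilon))\le\int_{\de\omega\cap\mathcal B_\epsilon}|d\Phi|$ together with \cref{mono.easy.claim}, giving $\int_\omega|d\rho|(\Phi)|d\Phi|^2\le C\sum_i r_i\to 0$. Without supplying arguments of this kind for the spurious preimages and for the region near $\de\omega\cap\mathcal B$, the identification of the limit of the cut-off term with $\int_{\de\omega}\ang{e_k(\Phi),\de_\nu\Phi}$ remains unproved, so the proposal has a genuine gap at its central step.
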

	
	\begin{proof}
		Fix $\lambda$ such that $\omega$ has smooth boundary, transverse to $\de U'$ if $y\in\de\Sigma$,
		and such that the trace $\Phi|_{\de\omega}$ is $W^{1,2}$, with differential given by the restriction of $d\Phi$ and vanishing a.e.\ on $\de\omega\cap\mathcal{B}$.
		For all $\epsilon>0$, we call $\mathcal{B}_\epsilon$ the closed $\epsilon$-neighborhood of $\mathcal{B}$ in $U'$.
	
		Take a smooth function $\rho$ vanishing near $\Phi(\de\omega\cap\mathcal{B}_\epsilon)$.
		Then $\Phi$ is a smooth immersion in a neighborhood of $S\cap\de\omega$, with $S:=\operatorname{spt}(\rho\circ\Phi)$,
		since $S\cap\de\omega\subseteq\mathcal{G}$.
		
		We can cover $S\cap\de\omega$ with finitely many disjoint closed arcs $\{\gamma_j\}\subseteq\mathcal{G}$, with endpoints in $\de U'\cup \mathcal{B}_\epsilon=\mathcal{B}_\epsilon$, so that $\Phi$ is an immersion near each of them.
		Fix now a smooth unit vector field $\tilde\nu$ on $\de\omega$ which points towards $\omega$, with $\tilde\nu\in T\de U'$ on the finite set $\de\omega\cap\de U'$. We can find functions $f_j:\gamma_j\to [0,1)$ such that the curves
		\begin{align*}
			&\tilde\gamma_j:=\{x+f_j(x)\tilde\nu(x)\mid x\in\gamma_j\}
		\end{align*}
		are disjoint, included in $\mathcal{G}$, have endpoints in $U'\setminus S$, and have images $\Gamma_j:=\Phi(\tilde\gamma_j)$ transverse to each other (meaning also self-transverse).
		Note that all $f_j$'s can be chosen arbitrarily close to $0$ in the $C^\infty$ topology.
		
		We now consider the domain
		\begin{align*}
			&\Omega:=\omega\setminus\bigcup_j\{x+sf_j(x)\tilde\nu(x)\mid 0\le s\le 1,\,x\in\gamma_j\}.
		\end{align*}
		Note also that we can assume the sets in the last union to be disjoint and
		\begin{align}\label{vanish.endpt}
			&\rho=0\text{ near }\Phi(\{x+sf_j(x)\tilde\nu(x)\mid 0\le s\le 1\})
		\end{align}
		whenever $x\in \mathcal{B}_\epsilon$ is an endpoint of one of the curves $\gamma_j$. This implies
		\begin{align}\label{relevant.boundary}
			&\de\Omega\cap S\subseteq\bigcup_j\operatorname{int}(\tilde\gamma_j),
		\end{align}
		where $\operatorname{int}(\tilde\gamma_j)$ denotes $\tilde\gamma_j$ minus the endpoints.
		
		Fix a smooth function $\chi:[0,\infty)\to[0,1]$ with $\chi=1$ on $[1,\infty)$ and $\chi=0$ on $[0,\mz]$.
		Let $\Gamma:=\bigcup_j\Gamma_j$ and $\chi_{\eta}:=\chi(\frac{\operatorname{dist}(\cdot,\Gamma)}{\eta})$.
		
		Let $F$ denote the closure of $\bigcup_j\Phi^{-1}(\Gamma_j)\setminus\bigcup_j\tilde\gamma_j$, together with all the endpoints of the curves $\tilde\gamma_j$. By transversality and conformality of $\Phi$, for each $x\in\bigcup_j\tilde\gamma_j\setminus F$ we have
		$\operatorname{dist}(\Phi(x-s\nu(x)),\Gamma)=s|\de_\nu\Phi(x)|+o(s)$, where $\nu$ is the outward unit normal for $\Omega$, and the gradient of $\operatorname{dist}(\Phi(\cdot),\Gamma)$ at $x-s\nu(x)$ is $-|\de_\nu\Phi(x)|\nu(x)+o(1)$, where $o(1)$ is infinitesimal as $s\to 0$ ($s>0$).
		These estimates hold uniformly on compact subsets of $\bigcup_j\tilde\gamma_j\setminus F$.
		
		Moreover, by transversality again, for any fixed small $r>0$ the support of $\chi_\eta\circ\Phi$ intersects the $r$-neighborhood $U_r$ of $\bigcup_j\tilde\gamma_j$ in the union of an $O(\eta)$-neighborhood of $\bigcup_j\tilde\gamma_j$, plus a set of measure $O(r\eta)$. In view of these remarks,
		\begin{align*}
			&\lim_{\eta\to 0}\int_{\Omega\cap U_r}\rho(\Phi)\ang{e_k(\Phi)\otimes d(\chi_\eta\circ\Phi),d\Phi} \\
			&=-\lim_{\eta\to 0} \sum_j\int_{\tilde\gamma_j}\int_0^1\chi'\Big(\frac{s|\de_\nu\Phi(x)|}{\eta}\Big)\frac{|\de_\nu\Phi(x)|}{\eta}\ang{(\rho e_k)(\Phi),\de_\nu\Phi}(x)\,ds\,dx+O(r) \\
			&=-\int_{\tilde\gamma_j}\ang{(\rho e_k)(\Phi),\de_\nu\Phi}+O(r).
		\end{align*}
		Also, note that $\Phi(\mathcal{B})\cap\Gamma=\emptyset$ by \cref{g.b.sat}; hence, for $\eta$ small, $\chi_\eta=1$ near $\Phi(\mathcal{B})$ and we deduce that $\operatorname{spt}((1-\chi_\eta)\circ\Phi)\subseteq\mathcal{G}$.
		Recalling also \cref{relevant.boundary}, we can integrate by parts as follows:
		\begin{align*}
			&\int_{\Omega\setminus U_r}\rho(\Phi)\ang{e_k(\Phi)\otimes d(\chi_\eta\circ\Phi),d\Phi} \\
			&=\int_{\Omega\setminus U_r}(1-\chi_\eta)(\Phi)\ang{e_k(\Phi)\otimes d(\rho\circ\Phi),d\Phi}
			+\int_{\Omega\setminus U_r}(\rho(1-\chi_\eta))(\Phi)\ang{\nabla(e_k(\Phi)),d\Phi} \\
			&\quad+\int_{\Omega\cap\de U_r}(\rho(1-\chi_\eta))(\Phi)\ang{e_k(\Phi),\de_\nu\Phi},
		\end{align*}
		where we used the harmonicity of $\Phi$ on $\mathcal{G}$.
		The convergence $(1-\chi_\eta)(\Phi)\to 0$ a.e.\ on $\Omega\setminus U_r$ and on $\de U_r$ (for $r$ small enough)
		implies that the right-hand side is infinitesimal as $\eta\to 0$.
		
		But, by the stationarity property of $\vfd_\Omega$, setting $X_\eta:=\rho\chi_\eta e_k$ we have
		\begin{align*}
			&\int_\Omega\ang{\nabla(X_\eta\circ\Phi),d\Phi}=0,
		\end{align*}
		since $X_\eta$ vanishes near $\Phi(\de\Omega)$ by the choice of $\chi_\eta$ and \cref{vanish.endpt}.
		Hence, from the previous computations we deduce
		\begin{align*}
			-\sum_j\int_{\tilde\gamma_j}\rho(\Phi)\ang{e_k(\Phi),\de_\nu\Phi}
			+\int_\Omega\ang{e_k(\Phi)\otimes d(\rho\circ\Phi),d\Phi}
			+\int_\Omega\rho(\Phi)\ang{\nabla e_k(\Phi)[d\Phi],d\Phi}=0.
		\end{align*}
		Letting $f_j\to 0$ we deduce our claim, provided we can replace $\rho$ with $1$.
		This is achieved as follows: the compact set $T:=\Phi(\de\omega\cap\mathcal{B}_\epsilon)$ has
		\begin{align*}
			&\mathcal{H}^1(T)\le\int_{\de\omega\cap\mathcal{B}_\epsilon}|d\Phi|.
		\end{align*}
		Hence, can cover $T$ with finitely many balls $B_{r_i}(p_i)$ intersecting $T$, such that
		\begin{align}\label{sum.radii}
			&2\sum_i r_i\le\mathcal{H}^1(T)+\epsilon
		\end{align}
		and $r_i<\epsilon$.
		Take now cut-off functions $0\le\rho_i\le 1$ which equal $0$ on $B_{r_i}(p_i)$ and $1$ on $\subman\setminus B_{2r_i}(p_i)$,
		with $|d\rho_i|\le Cr_i^{-1}$. Then the function $\rho:=\prod_i\rho_i$ satisfies
		\begin{align*}
			&\int_\omega|d\rho|(\Phi)|d\Phi|^2
			\le C\sum_i r_i^{-1}\int_{\omega\cap\Phi^{-1}(B_{2r_i}(p_i))}|d\Phi|^2
			\le Cr_i,
		\end{align*}
		because $(\Phi)_*(\mz|d\Phi|^2)\le\vfd_\Sigma$ and $\vfd_\Sigma(B_{2r_i}(p_i))\le Cr_i^2$ (see \cref{mono.easy.claim}).
		Note that the right-hand side of \cref{sum.radii} becomes infinitesimal as $\epsilon\to 0$, as $\int_{\de\omega\cap\mathcal{B}}|d\Phi|=0$.
		
		Finally, writing $T_\epsilon$ and $\rho_\epsilon$ in place of $T$ and $\rho$ to emphasize the dependence on $\epsilon$,
		we have $\rho_\epsilon(\Phi)\to 1$ pointwise on $\mathcal{G}$:
		indeed, since $T_\epsilon\to\Phi(\de\omega\cap\mathcal{B})$ in the Hausdorff topology,
		if $\rho_\epsilon(\Phi(x))$ does not converge to $1$ then $\Phi(x)\in\Phi(\de\omega\cap\mathcal{B})$
		and thus, by \cref{g.b.sat}, $x\in\mathcal{B}$. Hence,
		\begin{align*}
			0&=-\int_{\de\omega}\rho_\epsilon(\Phi)\ang{e_k(\Phi),\de_\nu\Phi}
			+\int_\omega\ang{e_k(\Phi)\,d(\rho_\epsilon\circ\Phi),d\Phi}
			+\int_\omega\rho_\epsilon(\Phi)\ang{\nabla e_k(\Phi)[d\Phi],d\Phi} \\
			&\to -\int_{\de\omega}\ang{e_k(\Phi),\de_\nu\Phi}
			+\int_\omega\ang{\nabla e_k(\Phi)[d\Phi],d\Phi},
		\end{align*}
		as desired.
	\end{proof}

	\appendix
	\section*{Appendix}\label{app.sec}
	\renewcommand{\thesection}{A}
	\setcounter{definition}{0}
	\setcounter{equation}{0}
	
	\begin{proposition}\label{app}
		A continuous, $W^{1,2}$ map $u:B_1^2\to\R^m$ solving a linear system of the form
		\begin{align*}
			&-\de_i(g_{jk}\de_i u^j)+b_{kpq}\de_i u^p\de_i u^q=0,
		\end{align*}
		with $g\ge\lambda>0$ symmetric and continuous and $b$ bounded,
		is $W^{1,r}_{loc}$ for all $r<\infty$.
		
		The same holds for $u$ defined on the half-ball $U':=B_1^2\cap\{\Im(z)\ge 0\}$,
		if in addition we have
		\begin{align*}
			&\de_\nu u^k=0\text{ for }k\le n,\quad u^k=0\text{ for }k>n,
		\end{align*}
		as well as $g_{ij}=0$ for $i\le n$, $j>n$, on the boundary $\de U'$, for some $0\le n\le m$.
	\end{proposition}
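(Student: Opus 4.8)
The plan is to prove the interior assertion first and then reduce the boundary case to it by reflection. The key structural point is that, although the quadratic term $b_{kpq}\de_i u^p\de_i u^q$ is only in $L^1$ a priori — so naively at the borderline of what elliptic theory can handle — we are given that $u$ is \emph{continuous}, and $g$ is continuous, so $g_{jk}$ (regarded as a function of $x$) is continuous. Since $\nabla u\in L^2$, absolute continuity of the integral gives, for every $\epsilon_0>0$, a radius $\rho_0>0$ with $\mathrm{osc}_{B_\rho(x)}u<\epsilon_0$, $\mathrm{osc}_{B_\rho(x)}(g(u))<\epsilon_0$ and $\int_{B_\rho(x)}|\nabla u|^2<\epsilon_0$ for all $x$ in a fixed compact subset and all $\rho<\rho_0$. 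I would exploit this smallness by testing the equation against $\varphi^2(u^k-c^k)$, with $\varphi$ a cut‑off between $B_{\rho/2}(x)$ and $B_\rho(x)$ and $c$ the average of $u$ over the annulus $B_\rho\setminus B_{\rho/2}$: the contribution of the quadratic term is then bounded by $\|b\|_\infty\,\mathrm{osc}_{B_\rho}(u)\int_{B_\rho}|\nabla u|^2$ and the variable‑coefficient error by $\mathrm{osc}_{B_\rho}(g(u))\,\|\nabla u\|_{L^2(B_\rho)}\|\nabla(\varphi^2(u-c))\|_{L^2}$ — both absorbable. Together with Poincaré on the annulus this yields a Caccioppoli inequality $\int_{B_{\rho/2}}|\nabla u|^2\le C\int_{B_\rho\setminus B_{\rho/2}}|\nabla u|^2+C\epsilon_0\int_{B_\rho}|\nabla u|^2$, and a hole‑filling step gives $\int_{B_{\rho/2}}|\nabla u|^2\le\theta\int_{B_\rho}|\nabla u|^2$ with a fixed $\theta<1$ (once $\epsilon_0$ is small), hence by iteration the Morrey decay $\int_{B_r(x)}|\nabla u|^2\le Cr^{2\alpha}$ for some $\alpha>0$.

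Next I would upgrade integrability. Feeding the Sobolev–Poincaré inequality $\|u-\bar u\|_{L^2(B_\rho)}\le C\|\nabla u\|_{L^1(B_\rho)}$ (the critical $2$-dimensional case) into the same Caccioppoli estimate produces a reverse Hölder inequality for $|\nabla u|$ with an absorbably small extra term, so Gehring's lemma gives $\nabla u\in L^{2+\epsilon}_{\mathrm{loc}}$ for some $\epsilon>0$. From here one bootstraps in the standard way: the quadratic right‑hand side then lies in $L^{1+\epsilon/2}_{\mathrm{loc}}$; the Calderón–Zygmund $L^p$‑theory for divergence‑form systems with continuous (in particular VMO) principal coefficients gives $u\in W^{2,1+\epsilon/2}_{\mathrm{loc}}$, whence $\nabla u\in L^{q_1}_{\mathrm{loc}}$ with $q_1>2+\epsilon$ by the Sobolev embedding $W^{2,p}\inject W^{1,2p/(2-p)}$; iterating, the exponent grows geometrically and after finitely many steps exceeds any prescribed $r<\infty$ (in fact one reaches $W^{2,2}_{\mathrm{loc}}$).

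For the boundary case I would reflect across $\de U'=U'\cap\{\Im(z)=0\}$. Set $\epsilon_j:=+1$ for $j\le n$ and $\epsilon_j:=-1$ for $j>n$, and extend $u$ to $B_1^2$ by $\tilde u^j(x_1,x_2):=\epsilon_j u^j(x_1,-x_2)$ for $x_2<0$ — even reflection of the Neumann components, odd reflection of the Dirichlet ones; the Dirichlet conditions $u^j=0$ ($j>n$) make each $\tilde u^j$ continuous and $W^{1,2}$ across $\de U'$. Reflecting the coefficients by $\tilde g_{jk}(x_1,x_2):=\epsilon_j\epsilon_k g_{jk}(x_1,-x_2)$ and $\tilde b_{kpq}(x_1,x_2):=\epsilon_k\epsilon_p\epsilon_q b_{kpq}(x_1,-x_2)$ for $x_2<0$, a short computation — integrating the weak formulation separately over the two half‑balls — shows that $\tilde u$ solves the reflected system weakly on all of $B_1^2$: the interface integrals on $\{x_2=0\}$ cancel exactly because of the conditions $\de_\nu u^j=0$ ($j\le n$), $u^j=0$ ($j>n$) and $g_{ij}=0$ for $i\le n<j$ there. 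The hypothesis $g_{ij}=0$ for $i\le n<j$ on $\de U'$ is used once more to ensure $\tilde g$ is continuous across $\{x_2=0\}$ (same‑block entries reflect evenly, hence are continuous automatically; the mixed entries reflect oddly, so continuity forces them to vanish on the interface, which is precisely the assumption), while $\tilde g$ stays uniformly elliptic since $\tilde g_{jk}(x)\xi^j\xi^k=g_{jk}(x_1,-x_2)\eta^j\eta^k\ge\lambda|\eta|^2=\lambda|\xi|^2$ with $\eta^j:=\epsilon_j\xi^j$, and $\tilde b$ is bounded. Thus $\tilde u$ is a continuous $W^{1,2}$ solution on $B_1^2$ of a system of exactly the type covered by the interior case, which then yields $u\in W^{1,r}_{\mathrm{loc}}(U')$ for all $r<\infty$.

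The main obstacle is the first paragraph: extracting, from the merely‑$L^1$ quadratic term, a usable Caccioppoli estimate — this is where the a priori continuity of $u$ is essential, replacing the compensated‑compactness input one would otherwise need. Once the Morrey decay and the reverse Hölder inequality are secured, the remaining bootstrap is routine. In the boundary case, the only delicate verification is that the reflected coefficient matrix remains both continuous and uniformly elliptic, which is guaranteed precisely by the structural hypothesis on the entries $g_{ij}$ with $i\le n<j$.
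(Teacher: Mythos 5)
Your argument follows essentially the same route as the paper: a Caccioppoli inequality in which the quadratic term is absorbed using the continuity (small oscillation) of $u$, then a reverse H\"older inequality and Gehring's lemma to get $\nabla u\in L^{2+\epsilon}_{loc}$, a Calder\'on--Zygmund/Sobolev bootstrap, and for the half-ball the identical even/odd reflection with $\tilde g=UgU$, $\tilde b_{kpq}=\epsilon_k\epsilon_p\epsilon_q b_{kpq}$, where the block hypothesis $g_{ij}=0$ ($i\le n<j$) on $\de U'$ guarantees continuity of the reflected coefficients; your Morrey-decay/hole-filling step is redundant, since Caccioppoli plus the Sobolev--Poincar\'e inequality already yields the reverse H\"older inequality directly. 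One caveat: the intermediate claims $u\in W^{2,1+\epsilon/2}_{loc}$ and ``in fact one reaches $W^{2,2}_{loc}$'' are not justified here, because the system is in divergence form with coefficients that are merely continuous, so one cannot differentiate them to obtain second-derivative estimates; the correct (and sufficient) statement is the gradient Calder\'on--Zygmund estimate for divergence-form systems with VMO coefficients, namely that a right-hand side in $L^p$, $1<p<2$, gives $\nabla u\in L^{p^*}_{loc}$ with $1/p^*=1/p-1/2$, which produces exactly the same exponent gain you use and completes the bootstrap as in the paper.
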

	
	\begin{rmk}\label{weak.interp}
		The condition $\de_\nu u^k=0$ could be written more faithfully as $g_{jk}\de_\nu\Phi^j=0$ and is of course meant in a weak sense, coupled with the equation: namely,
		we require $\int_{U'}(g_{jk}\de_i f\de_i u^j+b_{kpq}f\de_i u^p \de_i u^q)=0$
		for all $f\in C^\infty_c(U')$ and $k\le n$, allowing $f$ to be nonzero on $\de U'$.
	\end{rmk}
	
	\begin{proof}
		Assume $u$ is a solution on the unit ball. Then, for any ball $B_{2r}^2(x)\subseteq B_1^2$,
		we can integrate the equation against $\eta^2(u-(u)_{B_{2r}^2(x)})$, where $\eta\in C^\infty_c(B_{2r}^2(x))$ is a cut-off function satisfying $\eta=1$ on $B_r^2(x)$ and $|d\eta|\le\frac{2}{r}$. Recall that the notation $(u)_S$ indicates the average of $u$ on a set $S$. This gives
		\begin{align*}
			&\lambda\int\eta^2|du|^2
			\le C\int\eta|du|\,|d\eta|\,|u-(u)_{B_{2r}^2(x)}|
			+C\int\eta^2|du|^2\operatorname{osc}(u,B_{2r}^2(x))
		\end{align*}
		and, applying Young's inequality, it follows that
		\begin{align*}
			&\int_{B_r^2(x)}|du|^2\le Cr^{-2}\int_{B_{2r}^2(x)}|u-(u)_{B_{2r}^2(x)}|^2
			\le Cr^{-2}\Big(\int_{B_{2r}^2(x)}|du|\Big)^2
		\end{align*}
		whenever $\operatorname{osc}(u,B_{2r}^2(x))$ is small enough.
		The classical Gehring's lemma (see, e.g., \cite[Theorem~V.1.2]{giaquinta}) then implies
		that $du\in L^r(B)$ for some $r>2$ and any fixed ball $B\cptsub B_1^2$ (with $r$ depending on $B$). Then the nonlinear term $b_{kpq}\de_i u^p\de_i u^q$ is $L^{r/2}(B)$
		and standard elliptic regularity theory gives $du\in L^s_{loc}(B)$, with $\frac{1}{s}=\frac{2}{r}-\mz$, so that $s>r$; iterating, we get $du\in L^t_{loc}$ for any $t<\infty$.
		
		If we are in the half-ball case, then we can reduce to the previous case by reflection.
		We extend $g$ and $u$ to $\tilde g$ and $\tilde u$ on the ball $B_1^2$, by means of the formula
		\begin{align*}
			&g(s,-t):=Ug(s,t)U,\quad \begin{pmatrix}\tilde u^1 \\ \vdots \\ \tilde u^m\end{pmatrix}(s,-t):=U\begin{pmatrix}u^1 \\ \vdots \\ u^m\end{pmatrix}(s,t)
		\end{align*}
		for $(s,-t)$ in the lower half-ball, with $U:=\begin{pmatrix}I_n & \\ & -I_{m-n}\end{pmatrix}$. Note that, by our hypotheses on $g$, $\tilde g$ is still continuous.		
		Also, it is straightforward to check that $\tilde u$ solves
		\begin{align*}
			&-\de_i(\tilde g_{jk}\de_i \tilde u^j)+\tilde b_{kpq}\de_i \tilde u^p\de_i \tilde u^q=0,
		\end{align*}
		with $\tilde b_{kpq}$ extending $b_{kpq}$ according to the following rule:
		if $k\le n$ then $\tilde b_{kpq}(s,-t):=b_{kpq}(s,t)$ if $p$ and $q$ belong to the same set in the partition $\{\{1,\dots,n\},\{n+1,\dots,m\}\}$, and $\tilde b_{kpq}(s,-t):=-b_{kpq}(s,t)$ otherwise;
		if $k>n$ then the opposite holds.
		Then from the case of the full ball we deduce $d\tilde u\in L^t_{loc}$ for any $t<\infty$.
	\end{proof}
	
	\begin{rmk}\label{app.bis}
		If the coefficients are smooth functions of $u$, then $u$ is smooth.
		To check this, note that in the full ball case $u$ is $C^{0,\alpha}_{loc}$ for any $\alpha<1$.
		The same is then true for the coefficients $g_{jk}(u)$.
		Since the nonlinearity $b_{kpq}\de_i u^p\de_i u^q$ belongs to $L^r_{loc}$ for all $r<\infty$,
		classical Schauder theory then gives $du\in C^{0,\alpha}_{loc}$ for all $\alpha<1$
		and bootstrapping we reach $u\in C^\infty$.
		
		In the half-ball case, we can still argue in the same way that $d\tilde u\in C^{0,\alpha}_{loc}$ for all $\alpha<1$.
		So $\tilde g$ is locally Lipschitz and we deduce $\tilde u\in W^{2,r}_{loc}$ for all $r<\infty$.
		Differentiating the original equation in the first variable preserves the boundary conditions and leads to an equation of the form
		\begin{align*}
			&\de_i(g_{jk}\de_i(\de_1 u^j))+f_k=0
		\end{align*}
		with $f_k\in L^r_{loc}$ for all $r<\infty$, and the same reflection trick (applied to $w:=\de_1 u$)
		gives $\de_1 u\in W^{2,r}_{loc}$ for all $r<\infty$. Iterating we get the same for all derivatives $\de_1^k u$.
		Now the equation allows to deduce inductively that $u\in W^{k,r}_{loc}$ for all $k$,
		since $g_{jk}(u)\Delta u^j=-\de_i(g_{jk}(u))\de_i u^j+b_{kpq}(u)\de_i u^p\de_i u^q$;
		this expresses $\de_{22}u$ in terms of $\de_{11}u$ and lower order derivatives and
		hence, for any multi-index $\alpha=(\alpha_1,\alpha_2)$ with $\alpha_2\ge 2$,
		we deduce that $\de^\alpha u=\de_1^{\alpha_1}\de_2^{\alpha_2}u\in L^r_{loc}$ for all $r<\infty$ from the same property enjoyed by $\de_1^{\alpha_1+2}\de_2^{\alpha_2-2}u$ and lower order derivatives of $u$.
	\end{rmk}
	
	The following statements deal with general varifolds. It is clear that we can assume the smallness constant $c_V$ appearing in all of them to be always the same.
	
	\begin{lemmaen}\label{quant.vfd}
		There exists $c_V(\subman,\bsubman)>0$ with the following property.
		Given $p\in\bsubman$ and $0<s<c_V$, for any $2$-varifold $\vfd$ on $\mathcal{M}$ which is free boundary stationary outside $\bar B_s(p)$ and has density $\theta\ge\bar\theta$ on $\operatorname{spt}(|\vfd|)\setminus\bar B_s(p)$, either $\operatorname{spt}(|\vfd|)\subseteq B_{2s}(p)$ or
		$|\vfd|(\subman\setminus\bar B_s(p))\ge c_V\bar\theta$.
	\end{lemmaen}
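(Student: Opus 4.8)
The plan is to argue by contradiction, extracting from a counterexample a point of $\operatorname{spt}(|\vfd|)$ at a \emph{definite} (i.e.\ $s$-independent) distance from $p$, and then applying a monotonicity-type mass lower bound there. The starting point is that, since $\vfd$ is free boundary stationary outside $\bar B_s(p)$, the computation proving \cref{mono.easy} goes through verbatim provided one only uses cut-off vector fields supported in $\subman\setminus\bar B_s(p)$ (this includes the switch of centre to a nearby point of $\bsubman$ used there). Together with the hypothesis $\theta\ge\bar\theta$ on $\operatorname{spt}(|\vfd|)\setminus\bar B_s(p)$, this yields a constant $c_0=c_0(\subman,\bsubman)>0$ with
\begin{align*}
	&|\vfd|(B_r(x))\ge c_0\,\bar\theta\, r^2\qquad\text{for all }x\in\operatorname{spt}(|\vfd|)\text{ with }\operatorname{dist}(x,p)\ge 2s\text{ and all }0<r\le\tfrac{1}{10}\big(\operatorname{dist}(x,p)-s\big),
\end{align*}
the constraint on $r$ guaranteeing that the relevant balls avoid $\bar B_s(p)$.

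Now suppose $\operatorname{spt}(|\vfd|)\not\subseteq B_{2s}(p)$, and let $q^*\in\operatorname{spt}(|\vfd|)$ realise $\rho^*:=\max_{x\in\operatorname{spt}(|\vfd|)}\operatorname{dist}(x,p)\ge 2s$. Fix once and for all $\rho_0=\rho_0(\subman,\bsubman)>0$ small enough to lie below the injectivity radius and to make the geodesic spheres uniformly strictly convex on $B_{\rho_0}(p)$, i.e.\ $\operatorname{tr}_\Pi\nabla^2\operatorname{dist}(\cdot,p)\ge c\,\operatorname{dist}(\cdot,p)^{-1}$ for every $2$-plane $\Pi$ at a point of $B_{\rho_0}(p)\setminus\{p\}$; then set $c_V:=\min\{\rho_0/2,\ c_0\rho_0^2/10^3\}$, so $0<s<c_V\le\rho_0/2$. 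If $\rho^*\ge\rho_0$ then $\operatorname{dist}(q^*,p)-s\ge\rho_0/2$, and the displayed inequality with $r:=\rho_0/20$ gives $|\vfd|(\subman\setminus\bar B_s(p))\ge|\vfd|(B_{\rho_0/20}(q^*))\ge c_0\bar\theta(\rho_0/20)^2\ge c_V\bar\theta$, which is the desired alternative.

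It remains to rule out the case $2s\le\rho^*<\rho_0$, and this is where I expect the main obstacle. Here $\operatorname{spt}(|\vfd|)\subseteq\bar B_{\rho^*}(p)$, the support touches the geodesic sphere $\partial B_{\rho^*}(p)$ at $q^*$, and $\vfd$ is free boundary stationary on $B_{\rho^*-s}(q^*)$, a ball disjoint from $\bar B_s(p)$ since $\rho^*-s\ge\rho^*/2>0$. I would reach a contradiction by a maximum-principle argument: as $\partial B_{\rho^*}(p)$ is strictly convex and $\operatorname{spt}(|\vfd|)$ lies on its inner side near $q^*$, one tests the (free boundary) stationarity of $\vfd$ near $q^*$ against a cut-off of the gradient of the strictly convex barrier $\operatorname{dist}(\cdot,p)$ — arranged, via coordinates adapted to $\bsubman$ as in \cref{coordinates}, to be tangent to $\bsubman$ — and exploits the sign $\operatorname{tr}_\Pi\nabla^2\operatorname{dist}(\cdot,p)\ge c\,\rho^{*-1}>0$ together with $\theta(\vfd,q^*)\ge\bar\theta$ to see that the sign-definite leading term in the first-variation identity cannot be absorbed by the lower-order errors. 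This is precisely the varifold maximum principle in the spirit of White's theorem (with its free boundary version when $q^*\in\bsubman$); equivalently, one may blow $\vfd$ up by the factor $1/\rho^*$ about $p$, reducing to the Euclidean statement that a nonzero (free boundary) stationary varifold whose support lies in $\bar B_1(0)$ and which is stationary near $\partial B_1(0)$ cannot meet $\partial B_1(0)$. Granting this step, the case $\rho^*<\rho_0$ does not occur and the proof is complete; the remaining care is purely bookkeeping around the free-boundary setting (adapted coordinates, and treating $q^*\in\bsubman$ via the boundary version of the maximum principle).
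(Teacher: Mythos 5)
Your overall architecture is the same as the paper's: split according to whether the support escapes a fixed, $s$-independent radius; in the ``far'' case use the localized monotonicity/density lower bound (your step giving $|\vfd|(B_r(x))\ge c_0\bar\theta r^2$ for balls avoiding $\bar B_s(p)$ is exactly how the paper invokes \cref{mono.easy.claim2}), and in the ``near'' case run a first-variation/barrier argument with a radial-type field tangent to $\bsubman$. The far case of your proposal is complete and correct.

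The gap is the near case, which you yourself flag and then ``grant''. As written it does not go through: you propose to apply White's maximum principle (or a free boundary version) at a touching point of the geodesic sphere $\de B_{\rho^*}(p)$, but the available free boundary maximum principles require compatibility between the barrier and the constraint (typically that the barrier hypersurface meets $\bsubman$ orthogonally, or an analogous admissibility condition), and a geodesic sphere centered at $p\in\bsubman$ does \emph{not} meet $\bsubman$ orthogonally unless $\bsubman$ is totally geodesic; likewise $\nabla\operatorname{dist}(\cdot,p)$ is in general not tangent to $\bsubman$, so ``a cut-off of the gradient of $\operatorname{dist}(\cdot,p)$'' is not an admissible test field, and saying it can be ``arranged to be tangent via adapted coordinates'' is precisely the step that has to be carried out, since replacing the field changes both the barrier (coordinate spheres rather than geodesic spheres) and the divergence computation. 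Moreover, your fallback of ``exploiting $\theta(\vfd,q^*)\ge\bar\theta$ so that the sign-definite term cannot be absorbed by lower-order errors'' is not viable in this lemma: there is no mass upper bound hypothesis, so any error term that needs $|\vfd|$ of an annulus controlled from above cannot be absorbed. The correct execution is the paper's: in coordinates $\xi$ adapted to $\bsubman$ and centered at $p$ (\cref{coordinates}), the field $X=\chi(|\xi|)\,\xi_i\de_{\xi_i}$ with $\chi$ nondecreasing, vanishing on a neighborhood of $\bar B_s(p)$ and equal to $1$ up to the fixed radius, is \emph{exactly} tangent to $\bsubman$ and has $\operatorname{div}_\Pi X\ge 0$ on every $2$-plane, strictly positive where $\chi=1$; testing the free boundary stationarity against this single global field forces $\spt(|\vfd|)\subseteq B_{2s}(p)$, i.e.\ it yields the first alternative of the lemma directly, with no touching-point analysis, no blow-up, and no use of the density at $q^*$. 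So the mechanism you point to is the right one, but the decisive computation (tangency plus pointwise nonnegativity of the $2$-divergence for an admissible field) is missing from your argument and cannot be outsourced to a citation in the form you state.
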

	
	\begin{proof}
		Pick $\gamma>0$ small, to be fixed along the proof; we will choose $c_V\le\gamma$,
		so that the varifold is free boundary stationary outside $\bar B_\gamma(p)$
		Possibly multiplying $\vfd$ by $\bar\theta^{-1}$, we can assume $\bar\theta=1$.
		Note that if $q\in\operatorname{spt}(|\vfd|)\setminus B_{2\gamma}(p)$ then by \cref{mono.easy.claim2} we have
		\begin{align}\label{cv.spec}
		&|\vfd|(B_\gamma(q))
		\ge c(\subman,\bsubman)\gamma^2\theta(|\vfd|,q)
		\ge c(\subman,\bsubman)\gamma^2.
		\end{align}
		Otherwise, $|\vfd|$ is supported in $B_{2\gamma}(p)$.
		Assume we are in this second case and pick a set of coordinates $(x_1,\dots,x_m):B_{5\gamma}(p)\to\R^m$ centered at $p$,
		with $\bsubman$ corresponding to $\{x_{n+1}=\dots=x_m=0\}$. We can impose that $\|g_{ij}-\delta_{ij}\|_{C^1}\le\gamma$
		(in coordinates),
		for $\gamma$ small, independently of $p\in\bsubman$.
		
		On this ball, we define the vector field $X$ to be $X(x):=\chi(|x|)x_i\frac{\de}{\de x_i}$,
		where $\chi:[0,\infty)\to[0,1]$ is smooth and such that $\chi'\ge 0$ on $[0,3\gamma]$, $\chi=1$ on $[\frac{5}{3}s,3\gamma]$, $\chi=0$ on $[0,\frac{4}{3}s]\cup[4\gamma,\infty)$.
		Assuming $\{|x|\le 4\gamma\}\cptsub B_{5\gamma}(p)$, we can smoothly extend $X$ to all of $\subman$, with $X=0$ outside the ball.
		For $\gamma$ small enough (independently of $p$ and $s<\gamma$), the $C^1$ closeness of $g_{ij}$ to $\delta_{ij}$ guarantees
		\begin{align*}
		&\operatorname{div}_\Pi X\ge 0
		\end{align*}
		for all $(p,\Pi)\in\operatorname{Gr}_2(\subman)$ in the support of $\vfd$, since we can assume $\operatorname{spt}(|\vfd|)\subseteq\{|x|<3\gamma\}$:
		indeed, here the contribution of $\chi'$ is nonnegative, while the one of the position vector $x_i\frac{\de}{\de x_i}$
		is close to $2$ (multiplied by $\chi(|x|)$).
		Also, the inequality is strict if $|x(p)|\ge\frac{5}{3}s$. Moreover, $X$ is tangent to $\bsubman$. We can also assume that $\bar B_s(p)\cptsub\{|x|\le\frac{4}{3}s\}$; hence, we can test the stationarity of $\vfd$ against $X$ and reach the contradiction
		\begin{align*}
		&0=\int_{(p,\Pi)\in\operatorname{Gr}_2(\subman)}\operatorname{div}_\Pi X\,d\vfd(p,\Pi)>0
		\end{align*}
		unless $\operatorname{spt}(|\vfd|)$ is contained in $\{|x|\le \frac{5}{3}s\}$.
		Since the latter can be assumed to be included in $B_{2s}(p)$, the statement follows from \cref{cv.spec}.
	\end{proof}
	
	\begin{rmk}\label{quant.vfd.bis}
		The same statement holds if $\vfd$ is stationary, without the assumption $p\nin\bsubman$.
		The proof is analogous (but simpler, in that we do not need coordinates adapted to $\bsubman$).
	\end{rmk}

	\begin{lemmaen}\label{quant.vfd.soft}
		There exist $c_V>0$ and $\delta:(0,\infty)^2\to(0,\infty)$, with $\lim_{s\to 0}\delta(s,t)=0$ for every $t$, satisfying the following property.
		Given two points $p_1,p_2\in\subman$ and a radius $s>0$, let $B:=\bar B_s(p_1)\cup\bar B_s(p_2)$;
		if a $2$-varifold $\vfd$ on $\mathcal{M}$ is free boundary stationary outside $B$, has density $\theta\ge\bar\theta$ on $\operatorname{spt}(|\vfd|)\setminus B$ and satisfies the bound
		\begin{align*}
			&|\vfd|(B_r(q))\le c'r^2\quad\text{for all }q\in\subman,\ r>0,
		\end{align*}
		then either $|\vfd|(\subman)\le \bar\theta\delta(s,c'/\bar\theta)$ or $|\vfd|(\subman)\ge c_V\bar\theta$.
		The constant $c_V$ and the function $\delta$ depend only on $\subman$ and $\bsubman$.
	\end{lemmaen}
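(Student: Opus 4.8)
\emph{Strategy.} The plan is to run the radial vector-field argument from the proof of \cref{quant.vfd}, now applied to the union of the two small balls, and to read off the function $\delta$ from the density bound. Fix a small constant $\gamma=\gamma(\subman,\bsubman)$, small enough that the coordinates of \cref{coordinates} are available on balls of radius $\le c_F$, that $\gamma$ is much smaller than $c_F$, and that the divergence estimate \cref{diverg} holds on the relevant balls. If $s\ge\gamma$ we simply put $\delta(s,t):=C(\subman)t$: covering $\subman$ by boundedly many balls of radius $\operatorname{diam}(\subman)$ and using $|\vfd|(B_r(q))\le c'r^2$ gives $|\vfd|(\subman)\le C(\subman)c'=\bar\theta\,\delta(s,c'/\bar\theta)$, so the first alternative holds automatically. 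Assume henceforth $s<\gamma$ and, after replacing $\vfd$ by $\bar\theta^{-1}\vfd$, that $\bar\theta=1$; put $c'':=c'/\bar\theta$. It then suffices to produce $c_V>0$ and a function $\delta_0$ with $\delta_0(s,t)\to 0$ as $s\to 0$ such that $|\vfd|(\subman)\le\delta_0(s,c'')$ or $|\vfd|(\subman)\ge c_V$.

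\emph{The far case.} Suppose $\spt(|\vfd|)$ contains a point $q$ with $\operatorname{dist}(q,\{p_1,p_2\})>2\gamma$. Since $s<\gamma$, the ball $B_\gamma(q)$ is disjoint from $B$, so $\vfd$ is genuinely free boundary stationary there, and the argument proving \cref{mono.easy.claim2} applies — it only uses stationarity on balls centered at $\spt(|\vfd|)$ or at a nearby point of $\bsubman$, all of which can be taken inside $B_\gamma(q)\subseteq\subman\setminus B$, recentering at the nearest boundary point when $q$ lies close to $\bsubman$ exactly as in \cref{mono.easy} and in the proof of \cref{mono.log}. As $\theta(\vfd,q)\ge 1$, this gives
\begin{align*}
	&|\vfd|(\subman)\ge|\vfd|(B_\gamma(q))\ge c(\subman,\bsubman)\,\gamma^2=:c_V,
\end{align*}
the second alternative. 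So from now on $\spt(|\vfd|)\subseteq B_{2\gamma}(p_1)\cup B_{2\gamma}(p_2)$, and we must show $|\vfd|(\subman)$ is small.

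\emph{The localized case.} Consider first $\operatorname{dist}(p_1,p_2)>100\gamma$. Then $\spt(|\vfd|)\cap B_{50\gamma}(p_i)\subseteq B_{2\gamma}(p_i)$ for each $i$. In coordinates $\xi$ centered at $p_i$ as in \cref{coordinates} (adapted to $\bsubman$, recentering at the nearest boundary point if $p_i$ is close to $\bsubman$, as in \cref{mono.log}), take $X_i:=\chi(|\xi|)\xi_j\tfrac{\de}{\de\xi_j}$ with $\chi$ nondecreasing on $[\tfrac43 s,5\gamma]$, $\chi=0$ on $[0,\tfrac43 s]$, $\chi=1$ on $[\tfrac53 s,5\gamma]$, $\chi=0$ on $[6\gamma,\infty)$; by \cref{diverg}, $\operatorname{div}_\Pi X_i\ge 0$ wherever $|\xi|\le 5\gamma$, strictly so (by a fixed amount) once $|\xi|\ge\tfrac53 s$. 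Since $\spt(X_i)\subseteq B_{12\gamma}(p_i)$ avoids $\bar B_s(p_{3-i})$, and $\spt(|\vfd|)\cap\spt(X_i)\subseteq B_{2\gamma}(p_i)\subseteq\{|\xi|<4\gamma\}$, the field $X_i\in\fbvf$ is admissible and the stationarity of $\vfd$ forces $\spt(|\vfd|)\cap B_{50\gamma}(p_i)\subseteq B_{Cs}(p_i)$; hence $|\vfd|(\subman)\le|\vfd|(B_{Cs}(p_1))+|\vfd|(B_{Cs}(p_2))\le 2C^2c''s^2$. If instead $\operatorname{dist}(p_1,p_2)\le 100\gamma$, then $\spt(|\vfd|)\subseteq B_{102\gamma}(p_1)$; in coordinates centered at $p_1$ take $X:=\chi(|\xi|)\xi_j\tfrac{\de}{\de\xi_j}$ with the plateau of $\chi$ enlarged to contain $\{\,|\xi|:x\in\spt(|\vfd|)\,\}\subseteq\{|\xi|<210\gamma\}$ and $\chi$ dropping to $0$ only past $220\gamma$ (possible inside $B_{c_F}(p_1)$ since $\gamma\ll c_F$), so again $\operatorname{div}_\Pi X\ge 0$ on $\spt(|\vfd|)$, strictly so where $|\xi|\ge\tfrac53 s$. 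Now $X$ vanishes near $\bar B_s(p_1)$ but not near $p_2$; multiply it by a cut-off $\psi$ equal to $0$ on $B_{\frac43 s}(p_2)$ and $1$ outside $B_{2s}(p_2)$ with $|d\psi|\le Cs^{-1}$, and test stationarity against $\psi X\in\fbvf$, which is supported away from $B$. The difference $\operatorname{div}_\Pi(\psi X)-\psi\operatorname{div}_\Pi X$ is supported in $B_{2s}(p_2)$ and bounded there by $C\gamma s^{-1}$ (using $|X|\le C\gamma$ on that set), so it integrates against $\vfd$ to at most $C\gamma s^{-1}|\vfd|(B_{2s}(p_2))\le C\gamma c''s$ — this is exactly where $|\vfd|(B_r(q))\le c'r^2$ enters. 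Since $\psi\operatorname{div}_\Pi X\ge 0$ on $\spt(|\vfd|)$ and is bounded below by a positive constant on $\{|\xi|\ge\tfrac53 s\}\cap\{\psi=1\}$, stationarity forces $|\vfd|\big(\{|\xi|\ge\tfrac53 s\}\setminus B_{2s}(p_2)\big)=O(\gamma c''s)$; adding $|\vfd|(B_{Cs}(p_1)\cup B_{Cs}(p_2))\le Cc''s^2$ we get $|\vfd|(\subman)\le C(\subman,\bsubman)c''(s^2+s)$. In all cases we may take $\delta_0(s,t):=C(\subman,\bsubman)t(s^2+s)$ for $s<\gamma$, which completes the dichotomy.

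\emph{Main obstacle.} The one genuine difficulty is the last point: unlike in \cref{quant.vfd}, no single admissible radial field localizes mass near both balls when they are close, so one superposes a cut-off near the second center and absorbs its contribution using the uniform bound $|\vfd|(B_r(q))\le c'r^2$. The remaining ingredients are the routine technicalities already present in \cref{mono.easy} and \cref{mono.log} — handling centers near $\bsubman$ via boundary-adapted coordinates at the nearest point, and choosing the plateau width of $\chi$ so that $\operatorname{div}_\Pi X\ge 0$ throughout $\spt(|\vfd|)$ — and I have suppressed the exact radii.
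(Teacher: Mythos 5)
Your proof is correct in substance but follows a genuinely different route from the paper. The paper's argument is soft: it defines $\delta(s,c')$ abstractly as the supremum of the masses below $c_V$ among all competitors, then argues by contradiction, taking $s_k\to 0$, extracting a varifold limit $\vfd_\infty$ which is free boundary stationary away from the two points, removing those two point singularities thanks to the uniform bound $|\vfd|(B_r(q))\le c'r^2$ (cut-off functions at small scale, as in the proof of \cref{par.stat}), and invoking the universal mass lower bound coming from \cref{mono.easy.claim2} for nontrivial free boundary stationary varifolds with density bounded below; choosing $c_V$ below that bound forces $\vfd_\infty=0$, hence $\delta(s_k,c')\to 0$. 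You instead argue directly and quantitatively: your far case coincides with the first step of \cref{quant.vfd}, and in the localized case you test with radial fields multiplied by cut-offs at scale $s$ around the exceptional ball(s), absorbing the $|X|\,|d\psi|$ error via the hypothesis $|\vfd|(B_r(q))\le c'r^2$. Both proofs use that hypothesis, but for different purposes (removability of the two points in the limit versus absorption of the cut-off error at scale $s$). Your approach buys an explicit rate $\delta(s,t)\le C(\subman,\bsubman)\,t\,(s+s^2)$ and avoids varifold compactness; the paper's buys brevity and dispenses with the case analysis on the mutual position of $p_1,p_2$ and their position relative to $\bsubman$, at the price of a non-explicit $\delta$.

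One caveat on the step you deferred as routine: when $s\ll\operatorname{dist}(p_i,\bsubman)\lesssim\gamma$, recentering the adapted coordinates at the nearest boundary point (as in \cref{mono.log}) does not by itself make the radial field admissible, since the inner hole $\{|\xi|\le\tfrac43 s\}$ around the new center no longer contains $\bar B_s(p_i)$, and enlarging the hole to radius comparable to $\operatorname{dist}(p_i,\bsubman)$ would destroy the $O(s^2)$ bound. The fix is exactly your own device: keep the field radial from the boundary point (so it lies in $\fbvf$) and multiply by an additional cut-off $\psi_i$ vanishing on $B_{\frac43 s}(p_i)$, equal to $1$ outside $B_{2s}(p_i)$, with $|d\psi_i|\le Cs^{-1}$; the resulting error is again $O(\gamma c'' s)$ by the density upper bound, and the conclusion becomes that the mass outside two balls of radius $Cs$ is $O(\gamma c'' s)$, which still yields your stated $\delta$. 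With this adjustment (and the harmless enlargement of the numerical thresholds separating the far-apart and close cases), all cases go through and the dichotomy follows.
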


	\begin{proof}
		We can assume $\bar\theta=1$. From \cref{mono.easy.claim2} it follows that any nontrivial free boundary stationary varifold $\vfd'$ with density at least $1$ on $\operatorname{spt}(|\vfd|)$ has $|\vfd'|(\subman)\ge\lambda(\subman,\bsubman)$.
		Let $\delta(s,c')$ be the supremum of all possible masses $|\vfd|(\subman)$ which are smaller than $c_V$, for $\vfd$ as in the statement, with $c_V$ to be specified below. Take a sequence $s_k\to 0$ of positive numbers and a sequence $\vfd_k$ satisfying the assumptions with $s=s_k$, as well as $\delta(s_k,c')-2^{-k}<|\vfd_k|(\subman)<c_V$.
		
		Up to subsequences we get a limit varifold $\vfd_\infty$ which is free boundary stationary on the complement of two points $\bar p_1$ and $\bar p_2$. We still have $|\vfd_\infty|(B_r(q))\le c'r^2$ for all centers $q$ and all radii $r$. This upper bound implies easily that actually $\vfd_\infty$ is free boundary stationary on the full manifold: see the proof of \cref{par.stat} for the details. Also, by \cref{mono.easy.claim2} it has a lower bound $c\le 1$ for its density on $\operatorname{spt}(|\vfd_\infty|)$. Hence, $|\vfd_\infty|(\subman)\ge c\lambda$ unless $\vfd_\infty=0$.
		
		Since $|\vfd_\infty|(\subman)=\lim_{k\to\infty}|\vfd_k|(\subman)\le c_V$, choosing any $c_V<c\lambda$ forces $\vfd_\infty=0$, so that $\delta(s_k,c')\to 0$. This shows that $\delta(s,c')\to 0$ as $s\to 0$.		
	\end{proof}

	\frenchspacing


\begin{thebibliography}{99}
	\bibitem{allard}
	W.~K.~Allard.
	\newblock On the first variation of a varifold.
	\newblock \emph{Ann. of Math. (2)} \textbf{95} (1972), 417--491.
	\newblock \doi{10.2307/1970868}.
	
	\bibitem{allard.const} 
	W.~K.~Allard.
	\newblock An integrality theorem and a regularity theorem for surfaces whose first variation with respect to a parametric elliptic integrand is controlled.
	\newblock \emph{Proc. of Symp. in Pure Math.} \textbf{44} (1986), 1--28.
	\newblock \doi{10.1090/pspum/044/840267}.

	\bibitem{almgren}
	F.~J.~Almgren.
	\newblock \emph{The theory of varifolds. Mimeographed notes}.
	\newblock Princeton, 1965.
	
	\bibitem{malchiodi}
	A.~Ambrosetti and A.~Malchiodi.
	\newblock \emph{Nonlinear analysis and semilinear elliptic problems}, vol. 104 in \emph{Cambridge Studies in Advanced Mathematics}.
	\newblock Cambridge University Press, Cambridge, 2007.
	\newblock \doi{10.1017/CBO9780511618260}.
	
	\bibitem{bell}
	C.~Bellettini.
	\newblock Multiplicity-1 minmax minimal hypersurfaces in manifolds with positive Ricci curvature.
	\newblock \emph{ArXiv preprint \href{https://arxiv.org/abs/2004.10112}{2004.10112}}, 2020.
	
	\bibitem{riv.ps}
	Y.~Bernard and T.~Rivi\`ere.
	\newblock Uniform regularity results for critical and subcritical surface energies.
	\newblock \emph{Calc. Var. Partial Differential Equations} \textbf{58} (2019), no. 1, art. 10.
	\newblock \doi{10.1007/s00526-018-1457-0}.

	\bibitem{cfs}
	A.~Carlotto, G.~Franz and M.~B.~Schulz.
	\newblock {Free boundary minimal surfaces with connected boundary and arbitrary genus}.
	\newblock \emph{ArXiv preprint \href{https://arxiv.org/abs/2001.04920}{2001.04920}}, 2020.

	\bibitem{cheng}
	D.~R.~Cheng.
	\newblock Asymptotics for the Ginzburg--Landau equation on manifolds with boundary under homogeneous Neumann condition.
	\newblock \emph{ J. Funct. Anal.} \textbf{278} (2020), no. 4, 108364+93.
	\newblock \doi{10.1016/j.jfa.2019.108364}.

	\bibitem{ch.ma}
	O.~Chodosh and C.~Mantoulidis.
	\newblock Minimal surfaces and the Allen--Cahn equation on 3-manifolds: index, multiplicity, and curvature estimates.
	\newblock \emph{Ann. of Math. (2)} \textbf{191} (2020), no. 1, 213--328.
	\newblock \doi{10.4007/annals.2020.191.1.4}.

	\bibitem{delellis}
	C.~De~Lellis and J.~Ramic.
	\newblock Min-max theory for minimal hypersurfaces with boundary.
	\newblock \emph{Ann. Inst. Fourier (Grenoble)} \textbf{68} (2018), no. 5, 1909--1986.
	\newblock \doi{10.5802/aif.3200}.

	\bibitem{douglas}
	J.~Douglas.
	\newblock Solution of the problem of Plateau.
	\newblock \emph{Trans. Amer. Math. Soc.} \textbf{33} (1931), no. 1, 263--321.
	\doi{10.2307/1989472}.
	
	\bibitem{evans}
	L.~C.~Evans and R.~F.~Gariepy.
	\newblock \emph{Measure theory and fine properties of functions (revised edition)}, in \emph{Textbooks in Mathematics}.
	\newblock CRC Press, Boca Raton, Fla., 2015.
	\newblock \doi{10.1201/b18333}.

	\bibitem{fedfle}
	H.~Federer and W.~H.~Fleming.
	\newblock Normal and integral currents.
	\newblock \emph{Ann. of Math. (2)} \textbf{72} (1960), 458--520.
	\newblock \doi{10.2307/1970227}.

	\bibitem{ghou}
	N.~Ghoussoub.
	\newblock \emph{Duality and perturbation methods in critical point theory}, vol. 107 in \emph{Cambridge Tracts in Mathematics}.
	\newblock Cambridge University Press, Cambridge, 1993.
	\newblock \doi{10.1017/CBO9780511551703}.
	
	\bibitem{giaquinta}
	M.~Giaquinta.
	\newblock \emph{Multiple integrals in the calculus of variations and nonlinear elliptic systems}, vol. 105 in \emph{Annals of Mathematics Studies}.
	\newblock Princeton University Press, Princeton, NJ, 1983.
	\newblock \doi{10.2307/j.ctt1b9s07q}.
	
	\bibitem{lz2}
	Q.~Guang, M.~Li, Z.~Wang and X.~Zhou.
	\newblock {Min-max theory for free boundary minimal hypersurfaces II -- General Morse index bounds and applications}.
	\newblock \emph{ArXiv preprint \href{https://arxiv.org/abs/1907.12064}{1907.12064}}, 2019.
	
	\bibitem{guaraco}
	M.~A.~M.~Guaraco.
	\newblock Min-max for phase transitions and the existence of embedded minimal hypersurfaces.
	\newblock \emph{J. Differential Geom.} \textbf{108} (2018), no. 1, 91--133.
	\newblock \doi{10.4310/jdg/1513998031}.

	\bibitem{hummel}
	C.~Hummel.
	\newblock \emph{Gromov's compactness theorem for pseudo-holomorphic curves}, vol. 151 in \emph{Progress in Mathematics}.
	\newblock Birkh\"{a}user, Basel, 1997.
	\newblock \doi{10.1007/978-3-0348-8952-0}.
	
	\bibitem{hut.ton}
	J.~E.~Hutchinson and Y.~Tonegawa.
	\newblock Convergence of phase interfaces in the van der Waals--Cahn--Hilliard theory.
	\newblock \emph{Calc. Var. Partial Differential Equations} \textbf{10} (2000), no. 1, 49--84.
	\newblock \doi{10.1007/PL00013453}.
	
	\bibitem{imayoshi}
	Y.~Imayoshi and M.~Taniguchi.
	\newblock \emph{An introduction to Teichm\"{u}ller spaces}.
	\newblock Springer, Tokyo, 1992.
	\newblock \doi{10.1007/978-4-431-68174-8}.
	
	\bibitem{mn.irie}
	K.~Irie, F.~C.~Marques and A.~Neves.
	\newblock Density of minimal hypersurfaces for generic metrics.
	\newblock \emph{Ann. of Math. (2)} \textbf{187} (2018), no. 3, 963--972.
	\newblock \doi{10.4007/annals.2018.187.3.8}.
	
	\bibitem{ketover}
	D.~Ketover.
	\newblock {Free boundary minimal surfaces of unbounded genus}.
	\newblock \emph{ArXiv preprint \href{https://arxiv.org/abs/1612.08691}{1612.08691}}, 2016.
	
	\bibitem{lehto}
	O.~Lehto and K.~I.~Virtanen.
	\newblock \emph{Quasiconformal mappings in the plane (second edition)}, vol. 126 in \emph{Die Grundlehren der mathematischen Wissenschaften}.
	\newblock Springer--Verlag, New York--Heidelberg--Berlin, 1973.
	
	\bibitem{li.survey}
	M.~Li.
	\newblock {Free boundary minimal surfaces in the unit ball: recent advances and open questions}.
	\newblock \emph{ArXiv preprint \href{https://arxiv.org/abs/1907.05053}{1907.05053}}, 2019.
	
	\bibitem{lz1}
	M.~Li and X.~Zhou.
	\newblock {Min-max theory for free boundary minimal hypersurfaces I -- regularity theory}.
	\newblock \emph{ArXiv preprint \href{https://arxiv.org/abs/1611.02612}{1611.02612}}, 2016.
	
	\bibitem{mn.equi}
	Y.~Liokumovich, F.~C.~Marques and A.~Neves.
	\newblock Weyl law for the volume spectrum.
	\newblock \emph{Ann. of Math. (2)} \textbf{187} (2018), no. 3, 933--961.
	\newblock \doi{10.4007/annals.2018.187.3.7}.
	
	\bibitem{mn.willmore}
	F.~C.~Marques and A.~Neves.
	\newblock Min-max theory and the Willmore conjecture.
	\newblock \emph{Ann. of Math. (2)} \textbf{179} (2014), no. 2, 683--782.
	\newblock \doi{10.4007/annals.2014.179.2.6}.
	
	\bibitem{mn.multone}
	F.~C.~Marques and A.~Neves.
	\newblock Morse index and multiplicity of min-max minimal hypersurfaces.
	\newblock \emph{Camb. J. Math.} \textbf{4} (2016), no. 4, 463--511.
	\newblock \doi{10.4310/CJM.2016.v4.n4.a2}.

	\bibitem{mn.ric}
	F.~C.~Marques and A.~Neves.
	\newblock Existence of infinitely many minimal hypersurfaces in positive Ricci curvature.
	\newblock \emph{Invent. Math.} \textbf{209} (2017), no. 2, 577--616.
	\newblock \doi{10.1007/s00222-017-0716-6}.
	
	\bibitem{modica}
	L.~Modica.
	\newblock The gradient theory of phase transitions and the minimal interface criterion.
	\newblock \emph{Arch. Rational Mech. Anal.} \textbf{98} (1987), no. 2, 123--142.
	\newblock \doi{10.1007/BF00251230}.

	\bibitem{pigriv}
	A.~Pigati and T.~Rivi\`{e}re.
	\newblock The regularity of parameterized integer stationary varifolds in two dimensions.
	\newblock To appear on \emph{Comm. Pure Appl. Math.}
	\newblock \emph{ArXiv preprint \href{https://arxiv.org/abs/1708.02211}{1708.02211}}, 2017.
	
	\bibitem{multone}
	A.~Pigati and T.~Rivi{\`e}re.
	\newblock A proof of the multiplicity one conjecture for min-max minimal surfaces in arbitrary codimension.
	\newblock To appear on \emph{Duke Math. J.}
	\newblock \emph{ArXiv preprint \href{https://arxiv.org/abs/1807.04205}{1807.04205}}, 2018.

	\bibitem{pig.ste}
	A.~Pigati and D.~Stern.
	\newblock Minimal submanifolds from the abelian Higgs model.
	\newblock \emph{ArXiv preprint \href{https://arxiv.org/abs/1905.13726}{1905.13726}}, 2019.
	
	\bibitem{pitts}
	J.~T.~Pitts.
	\newblock \emph{Existence and regularity of minimal surfaces on Riemannian manifolds}, vol. 27 in \emph{Mathematical Notes}.
	\newblock Princeton University Press, Princeton, N.J. and University of Tokyo Press, Tokyo, 1981.
	\newblock \doi{10.2307/j.ctt7zv66w}.
			
	\bibitem{rado}
	T.~Rad\'{o}.
	\newblock On Plateau's problem.
	\newblock \emph{Ann. of Math. (2)} \textbf{31} (1930), no. 3, 457--469.
	\newblock \doi{10.2307/1968237}.
	
	\bibitem{riv.notes}
	T.~Rivi{\`e}re.
	\newblock \emph{Lecture 3. A viscosity approach to minmax}, lec. 3 in \emph{Minmax methods in the calculus of variations of curves and surfaces}.
	\newblock Notes available online at \href{https://people.math.ethz.ch/~riviere/minimax}{https://people.math.ethz.ch/$\sim$riviere/minimax}, 2016.

	\bibitem{riv.minmax}
	T.~Rivi{\`e}re.
	\newblock A viscosity method in the min-max theory of minimal surfaces.
	\newblock \emph{Publ. Math. Inst. Hautes \'Etudes Sci.} \textbf{126} (2017), 177--246.
	\newblock \doi{10.1007/s10240-017-0094-z}.

	\bibitem{riv.target}
	T.~Rivi{\`e}re.
	\newblock The regularity of conformal target harmonic maps.
	\newblock \emph{Calc. Var. Partial Differential Equations} \textbf{56} (2017), no. 4, art. 117.
	\newblock \doi{10.1007/s00526-017-1215-8}.
	
	\bibitem{sa.uh}
	J.~Sacks and K.~Uhlenbeck.
	\newblock The existence of minimal immersions of {$2$}-spheres.
	\newblock \emph{Ann. of Math. (2)} \textbf{113} (1981), no. 1, 1--24.
	\newblock \doi{10.2307/1971131}.

	\bibitem{schoen}
	R.~Schoen and L.~Simon.
	\newblock Regularity of stable minimal hypersurfaces.
	\newblock \emph{Comm. Pure Appl. Math.} \textbf{34} (1981), no. 6, 741--797.
	\newblock \doi{10.1002/cpa.3160340603}.

	\bibitem{simon}
	L.~Simon.
	\newblock \emph{Lectures on geometric measure theory}, vol. 3 in \emph{Proceedings of the Centre for Mathematical Analysis}.
	\newblock Australian National University, Canberra, 1984.
	
	\bibitem{song}
	A.~Song.
	\newblock Existence of infinitely many minimal hypersurfaces in closed manifolds.
	\newblock \emph{ArXiv preprint \href{https://arxiv.org/abs/1806.08816}{1806.08816}}, 2018.
	
	\bibitem{stern}
	D.~Stern.
	\newblock Existence and limiting behavior of min-max solutions of the Ginzburg--Landau equations on compact manifolds.
	\newblock To appear on \emph{J. Differential Geom.}
	\newblock \emph{ArXiv preprints \href{https://arxiv.org/abs/1612.00544}{1612.00544}}, 2016,
	and \emph{\href{https://arxiv.org/abs/1704.00712}{1704.00712}}, 2017.

	\bibitem{struwe}
	M.~Struwe.
	\newblock On a free boundary problem for minimal surfaces.
	\newblock \emph{Invent. Math.} \textbf{75} (1984), no. 3, 547--560.
	\newblock \doi{10.1007/BF01388643}.

	\bibitem{wick}
	N.~Wickramasekera.
	\newblock A general regularity theory for stable codimension 1 integral varifolds.
	\newblock \emph{Ann. of Math. (2)} \textbf{179} (2014), no. 3, 843--1007.
	\newblock \doi{10.4007/annals.2014.179.3.2}.

	\bibitem{zhou}
	X.~Zhou.
	\newblock {On the multiplicity one conjecture in min-max theory}.
	\newblock \emph{ArXiv preprint \href{https://arxiv.org/abs/1901.01173}{1901.01173}}, 2019.
	\end{thebibliography}
\end{document}